\newlength{\dhatheight}
\renewcommand*{\backref}[1]{}
\renewcommand*{\backrefalt}[4]{\quad \tiny
  \ifcase #1 (\textbf{NOT CITED.})%
  \or    (Cited on page~#2.)%
  \else   (Cited on pages~#2.)%
  \fi}
\numberwithin{equation}{section}     
\setlist[enumerate,1]{label={\upshape(\roman*)},ref=\roman*}
\setlist[enumerate,2]{label={\upshape(\alph*)},ref=\alph*}
\newcommand{\N}{\mbox{$\mathbb{N}$}}
\newcommand{\R}{\mbox{$\mathbb{R}$}}
  \def\CC{{\mathbb C}} \def\DD{{\mathbb D}}
 \def\RR{{\mathbb R}}  
 \def\ZZ{{\mathbb Z}}
\def\cA{\mathcal{A}}    \def\cS{\mathcal{S}}
\def\cB{\mathcal{B}}  \def\cH{\mathcal{H}}  \def\cT{\mathcal{T}}
   \def\cP{\mathcal{P}} 
\def\cE{\mathcal{E}}   \def\cQ{\mathcal{Q}} \def\cW{\mathcal{W}}
   \def\cR{ \mathrm{Rat} } 
\def\cY{\mathcal{Y}}  
\newtheorem*{teo*}{Theorem}
\newtheorem{teo}{Theorem}[section]
\newtheorem{thm}[teo]{Theorem}
\newtheorem{theorem}[teo]{Theorem}
\newtheorem{cor}[teo]{Corollary}
\newtheorem{fact}[teo]{Fact}
\newtheorem{claim}[teo]{Claim}
\newtheorem{lema}[teo]{Lemma}
\newtheorem{lemma}[teo]{Lemma}
\newtheorem{prop}[teo]{Proposition}
\theoremstyle{definition}
\newtheorem{defi}[teo]{Definition}
\theoremstyle{remark}
\newtheorem{remark}[teo]{Remark}
\newtheorem{ej}[teo]{Example}
\newcommand{\finobs}{\par\hfill{$\diamondsuit$} \vspace*{.05in}}
\newcommand{\eps}{\varepsilon}
\newcommand{\supp}{\mathrm{supp}}
\newcommand{\ary}{\begin{eqnarray}}
\newcommand{\eary}{\end{eqnarray}}
\newcommand{\aryst}{\begin{eqnarray*}}
\newcommand{\earyst}{\end{eqnarray*}}
\newcommand{\enmt}{\begin{enumerate}}
\newcommand{\eenmt}{\end{enumerate}}
\newcommand{\clblue}{\color{blue}}
\newcommand{\clb}{\color{black}}
\title[Geometric properties of partially hyperbolic measures]{Geometric properties of partially hyperbolic measures and applications to measure rigidity}
\author[A. Eskin]{Alex Eskin} 
\address{Department of Mathematics, University of Chicago, Chicago, Illinois 60637, USA}
\email{eskin@math.uchicago.edu}
\urladdr{}
\author[R.~Potrie]{Rafael Potrie} 
\address{Centro de Matem\'atica, Universidad de la Rep\'ublica, Uruguay}
\email{rpotrie@cmat.edu.uy}
\urladdr{http://www.cmat.edu.uy/~rpotrie/}
\author[Z. Zhang]{Zhiyuan Zhang} 
\address{Department of Mathematics,  Imperial College London,  United Kingdom}
\email{ zhiyuan.zhang@imperial.ac.uk}
\address{Previous affiliation: CNRS, Institut Galile\'e, Universit\'e Paris 13. 99 Av. Jean Baptsite Cl\'ement, 93430, Villetaneuse}
\email{ zhiyuan.zhang@math.univ-paris13.fr}
\urladdr{}
\thanks{Rafael Potrie was partially supported by CSIC. Zhiyuan Zhang was supported by the National Science Foundation under Grant No. DMS-1638352. This work was started while the authors were members of IAS and they would like to thank the IAS for the excellent conditions offered. }
\begin{document}

\begin{abstract}
We give a geometric characterization of the {\it quantitative joint non-integrability}, introduced by  Katz in \cite{Katz}, of strong stable and unstable bundles of partially hyperbolic measures and sets in dimension 3. This is done via the use of higher order templates for the invariant bundles. 
Using the recent work of Katz, we derive some consequences, including the measure rigidity of $uu$-states and  the existence of physical measures. 
\end{abstract}

\maketitle


\section{Introduction}

Let $f: M \to M$ be a partially hyperbolic diffeomorphism of a closed 3-manifold: the tangent bundle $TM= E^u \oplus E^c \oplus E^s$ splits into $Df$-invariant one dimensional bundles with the property that there is some integer $N>0$ such that for every $x \in M$, we have
\aryst
 \|Df^N|_{E^s(x)} \| &\leq& \frac{1}{2} \min\{1, \|Df^N|_{E^c(x)} \|\}  \\
 &<& 2 \max\{1, \|Df^N|_{E^c(x)} \|\} \leq \|Df^N|_{E^u(x)}\|.
 \earyst
Any such diffeomorphism $f$ admits (uniquely defined) $f$-invariant foliations $\cW^s$ and $\cW^u$ tangent respectively to the bundles $E^s$ and $E^u$ (see e.g. \cite{CP}).

 Consider a lamination $\Lambda \subset M$ which is $f$-invariant and $\cW^u$-saturated. The geometric properties of its leaves,  when projected along stable holonomy, are very relevant to understanding several problems: ergodicity of conservative systems (e.g. \cite{BurnsWilkinson}), finiteness of attractors (e.g. \cite{CPS}), mixing properties (e.g. \cite{TZ}), among other properties.  More recently, some quantitative measures of joint non-integrability have been used  by Katz \cite{Katz} to obtain measure rigidity results based on ideas coming from homogeneous and Teichmuller dynamics \cite{EL,EM} (related progress is that of random dynamical systems \cite{BRH}, see also \cite{Obata} for its connection with partially hyperbolic dynamics). In this paper, 
 we intend to look into the notion of quantitative non-joint integrability (QNI) proposed by \cite{Katz}.  We consider here exclusively $C^\infty$ diffeomorphisms,  and obtain in this setting  equivalent notions that seem more conceptual and easier to verify and work with.

\begin{defi}\label{defi.jointorderell}
We say that a compact invariant set $\Lambda$ of a partially hyperbolic diffeomorphism $f: M \to M$ is \emph{jointly integrable up to order $\ell$} if there is $\rho>0$ and a continuous  family of $C^\ell$ smooth surfaces $\{  \cS_x  \}_{x \in \Lambda}$ which verifies that:  
\enmt
\item  $\cW^u_{\rho}(x) \cup \cW^s_{\rho}(x)\subset \cS_x$,
\item for every $x \in \Lambda$ and $y \in \cW^u_{\rho}(x) \cap \Lambda$ (resp. $y \in \cW^s_{\rho}(x) \cap \Lambda$) we have that $\cW^s_{loc}(y)$ is tangent to order $\ell$ to $S_x$ at $y$ (resp. $\cW^u_{loc}(y)$ is tangent to order $\ell$ to $\cS_x$ at $y$).
\eenmt  
\end{defi}

Here, when we say that the curve $\gamma$ is tangent to order $\ell$ to $\cS_x$ we mean that there is a constant $C > 0$ \footnote{Later in the paper we will also work with a measurable version of this, for partially hyperbolic measures, see Definition~\ref{defi.jointorderell-meas}.} such that when parametrized by arc-length the distance from a point $y \in \gamma$ to the surface $\cS_x$ is less than $C t^\ell$ where $t$ is the arc-length from $y$ to $x$.

Our main results concern the study of $uu$-states of partially hyperbolic systems. By definition, an ergodic $uu$-state is an ergodic invariant measure that is absolutely continuous with respect to strong unstable manifolds of the foliation $\cW^u$. These measures always exist (see e.g. \cite[\S 11]{BDV}) and are usually the place to look for \emph{physical measures} (i.e. those for which the statistical basin has positive Lebesgue measure).  

The results in this paper are also obtained in the more general setting of partially hyperbolic measures where analogous results hold. While very similar, the proofs require more careful analysis in some parts of the argument. We refer the reader to \S\ref{section-statements} for precise statements.

Our techincal result Theorem \ref{teo.uniformversion} in Section \ref{section-statements}, combined with the recent results of \cite{Katz}, immediately gives the following.
\begin{teo} \label{thm: main}
Let $f: M \to M$ be a $C^\infty$ partially hyperbolic diffeomorphism on a closed $3$-manifold and let $\mu$ be a $uu$-state  with positive center Lyapunov exponent, then, either $\mu$ is physical, or the support of $\mu$ is jointly integrable up to order $\ell$ for every $\ell>0$. 
\end{teo}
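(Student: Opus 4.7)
The strategy is to argue by contrapositive and combine the paper's technical result (Theorem \ref{teo.uniformversion}) with Katz's measure rigidity theorem from \cite{Katz}. Suppose $\mu$ is a $uu$-state with positive center Lyapunov exponent and suppose, for contradiction to the dichotomy, that the support $\Lambda := \supp(\mu)$ fails to be jointly integrable up to order $\ell$ for some finite $\ell > 0$. The goal is to deduce that $\mu$ must be physical.

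The first step is to invoke Theorem \ref{teo.uniformversion}, whose content (as previewed in the introduction) is a geometric characterization of the quantitative joint non-integrability (QNI) condition of \cite{Katz} in terms of the order-$\ell$ joint integrability of Definition \ref{defi.jointorderell}. Concretely, the negation of ``$\Lambda$ is jointly integrable up to order $\ell$'' for some $\ell$ should translate into the existence of two points $x \in \Lambda$ and $y \in \cW^u_\rho(x) \cap \Lambda$ for which the local stable leaf through $y$ separates from every $C^\ell$ surface tangent to $\cW^u_\rho(x) \cup \cW^s_\rho(x)$ at $x$ at rate strictly faster than polynomial of order $\ell$. Unwinding the definition of QNI in \cite{Katz}, this failure for all sufficiently large $\ell$ is precisely what is needed to produce a pair of nearby points whose local stable manifolds diverge at a polynomially quantified rate; that is, QNI holds on $\Lambda$.

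Once QNI is established for the $uu$-state $\mu$, the second step is to apply Katz's measure rigidity theorem directly. Under the hypotheses that $\mu$ is a $uu$-state of a $C^\infty$ partially hyperbolic diffeomorphism on a closed 3-manifold, has positive center Lyapunov exponent, and satisfies QNI, Katz proves that $\mu$ is absolutely continuous along the center direction, and hence along the full center-unstable foliation. Combined with the fact that $\mu$ is already $\cW^u$-absolutely continuous by the $uu$-state hypothesis and that the center exponent is positive, this promotes $\mu$ to an SRB measure, which (see e.g.\ \cite[\S 11]{BDV}) is physical. This gives the desired conclusion.

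The main obstacle, hidden in the reduction above, is the first step: one must verify that Theorem \ref{teo.uniformversion} really provides the correct translation between the topological/geometric notion of order-$\ell$ joint integrability and the measure-theoretic QNI condition used in \cite{Katz}. The delicate points are (a) that QNI is a statement about a positive-measure set of points rather than all of $\supp(\mu)$, so one must extract a measurable failure of joint integrability from the topological one (or use the measurable version flagged in the footnote after Definition \ref{defi.jointorderell}); (b) that the quantitative polynomial rate needed for QNI must be matched with the order $\ell$ in Definition \ref{defi.jointorderell} uniformly on an invariant set; and (c) that the higher-order templates alluded to in the abstract must be shown to control the deviation between the true stable leaf $\cW^s_{loc}(y)$ and the candidate surface $\cS_x$ at the order required by Katz. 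All three are packaged into Theorem \ref{teo.uniformversion}, and once that is granted, the deduction of Theorem \ref{thm: main} is essentially immediate.
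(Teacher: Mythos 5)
Your proposal follows essentially the paper's own route: argue the dichotomy via the contrapositive of Theorem \ref{teo.uniformversion} applied to $\Lambda=\supp(\mu)$ to obtain QNI, then invoke Katz's rigidity result (Theorem \ref{thm Katz}) to conclude $\mu$ is SRB and hence, being a hyperbolic SRB measure, physical. The only hypothesis you leave unverified is the non-degeneracy of $\mu$ required in Theorem \ref{teo.uniformversion}, which holds automatically here by the Ledrappier--Young entropy formula since $\mu$ is a $uu$-state with $\chi_2>0$ (Remark \ref{rem.nondeg}).
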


Note that in \cite{ABV} the physicality of $uu$-states is proved under the assumption that \emph{every} such measure has positive center exponents. 

In principle similar results may hold in higher dimensions which may be worth investigating. This may involve adapting some definitions to take care of some higher dimensional phenomena that may occur. We decided to restrict to the 3-dimensional case since it already presents some challenges and applications. We note that right now the results in \cite{Katz} require one-dimensional center, but there are extensions to higher dimensional centers in the work in progress \cite{BEFRH}. 

We also note that our results require very high regularity to compensate for the fact that we deal with the case where holonomies are not regular (which is the usual case). In some cases, there are reasons that force more regularity of holonomies, even in open sets, and in those cases recently arguments have been made to obtain similar results assuming less regularity of the map, see \cite{ALOS}.  
Theorem \ref{thm: main} will be used in \cite{ACEPWZ} to understand $uu$-states of partially hyperbolic Anosov diffeomorphisms in dimension 3 (addressing a conjecture of \cite{GKM}) and will be strengthened to show that if one assumes that the strong unstable foliation of a partially hyperbolic diffeomorphism of a 3-dimensional manifold fills center unstable disks\footnote{More precisely, a minimal subset of the strong unstable foliation verifies that it 'fills center unstable disks' if it contains open sets in some center unstable disk.} , then joint integrability up to order $\ell$ implies actual joint integrability.

The main technical contribution of this paper is to extend the notion of templates introduced in \cite{TZ} to partially hyperbolic dynamics, in particular, dealing with higher order templates to deduce quantitative forms of non-integrability of dynamically defined bundles.

\smallskip

{\small {\em Acknowledgements:} The authors have benefited from important input and suggestions by S. Alvarez, S. Crovisier, R. Elliot Smith, M. Leguil, D. Obata and B. Santiago that allowed to clarify some parts of the proofs and improve the presentation overall. }

\section{Context and main technical result}\label{section-statements}

Throughout this paper we let $f: M \to M$ be a $C^\infty$-diffeomorphism\footnote{All results hold in finite regularity which depends on the properties (Lyapunov exponents) of the measure one looks at   as well as some uniform constants of $f$ around the support of the measure.   We will not attempt to estimate the precise regularity since in any case it will be usually very high.} of a closed 3-manifold $M$.  
 We fix a smooth Riemannian metric $\| \cdot \|_0$ on $M$.

\subsection{Partially hyperbolic measures}   \label{subsec PHM}
 
An ergodic $f$-invariant measure  $\mu$ is  \emph{partially hyperbolic} if the following is true: 
\begin{itemize} 
\item $f$ has \emph{simple spectrum}. Namely, $f$ has three different Lyapunov exponents $\chi_1 > \chi_2 > \chi_3$, 
\item $\chi_1> 0 > \chi_3$. 
\end{itemize}

We denote by $E^1, E^2, E^3$ the Oseledets bundles for $\mu$ corresponding to $\chi_1, \chi_2, \chi_3$ respectively, and denote by $\cW^1$,  $\cW^3$ the Pesin laminations associated to $E^1, E^3$ respectively (see \cite{BP} and also \S\ref{ss.nf} for more properties of these laminations). 

Throughout the paper, we fix some $0 < \epsilon \ll \min_{i\in \{1,2\}} |\chi_i - \chi_{i+1}|$, and denote by  $\| \cdot \|$  the  Lyapunov norm (with parameter $\epsilon$)  for $\mu$ satisfying the following property:
For $\mu$-almost every $x \in M$ we have, for $i\in \{1,2,3\}$

\begin{equation}\label{eq:exponentpoint} 
 \|D_xf|_{E^i(x)}\| := \frac{\|D_xf(v)\|}{\|v\|} \in (e^{\chi_i - \epsilon}, e^{\chi_i + \epsilon}) \ , \ v \in E^i(x) \setminus \{0\}. 
\end{equation}

If we fix an orientation on each of the (one dimensional) bundles $E^i$ we get a vector $e^i(x)$ in $E^i(x)$  with unit Lyapunov norm $\|\cdot \|$ for almost every $x \in M$ and $i\in \{1,2,3\}$. We define $ \lambda_{i,x} \in \RR  $ by equation: 
\begin{equation}\label{eq:exponentpointsign} 
D_xf (e^i(x)) =  \lambda_{i,x} e^i(f(x)).
 \end{equation}
By definition, we have that $\lambda_{i,x} \in \{ \pm  \|D_xf|_{E^i(x)}\| \}$.

The general measure theory allows us to disintegrate the measure $\mu$ along the leaves of $\cW^1$ and $\cW^3$. We will denote by $\mu^i_x$ (with $i\in \{1,3\}$) the \emph{conditional measure along the leaves of $\cW^i$}, $i\in \{1,3\}$, (see \cite{BP}). 

\begin{defi}\label{standing-noatoms} 
An ergodic $f$-invariant partially hyperbolic measure $\mu$ will be called \emph{non-degenerate} if for almost every $x \in M$ the measures $\mu^1_x$ and $\mu^3_x$ are without atoms. 
\end{defi}

A measure $\mu$ is called a $uu$\emph{-state} if for $\mu$-a.e. $x$ the measure $\mu^1_x$ is absolutely continuous with respect to the length induced by the Riemannian metric on the leaves of $\cW^1$. Note that if $\chi_2>0$ then $E^1 \oplus E^2$ is $\mu$-a.e. tangent to the leaves of the Pesin unstable lamination which we denote by $\cW^{12}$. 
We denote the disintegration of $\mu$ along $\cW^{12}$ at a $\mu$-typical point $x$ by $\mu^{12}_x$. A measure $\mu$ is said to be \emph{SRB} (Sinai-Ruelle-Bowen) in this context if $\mu^{12}_x$ is absolutely continuous with respect to the Riemannian volume along the leaves of $\cW^{12}$.

\begin{remark}\label{rem.nondeg}
Note that by Ledrappier-Young's entropy formula in \cite{LY2}, any $uu$-state which has $\chi_2> -\chi_1$(in particular, when $\chi_2 > 0$) must be non-degenerate. 
\end{remark}

\subsection{Partially hyperbolic sets} 

A particularly important case in our discussion is the one where the diffeomorphism $f: M \to M$ is \emph{partially hyperbolic}.   
More generally, we let $f: M \to M$ be a smooth diffeomorphism of a closed 3-manifold and let $\Lambda$ be a compact $f$-invariant set admitting a \emph{partially hyperbolic splitting} $T_\Lambda M := TM |_{\Lambda} = E^u \oplus E^c \oplus E^s$ which is, by definition, $Df$-invariant and verifies that there is an integer $N>0$  so that for every $x\in \Lambda$ we have:
\aryst
 \|Df^N|_{E^s(x)} \|_0 &<& \min\{1, \|Df^N|_{E^c(x)} \|_0\}   \\
  &\leq&  \max\{1, \|Df^N|_{E^c(x)} \|_0\} < \|Df^N|_{E^u(x)}\|_0.
 \earyst
In this case, we call $\Lambda$ a  (uniformly) \emph{partially hyperbolic set} for $f$. 
Note that every ergodic $f$-invariant measure supported in $\Lambda$ is partially hyperbolic. (See e.g. \cite{BDV,CP} for more properties of these objects.)

Replacing $\| \cdot \|_0$ by an appropriate smooth Riemannian metric adapted to the dynamics, 
we can always assume that $N=1$ in the above inequalities (see \cite{CP}).
By a slight abuse of notation, we will denote such a metric by $\| \cdot \|$ in analogy with the Lyapunov metric in the case of partially hyperbolic measures.  
However we stress that these two metrics are usually not the same: 
The Lyapunov norm of a given measure supported on $\Lambda$ assigns an inner product to almost every point in a  measurable way, but it needs not be continuous, or everywhere defined on $\Lambda$.

It is known that every partially hyperbolic diffeomorphism admits at least one ergodic $uu$-state, but the existence of SRB measures is not clear in general (see \cite[Chapter 11]{BDV}). We state the following for later reference: 

\begin{fact}
Let $f: M \to M$ be a partially hyperbolic diffeomorphism. Then there exists a partially hyperbolic measure $\mu$ which is a $uu$-state. 
\end{fact}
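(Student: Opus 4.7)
The plan is the classical construction going back to Sinai and Pesin: push forward Lebesgue on an unstable disk, Cesàro-average, take a weak-$*$ limit, then extract an ergodic component.

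First, fix a point $x_0\in M$ and a local unstable plaque $D := \cW^u_\rho(x_0)$, and let $\nu$ be the normalized arclength measure on $D$. Since $E^u$ is uniformly expanded in an adapted metric, $f^n(D)$ is contained in a single leaf of $\cW^u$ and has uniformly bounded geometry. The $C^\infty$ (in particular $C^{1+\alpha}$) regularity of $f|_{\cW^u}$ yields the standard bounded distortion estimate, so $f^n_* \nu$ is absolutely continuous along the corresponding unstable plaques with densities bounded uniformly in $n$. Form the Cesàro averages $\mu_n := \frac{1}{n}\sum_{k=0}^{n-1} f^k_* \nu$ and extract a weak-$*$ accumulation point $\mu$; the Krylov–Bogolyubov argument makes $\mu$ an $f$-invariant probability measure.

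Second, I would show that $\mu$ has absolutely continuous conditionals along $\cW^u$. Working in a foliated chart for $\cW^u$, each $f^k_*\nu$ disintegrates as a convex combination (along the transversal) of measures that are absolutely continuous on unstable plaques with uniformly bounded densities. This property is preserved by convex combinations and, by the uniform density bound plus Fatou/weak-$*$ lower semicontinuity arguments, it passes to the weak-$*$ limit $\mu$; this is exactly the classical argument recalled in \cite[Chapter 11]{BDV}. Thus $\mu$ is a $uu$-state in the sense that its $\cW^u$-conditionals are absolutely continuous w.r.t.\ arclength.

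Third, $\mu$ need not be ergodic. Consider its ergodic decomposition $\mu = \int \mu_\xi \, d\hat\mu(\xi)$. By uniqueness of disintegration of measures along $\cW^u$, the $\cW^u$-conditionals of almost every ergodic component coincide (up to normalization) with those of $\mu$, and in particular remain absolutely continuous with respect to arclength. Pick any such ergodic component $\mu_\xi$. To verify that $\mu_\xi$ is partially hyperbolic in the sense of Section~\ref{subsec PHM}, apply Oseledets to $\mu_\xi$ with the $Df$-invariant splitting $E^u \oplus E^c \oplus E^s$: in the adapted metric the cocycles $\log\|Df|_{E^u}\|$, $\log\|Df|_{E^c}\|$, $\log\|Df|_{E^s}\|$ are continuous and pointwise strictly separated by the uniform partial hyperbolicity inequality, so their $\mu_\xi$-integrals $\chi_1,\chi_2,\chi_3$ satisfy $\chi_1 > \max(0,\chi_2) \ge \min(0,\chi_2) > \chi_3$, giving simple spectrum with $\chi_1 > 0 > \chi_3$. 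Since $E^1 = E^u$ on the support of $\mu_\xi$, the Pesin lamination $\cW^1$ coincides locally with $\cW^u$, so $\mu_\xi$ is the required $uu$-state.

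The only non-cosmetic step is the second one: propagating absolute continuity of the $\cW^u$-conditionals through the weak-$*$ limit. The uniform density bound coming from bounded distortion on iterates of the unstable disk is what makes this work.
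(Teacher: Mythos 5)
Your proof is correct and follows exactly the classical Pesin--Sinai construction of Gibbs $u$-states (push-forward of Lebesgue on an unstable disk, Ces\`aro averaging, passage of absolutely continuous conditionals to the weak-$*$ limit, then extraction of an ergodic component), which is precisely the argument from \cite[Chapter 11]{BDV} that the paper invokes for this Fact without reproving it. The verification that any ergodic component is partially hyperbolic in the sense of \S\ref{subsec PHM} via the uniform separation of $\log\|Df|_{E^i}\|$ is also the intended (and correct) justification.
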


We note that the same holds if there is a partially hyperbolic attractor (i.e. there is an open set $U$ such that $f(\overline{U}) \subset U$ and the set $\Lambda = \bigcap_{n>0} f^n(U)$ is partially hyperbolic). 

\subsection{Normal Forms}\label{ss.nf}
We refer the reader to \cite[\S 3.1]{KK} for more details and \cite{KS} for more general results. 

In the following, for $\mu$-a.e. $x$, we identify $T_x\cW^i(x)$ with $\RR$ so that the unit vector $e^i(x)$ corresponds to $1$. 

\begin{prop}\label{prop-normalforms} 
Let $\mu$ be a partially hyperbolic measure. Then for $i\in \{1,3\}$ and $\mu$-almost every $x\in M$ there exists $\Phi^i_{x}: T_x\cW^i(x) \to \cW^i(x)$ a smooth diffeomorphism such that: 
\begin{enumerate}
\item $x \mapsto \Phi^i_x$ varies measurably,
\item $\Phi^i_x(0) = x$ and $D_0\Phi^i_x = \mathrm{id}$, 
\item $f(\Phi^i_x(t)) = \Phi^i_{f(x)}(\lambda_{i,x} t)$ for every $t \in \R$,
\item if $y \in \cW^i(x)$ then $(\Phi^i_y)^{-1} \circ \Phi^i_x$ is an affine map. 
\end{enumerate}
\end{prop}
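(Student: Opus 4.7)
The plan is to construct $\Phi^i_x$ via a non-stationary Sternberg linearization, which in dimension one admits no resonance obstructions and hence produces a fully smooth normal form, following \cite{KK, KS}. I describe the construction for $i = 3$; the case $i = 1$ is entirely symmetric, obtained by applying the same argument to $f^{-1}$, which contracts $E^1$.

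First I would fix, for $\mu$-a.e.\ $x$, a measurable family of $C^\infty$ parametrizations $\psi_x \colon \R \to \cW^3(x)$ with $\psi_x(0) = x$ and $D_0 \psi_x = \mathrm{id}$, whose $C^k$ sizes grow at most subexponentially along orbits (standard via Lusin regularization in the Lyapunov norm). In these charts, $f|_{\cW^3(x)}$ is represented by a germ $g_x := \psi_{f(x)}^{-1} \circ f \circ \psi_x$ fixing $0$ with $g_x'(0) = \lambda_{3,x}$, so that $g_x(s) = \lambda_{3,x}\, s + O(s^2)$. Setting $\Lambda_n(x) := \prod_{j=0}^{n-1}\lambda_{3, f^j(x)}$, I define
\[
\Phi^3_{x,n}(t) \; := \; f^{-n}\bigl(\psi_{f^n(x)}(\Lambda_n(x)\, t)\bigr),
\]
and prove that $\Phi^3_{x,n}$ converges in $C^\infty_{\mathrm{loc}}$ to a $C^\infty$ diffeomorphism $\Phi^3_x \colon \R \to \cW^3(x)$. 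Heuristically, $\Phi^3_{x,n+1}$ differs from $\Phi^3_{x,n}$ only through the nonlinear part of $g_{f^n(x)}$ evaluated at an argument of size $|\Lambda_n(x)| \asymp e^{-n(|\chi_3| - \epsilon)}$; this quadratic error is then pushed back by $f^{-n}$, whose norm along $\cW^3$ grows like $|\Lambda_n(x)|^{-1}$, leaving a net geometric decay of order $e^{-n(|\chi_3| - 3\epsilon)}$. Higher-order derivatives are controlled by the same computation together with Lyapunov bunching; crucially, since $E^3$ is one-dimensional there are no internal resonances, and all $C^k$-norms converge provided $\epsilon$ is taken sufficiently small compared to $|\chi_3|$.

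Properties (i)--(iii) then follow directly from the defining formula: measurability from the measurable choice of $\psi_x$ and of $\lambda_{3,x}$; the normalization $\Phi^3_x(0) = x$, $D_0 \Phi^3_x = \mathrm{id}$ from $\psi_x(0) = x$ and $D_0 \psi_x = \mathrm{id}$; and the equivariance $f \circ \Phi^3_x = \Phi^3_{f(x)} \circ (\lambda_{3,x}\,\cdot)$ from re-indexing $n \mapsto n+1$ in the limit. For property (iv), if $y \in \cW^3(x)$, then $H := (\Phi^3_y)^{-1} \circ \Phi^3_x$ is a $C^\infty$ map of $\R$ that intertwines the non-stationary linear cocycles $t \mapsto \Lambda_n(x)\, t$ and $t \mapsto \Lambda_n(y)\, t$ via the common dynamics $f^n$. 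Since $f^n(x)$ and $f^n(y)$ eventually share Lyapunov data along a common Pesin leaf, these two linear cocycles coincide up to a constant change of variable, and the standard uniqueness of non-stationary linearizations in dimension one (classical in normal-form theory) forces $H$ to be affine.

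The main obstacle is the bookkeeping of $C^\infty$ convergence with measurable and only subexponentially controlled constants along orbits; this is by now routine in Pesin theory, and the one-dimensionality of each $E^i$ is precisely what eliminates the possibility of resonance-type obstructions and allows the linearization to be pushed to full $C^\infty$ regularity rather than merely $C^1$.
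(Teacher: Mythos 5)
This proposition is not proved in the paper: it is quoted from the non-stationary normal form theory of \cite{KK} (see \S 3.1 there) and \cite{KS}, and your sketch follows essentially the same route as those references. The construction $\Phi^3_{x}=\lim_n f^{-n}\circ\psi_{f^n(x)}\circ(\Lambda_n(x)\cdot)$, the telescoping that yields the equivariance (iii), and the absence of resonances in dimension one (so that $C^k$-convergence holds for every $k$ once $\epsilon\ll|\chi_3|$) are exactly the standard argument, and your treatment of (i)--(iii) is fine at the level of a sketch.

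The step that does not work as written is (iv). Your claim that the two linear cocycles $t\mapsto\Lambda_n(x)t$ and $t\mapsto\Lambda_n(y)t$ ``coincide up to a constant change of variable'' is false: that would force $\lambda_{3,f^n(x)}=\lambda_{3,f^n(y)}$ for all $n$, which has no reason to hold (and one cannot argue from ``shared Lyapunov data'', since equality of exponents says nothing about the individual multipliers; note also Remark \ref{rem.smoothunstables}: points of $\cW^3(x)$ need not be generic). The standard repair, which is what the cited references do, is a scaling argument on second derivatives: set $H_n=(\Phi^3_{f^n(y)})^{-1}\circ\Phi^3_{f^n(x)}$, so that $H_0(t)=\Lambda_n(y)^{-1}H_n(\Lambda_n(x)t)$ and hence $H_0''(t)=\bigl(\Lambda_n(x)^2/\Lambda_n(y)\bigr)\,H_n''(\Lambda_n(x)t)$. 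Along a subsequence of times at which $f^n(x)$ (and, by exponential convergence of $f^n(y)$ to $f^n(x)$, the nearby point $f^n(y)$) returns to a Lusin set where the charts are uniformly controlled, $H_n''$ is bounded on a fixed neighborhood of $0$, while $|\Lambda_n(x)^2/\Lambda_n(y)|\lesssim e^{n(\chi_3+3\epsilon)}\to0$; therefore $H_0''\equiv0$ and $H_0$ is affine. Appealing instead to ``uniqueness of the non-stationary linearization'' is circular unless you prove a uniqueness-up-to-affine statement, and that statement is proved by precisely this scaling argument, so some version of it must appear in your proof.
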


Note that in (ii) we have identified $\Phi^{i}_x$ with a diffeomorphism from $\RR$ to $\cW^i(x)$ through the above identification between $\R$ and $T_x\cW^i(x)$. 
From now on, we will fix a collection of maps $\Phi^i_x$, $i = 1, 3$, given by Proposition \ref{prop-normalforms}.

 \begin{remark}\label{remark.sign}
 The sign of $\lambda_{i,x}$ depends on the chosen orientation of the bundles $E^i$ at $x$. It is sometimes impossible to find a continuous orientation of the bundles, so it cannot be made so that the values are always positive even after taking iterates or finite covering. For the purposes of this paper, this is not an issue, so we will sometimes assume that $\lambda_{i,x}$ is always positive  to simplify the exposition when it is possible to treat the general case in a similar way. 
 \end{remark}

 We denote $W^i_r(x) = \Phi^i_x((-r,r))$, $i \in \{1, 3\}$.  
 We denote by $W^i_{loc}(x)$ a neighborhood of $x$ in $\cW^i(x)$ whose size may vary from line to line. 
 Since we will use some dynamically defined scales, we introduce the following notation for each integer $k>0$ and each $\rho > 0$: 
\begin{equation}\label{eq:notationscale}  
W^{1,k}_\rho(x) := f^{-k}(W^1_\rho(f^k(x))), \ \  W^{3,k}_\rho(x) : = f^k (W^3_\rho(f^{-k}(x))) . 
\end{equation}

Let $i\in \{1,3\}$.
Recall that $\mu^i_x$ is defined in Subsection \ref{subsec PHM}. We denote
\begin{equation}\label{eq:measurecoordinate}
\hat \mu^i_x = [(\Phi^i_x)^{-1} ]_\ast \mu^i_x.
\end{equation} 
The above conditions determine $\hat \mu^i_x$ as a Radon measure on $\RR$ up to a multiple. Given $i\in \{1,3\}$, we have $\hat \mu^i_{f(x)} = c f_\ast \hat \mu^i_x$ for some $c  > 0$. 
 In the following we normalise $\hat \mu^i_x$ so that its restriction to $(-1,1)$ is a probability measure. 
 With a slight abuse of notation, we use $\hat \mu^i_x$ to denote the probability measure restricted to $(-1,1)$.  

The following is an alternative way to characterize $uu$-states.
\begin{prop}\label{p.ustate}
The measure $\mu$ is an $uu$-state if and only if the measures $\hat \mu^1_x$ defined in \eqref{eq:measurecoordinate} are Lebesgue. 
\end{prop}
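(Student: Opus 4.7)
The converse direction is immediate: if $\hat\mu^1_x$ is a multiple of Lebesgue on $\RR$, then $\mu^1_x = (\Phi^1_x)_*\hat\mu^1_x$ is the push-forward of Lebesgue by a $C^\infty$ diffeomorphism and hence has a positive smooth density with respect to the Riemannian arc-length on $\cW^1(x)$. So the content of the proposition lies in the forward direction, which I plan to prove as follows.

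Assume $\mu$ is a $uu$-state, so $\mu^1_x$ is absolutely continuous with respect to arc-length on $\cW^1(x)$, and consequently $\hat\mu^1_x$ is absolutely continuous with respect to Lebesgue on $\RR$ with some density $g_x$. Using property (iii) of Proposition \ref{prop-normalforms}, the map $(\Phi^1_{f(x)})^{-1}\circ f \circ \Phi^1_x$ is multiplication by $\lambda_{1,x}$, so the invariance $f_*\mu = \mu$, transported through the normal form, yields the equivariance
\[
   g_{f^n(x)}(s) \;=\; C_n(x)\, g_x\!\left(\frac{s}{\Lambda_n(x)}\right), \qquad \Lambda_n(x) := \prod_{k=0}^{n-1} \lambda_{1,f^k(x)},
\]
where $C_n(x)>0$ is fixed by the requirement $\int_{-1}^1 g_{f^n(x)}\,ds = 1$. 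Since $\chi_1 > 0$, $|\Lambda_n(x)|\to \infty$ exponentially.

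The core step of the proof will be to show that $\hat\mu^1_{f^n(y)} \to \tfrac12\mathrm{Leb}|_{(-1,1)}$ weakly as $n\to\infty$ for $\mu$-a.e.\ $y$. The displayed identity shows that $g_{f^n(y)}|_{(-1,1)}$ is a rescaled copy of $g_y$ concentrated near the origin; provided that $0$ is a Lebesgue density point of $g_y$ with $g_y(0)>0$, the Lebesgue differentiation theorem gives $C_n(y) \to 1/(2g_y(0))$ and $g_y(s/\Lambda_n(y))\to g_y(0)$ in $L^1_{\mathrm{loc}}$, and the weak-$\ast$ convergence to $\tfrac12\mathrm{Leb}|_{(-1,1)}$ follows. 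To verify this hypothesis for $\mu$-a.e.\ $y$, I would pick $x$ and $t_0$ with $y = \Phi^1_x(t_0)$: by property (iv) of Proposition \ref{prop-normalforms}, $(\Phi^1_y)^{-1}\circ\Phi^1_x$ is affine, so combining with $\mu^1_y \propto \mu^1_x$ gives $g_y(s) = c(x,y)\,g_x(a(x,y)s+t_0)$ for suitable constants $c,a > 0$. Since $\hat\mu^1_x$-almost every $t_0$ is a Lebesgue point of $g_x$ with $g_x(t_0)>0$, transporting through this affine map shows that $0$ is a Lebesgue point of $g_y$ with $g_y(0)>0$ for $\mu$-a.e.\ $y$. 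The main obstacle here is exactly the fact that $0$ is a distinguished point in normal form coordinates that a priori need not be a Lebesgue point of $g_y$: property (iv) is essential to avoid this pitfall.

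Finally, I would transfer this convergence back to $y$ through recurrence. The map $z\mapsto \hat\mu^1_z$ into the space of probability measures on $[-1,1]$ (with weak-$\ast$ topology) is $\mu$-measurable, so Lusin's theorem provides, for each $\eps>0$, a compact $K_\eps\subset M$ with $\mu(K_\eps)>1-\eps$ on which $z\mapsto\hat\mu^1_z$ is continuous. Poincar\'e recurrence applied to the first-return map to $K_\eps$ yields, for $\mu$-a.e.\ $y\in K_\eps$, a sequence $n_k\to\infty$ with $f^{n_k}(y)\in K_\eps$ and $f^{n_k}(y)\to y$; by continuity on $K_\eps$ we have $\hat\mu^1_{f^{n_k}(y)}\to\hat\mu^1_y$ weakly, and combined with the previous step this forces $\hat\mu^1_y = \tfrac12\mathrm{Leb}|_{(-1,1)}$. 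Letting $\eps\to 0$ gives the conclusion for $\mu$-a.e.\ $y$.
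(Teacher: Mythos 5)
Your argument is correct, but it is genuinely different from the route the paper takes: the paper does not prove Proposition \ref{p.ustate} at all, it cites \cite[\S 6.5]{BRH}, where the statement is deduced from the Ledrappier--Young rigidity theorem \cite{LY} — absolute continuity of $\mu^1_x$ forces the density to be the explicit Jacobian cocycle $\prod_{k\geq 1}\frac{\|Df|_{E^1}(f^{-k}y)\|}{\|Df|_{E^1}(f^{-k}x)\|}$, and property (iii) of Proposition \ref{prop-normalforms} turns exactly this cocycle into a constant in the coordinates $\Phi^1_x$. You instead give a self-contained renormalization (blow-up) argument: the linearizing equivariance $\hat\mu^1_{f^n(y)}\propto (t\mapsto \Lambda_n(y)t)_*\hat\mu^1_y$, the Lebesgue differentiation theorem at the base point (with property (iv) correctly invoked to guarantee that $0$ is a Lebesgue point of $g_y$ with $g_y(0)>0$ for $\mu$-a.e.\ $y$, which is indeed the one pitfall of the normalization), and then Lusin plus recurrence to identify the limit with $\hat\mu^1_y$ itself. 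This buys independence from the entropy/SRB machinery of \cite{LY}, at the cost of not producing the explicit density in the Riemannian parametrization, which the cited proof gives for free. Two small points to tighten: plain Poincar\'e recurrence to $K_\eps$ only gives returns to $K_\eps$, not $f^{n_k}(y)\to y$; you should apply it to $K_\eps$ intersected with a countable basis of balls to get, for a.e.\ $y\in K_\eps$, returns to $K_\eps$ converging to $y$. And your conclusion identifies $\hat\mu^1_y$ restricted to $(-1,1)$ with $\tfrac12\mathrm{Leb}$; to get that the Radon measure $\hat\mu^1_y$ is proportional to Lebesgue on all of $\RR$ (the statement as written, before the paper's abuse of notation), one more application of the same equivariance, pulling back Lebesgue from $(-1,1)$ at $f^n(y)$ to $(-|\Lambda_n(y)|,|\Lambda_n(y)|)$ at $y$, finishes it.
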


 See \cite[\S 6.5]{BRH} for a proof based on the rigidity result of Ledrappier-Young \cite{LY}.

In a similar way as in \S\ref{ss.nf} one can find continuous\footnote{Note that here the one-dimensionality of the bundle is crucial for this result. Here we have ignored the issue with orientability for simplicity. See Footnote \ref{footnote7} for explanation.} normal form coordinates for partially hyperbolic set, in dimension 3 (see \cite{KK}):

\begin{prop}\label{prop-normalforms-uniform} 
Let $f: M \to M$ be a smooth diffeomorphism of a 3-manifold $M$ and let $\Lambda$ be a compact $f$-invariant partially hyperbolic set. For every $x\in \Lambda$ there exists $\Phi^i_{x}: T_x\cW^i(x) \to \cW^i(x)$ a smooth diffeomorphism such that: 
\begin{enumerate}
\item $x \mapsto \Phi^i_x$ varies  continuously,
\item $\Phi^i_x(0) = x$ and $D_0\Phi^i_x = \mathrm{id}$,  \item $f(\Phi^i_x(t)) = \Phi^i_{f(x)}(\lambda_{i,x}t)$ for every $t \in \R$,
\item if $y \in \cW^i(x)$ then $(\Phi^i_y)^{-1} \circ \Phi^i_x$ is an affine map. 
\end{enumerate}
\end{prop}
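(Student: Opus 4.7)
The plan is to carry out the Sternberg-style linearization along orbits that yields Proposition~\ref{prop-normalforms}, but to track uniform $C^\infty$ estimates and continuous dependence on $x \in \Lambda$. The key structural fact I want to exploit is that $E^1$ and $E^3$ are one-dimensional: this rules out every nontrivial resonance $\lambda^k = \lambda$ (with $k \geq 2$), so the cohomological equations that appear at each order in the Taylor expansion are solvable with uniform bounds. This is precisely the setting of the Katok--Kalinin normal forms (cf.~\cite[\S 3.1]{KK}) specialized to the uniformly hyperbolic 3-dimensional case.

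I will treat $i = 1$; the case $i=3$ is symmetric under the substitution $f \leftrightarrow f^{-1}$. Since $E^1$ is one-dimensional and $f$ is $C^\infty$, a standard graph-transform argument produces, for each $x \in \Lambda$, a $C^\infty$ parametrization $\psi^1_x : \RR \to \cW^1(x)$ with $\psi^1_x(0) = x$ and $D_0\psi^1_x\cdot 1 = e^1(x)$, depending continuously on $x$ in the $C^r$ topology on bounded intervals for every finite $r$. Writing $\Lambda^1_n(x) := \prod_{j=0}^{n-1} \lambda_{1, f^j(x)}$, I define the finite-stage approximations
\begin{equation*}
\Phi^{1,(n)}_x(t) \;:=\; f^{-n}\!\left(\psi^1_{f^n(x)}\bigl(\Lambda^1_n(x)\, t\bigr)\right).
\end{equation*}
A direct check gives $\Phi^{1,(n)}_x(0) = x$, $D_0 \Phi^{1,(n)}_x = \mathrm{id}$, and the exact finite-stage equivariance $f\circ \Phi^{1,(n)}_x = \Phi^{1,(n-1)}_{f(x)}\circ(\lambda_{1,x}\cdot)$. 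Once convergence is in hand, the limit $\Phi^1_x := \lim_{n\to\infty}\Phi^{1,(n)}_x$ automatically satisfies (ii) and (iii), while (iv) follows from uniqueness of the Sternberg linearization in dimension one: two $C^\infty$ parametrizations conjugating $f$ to the same linear cocycle on a single leaf and both normalized at $0$ can only differ by an affine map.

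The main obstacle is the $C^\infty$-convergence of $\Phi^{1,(n)}_x$ with bounds uniform in $x$. Writing $f$ in $\psi^1$-coordinates as $F^1_y(t) = \lambda_{1,y} t + \sum_{k\geq 2} a_k(y) t^k$ with coefficients continuous on $\Lambda$, I would show that the $k$-th Taylor coefficient at $0$ of the correction $\Phi^{1,(n+1)}_x - \Phi^{1,(n)}_x$ is bounded by $C_k |\Lambda^1_n(x)|^{-(k-1)}$, where $C_k$ depends only on uniform bounds on the $a_j$ with $j\leq k$. Uniform expansion on $E^1$ makes these corrections geometrically summable, so the full Taylor jet at $0$ converges at every order. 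A standard bootstrap using uniform $C^r$ bounds on the family $\{F^1_y\}_{y \in \Lambda}$ then upgrades jet convergence at the origin to $C^r_{\mathrm{loc}}$ convergence on bounded intervals, uniformly in $x$; continuity of $x \mapsto \Phi^1_x$ is inherited from that of $x \mapsto \psi^1_x$ together with the uniform convergence. The most delicate point is controlling the growth of the constants $C_k$ uniformly over $\Lambda$; one handles this by slightly shrinking the domain of convergence as $k$ grows, which is harmless for the $C^\infty_{\mathrm{loc}}$ statement of the proposition.
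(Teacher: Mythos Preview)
The paper does not prove this proposition; it simply cites \cite{KK}. So there is no in-paper proof to compare against, and your outline is in spirit the Katok--Kalinin construction. However, your iteration goes in the wrong direction, and this makes the scheme diverge rather than converge.

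Concretely: you set $\Phi^{1,(n)}_x(t) = f^{-n}\bigl(\psi^1_{f^n(x)}(\Lambda^1_n(x)\,t)\bigr)$ and claim the $k$-th Taylor coefficient of the increment $\Phi^{1,(n+1)}_x - \Phi^{1,(n)}_x$ is $O(|\Lambda^1_n(x)|^{-(k-1)})$. This is false. Already for a single expanding map $g(t)=2t+t^2$ one computes that $\Phi^{(n)}(t)=g^{-n}(2^n t)$ has second Taylor coefficient $-(2^n-1)/2$, and the increment at order two is $-2^{n-1}$, which \emph{grows} geometrically. The reason is transparent: in your formula the nonlinearity of $\psi^1$ (or of $g^{-1}\circ\lambda$) is sampled at the large argument $\Lambda^1_n(x)\,t$, producing a correction of size $\sim(\Lambda^1_n(x)\,t)^2$; the subsequent contraction by $f^{-n}$ contributes only a factor $\sim(\Lambda^1_n(x))^{-1}$, so the net effect on the second jet is $\sim \Lambda^1_n(x)\,t^2$. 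For $E^1$ one must iterate backward,
\[
\Phi^{1,(n)}_x(t)\;=\;f^{n}\!\left(\psi^1_{f^{-n}(x)}\bigl((\Lambda^1_n(f^{-n}(x)))^{-1} t\bigr)\right),
\]
so that the nonlinearity is evaluated at an exponentially small argument; then the $k$-th jet correction is genuinely $O\bigl((\Lambda^1_n(f^{-n}(x)))^{-(k-1)}\bigr)$ and the sums converge with uniform constants. (Your forward scheme would be the correct one for $E^3$.)

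Your argument for (iv) is also not right as stated: $\Phi^1_x$ and $\Phi^1_y$ are normalized at \emph{different} base points and conjugate $f$ to different multipliers $\lambda_{1,x}\neq\lambda_{1,y}$, so ``uniqueness at $0$'' does not apply. The standard argument sets $G=(\Phi^1_y)^{-1}\circ\Phi^1_x$, derives $G_0(t)=\frac{\Lambda^1_n(f^{-n}(y))}{1}\,G_{-n}\bigl(t/\Lambda^1_n(f^{-n}(x))\bigr)^{-1}$-type relations along the \emph{backward} orbit, and uses that $d(f^{-n}(x),f^{-n}(y))\to 0$ together with uniform $C^2$ bounds on the $\Phi^1$'s to force $G''\equiv 0$. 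Once you reverse the iteration direction the rest of your outline (uniform $C^r$ convergence, continuous dependence) goes through as in \cite{KK}.
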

 
 Remark \ref{remark.sign} applies to this proposition too. 
 
\subsection{Quantitative non-integrability} 

Recently, in \cite{Katz} the author proposed a geometric condition on $uu$-states that allows one to apply the scheme introduced in \cite{EM,EL}. Let us recall the following crucial definition in \cite{Katz}  (although this notion is only defined for $uu$-states in \cite{Katz},  it can be stated for partially hyperbolic measure considered here): 

\begin{defi}\label{def.QNI} 
A partially hyperbolic measure $\mu$ has the \emph{quantitative non-integrability} property (QNI) for $f$  if: 
\begin{itemize}
\item there is $\alpha>0$ and, 
\item for every $\eps>0$ a subset $\cP \subset M$ of measure $\mu(\cP)>1-\eps$ and,
\item for every $\nu > 0$ constants $C=C(\nu,\eps) > 0$ and $k_0 = k_0(\nu,\eps) > 0$ such that:
\end{itemize}
\qquad  if an integer $k \geq k_0$ and  $x \in \cP$ satisfy $f^k(x), f^{-k}(x) \in \cP$ then
\begin{itemize}
\item
there is a subset $S_x \subset W^{3,k}_1(x)$\footnote{Recall notation \eqref{eq:notationscale}.}  with $\mu_x^3(S_x) > (1-\nu) \mu_x^3(W^{3,k}_1(x))$ satisfying the following property: 

\item  for every $y \in S_x$ there exists $U_y \subset  W^{1,k}_1(x)$ with $\mu_x^1(U_y) > (1-\nu) \mu_x^1(W^{1,k}_1(x))$ so that if $z \in U_y$ then 
\begin{equation}\label{eq:QNI}
d(W^1_1(y), W^3_1(z)) > C  e^{-\alpha k}.   
\end{equation}
\end{itemize}
\end{defi}

We do not assume that the measure is a $uu$-state because this allows us to define the notion in a more general setting; and even though our main application is for $uu$-states we wish to make the arguments symmetric:  

\begin{prop}\label{prop.symmetry}
A partially hyperbolic measure $\mu$ has QNI for $f$ if and only if it has QNI for $f^{-1}$.  
\end{prop}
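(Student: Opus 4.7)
The plan is to unfold Definition \ref{def.QNI} for $f^{-1}$, identify all objects with those for $f$, and then handle the asymmetry in the quantifiers by a Fubini/Markov exchange. Under $f \leftrightarrow f^{-1}$ the Lyapunov exponents, Oseledets bundles, Pesin laminations and conditional measures satisfy $\chi_i^{f^{-1}} = -\chi_{4-i}^f$, $E^{i,f^{-1}} = E^{4-i,f}$, $\cW^{i,f^{-1}} = \cW^{4-i,f}$ and $\mu^{i,f^{-1}}_x = \mu^{4-i,f}_x$. A routine check against the defining relations of Proposition \ref{prop-normalforms} shows one may take $\Phi^{i,f^{-1}}_x := \Phi^{4-i,f}_x$, so the dynamical scales in \eqref{eq:notationscale} also swap exactly: $W^{i,f^{-1},k}_\rho(x) = W^{4-i,f,k}_\rho(x)$. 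Using symmetry of distance between sets, Definition \ref{def.QNI} applied to $f^{-1}$ is equivalent, after relabelling $y \leftrightarrow z$, to the following statement expressed in $f$-objects: there is a large $\mu^1_x$-subset $S^\ast_x$ of $W^{1,k}_1(x)$ such that, for each $z \in S^\ast_x$, a large $\mu^3_x$-subset of $W^{3,k}_1(x)$ consists of points $y$ satisfying \eqref{eq:QNI}. It thus suffices to deduce this swapped statement from QNI for $f$.

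Suppose $\mu$ has QNI for $f$ with exponent $\alpha$, and fix $\eps, \nu > 0$. I would apply QNI for $f$ with parameter $\nu'' := \nu^2/2$ to obtain $\cP$, $C = C(\nu'',\eps)$, $k_0 = k_0(\nu'',\eps)$ and, for each admissible pair $(x,k)$, the set $S_x \subset W^{3,k}_1(x)$ together with fibres $U_y \subset W^{1,k}_1(x)$ for $y \in S_x$. Let $A \subset W^{3,k}_1(x) \times W^{1,k}_1(x)$ be the set of pairs $(y,z)$ satisfying \eqref{eq:QNI}. Splitting the measure of $A^c$ according to whether $y \in S_x$ and using $U_y \subset A_y$ yields
\[
(\mu^3_x \otimes \mu^1_x)(A^c) \;\leq\; 2\nu'' \, \mu^3_x(W^{3,k}_1(x)) \, \mu^1_x(W^{1,k}_1(x)).
\]
Markov's inequality applied to the marginal $z \mapsto \mu^3_x(\{y : (y,z) \notin A\})$ on $W^{1,k}_1(x)$ then produces a set $S^\ast_x \subset W^{1,k}_1(x)$ with
\[
\mu^1_x(S^\ast_x) \;\geq\; (1-\sqrt{2\nu''})\,\mu^1_x(W^{1,k}_1(x)) \;=\; (1-\nu)\,\mu^1_x(W^{1,k}_1(x))
\]
such that, for each $z \in S^\ast_x$, the slice $\{y : (y,z) \in A\}$ has $\mu^3_x$-measure at least $(1-\nu)\mu^3_x(W^{3,k}_1(x))$. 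This is exactly the swapped statement with the same $\alpha$ and $C$, the same $\cP$, and threshold $k_0^{f^{-1}}(\nu,\eps) := k_0^f(\nu^2/2,\eps)$. The reverse implication is obtained by applying the same argument with $f$ and $f^{-1}$ interchanged.

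The main (and essentially only) technical point is the bookkeeping in the first paragraph: one must verify that the normal-form parametrisations of the Pesin leaves for $f$ and for $f^{-1}$ identify in the clean way described, so that all the data swap exactly rather than only up to comparable constants. This is a mechanical check against Proposition \ref{prop-normalforms}; apart from this, the remainder is a standard Fubini/Markov exchange and I expect no genuine obstacle.
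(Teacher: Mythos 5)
Your argument is correct and is essentially the paper's proof: both reduce QNI for $f^{-1}$ to the ``swapped'' statement in the $f$-objects and then run a Fubini/Markov exchange on the product measure $\mu^3_x\otimes\mu^1_x$ over $W^{3,k}_1(x)\times W^{1,k}_1(x)$. The only differences are cosmetic — you pre-adjust the input parameter ($\nu''=\nu^2/2$) to land exactly on $1-\nu$, while the paper accepts an output threshold $\rho(\nu)=2\sqrt{\nu}\to 0$, and you make explicit the identification of the $f^{-1}$ normal forms and scales that the paper leaves implicit.
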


The proof is a simple Fubini argument that we postpone to Appendix \ref{app.symmetry}. In Appendix \ref{app.symmetry} we also discuss this definition as well as other formulations and compare them with the ones in the work of Katz \cite{Katz}.

\begin{remark}
The main difference between our definition and that of \cite{Katz} is the notion of local stable and local unstable manifolds. For notational simplicity (helped by the fact that we are working with one dimensional stable and unstable strong manifolds) we consider subsets of $W^{i, k}_1$, while in \cite{Katz} the local stable and unstable manifolds are considered with respect to a measurable partition of the stable/unstable measurable (Pesin) lamination. The consideration in \cite{Katz} is more natural and it extends better to higher dimensions. We could have chosen to use this formalism, but some arguments where we reduce to cocycles defined on fixed intervals would be more cumbersome to write. We explain the equivalence of the definitions in more detail in Appendix \ref{app.symmetry}. 
\end{remark}

\subsection{Cocycle normal forms and good charts} 

We will consider good coordinate charts that incorporate the normal coordinates as in \cite[\S 4]{TZ}. 

\begin{defi}[$0$-good unstable charts]\label{def.normalcharts}
Let $\mu$ be a partially hyperbolic measure. A measurable collection of smooth diffeomorphisms  $\{\imath_x:  (- \| Df \|,  \| Df \| )^3  \to M\}_{x \in M}$  is a family of \emph{unstable charts}  if it verifies that for $\mu$-almost every $x \in M$ we have that $\imath_x(t_1,0,0)= \Phi^1_x(t_1)$, $\imath_x(0,0,t_3)= \Phi^3_x(t_3)$ for $t_1,t_3 \in (-1,1)$, $\partial_2 \imath_x(0,0,0)$ is the unit vector $e^2(x)$ in $E^2(x)$. Moreover, if we write $F_x := \imath_{f(x)}^{-1} \circ f \circ \imath_x = (F_{x,1},F_{x,2}, F_{x,3})$ then the map $F_x:  ( - \| Df \|,  \| Df \| )^3   \to \RR^3$ verifies that: 
\begin{enumerate}
\item $\partial_2 F_{x,2}(t,0,0) =  \lambda_{2, x}$ for all $t \in (-1,1)$,
\item  $\partial_3 F_{x,3}(t,0,0) =  \lambda_{3, x}$ for all $t \in (-1,1)$,
\item $\partial_2 F_{x,3}(t,0,0) = 0$ for all $t \in (-1,1)$.  
\end{enumerate}

We say that a family of  unstable charts is  {\em $0$-good}
if for some constant $d$ (independent of $x$) we have that 
\begin{equation}\label{eq:goodcoordinate0}
\partial_3 F_{x,2}(t,0,0)  \text{ is a polynomial  of degree} \leq d \text{ in } t \in (-1,1).
\end{equation}

\end{defi}

Note that in \cite[\S 4]{TZ} similar charts are constructed for Anosov flows. It is not hard to adapt the argument to our case.  We will prove the following in \S\ref{s.cocyclenormalforms}.

\begin{prop}\label{p.existnormalcoord}
For every partially hyperbolic measure $\mu$ there is a family of $0$-good unstable charts.
\end{prop}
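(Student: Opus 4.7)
The plan is to adapt the construction of charts from \cite[\S 4]{TZ}, where analogous charts are built for Anosov flows, to the present discrete and measurable setting. The required modifications consist in replacing the time-one map of the flow by the single step $f$, and continuity by measurability (legitimate since all objects in Definition \ref{def.normalcharts} are allowed to depend measurably on $x$). Throughout, size control of the chart domain comes from working in the Lyapunov norm $\|\cdot\|$.

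Step 1 (initial chart). For $\mu$-a.e.\ $x$, pick a measurable smooth unit vector field $v_x$ defined on a small Pesin neighborhood of $x$ with $v_x(x)=e^2(x)$, and set
\[
\jmath_x(t_1,t_2,t_3) := \Phi^3_{y_x(t_1,t_2)}\bigl(g_x(t_1,t_2)\,t_3\bigr),
\]
where $y_x(t_1,t_2)$ is the time-$t_2$ orbit of $v_x$ starting at $\Phi^1_x(t_1)$, and $g_x$ is a measurable positive scalar to be determined. Irrespective of $g_x$, this gives $\jmath_x(t_1,0,0)=\Phi^1_x(t_1)$ and $\partial_2\jmath_x(0,0,0)=e^2(x)$; imposing $g_x(0,0)=1$ further yields $\jmath_x(0,0,t_3)=\Phi^3_x(t_3)$. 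Since $t_3\mapsto\jmath_x(t_1,0,t_3)$ lies in $\cW^3(\Phi^1_x(t_1))$, condition (iii) of Definition \ref{def.normalcharts} is automatic.

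Step 2 (conditions (i)--(ii)). Using the conjugation relation in Proposition \ref{prop-normalforms}(iii), condition (ii) translates into the measurable cohomological equation
\[
g_{f(x)}(\lambda_{1,x}t_1,0) = \frac{\lambda_{3,\Phi^1_x(t_1)}}{\lambda_{3,x}}\,g_x(t_1,0),\qquad g_x(0,0)=1,
\]
solved by a telescoping product backwards along the orbit that converges thanks to Lyapunov regularity of $\lambda_{3,\cdot}$ along $\cW^1$. Condition (i) is handled by an analogous adjustment of the $t_2$-parameter driven by the $E^2$-cocycle along $\cW^1$. The resulting chart, call it $\imath^{(0)}_x$, satisfies (i)--(iii) of Definition \ref{def.normalcharts}.

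Step 3 (polynomial bound \eqref{eq:goodcoordinate0}). The residual gauge freedom that preserves (i)--(iii) and the two axis-fixing identities consists of reparameterizations $R_x(t_1,t_2,t_3)=(t_1,t_2+\varphi_x(t_1)t_3,t_3)$. A direct computation shows that replacing $\imath^{(0)}_x$ by $\imath^{(0)}_x\circ R_x$ changes $P_x(t):=\partial_3 F_{x,2}^{(0)}(t,0,0)$ into
\[
P'_x(t) = P_x(t) + \lambda_{2,x}\varphi_x(t) - \lambda_{3,x}\varphi_{f(x)}(\lambda_{1,x}t),
\]
a coboundary in a scalar cocycle over $(\cW^1,f)$ whose degree-$k$ Taylor component carries fiberwise Lyapunov exponent $\chi_3+k\chi_1-\chi_2$. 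This operator is invertible on Taylor components of degree $k>d$, where $d$ is the largest integer with $d(\chi_1-\epsilon)\leq\chi_2-\chi_3+\epsilon$, so one can measurably choose $\varphi_x$ killing all Taylor components of $P_x$ of degree exceeding $d$, leaving $P'_x$ a polynomial of degree $\leq d$ as required. The main obstacle is executing this last step: the solvability of the polynomial-normal-form reduction and the measurability of the chosen $\varphi_x$. We will handle it via the non-stationary normal form machinery of Katok--Spatzier and Kalinin--Sadovskaya applied to the above scalar cocycle, in direct analogy with \cite[\S 4]{TZ}.
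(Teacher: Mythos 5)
There is a genuine gap, and it sits in Step 1. Definition \ref{def.normalcharts} requires each $\imath_x$ to be a \emph{smooth} diffeomorphism of a cube into $M$, but the map $\jmath_x(t_1,t_2,t_3)=\Phi^3_{y_x(t_1,t_2)}\bigl(g_x(t_1,t_2)t_3\bigr)$ is not one: the Pesin stable manifold $\cW^3(y)$ and the normal form $\Phi^3_y$ exist only for $\mu$-a.e.\ $y$ and depend on $y$ only measurably (H\"older at best on Pesin blocks), while the points $y_x(t_1,t_2)$ are in general not $\mu$-generic (cf.\ Remark \ref{rem.smoothunstables}); so $\jmath_x$ is neither everywhere defined nor smooth in $(t_1,t_2)$. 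More fundamentally, a smooth chart whose $t_3$-coordinate lines over the whole unstable axis are stable manifolds would make the stable Hopf brush $\bigcup_t W^3_{loc}(\Phi^1_x(t))$ a smooth surface (cf.\ Remark \ref{rem-HB}); this is exactly what fails generically and what the templates in \eqref{eq:highordertemplate} are introduced to quantify, so no choice of $v_x$, $g_x$ can realize your Step 1. Note also the internal inconsistency: if Step 1 did work, then $F_{x,2}(t_1,0,t_3)\equiv 0$, hence $\partial_3F_{x,2}(t,0,0)\equiv 0$, and your Step 3 would be vacuous. The same defect reappears in Step 2: the data $t_1\mapsto\lambda_{3,\Phi^1_x(t_1)}$ (and the ``$E^2$-cocycle along $\cW^1$'') are only measurable along the unstable leaf, since the Oseledets line fields and the Lyapunov norm are not smooth along $\cW^1$; solving your cohomological equation with such data destroys smoothness of the chart. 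Finally, the claim that condition (iii) is automatic is false even granting the construction: (iii) reads $\partial_2F_{x,3}(t,0,0)=0$, i.e.\ invariance of the plane field $D\imath_x\langle\partial_1,\partial_2\rangle$ along the unstable axis; it is unrelated to the $t_3$-lines and requires the $\partial_2$-direction along $W^1_1(x)$ to be tangent to $E^1\oplus E^2$, whose smoothness along unstable leaves is precisely Proposition \ref{p.oseledetssmooth}, an ingredient absent from your argument.

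The paper's route avoids all of this: one takes Pesin charts adjusted (via Proposition \ref{prop-normalforms}) so that the two axes are $\Phi^1_x$ and $\Phi^3_x$, and then works not with pointwise Oseledets data along the leaf but with the derivative cocycle induced on the quotient bundle $TM/E^1$ restricted to unstable leaves (Example \ref{ex.derivative}), which \emph{is} smooth along $\cW^1$. Proposition \ref{p.CNF} then triangularizes it (using Proposition \ref{p.oseledetssmooth}), makes the diagonal entries constant, and reduces the off-diagonal entry to a polynomial; the resulting smooth change of coordinates of the form $(x_1,x_2,x_3)\mapsto(x_1,x_2+a_2(x_1)x_2+a_3(x_1)x_3,\,x_3+b_2(x_1)x_2+b_3(x_1)x_3)$ yields the $0$-good charts. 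Your Step 3 is essentially the last cohomological step of this scheme (your transformation law and degree bound are correct, granted (i)--(iii)), but it cannot be reached along the path of Steps 1 and 2; you need the quotient-cocycle formulation to obtain (i)--(iii) in the first place.
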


 Given a family of unstable charts, for $\mu$-a.e. $x$, the map $F_x$ satisfies that for every $t_1 \in (-1,1)$ we have that $F_x(t_1,0,0)= (\lambda_{1,x}t_1,0,0)$ and 
\begin{equation}\label{eq:derivativenormalchart}
\begin{pmatrix}
 \frac{\partial F_{x, 2}}{\partial x_2}  & \frac{\partial F_{x, 2}}{\partial x_3}  \\[6pt] \frac{\partial F_{x, 3}}{\partial x_2} & \frac{\partial F_{x, 3}}{\partial x_3}  \end{pmatrix} (t_1, 0, 0) \clb = \begin{pmatrix}  \lambda_{2, x} & r_x(t_1) \\ 0 &  \lambda_{3, x}\end{pmatrix}.
\end{equation} 
 We may think of \eqref{eq:derivativenormalchart} as representing a $2$-dimensional \emph{linear cocycle} over $f$ in a family of unstable charts. 
The construction of this linear cocycle will be detailed in \S\ref{s.cocyclenormalforms}.

In equation \eqref{eq:derivativenormalchart} the function $r_x$ is a smooth function. It follows from the general theory of {\it cocycle normal forms}, developed in \cite{BEFRH},  that one can change coordinates in order to make $r_x$ a polynomial of degree depending only on the values of the functions $\lambda_{2,x}$ and $\lambda_{3,x}$ (see Proposition~\ref{p.CNF} below). This is how Proposition \ref{p.existnormalcoord} is proven.

 Note that the strong stable bundle along the strong unstable manifold can be modeled as a section of  this cocycle (cf. \S\ref{ss.CNF}), what will be referred to as a \emph{template} (see Definition \ref{def.lgoodcharts} below).  Since the cocycle is 2 dimensional and can be taken smoothly into an upper triangular form (cf. equation \eqref{eq:derivativenormalchart}) we can think of this template, under the normal form coordinates, as a function on  the strong unstable manifold. Therefore this reduction allows one to distinguish between the case where such template is a polynomial or not. This is a reformulation of one of the main observations from \cite{Tsujii,TZ} (see \cite[Remark 1.2]{Tsujii}). 
 
We will show that whenever the template is not a polynomial, then the QNI condition is verified. Else, one can continue doing this for higher order $\ell+1$-good charts of the stable manifolds along a strong unstable manifold, see Theorem \ref{teo.maintechnical1}.  Figure \ref{fig-flowchart1} illustrates schematically the way Theorem \ref{teo.maintechnical1} works.  

\begin{figure}[ht]
\begin{center}
\includegraphics[scale=0.84]{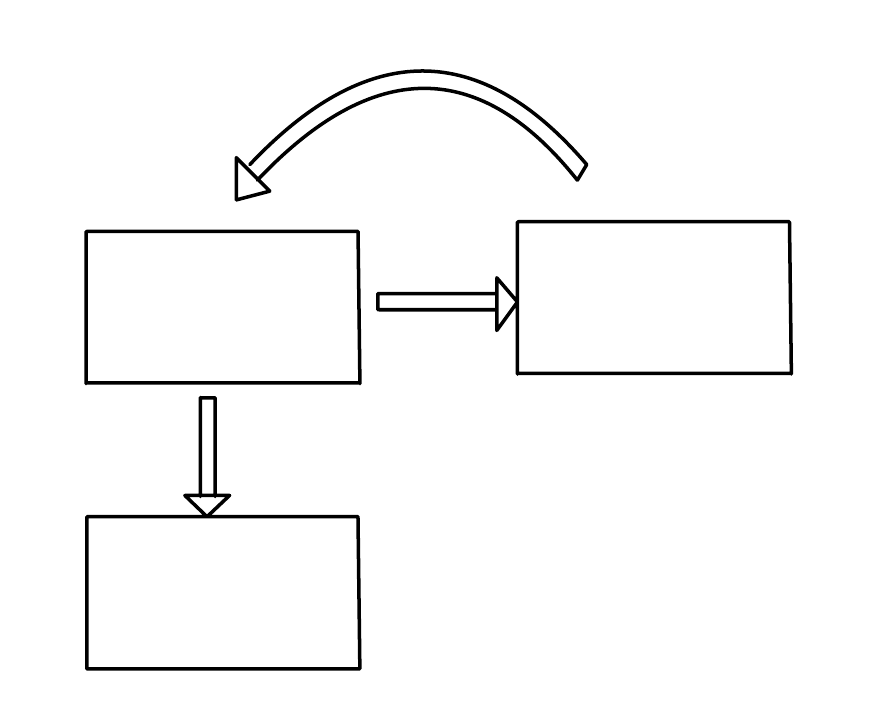}
\begin{picture}(0,0)
\put(-189,104){$\exists \ell-$ \text{good}} 
\put(-179,90){\text{charts}} 
\put(-92,104){$\exists (\ell+1)$-\text{good}}
\put(-75,90){\text{charts}}
\put(-135,140){{\tiny \text{$\ell \mapsto \ell+1$}}}
\put(-125,92){{\tiny \text{smooth}}}
\put(-128,86){{\tiny \text{template}}}
\put(-164,72){{\tiny \text{non-smooth}}}
\put(-160,66){{\tiny \text{template}}}
\put(-177,29){\text{QNI}}

\end{picture}
\end{center}
\vspace{-0.5cm}
\caption{{\small Schematic illustration of the way QNI is obtained if to a certain order there are no $\ell$-good (unstable) charts .}}\label{fig-flowchart1}
\end{figure}

We introduce the following notion.

\begin{defi}[$\ell$-good unstable charts]\label{def.lgoodcharts}
Let $\{\imath_x\}_{x \in M}$ be a family of unstable charts for a partially hyperbolic measure $\mu$. Let $\ell$ be a positive integer.  We say that the family is $\ell$-\emph{good} if for $\mu$-almost every $x\in M$ there is a unique  \footnote{It is unique almost everywhere and up to zero  $\hat \mu_x^1$  measure.} collection of measurable  functions $\cT^\ell_{x}: (-1,1) \to \RR$, $a_x, b_x: (-1,1)^2 \to \RR$
such that for  $\hat \mu_x^1$-almost every $t \in (-1,1)$  we have that: 
\begin{equation}\label{eq:highordertemplate} 
\imath_x^{-1}(W^3_{ loc }(\Phi^1_x(t))) = \{ (t  + a_x(t,s)s, \cT^\ell_{x}(t)  s^{\ell + 1}  + b_x(t,s)  s^{\ell+2},s )  :  s \in (-1,1) \}
\end{equation} 
and for some constant $d:=d(\ell,f, \mu)$ (independent of $x$) we have that  
\begin{equation}\label{eq:goodcoordinate}
 \partial_3^{\ell+1} F_{x,2}(t,0,0)  \text{ is a polynomial of degree } \leq d \text{ in } t \in (-1,1).
\end{equation}
In this case, we call $\cT^\ell_x$ in equation \eqref{eq:highordertemplate} a  \emph{stable template of} $(\ell+1)$-\emph{jets} at $x$.

One can define in a similar way $\ell$-good stable charts for $\mu$. 
\end{defi}

\begin{remark}
We point out again the fact that the relevant conditions about $\ell$-good unstable charts at a point $x$ all concern information that can be read in arbitrarily small neighborhoods of $W^1_{1}(x)$ and therefore to analyze the existence of such charts it is enough to understand the associated linear cocycles along the unstable manifold. This will be expanded in \S
\ref{s.cocyclenormalforms}.  In particular, in Proposition \ref{p.normalform} (see in particular Lemma \ref{lem.derivativezero}) it is shown that under the assumptions of the definition, the lower order derivatives vanish so that condition \eqref{eq:goodcoordinate} makes sense. 
\end{remark}

We note that since the leaves of the invariant laminations are smooth, the functions $a_x(t, s)$ and $b_x(t,s)$ are smooth in $s$ for $\hat \mu^1_x$-almost every $t \in (-1,1)$. In particular, there is a measurable function $c_x: (-1,1) \to \R_+$ such that  for $\hat \mu^1_x$-almost every $t \in (-1,1)$ and for any $|s|< 1$ we have
\begin{equation}\label{eq:boundab}
|a_x(t,s)|, |b_x(t,s)| < c_x(t). 
\end{equation}
Moreover, we have the following, which will be used in Section \ref{s.compatible}.
\begin{lemma} \label{lem uniformboundab}
For every $\eps > 0$, there exist a constant $C > 1$ and a subset $\cP \subset M$ with $\mu(\cP) > 1 - \eps$, and for every $x \in \cP$, 
for every $\nu > 0$, there exists an integer $m_0 > 0$ such that for every integer $m > m_0$, the set of $t \in (-1, 1)$ satisfying $|c_x(t)| < C$  and $\Phi^1_x(t) \in W^{1, m}_1(x)$  has $\hat \mu^1_x$-measure at least $(1-  \nu ) \mu^1_x(W^{1, m}_1(x))$. The same statement holds if we consider $\mu^3_x$, $W^{3, m}_1(x)$  in place of $\mu^1_x$, $W^{1, m}_1(x)$.
\end{lemma}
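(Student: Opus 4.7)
The plan is to combine Lusin's theorem (giving uniform $C^k$ control of the normal-form charts and stable leaves on a large-measure set) with the Lebesgue density theorem along $\cW^1$-leaves (to argue that, in shrinking normal-form intervals around $x$, most points lie in the good set). First I fix a canonical measurable choice
\[
c_x(t) := 1 + \sup_{|s|<1}\bigl(|a_x(t,s)|+|b_x(t,s)|\bigr),
\]
finite for $\hat\mu^1_x$-a.e.\ $t$ by smoothness of stable manifolds and jointly measurable in $(x,t)$ through the measurable dependence of $\imath_x$, $\Phi^1_x$, and of $\cW^3$.

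The key transfer step is to pass from a uniform bound at a single base point $y$ to a uniform bound on $c_x(t)$ whenever $\Phi^1_x(t) = y$. For this, I apply Lusin's theorem to the measurable map $x \mapsto (\imath_x, \imath_x^{-1}, W^3_{loc}(x))$ valued in a Polish space of $C^k$-charts and $C^k$-submanifolds: for every $\eps>0$ this produces a compact $\cP_0 \subset M$ with $\mu(\cP_0) > 1 - \eps/2$ on which these $C^k$-norms are bounded by some uniform $C_1$. Writing the stable leaf through $y = \Phi^1_x(t)$ in the chart $\imath_x$ as the image under $\imath_x^{-1} \circ \imath_y$ of the analogue of \eqref{eq:highordertemplate} based at $y$ (so with parameter $0$ in place of $t$), and Taylor-expanding in $s$, one obtains a uniform constant $C = C(C_1)$ such that $|a_x(t,s)|, |b_x(t,s)| \leq C$ for all $|s|<1$ whenever $x \in \cP_0$ and $\Phi^1_x(t) \in \cP_0$. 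This transfer step is the main technical obstacle: one has to show that $C^k$-bounds on the measurable objects $\imath_x$ and $W^3_{loc}(y)$ on a common good set propagate to bounds on the Taylor coefficients describing one of them in the chart of the other, uniformly in the stable parameter $s$.

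Finally, I apply the leafwise Lebesgue differentiation theorem. By Rokhlin disintegration of $\mu$ along $\cW^1$ and the standard differentiation theorem for the Radon measure $\mu^1_x$ on the one-dimensional leaf $\cW^1(x)$, a Fubini argument shows that for $\mu$-a.e.\ $x$ the point $x$ is a $\mu^1_x$-Lebesgue density point of $\cP_0 \cap \cW^1(x)$. Let $\cP$ be the intersection of $\cP_0$ with the set of such density points; still $\mu(\cP) > 1 - \eps$. Using property (iii) of Proposition \ref{prop-normalforms}, we have $W^{1,m}_1(x) = \Phi^1_x((-r_m, r_m))$ with $r_m := \prod_{j=0}^{m-1}|\lambda_{1,f^j(x)}|^{-1} \to 0$, so these are shrinking symmetric intervals in the normal-form parameter and form a valid differentiation basis at $0$. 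Hence for $x \in \cP$ and any $\nu>0$, density produces $m_0 = m_0(x,\nu)$ with
\[
\frac{\mu^1_x(W^{1,m}_1(x) \cap \cP_0)}{\mu^1_x(W^{1,m}_1(x))} > 1 - \nu \qquad \text{for all } m > m_0.
\]
Combined with the transfer step, this shows that the set of $t \in (-1,1)$ with $\Phi^1_x(t) \in W^{1,m}_1(x) \cap \cP_0$ (hence $c_x(t) \leq C$) has $\hat\mu^1_x$-measure at least $(1-\nu)\mu^1_x(W^{1,m}_1(x))$. The statement for $\mu^3_x$ and $W^{3,m}_1(x)$ follows by the identical argument after exchanging the roles of stable and unstable (working with $\ell$-good stable charts and iterating $f$ in place of $f^{-1}$).
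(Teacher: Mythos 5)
Your argument is correct and follows essentially the same route as the paper: Lusin's theorem produces a compact set on which the charts and local invariant leaves vary continuously, hence a uniform bound on $c_x(t)$ whenever both $x$ and $\Phi^1_x(t)$ lie in that set, and a leafwise density argument then shows that for large $m$ most of $W^{1,m}_1(x)$ (with respect to $\mu^1_x$) lies in the good set. The only cosmetic difference is that the paper delegates the density step to Proposition \ref{prop.measurable}, whereas you prove it directly via Rokhlin disintegration and the Lebesgue differentiation theorem along $\cW^1$-leaves, which yields the (sufficient) $x$-dependent $m_0$ rather than the uniform one of the proposition.
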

\begin{proof}
Fix an arbitrary $\eps > 0$. By Lusin's lemma, we can find a compact subset $\cQ \subset M$ such that $\mu(\cQ) > 1 - \eps^2$, and $W^1_{loc}(x)$, $W^3_{loc}(x)$ as well as the chart $\imath_x$ depend continuously on $x$ restricted to $\cQ$. Then by definition, for any $x \in \cQ$ and $t \in (-1,1)$ such that $\Phi^1_x(t) \in \cQ$, we see that $|c_x(t)|$ can be chosen uniformly bounded from above. Then the lemma follows immediately from Proposition \ref{prop.measurable}.
\end{proof}

\begin{remark}
Note that the stable templates depend on the charts.  In \cite{TZ} the stable templates at $x$ are taken to be the family of all possible $\cT^0_x$ as we change the underlying $0$-good unstable charts. We emphasize that we usually expect to have non-smooth $\cT^\ell_x$. Indeed, one of the main points here is that if $\cT^\ell_x$ is smooth in some regions, then one can produce a higher order approximation. 
\end{remark}

\begin{remark}\label{rem-HB} 
 The existence of $\ell$-good charts implies that the {\em stable Hopf brush} at a point $x$, by which we mean $\cH_x^s = \bigcup_{t \in (-1,1)} W^3_1(\Phi^1_x(t))$, is more regular than expected: it can be approximated to order $\ell$ by the stable templates of $\ell$-jet.           
One has a similar approximation for the \emph{unstable Hopf brush} defined by $\cH_x^u= \bigcup_{t \in (-1,1)} W^1_1(\Phi^3_x(t))$.  Note that the regularity of $\cH_x^s$ and $\cH_x^u$ may be pretty bad, but the templates used to approximate these sets to high order may have good regularity.
\end{remark}

One useful consequence of \eqref{eq:goodcoordinate} is the following simple computation: 

\begin{remark}\label{rem.dynamicstemplate}
Note that the condition \eqref{eq:highordertemplate} together with the properties of unstable normal coordinate charts imply that: 
\begin{equation}\label{eq:dynamicstemplate} 
\frac{\lambda_{2,x}}{\lambda_{3,x}^{\ell+1}} \cT^\ell_x(t) + \frac{1}{\lambda_{3,x}^{\ell+1}} \frac{ \partial_3^{\ell+1} F_{x,2}(t,0,0)}{(\ell+1) !} = \cT^\ell_{f(x)}(\lambda_{1,x} t). 
\end{equation}
If \eqref{eq:goodcoordinate} is verified, we know that $\frac{1}{(\ell+1) ! \lambda_{3,x}^{\ell+1}} \partial_3^{\ell+1} F_{x,2}(t,0,0)$ is a polynomial in $t$ which depends only on the coordinates we have chosen. Consequently, the property that $\cT^\ell_x$ is a polynomial of degree $\leq d$ is independent of the choice of the $\ell$-good chart. See Proposition \ref{prop.cocycle} for more details. 
\end{remark}

 Before we state the main inductive step for proving Theorem \ref{thm: main}, we recall the notion about Whitney smoothness.   
 
For a function $\varphi: (-1,1) \to \RR$ and $K \subset (-1,1)$ a compact set, we say that $\varphi$ is {\it $C^r$ in the sense of Whitney  on $K$ }  if 
 there exists a $C^r$ function $\tilde\varphi$ on an open neighborhood of $K$ such that $\tilde\varphi|_{K} = \varphi$.
Another equivalent condition (cf.  Whitney's extension theorem, see \cite{W}) is given by the existence of continuous functions $a_i : K \to \R$, $1 \leq i \leq r$, satisfying a family of compatibility conditions (see \cite{W}). In particular,  for any $t, s \in K$ we have
\begin{equation}\label{eq:whitney} 
\left| \varphi(s)- (\varphi(t) + a_1(t)(s-t) + \ldots + a_r(t)(s-t)^r ) \right| = o( |s-t|^{r} ). 
\end{equation}
We say that $\varphi$ is \emph{smooth in the sense of Whitney on $K$} if it is $C^r$  in the sense of Whitney  on $K$  for every integer $r>0$.

We will prove in \S\ref{s.dichotomy} the following proposition.
\begin{prop}[Dichotomy]\label{p.dichotomy}
Let $\mu$ be a partially hyperbolic measure with $\ell$-good   unstable charts. Then either there are $(\ell+1)$-good unstable charts, or, for almost every $x \in M$ we have that $\cT^\ell_x$ as defined in \eqref{eq:highordertemplate} is not smooth in the sense of Whitney restricted to any subset of $\cW^1(x)$ with positive $\mu^1_x$-measure (in particular, it is not a polynomial of degree $\leq d$).
\end{prop}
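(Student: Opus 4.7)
\medskip
\noindent\textbf{Proof plan.}
The plan is to prove the contrapositive. Assume that the set
$$\Omega := \{ x \in M : \cT^\ell_x \text{ is Whitney-smooth on some subset of } \cW^1(x) \text{ of positive } \mu^1_x\text{-measure}\}$$
has positive $\mu$-measure, and construct from this $(\ell+1)$-good unstable charts. The first step is to argue that $\mu(\Omega)=1$. Using the cocycle relation \eqref{eq:dynamicstemplate} together with hypothesis \eqref{eq:goodcoordinate} (which makes $\partial_3^{\ell+1}F_{x,2}(t,0,0)$ a polynomial in $t$), Whitney-smoothness of $\cT^\ell_x$ on a set $K\subset W^1_{loc}(x)$ of positive $\mu_x^1$-measure is equivalent to Whitney-smoothness of $\cT^\ell_{f(x)}$ on $\Phi^1_{f(x)}(\lambda_{1,x}(\Phi^1_x)^{-1}(K))$, a set of positive $\mu^1_{f(x)}$-measure by the $f$-quasi-invariance of the conditional measures. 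Hence $\Omega$ is $f$-invariant, and ergodicity gives $\mu(\Omega)=1$.

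For $x\in \Omega$, Whitney's extension theorem produces a $C^\infty$ function $\widetilde{\cT}^\ell_x$ on an open neighbourhood of a compact positive-measure subset of $W^1_{loc}(x)$ that agrees with $\cT^\ell_x$ on this subset. The key point is that one may choose such an extension \emph{canonically} by using the cocycle equation \eqref{eq:dynamicstemplate} to propagate the extension along the $f$-orbit: after restricting to a Lusin set on which charts, normal forms and the polynomial in \eqref{eq:goodcoordinate} vary continuously, the iterated cocycle equation forces any two extensions to coincide on the orbit of the smooth set; positivity of $\mu^1_x$ and the absence of atoms then spread the definition to a full Whitney-smooth extension $\widetilde{\cT}^\ell_x$ on $W^1_{loc}(x)$ for $\mu$-a.e.\ $x$.

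Next, modify the unstable chart by setting $\widetilde\imath_x := \imath_x\circ \Psi_x$, where
$$\Psi_x(t_1,t_2,t_3) = \bigl(t_1 + \alpha_x(t_1,t_3), \, t_2 - \widetilde{\cT}^\ell_x(t_1)\,t_3^{\ell+1} + \beta_x(t_1,t_3), \, t_3\bigr),$$
and $\alpha_x,\beta_x$ are smooth higher-order corrections chosen so that $\widetilde\imath_x$ still satisfies the axioms of Definition \ref{def.normalcharts} (in particular, preserving the images of $\Phi^1_x,\Phi^3_x$ on the coordinate axes and the orthogonality condition at the origin). In the new chart, the expansion \eqref{eq:highordertemplate} for $\widetilde\imath_x^{-1}(W^3_{loc}(\Phi^1_x(t)))$ has vanishing coefficient at order $s^{\ell+1}$ in the second coordinate, so the leading nontrivial term defines a new template $\cT^{\ell+1}_x$ of $(\ell+2)$-jets at $x$.

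The remaining point is to check that these new charts satisfy the polynomial condition \eqref{eq:goodcoordinate} at the next order, i.e.\ that $\partial_3^{\ell+2}\widetilde F_{x,2}(t,0,0)$ is a polynomial in $t$ of bounded degree. Writing out $\widetilde F_x = \widetilde\imath_{f(x)}^{-1}\circ f\circ \widetilde\imath_x$ and differentiating, one sees that $\partial_3^{\ell+2}\widetilde F_{x,2}(t,0,0)$ differs from $\partial_3^{\ell+2}F_{x,2}(t,0,0)$ by an explicit expression involving the polynomial $\partial_3^{\ell+1}F_{x,2}(t,0,0)$ and derivatives of $\widetilde{\cT}^\ell_x$ conjugated by the dynamics. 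Applying the cocycle normal forms machinery (Proposition \ref{p.CNF}) to this $2$-dimensional triangular cocycle, one can perform a further smooth change of variables in the $t_2$-direction — without disturbing the $(\ell+1)$-good structure just obtained — that reduces $\partial_3^{\ell+2}\widetilde F_{x,2}(t,0,0)$ to a polynomial in $t$ of degree depending only on the ratios of $\lambda_{1,x},\lambda_{2,x},\lambda_{3,x}$ through the non-resonance bound. The main technical obstacle is the globalisation step in the second paragraph (passing from a Whitney-smooth extension on a measurable set to a smooth extension defined on $W^1_{loc}(x)$ compatible with the dynamics); once this is achieved, Steps 3 and 4 are essentially formal consequences of the cocycle normal forms framework already set up in Section~\ref{s.cocyclenormalforms}.
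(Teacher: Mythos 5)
Your Step 1 (invariance of the ``Whitney-smooth on a positive measure set'' locus under the cocycle relation \eqref{eq:dynamicstemplate}, then ergodicity) matches the paper. The gap is in Step 2, which is the crux of your argument and is asserted rather than proved: there is no mechanism that produces a \emph{canonical} smooth extension $\widetilde{\cT}^\ell_x$ of the template to all of $W^1_{loc}(x)$ compatible with the dynamics. The template is only defined $\hat\mu^1_x$-a.e., its support may be a Cantor set, and two Whitney extensions of a function on a compact set are only forced to agree on that set; the cocycle relation \eqref{eq:dynamicstemplate} relates values of $\cT^\ell_x$ and $\cT^\ell_{f(x)}$ only on (the rescaled copies of) the support, which is essentially invariant under this renormalization, so ``propagating along the orbit'' never fills the gaps, and ``absence of atoms spreads the definition'' is not an argument. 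Asking for an extension that satisfies \eqref{eq:dynamicstemplate} \emph{identically} on $(-1,1)$ is an invariant-section problem whose solvability is exactly what is at stake; in fact the paper's proof shows that any such smooth invariant object is forced to be a polynomial.

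This matters because your Steps 3--4 then break down: the definition of $(\ell+1)$-good charts requires the global polynomiality condition \eqref{eq:goodcoordinate}, and the chart change $\psi_x(t,u,s)=(t,u-\widetilde{\cT}^\ell_x(t)s^{\ell+1},s)$ preserves it only when $\widetilde{\cT}^\ell_x$ is a polynomial (or satisfies \eqref{eq:dynamicstemplate} everywhere): with a generic smooth extension the new $\partial_3^{\ell+1}F_{x,2}(t,0,0)$ vanishes only on $\supp\hat\mu^1_x$ and is not polynomial off it, so the modified charts are no longer $\ell$-good, Lemma \ref{lem.derivativezero} fails off the support, the $(\ell+2)$-jet action of Proposition \ref{prop.cocycle} is no longer a (triangular) linear cocycle over the whole leaf, and Proposition \ref{p.CNF} cannot be invoked to ``restore'' polynomiality. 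The paper's route is different and this is where the real work lies: Proposition \ref{p.dichotomytech} upgrades ``Whitney smooth on a set of positive $\hat\mu^1_x$-measure'' to ``$\cT^\ell_x$ is a polynomial of degree $\leq d$ on a full $\hat\mu^1_x$-measure set, for a.e.\ $x$,'' by iterating \eqref{eq:templatentimes2}: the Whitney remainder beyond degree $d$ contracts because $\alpha_x^{(n)}(\beta_x^{(n)})^{d+1}\to 0$ by \eqref{eq:choiceofd}, and a returning subsequence to a Lusin set, together with non-degeneracy (Lemma \ref{l.compactintersectmanypoints}, giving infinitely many points to determine a degree-$d$ polynomial), identifies the limit as a polynomial. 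Only then does the explicit polynomial change of coordinates of Proposition \ref{p.varphipolyell+1} (together with Proposition \ref{p.normalform}) yield $(\ell+1)$-good charts. Your proposal omits this renormalization/contraction argument, which is precisely what replaces your unproved extension step.
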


 We can see from the above proposition that  the smoothness of $\mathcal{T}^{\ell}_x$ (an intrinsic property about $(f, \mu)$) can be expressed naturally using normal coordinates  (see Proposition \ref{p.ustate} for another application of such an idea). 
 It says that if these $\ell$-order approximations of the strong stable lamination are smooth along the strong unstable direction then one has the a priori stronger condition that they are polynomial in the normal coordinates.

\subsection{Compatible charts} 
Note that the time one map of the geodesic flow on a constant negatively curvatured surface admits $\ell$-good  stable and unstable charts for every $\ell>0$, and, at the same time, verifies a strong form of quantitative non-integrability due to the contact structure. Thus, to be able to extract more information from the existence of $\ell$-good stable and unstable charts, we will show that there is some compatibility between these charts assuming that the QNI condition is not verified. 

\begin{defi}[Compatible charts]\label{def.compatible}
For a partially hyperbolic measure $\mu$ we say that it admits $\ell$-\emph{compatible good charts} if there exist some $L \geq \ell$,  $L$-good stable charts $\{\imath_x\}_{x \in M}$ and $L$-good unstable charts $\{\imath'_x\}_{x\in M}$ such that for $\mu$ almost every $x\in M$ we have that: for all $(t_1,t_3)$ close to $(0,0)$, 
\begin{equation}\label{eq:compatible}
(\imath'_x)^{-1} \circ \imath_x (t_1, 0 , t_3) = ( s_1 , O(  (|s_1| + |s_3|)^\ell),  s_3). 
\end{equation} 
We say that $\mu$ admits \emph{compatible good charts} if it admits $\ell$-compatible good charts for every $\ell>0$. 
\end{defi}

The existence of compatible good charts implies that the measure is \emph{jointly integrable up to order $\ell$}, a notion defined below:  
\begin{defi}\label{defi.jointorderell-meas}
We say that a partially hyperbolic measure $\mu$ is \emph{jointly integrable up to order $\ell$} if there is a measurable family of $C^{\ell}$ smooth surfaces (with boundaries) $\{\cS_x \}_{x \in M}$ in $M$ such that for $\mu$ almost every $x \in M$, there is $\rho_x > 0$ such that: 
\begin{enumerate}
\item $W^1_{\rho_x}(x) \cup W^3_{\rho_x}(x) \subset \cS_x$, and, 
\item for $\mu^1_x$ almost every $y \in W^1_{\rho_x}(x)$ (resp. $\mu^3_x$ almost every $y$ in $W^3_{\rho_x}(x)$) we have that $W^3_1(y)$ is tangent  to $\cS_x$ to order $\ell$ at $y$ (resp. $W^1_1(y)$ is tangent to $\cS_x$  to order $\ell$ at $y$). 
\end{enumerate}
\end{defi}

It is natural to compare the above definition with Definition \ref{defi.jointorderell}.

\begin{prop}\label{prop.compatibleimpliesjoint}
Let $\mu$ be a partially hyperbolic measure with compatible good charts. Then $\mu$ is jointly integrable up to order $\ell$ for every $\ell>0$. 
\end{prop}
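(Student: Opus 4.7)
For fixed $\ell > 0$, the plan is to invoke the hypothesis of $\ell$-compatible good charts, obtaining some $L \geq \ell$, $L$-good stable charts $\{\imath_x\}$, $L$-good unstable charts $\{\imath'_x\}$ satisfying the compatibility \eqref{eq:compatible}, and to define for $\mu$-a.e.\ $x$
\[
\cS_x := \imath'_x\bigl(B \cap \{t_2 = 0\}\bigr),
\]
where $B$ is a small ball in $\RR^3$ around the origin. Since $\imath'_x$ is a smooth diffeomorphism, $\cS_x$ is a $C^\infty$ surface varying measurably in $x$. By the defining properties of unstable charts (Definition~\ref{def.normalcharts}), $\imath'_x(t_1, 0, 0) = \Phi^1_x(t_1)$ and $\imath'_x(0, 0, t_3) = \Phi^3_x(t_3)$, so $W^1_{\rho_x}(x) \cup W^3_{\rho_x}(x) \subset \cS_x$ for a suitably small $\rho_x > 0$, verifying condition~(i) of Definition~\ref{defi.jointorderell-meas}.

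For condition~(ii) at $y_0 = \Phi^1_x(t_0) \in W^1_{\rho_x}(x)$, formula~\eqref{eq:highordertemplate} parametrizes $W^3_{\mathrm{loc}}(y_0)$ in the unstable chart as $s \mapsto \bigl(t_0 + a_x(t_0, s)\,s,\ \cT^L_x(t_0)\,s^{L+1} + O(s^{L+2}),\ s\bigr)$. Since $\cS_x$ corresponds to $\{t_2 = 0\}$ in the chart and $d(y, x)$ is comparable to $|t_0| + |s|$, we get $d(y, \cS_x) = O(s^{L+1}) \leq C\, d(y, x)^\ell$ using $L + 1 \geq \ell + 1$, yielding tangency to order $\ell$. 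For condition~(ii) at $y_0 = \Phi^3_x(t_0) \in W^3_{\rho_x}(x)$, I pass to the $L$-good stable chart $\imath_x$; its analogous parametrization gives $W^1_{\mathrm{loc}}(y_0)$ in stable coordinates as $s \mapsto \bigl(t_0 + a'_x(t_0, s)\,s,\ O(s^{L+1}),\ s\bigr)$. Transferring via $G := (\imath'_x)^{-1} \circ \imath_x$, I use $G(T_1, T_2, T_3) = G(T_1, 0, T_3) + O(T_2)$ with $T_2 = O(s^{L+1})$ to see that the image in the unstable chart equals $G(t_0 + a'_x(t_0, s)\,s,\ 0,\ s) + O(s^{L+1})$. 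By \eqref{eq:compatible}, the middle coordinate of $G(T_1, 0, T_3)$ is bounded by $C(|s_1| + |s_3|)^\ell$, where $(s_1, s_3)$ are the first and third output coordinates; since $G(t_1, 0, 0) = (0, 0, t_1)$ and $G(0, 0, t_3) = (t_3, 0, 0)$, a Taylor expansion yields $s_1 = O(|s|)$ and $s_3 = O(|t_0|) + O(|s|)$. Hence the middle coordinate of $W^1_{\mathrm{loc}}(y_0)$ in the unstable chart is $O((|s| + |t_0|)^\ell) + O(s^{L+1})$, and since $d(y, x) \sim |s| + |t_0|$ in the chart, $d(y, \cS_x) \leq C'\, d(y, x)^\ell$. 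Running this construction for every $\ell > 0$ completes the proof.

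The main obstacle is the second case: the compatibility estimate $O((|s_1| + |s_3|)^\ell)$ is a bound at the origin of the transition $G$, and naively expanding in $s$ at a fixed $t_0 \neq 0$ would only produce a leading term of size $s \cdot t_0^{\ell-1}$, which is of order $s^1$ rather than $O(s^\ell)$. The resolution is to measure tangency against the ambient distance $d(y, x) \sim |s| + |t_0|$ rather than the arc-length along $W^1(y_0)$ alone, so that the bound $\ell \cdot s \cdot t_0^{\ell-1} \leq (|s| + |t_0|)^\ell$ absorbs the $t_0$-dependence into $d(y, x)^\ell$.
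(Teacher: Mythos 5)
Your construction takes $\cS_x=\imath'_x(T_0)$, the image of the flat plane $\{t_2=0\}$ in the unstable chart, and this surface does not satisfy Definition~\ref{defi.jointorderell-meas}(ii) at points of the stable manifold; the device you introduce to save it --- measuring the tangency against $d(\cdot,x)$ instead of against the displacement along the curve from the tangency point --- changes the definition rather than verifying it. In Definition~\ref{defi.jointorderell-meas}(ii) the tangency of $W^1_1(y)$ to $\cS_x$ is required \emph{at} $y=\Phi^3_x(t_0)$ (likewise in Definition~\ref{defi.jointorderell}), so for fixed $t_0\neq0$ the distance from the point of $W^1_{loc}(y)$ at parameter $s$ to $\cS_x$ must be $O(|s|^{\ell})$ as $s\to0$, with a constant allowed to depend on $y$ but not a bound that degenerates to a constant of size $|t_0|^{\ell}$. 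Your own estimate shows the failure: the compatibility \eqref{eq:compatible} only gives $h^x_2(t_1,t_3)=O((|t_1|+|t_3|)^{\ell})$, so at fixed $t_0\neq0$ nothing forces $\partial_{t_1}h^x_2(0,t_0)$ to vanish, and it is typically of size $|t_0|^{\ell-1}\neq0$. Since $W^1_{loc}(\Phi^3_x(t_0))$ osculates the stable-chart plane $\imath_x(T_0)$ to order $L+1$, its second coordinate in the unstable chart has first-order $s$-coefficient $\approx\partial_{t_1}h^x_2(0,t_0)$, so $d(\cdot,\imath'_x(T_0))\sim|t_0|^{\ell-1}|s|$, which is not $O(|s|^{\ell})$ for $\ell\geq2$. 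In other words, the flat plane of the unstable chart osculates the $W^3$-leaves along $W^1_{\rho_x}(x)$ (your first case is fine, since there $O(s^{L+1})\leq Cs^{\ell}$), but it does not osculate the $W^1$-leaves along $W^3_{\rho_x}(x)$; the bound $O((|s|+|t_0|)^{\ell})$ only expresses that the two Hopf brushes are close near $x$, which is strictly weaker than condition (ii).

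This is precisely the difficulty the paper's proof is built to overcome, and it is why the flat plane is not used. One records two jet prescriptions on the cross $E=[-\rho_x,\rho_x]\times\{0\}\cup\{0\}\times[-\rho_x,\rho_x]$: the $\ell$-jets of $h^x_2$ along the $t_3$-axis and the zero jets along the $t_1$-axis; the compatibility \eqref{eq:compatible}, applied with $10\ell$ in place of $\ell$, yields \eqref{eq h2bound} and hence the Whitney compatibility conditions \eqref{eq varphiijt1t3} at the corner, so Whitney's extension theorem (Theorem~\ref{thm WET}) produces a $C^{\ell}$ function $\varphi_x$ interpolating both prescriptions; the surface is then the graph $\imath'_x\circ H(T_0)$ with $H(t_1,t_2,t_3)=(t_1,t_2+\varphi_x(t_1,t_3),t_3)$. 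This graph agrees with $\imath'_x(T_0)$ to order $\ell$ along the $t_1$-axis (so the $L$-good unstable charts give tangency at points of $W^1_{\rho_x}(x)$, as in your first case) and agrees with $(\imath'_x)^{-1}\circ\imath_x(T_0)$ to order $\ell$ along the $t_3$-axis (so the $L$-good stable charts give tangency at points of $W^3_{\rho_x}(x)$, which is exactly what your surface lacks); an extra step you do not face, measurable dependence of $\varphi_x$ on $x$, is handled via Lusin's theorem and Fefferman's extension operators. To repair your argument you must replace the flat plane by such a jet-interpolating surface; no choice of constant in your final display can absorb the term $|s|\,|t_0|^{\ell-1}$ at fixed $t_0$ once tangency is required at $y$ itself.
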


\begin{proof} 
We fix an arbitrary integer $\ell > 0$. 
By hypothesis, there exist an integer $L > 10 \ell$, a collection of $L$-good stable charts  $\{\imath_x\}_{x \in M}$,  and a collection of $L$-good unstable charts  $\{\imath'_x\}_{x\in M}$  such that \eqref{eq:compatible} is satisfied for $10 \ell$ in place of $\ell$. Let $x$ be a $\mu$-typical point such that $\imath_x$ and $\imath'_x$ are defined at $x$, and \eqref{eq:compatible} holds.

Let $\rho_x > 0$ be a small constant such that for all $(t_1, t_3) \in (-\rho_x, \rho_x)^2$, we may write
\aryst
(\imath'_x)^{-1} \circ \imath_x(t_1, 0, t_3) = (h^x_1(t_1, t_3), h^x_2(t_1, t_3), h^x_3(t_1, t_3)).
\earyst
Here $h^x_1, h^x_2, h^x_3$ are smooth functions. 
We have $h^x_1(t_1, 0) = t_1$ and $h^x_3(0, t_3) = t_3$.
Then we have $| h^x_1(t_1, t_3)|, |h^x_3(t_1, t_3)| \leq C_x( |t_1| + |t_3| )$ for some $C_x > 0$. 
By \eqref{eq:compatible}, by enlarging $C_x$, and by letting $\rho_x$ be small  if necessary, we have
\ary \label{eq h2bound}
 | h^x_2(t_1, t_3) | \leq C_x( |t_1|^{10 \ell} + |t_3|^{10 \ell}).
\eary

We will  construct some function $\varphi_x : \R^2 \to \R$ whose derivatives along the axes have desired properties. The following statement of Whitney's extension theorem (see \cite{W}) is \cite[Theorem 2.3.6]{Ho} (in dimension $2$):
\begin{thm} \label{thm WET}
	Let $E$ be a compact set in $\R^2$ and let $u_{i, j}$ be a continuous functions on $E$ for any $i,j \geq 0$ with  $i + j \leq k$.
	If the function $U_{i, j} : E \times E \to \R$ given by   $U_{i, j} = 0$ on the diagonal of $E \times E$, and given by
	\aryst
		U_{i, j}((t_1, t_3), (s_1, s_3)) :=  u_{i, j}(t_1, t_3) -  \sum_{i' + j' \leq \ell - i - j} \frac{u_{i + i', j + j'}(s_1, s_3)}{(i')! (j')!}(t_1 - s_1)^{i'}(t_3 - s_3)^{j'} 
	\earyst
	for distinct $(t_1, t_3), (s_1, s_3)$ on $E$, 
	 is continuous, then there exists $v \in C^k(\R^2)$ with $\partial_{t_1}^{i} \partial_{t_3}^{j} v = u_{i, j}$ for any $i,j \geq 0$ with $i + j \leq k$, and satisfies that
	\aryst
	\sum_{i, j \geq 0, i+ j \leq k}  \| \partial^i_{t_1} \partial^j_{t_3} v \| \leq C (\sum_{i, j \geq 0, i+ j \leq k} \sup_{K \times K}  | U_{i, j} | + \sum_{i, j \geq 0, i+j \leq k} \sup_K | u_{i, j} | ).
	\earyst
\end{thm}

 We set $E = [ - \rho_x, \rho_x ] \times \{0\} \cup \{0\} \times  [ -\rho_x, \rho_x ]$.
 For any integers $i, j \geq 0$ such that $i + j \leq \ell$, we define
 \ary \label{eq varphiijx}
 u^x_{i, j}(t_1, t_3) = 
 \begin{cases}
\partial_{t_1}^i \partial_{t_3}^j h^x_2(0, t_3),  & t_1 = 0, t_3 \in [ -\rho_x, \rho_x ] \setminus \{0\}, \\
0, & t_3 = 0, t_1 \in [ -\rho_x, \rho_x ].
 \end{cases}
 \eary
 By  \eqref{eq h2bound}, the above formula gives a collection of continuous functions on $E$.
 Moreover, by Taylor's expansion of $h_2$ at the origin, we see that for the above $i,j$ and some $C'_x > 0$
 \ary \label{eq boundforvarphiij}
 | u^x_{i,j}(t_1, t_3)| \leq  C'_x (|t_1| + |t_3|)^{\ell + 1}.
 \eary 
We claim that  for any $i, j$ as above, for any $(t_1, t_3)$ and $(s_1, s_3)$ on $E$, we have
 \ary 
 	U^x_{i, j}((t_1, t_3), (s_1, s_3))  =  O((| s_1 - t_1 | + | s_3 - t_3 |)^{\ell + 1 - i - j}) \label{eq varphiijt1t3} 
 \eary
 where $U^x_{i, j}$ is given by the expression of $U_{i, j}$ in Theorem \ref{thm WET} for $u^x_{i,j}$ in place of $u_{i, j}$.
    To prove the claim, it suffices to assume either $t_1 = s_3 = 0$ or $t_3 = s_1 = 0$, for otherwise the equality is either trivial or follows from the smoothness of $h^x_2$. Note that in both cases we have $|t_1|+|s_1| = |t_1-s_1|$ and $|t_3| + |s_3| = |t_3 - s_3|$. Then the claim follows immediately from \eqref{eq boundforvarphiij}.

 Now we may apply Theorem \ref{thm WET} to obtain a $C^{\ell}$ function $\tilde\varphi_x$ defined on $\R^2$ such that $\partial_{t_1}^i \partial_{t_3}^j \tilde\varphi_x|_{E} = u^x_{i,j}$  for any integers $i, j  \geq 0$ such that $i + j \leq \ell$. 
 We claim that this extension can be chosen to depend measurably on $x$ on a full measure set. Indeed, by Lusin's lemma we can take an increasing sequence $( \Omega_n \subset M )_{n \geq 0}$ so that their union is a full measure set, and for each $n$ the coordinate charts  $\imath_x$ and $\imath'_x$ depend continuously on $x$; $\rho_x$ is uniformly lower bounded; and $C'_x$ in \eqref{eq boundforvarphiij} is uniformly upper bounded, for $x$ restricted to $\Omega_n$.  As a result, the map $h^x_2$ depends continuously on $x$ restricted to $\Omega_n$.  By construction, the  collection of  functions $u^x_{i, j}$ given in \eqref{eq varphiijx} depend continuously on $x$ restricted to $\Omega_n$. Consequently, for any sequence $(x_m)_{m \geq 0}$ in $\Omega_n$ converging to $x \in \Omega_n$,  we have $ \lim_{m \to \infty} \sup_{K}| u^{x_m}_{i, j} - u^x_{i, j}| = 0$.
 Moreover, using the fact that  the implicit constant in \eqref{eq varphiijt1t3} is uniform, we may deduce that $ \lim_{m \to \infty} \sup_{K \times K} | U^{x_m}_{i, j} - U^{x}_{i, j} | = 0$.  By Theorem \ref{thm WET}, we see that $ \lim_{m \to \infty} \| \tilde \varphi_{x_m} - \tilde \varphi_{x} \|_{C^{\ell}(E)} = 0$ where $\| \varphi \|_{C^{\ell}(E)}$ is understood as the infimum of $\| F  \|_{C^\ell}$ over the collection of function $F$ extending $f$ (see \cite{Fe}).
 At this point, we can apply the main result in \cite{Fe}  to construct $\varphi_x$ so that it depends continuously on $x$ restricted to $\Omega_n$.\footnote{Alternatively, we may argue by following the proof in \cite{Ho} to see that the construction of $\tilde\varphi_x$ can be made linear in the data $(\varphi^x_{i, j})_{i, j \geq 0, i+j \leq k}$.}   As $n$ is arbitrary, we conclude the proof of the claim.


We define $H(t_1, t_2, t_3) = (t_1, t_2 + \varphi(t_1, t_3), t_3)$, and define a map $\imath^{\flat}_x : (-  \rho_x,   \rho_x)^3 \to M$ by $\imath^{\flat}_x = \imath'_x \circ H$.
We denote $T_0 = \{  (t_1, t_2, t_3)  : t_2 = 0 \}$ and define $\cS_x = (\imath^{\flat}_{x})^{-1}(T_0)$.
Then it is straightforward to see that Definition \ref{defi.jointorderell-meas}(i), (ii) are satisfied at $x$ for $\cS_x$. Consequently $\mu$ is jointly integrable up to order $\ell$.
\end{proof}

\begin{figure}[ht]
\begin{center}
\includegraphics[scale=0.91]{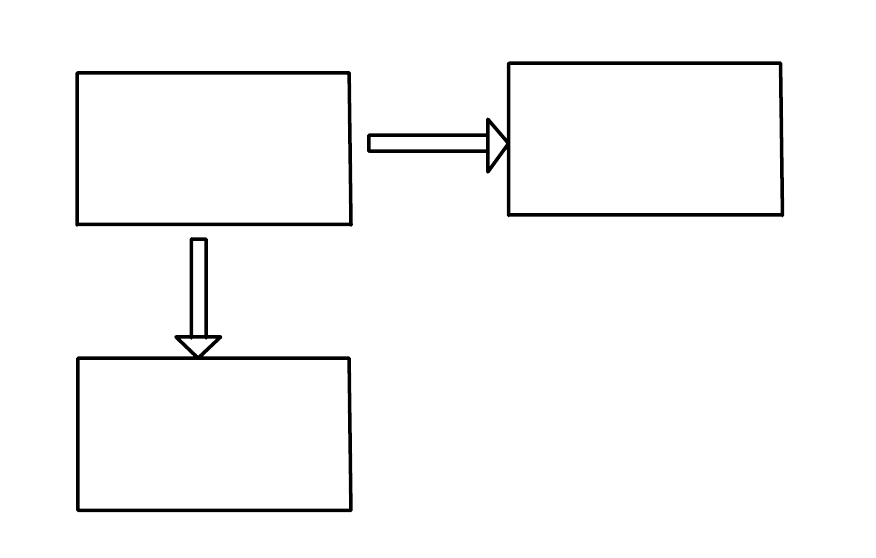}
\begin{picture}(0,0)
\put(-202,117){$\exists L-$ \text{good}} 
\put(-212,105){{\small \text{stable}+\text{unstable}}} 
\put(-195,93){\text{charts}} 
\put(-88,117){\text{jointly}}
\put(-94,105){\text{integrable up}}
\put(-90,94){\text{to order }$\ell$}
\put(-170,78){{\tiny \text{not}}}
\put(-140,97){{\tiny \text{compatible}}}
\put(-177,72){{\tiny \text{compatible}}}
\put(-187,29){\text{QNI}}

\end{picture}
\end{center}
\vspace{-0.5cm}
\caption{{\small Schematic illustration of the way QNI is obtained if the measure is not jointly integrable up to high order.}}\label{fig-flowchart2}
\end{figure}

\subsection{Main technical statement} 

We have the following dichotomy, which proposes a more geometric way to deal with the QNI condition (at least when the diffeomorphism is sufficiently smooth).

\begin{teo}\label{teo.maintechnical}
Let $\mu$ be a  non-degenerate  partially hyperbolic measure for a $C^\infty$ smooth diffeomorphism $f$ of a closed 3-manifold. Then, $\mu$ has the QNI property if and only if it does not admit compatible good charts (cf. Definition \ref{def.compatible}). 
\end{teo}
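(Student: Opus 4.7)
The plan is to prove the equivalence by treating the two implications separately, combining Proposition~\ref{prop.compatibleimpliesjoint}, the dichotomy of Proposition~\ref{p.dichotomy}, and the cocycle recursion \eqref{eq:dynamicstemplate}.

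For the easy direction ``compatible good charts $\Rightarrow$ QNI fails'', suppose $\mu$ admits $\ell$-compatible good charts for every $\ell$. By Proposition~\ref{prop.compatibleimpliesjoint}, $\mu$ is jointly integrable up to order $\ell$ for every $\ell$. Fix an arbitrary $\alpha>0$ and choose $\ell$ so that $\ell\min(\chi_1,|\chi_3|)>2\alpha+10\epsilon$. Using Lusin, fix a large-measure compact set on which the surfaces $\cS_x$ of Definition~\ref{defi.jointorderell-meas} are jointly continuous with uniform $C^\ell$-bounds. For typical $x$ there, and for $y\in W^{3,k}_1(x)$, $z\in W^{1,k}_1(x)$, the points $y,z$ lie within distance $O(e^{-ck})$ of $x$ with $c=\min(\chi_1,|\chi_3|)-\epsilon$, so both $W^1_1(y)$ and $W^3_1(z)$ lie in the $O(e^{-\ell c k})$-neighborhood of $\cS_x$. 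Since they are transverse curves inside a common (almost) $2$-dimensional surface, they essentially intersect, giving $d(W^1_1(y),W^3_1(z))=O(e^{-\ell ck})\ll e^{-\alpha k}$, which contradicts QNI at rate $\alpha$. As $\alpha$ was arbitrary, QNI fails.

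For the main direction ``no compatible good charts $\Rightarrow$ QNI'', start from the $0$-good unstable charts from Proposition~\ref{p.existnormalcoord} and iteratively apply Proposition~\ref{p.dichotomy}, together with its stable analogue. If $\mu$ does not admit compatible good charts, then one of the following happens: (A) at some order $\ell$ the template $\cT^\ell_x$ is not Whitney-smooth on any positive $\mu^1_x$-measure subset (so the inductive construction of $(\ell+1)$-good charts stops), or (B) $L$-good stable and unstable charts exist for every $L$ but fail to satisfy \eqref{eq:compatible} at some order $\ell$. In case (A), the failure of Whitney-smoothness means that on a positive density of pairs of nearby unstable points the template $\cT^\ell_x$ deviates from any polynomial approximation of degree $\le d$ by a macroscopic amount, which by \eqref{eq:dynamicstemplate} propagates under $f^{-k}$ with a controlled rate depending on $\ell$, $\chi_1$ and $\chi_3$; this gives a transverse displacement of size at least $Ce^{-\alpha k}$ between $W^1_1(y)$ and $W^3_1(z)$, verifying QNI. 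In case (B), the obstruction to \eqref{eq:compatible} is an order-$\ell$ part of $(\imath'_x)^{-1}\circ\imath_x(t_1,0,t_3)$ in the $t_2$-coordinate that is macroscopic on a positive-measure set; an analogous rescaling argument via the cocycle dynamics upgrades this into the QNI lower bound.

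The main obstacle, and the technical heart of the argument, is the quantitative step in both (A) and (B): upgrading a qualitative obstruction (non-Whitney-smoothness of a template, or non-compatibility of $L$-good charts) to the doubly-quantified rate $\alpha>0$ with uniform constants, as demanded by Definition~\ref{def.QNI}. This rests on (i) a careful analysis of how non-polynomial components of the template are transported by the cocycle equation \eqref{eq:dynamicstemplate}, which fixes $\alpha$ in terms of $\ell$, $\chi_1$, $\chi_3$; (ii) uniformization of all constants on large-measure compact sets using Lusin's lemma and Lemma~\ref{lem uniformboundab}; and (iii) a Fubini/density argument turning ``most pairs $(y,z)$ realize a large transverse distance'' into the nested conditions on $S_x\subset W^{3,k}_1(x)$ and $U_y\subset W^{1,k}_1(x)$. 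Once QNI is obtained in both cases (A) and (B), the contrapositive closes the implication.
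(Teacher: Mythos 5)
Your overall architecture coincides with the paper's: the easy implication (compatible charts force joint integrability to all orders, hence rule out QNI), plus the hard implication split into exactly the two cases the paper isolates as Theorem~\ref{teo.maintechnical1} (failure of $\ell$-good charts at some order) and Theorem~\ref{teo.maintechnical2} (good charts at every order but no compatibility). The easy direction as you sketch it is fine. The problem is that in both cases (A) and (B) you stop precisely where the proof of this theorem actually lives: the passage from the qualitative obstruction to the doubly quantified bound $d(W^1_1(y),W^3_1(z))>Ce^{-\alpha k}$ is the content of Sections~\ref{s.dichotomy}--\ref{s.compatible}, and labelling it ``the main obstacle'' is not an argument for it. Worse, the mechanism you do describe would not work as stated. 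In case (A) the obstruction is not ``macroscopic'': after the dichotomy (Proposition~\ref{p.dichotomytech}) one only knows that $\cT^\ell_x$ is not polynomial on $\supp\hat\mu^1_x$; transporting this to the dynamically shrinking windows $W^{1,k}_1(x)$ through \eqref{eq:dynamicstemplate} costs the factor $\lambda^{(k)}_{2,x}/(\lambda^{(k)}_{3,x})^{\ell+1}$, so the usable lower bound decays like $ce^{-\delta k}$ (Propositions~\ref{prop.estimatepoly} and \ref{prop.estimatepoly2}), and the proof consists of balancing that decay against Taylor and holonomy errors by tuning the time-ratio window $V$ and the degree $d_1$; nothing in your sketch fixes $\alpha$ or shows the error terms are dominated. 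Moreover the comparison cannot be made against polynomials: writing $W^1_{loc}(y)$ in the unstable chart at $x$ gives a graph $(t',\hat Q(t'),\hat P(t'))$ in which $\hat P$ is only H\"older and far from constant, so one must bound $|\cT^\ell_x(t)-\hat Q(t')/\hat P(t')^{\ell+1}|$ from below, i.e.\ compare with \emph{rational} functions, treat separately the region where $\hat P$ is small via the H\"older holonomy estimate \eqref{eq:pt'}, and invoke the compactness statement for $\cR^{d}$ (Lemma~\ref{l.subsequencerational}, Proposition~\ref{p.compactnessrational}); this is exactly the difficulty flagged in Remark~\ref{remarklipschitz}, and your ``polynomial approximation transported by the cocycle'' step fails there.

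Case (B) has the same status: the paper's proof is not a generic ``rescaling argument'' but an analysis of the Taylor set $I_x$ of the graph $\psi_x$ of $(\imath'_x)^{-1}\circ\imath_x(T_0)$, its $f$-invariance (Lemma~\ref{lem Ixisempty}), the minimal weighted degree $K(V',x)$, and the H\"older continuity of $E^1,E^3$ on Pesin blocks, which together dictate how $V$ and the order $L$ of the charts must be chosen before QNI can be extracted (Proposition~\ref{prop IxnonemptytoQNI}); none of this is supplied or replaced by an alternative. Two further omissions: the non-degeneracy hypothesis on $\mu$ is never used in your proposal, although it is essential (it produces the $(d,\sigma,\eta)$-spread sets on which the rational-function compactness applies, and without it the conclusion is false, e.g.\ for measures on a normally hyperbolic $cu$-surface); and the verification of Definition~\ref{def.QNI} itself requires the reduction to pairs of times $k_1,k_2$ with controlled ratio (Lemma~\ref{lem altdefQNI}), which is where your ``Fubini/density argument'' would have to be made precise. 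As it stands the proposal reproduces the paper's announced strategy but leaves the actual proof of both hard implications as a gap.
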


It is easy to check that if $\mu$ admits compatible good charts then it cannot verify QNI, so the main point of Theorem \ref{teo.maintechnical} is to establish that if $\mu$ does not admit compatible good charts, then it has to have the QNI property.
 We divide the proof into two natural steps. The first was illustrated in  Figure \ref{fig-flowchart1} and a more detailed scheme can be found in Figure \ref{fig-flowchart3}. 

\begin{teo}\label{teo.maintechnical1}
Let $\mu$ be a non-degenerate partially hyperbolic measure and assume that it does not admit $\ell$-good unstable charts for some integer $\ell>0$, then $\mu$ has the QNI property. 
\end{teo}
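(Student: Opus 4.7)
The plan is to choose the maximal order $\ell_0\geq 0$ for which $\ell_0$-good unstable charts still exist, apply the dichotomy in Proposition \ref{p.dichotomy} to conclude that the stable template $\cT^{\ell_0}_x$ fails Whitney-smoothness on any positive-measure subset, and convert this qualitative statement into the quantitative lower bound of Definition \ref{def.QNI} via the cocycle formula \eqref{eq:dynamicstemplate}. Such an $\ell_0$ is well-defined: Proposition \ref{p.existnormalcoord} provides $0$-good charts, while by hypothesis $\ell$-good charts fail to exist for some $\ell>0$. Fixing an $\ell_0$-good family $\{\imath_x\}_{x\in M}$, Proposition \ref{p.dichotomy} at order $\ell_0$ yields that, for $\mu$-a.e.\ $x$, the template $\cT^{\ell_0}_x$ is not smooth in the Whitney sense on any subset of $\cW^1(x)$ of positive $\mu^1_x$-measure.

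The next step is the geometric interpretation of the template as a ``distance defect''. In the chart $\imath_x$, for $y=\Phi^3_x(s_0)$ and $z=\Phi^1_x(t_0)$, equation \eqref{eq:highordertemplate} gives that the slice of $\imath_x^{-1}(W^3_1(z))$ at $x_3=s_0$ has $E^2$-coordinate $\cT^{\ell_0}_x(t_0)\,s_0^{\ell_0+1}+O(s_0^{\ell_0+2})$, whereas the $E^2$-coordinate of $W^1_1(y)$, viewed as a graph over $t_0$, is a smooth function of $(t_0,s_0)$ vanishing on $\{t_0=0\}\cup\{s_0=0\}$ whose Taylor expansion in $s_0$ up to order $\ell_0+1$ has coefficients polynomial in $t_0$ of degree $\leq d$; this polynomiality is forced by iterating the cocycle \eqref{eq:derivativenormalchart} together with the $\ell_0$-good-chart condition \eqref{eq:goodcoordinate} along $W^1(x)$. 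Combining these two contributions one obtains
\[
d\bigl(W^1_1(y),W^3_1(z)\bigr) \ \gtrsim \ \bigl|\cT^{\ell_0}_x(t_0)-P_x(t_0)\bigr|\cdot|s_0|^{\ell_0+1}
\]
for some $P_x\in\RR[t]_{\leq d}$ depending measurably on $x$ alone.

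The exponential estimate at scale $k$ then emerges from the cocycle transformation. Setting $A_x:=\lambda_{2,x}/\lambda_{3,x}^{\ell_0+1}$, iterating \eqref{eq:dynamicstemplate} gives
\[
\cT^{\ell_0}_x(t) \ = \ A_k(x)^{-1}\,\cT^{\ell_0}_{f^k(x)}\bigl(L_k(x)\,t\bigr) \ - \ Q_k(x,t),
\]
with $Q_k(x,\cdot)\in\RR[t]_{\leq d}$, $|A_k(x)|\sim e^{k(\chi_2+(\ell_0+1)|\chi_3|)}$ and $|L_k(x)|\sim e^{k\chi_1}$. A degree-$\leq d$ polynomial approximation of $\cT^{\ell_0}_x$ restricted to $W^{1,k}_1(x)=\{|t|\lesssim|L_k(x)|^{-1}\}$ pulls back under $t\mapsto L_k(x)\,t$ to an approximation of $\cT^{\ell_0}_{f^k(x)}$ on a bounded interval, rescaled by $|A_k(x)|^{-1}$; the non-Whitney-smoothness at $f^k(x)$ from the first step forbids such an approximation beyond some quantitative threshold $c(\nu,\eps)>0$, yielding $|\cT^{\ell_0}_x(t_0)-P_x(t_0)|\geq c(\nu,\eps)\,|A_k(x)|^{-1}$ on a $(1-\nu)$-density subset of $W^{1,k}_1(x)$. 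Multiplying by $|s_0|^{\ell_0+1}\sim e^{k(\ell_0+1)\chi_3}$ on $W^{3,k}_1(x)$ produces $d(W^1_1(y),W^3_1(z))\geq Ce^{-\alpha k}$ with $\alpha=\chi_2+2(\ell_0+1)|\chi_3|>0$ (positivity using $|\chi_2|<|\chi_3|$ in the case $\chi_2<0$). Lusin-type reductions in the spirit of Lemma \ref{lem uniformboundab}, together with non-degeneracy of $\mu$ (which rules out atoms near the origin along $W^3$), yield the uniform set $\cP$ of measure $>1-\eps$, and a Fubini argument on $W^{3,k}_1(x)\times W^{1,k}_1(x)$ delivers the subsets $S_x$, $U_y$ of Definition \ref{def.QNI}.

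The main obstacle is the quantification in the previous paragraph: upgrading the qualitative non-Whitney-smoothness in Proposition \ref{p.dichotomy} into a uniform-in-$k$ lower bound $c(\nu,\eps)>0$ on the $L^\infty$-distance from degree-$\leq d$ polynomials on a $(1-\nu)$-density subset of $W^1_1(f^k(x))$, with measurable control of all implicit constants. This effective form of the dichotomy must be combined with the bounded-degree structure of the polynomial cocycle correction $Q_k$ and with recurrence of $\mu$-typical orbits, and is the most technically demanding input of the proof.
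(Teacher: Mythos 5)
Your overall architecture (take the maximal $\ell_0$ with $\ell_0$-good charts, invoke the dichotomy to get non-Whitney-smoothness of $\cT^{\ell_0}_x$, then use the cocycle law \eqref{eq:dynamicstemplate} to turn this into an exponential lower bound at dynamical scales) is the same as the paper's, but the central geometric step of your second paragraph does not hold, and it is precisely where the real work lies. You treat $W^1_1(y)$, $y=\Phi^3_x(s_0)$, as a graph over $t_0$ at essentially constant height $s_0$ whose $E^2$-coordinate has Taylor coefficients in $s_0$ that are polynomials of degree $\leq d$ in $t_0$, "forced by" \eqref{eq:derivativenormalchart} and \eqref{eq:goodcoordinate}. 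But the good-chart conditions only constrain derivatives of $F_{x,2}$ \emph{along} $W^1(x)$ (at $x_2=x_3=0$); they say nothing about the chart's behaviour on the transverse unstable leaves, and the unstable Hopf brush $\bigcup_{s_0}W^1_1(\Phi^3_x(s_0))$ is in general only H\"older in $s_0$ (cf.\ Remark \ref{rem-HB}). In the chart one only has $\imath_x^{-1}(W^1_{loc}(y))=\{(t',\hat Q(t'),\hat P(t'))\}$ with $\hat P$ non-constant, so the quantity to be separated from $\cT^{\ell_0}_x$ is $\hat Q(t')/\hat P(t')^{\ell_0+1}$ — a ratio whose denominator can be small — and one is forced to compare the template with \emph{rational} functions of large degree $d_1$ (not degree-$\leq d$ polynomials attached to $x$), controlled via the H\"older holonomy bounds \eqref{eq:pt'}, a case analysis on $|\hat P(t')|$, and the compactness results of Proposition \ref{p.compactnessrational}. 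Your displayed lower bound $d(W^1_1(y),W^3_1(z))\gtrsim|\cT^{\ell_0}_x(t_0)-P_x(t_0)|\,|s_0|^{\ell_0+1}$ is exactly the paper's "toy case" simplification \eqref{eq:lipass}, which the paper explicitly says is unreasonable in general.

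A second, independent problem is your scale bookkeeping: you take $s_0$ at scale $e^{k\chi_3}$ and $t_0$ at scale $e^{-k\chi_1}$ with the \emph{same} $k$. Then the error terms (the $O(|\hat P(t')|^{\ell_0+2})$ correction, the Taylor/rational-approximation error, and the displacement $|t-t'|\lesssim|\hat P(t')|$ which must stay well inside the unstable window) dominate the main term $c\,e^{-\delta k}|s_0|^{\ell_0+1}$, since $\delta\approx\chi_2+(\ell_0+1)|\chi_3|$; the inequalities simply do not close for $\ell_0\geq 1$. This is why the paper works with two scales $k_1,k_2$ with $k_2/k_1$ near a large constant $V$ (cf.\ \eqref{eq:conditionQNI} and Claim \ref{l.bound}) and then recovers the symmetric Definition \ref{def.QNI} through the reduction Lemma \ref{lem altdefQNI}, shifting the base point along the orbit. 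Finally, the quantitative threshold $c(\nu,\eps)$ that you flag as "the most technically demanding input" is not optional: it is supplied in the paper by the compactness argument for rational functions (Lemma \ref{l.subsequencerational}, Propositions \ref{prop.estimatepoly} and \ref{prop.estimatepoly2}), and without it — and without the two corrections above — your sketch does not yield QNI.
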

The proof of this theorem will be given in \S\ref{s.computations}. 
The symmetric statement holds for the existence of $\ell$-good stable charts (cf. Proposition \ref{prop.symmetry}).

The second part is to show that the $\ell$-good charts must be compatible unless QNI holds (see Figure \ref{fig-flowchart2}): 

\begin{teo}\label{teo.maintechnical2}
 Let $\ell \geq 1$ and let $\mu$ be a  non-degenerate  partially hyperbolic measure. Then there is an integer $L > 0$ such that if $\mu$  admits $L$-good stable charts and $L$-good unstable charts and $\mu$ does not have QNI then there is a family of $\ell$-compatible good charts.
\end{teo}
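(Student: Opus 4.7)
Given $L$-good stable charts $\{\imath_x\}$ and $L$-good unstable charts $\{\imath'_x\}$ (with $L$ to be chosen), define $H_x := (\imath'_x)^{-1} \circ \imath_x$. Since both families fix the $t_1$-axis as $W^1_{loc}(x)$, the $t_3$-axis as $W^3_{loc}(x)$, and both have the same central derivative $e^2(x)$ at the origin, $H_x$ is a smooth diffeomorphism on a neighborhood of $0 \in \RR^3$ with $D_0 H_x = \mathrm{Id}$ fixing both coordinate axes. Writing $H_x(t_1, 0, t_3) = (s_1, s_2, s_3)$, the axis constraints force
\[ s_2(t_1, t_3) = \sum_{i, j \geq 1} c_{ij}(x)\, t_1^i t_3^j + (\text{higher order}), \]
with coefficients $c_{ij}(x)$ measurable in $x$. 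The two families are $\ell$-compatible precisely when $c_{ij}(x) = 0$ for every $i + j < \ell$. The plan is to choose $L$ large relative to $\ell$ and to $\chi_1/|\chi_3|$, and prove the contrapositive: if any $c_{ij}$ with $i+j < \ell$ is nonzero on a positive-measure set, then QNI must hold.

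\textbf{Step 1 (Reading $c_{ij}$ geometrically).} For small $u_1, u_3$, set $z := \Phi^1_x(u_1)$ and $y := \Phi^3_x(u_3)$. The $L$-good stable chart parametrizes $\imath_x^{-1}(W^1_1(y))$ as
\[ \{(s,\, \tau^L_x(u_3)\, s^{L+1} + O(s^{L+2}),\, u_3 + O(s)) : s \in (-1,1)\}, \]
using the unstable template $\tau^L_x$ in the stable chart (the symmetric analogue of Definition~\ref{def.lgoodcharts}), while the $L$-good unstable chart parametrizes $(\imath'_x)^{-1}(W^3_1(z))$ as
\[ \{(u_1 + O(s'),\, \cT^L_x(u_1)\,(s')^{L+1} + O((s')^{L+2}),\, s') : s' \in (-1,1)\}, \]
using the stable template $\cT^L_x$ of Definition~\ref{def.lgoodcharts}. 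Transport the latter curve into the stable chart by $H_x^{-1}$; since $D_0 H_x = \mathrm{Id}$ and the $t_2$-restriction of $H_x^{-1}$ to $\{t_2 = 0\}$ begins with $-\sum c_{ij}(x)\,\sigma_1^i \sigma_3^j$, the $E^2$-coordinate gap between the two curves at their closest approach (achieved near $s = u_1$, $s' = u_3$ by the near-transversality of the tangent vectors $(1,0,\ast)$ and $(\ast,0,1)$) equals
\[ \Delta(x; u_1, u_3) = \sum_{i, j \geq 1} c_{ij}(x)\, u_1^i u_3^j + \tau^L_x(u_3)\, u_1^{L+1} - \cT^L_x(u_1)\, u_3^{L+1} + (\text{strictly higher order}). \]
On a Lusin set where the chart maps are uniformly bi-Lipschitz and all template and Taylor remainders are uniformly bounded, this yields $d(W^1_1(y), W^3_1(z)) \gtrsim |\Delta(x; u_1, u_3)|$; for $L$ chosen large enough, the ``strictly higher order'' terms and the two explicit template corrections are dominated by every monomial $u_1^i u_3^j$ with $i + j < \ell$.

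\textbf{Step 2 (QNI via non-compatibility).} Suppose some pair $(i, j)$ with $1 \leq i + j < \ell$ has $c_{ij}(x) \neq 0$ on a positive-measure set. Lusin's lemma yields $\delta, \eps > 0$ and a compact set $\cP \subset M$ with $\mu(\cP) > 1 - \eps$ on which $|c_{ij}(x)| > \delta$ together with all uniform bounds required by Step~1. For $x$ satisfying $x, f^k(x), f^{-k}(x) \in \cP$ and for $(y, z) \in W^{3,k}_1(x) \times W^{1,k}_1(x)$ in an asymptotically full-measure subset, the normal-form scales give $|u_1| \asymp e^{-\chi_1 k}$ and $|u_3| \asymp e^{-|\chi_3| k}$ up to subexponential factors, so
\[ d(W^1_1(y), W^3_1(z)) \gtrsim \delta\, |u_1|^i |u_3|^j \gtrsim \delta\, e^{-(i \chi_1 + j|\chi_3|)k - o(k)}. \]
This verifies QNI with $\alpha = i \chi_1 + j |\chi_3| + \eta$ for arbitrarily small $\eta > 0$, contradicting the hypothesized failure of QNI. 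Hence $c_{ij}(x) = 0$ $\mu$-almost everywhere for every $(i,j)$ with $i + j < \ell$, so the given $L$-good charts form an $\ell$-compatible family, as required.

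\textbf{Main obstacle.} The bulk of the work is Step~1: showing that $c_{ij}(x)\, u_1^i u_3^j$ with $i + j < \ell$ is the genuine dominant contribution to $\Delta(x; u_1, u_3)$. This dictates choosing $L$ in terms of $\ell$, $\chi_1$, and $|\chi_3|$ so as to suppress both the explicit template corrections $\tau^L_x(u_3)\, u_1^{L+1}$ and $\cT^L_x(u_1)\, u_3^{L+1}$ and all the higher-order Taylor terms of $H_x^{-1}$ off the $\{t_2 = 0\}$ plane (which affect the transported curve $W^3_1(z)$). A Lusin-type reduction is then needed to upgrade the a.e.\ defined quantities (the $c_{ij}$, the templates, the remainders $a_x, b_x$ of Definition~\ref{def.lgoodcharts}, and the bi-Lipschitz constants of $\imath_x, \imath'_x$) to uniform bounds on a large-measure set. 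Confirming that the closest approach of the two curves really is at $(s, s') \approx (u_1, u_3)$ so that $|\Delta|$ lower-bounds the Euclidean distance inside the chart is the last piece of this estimate.
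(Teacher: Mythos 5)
Your overall skeleton is the same as the paper's: study the second coordinate of the transition map $H_x=(\imath'_x)^{-1}\circ\imath_x$ restricted to $\{t_2=0\}$ (your $\sum c_{ij}t_1^it_3^j$ is the paper's $\psi_x$, your vanishing condition is the paper's $I_x=\emptyset$), and prove the contrapositive: a nonvanishing low-order mixed jet forces QNI, with $L$ chosen large so that the order-$(L+1)$ template corrections are negligible. However, Step 2 as written has a genuine gap. You fix one pair $(i,j)$ with $c_{ij}\neq 0$ and claim $|\Delta|\gtrsim\delta\,|u_1|^i|u_3|^j$ on most of the box $|u_1|\lesssim e^{-\chi_1k}$, $|u_3|\lesssim e^{-|\chi_3|k}$, but the other nonzero coefficients $c_{i'j'}(x)$ (which also vary measurably in $x$) are simply ignored. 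At the \emph{symmetric} scales dictated by Definition \ref{def.QNI} with $f^{\pm k}(x)\in\cP$, two monomials can be of comparable exponential size whenever $i\chi_1+j|\chi_3|\approx i'\chi_1+j'|\chi_3|$, and then they can cancel along a fixed curve in the $(u_1,u_3)$-box; since the conditional measures $\mu^1_x,\mu^3_x$ are arbitrary non-atomic measures, you cannot discard a neighbourhood of that curve as having small conditional measure. This is exactly why the paper does not use the symmetric formulation directly: it proves the asymmetric-time characterization of QNI (Lemma \ref{lem altdefQNI}) and chooses the ratio $V=k_2/k_1$ so that, on the admissible range of $V'=\log|s_3|/\log|s_1|$, the set $\{a+bV':(a,b)\in I_x\}$ has a \emph{unique} minimum (possible because of \eqref{eq Ixstructure}); only then does a single monomial strictly dominate the whole jet and yield the clean lower bound \eqref{eq lowerboundofpsix}. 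Without some substitute for this resonance-avoidance step your inequality $d\gtrsim\delta|u_1|^i|u_3|^j$ is not justified.

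A second, related problem is the scale bookkeeping. The jet $\psi_x$ must be evaluated at the chart coordinates $(s_1,s_3)$ of the near-intersection, and these are related to the dynamical parameters $(t_1,t_3)$ (equivalently your $(u_1,u_3)$) only through the H\"older continuity of $E^1,E^3$ on Pesin sets: the paper only gets $C_1^{-1}|t_1|^{1/\delta}\leq|s_1|+|r_1|\leq C_1|t_1|^{\delta}$ and $|s_3|\leq C_1|t_3|^{\delta}$, cf. \eqref{eq ratio t1t3}, not your asserted $(s,s')\approx(u_1,u_3)$ with the resulting exponent $i\chi_1+j|\chi_3|$. Consequently the admissible window for the time ratio must be widened by factors $\delta^{\pm 2}$, the final exponent is of the form $K/\delta$ rather than $i\chi_1+j|\chi_3|$, and the threshold for $L$ must be taken relative to $K/\delta^2$ and $V$, not merely to $\ell$ and $\chi_1/|\chi_3|$. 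You flag the "closest approach" issue as the main obstacle, but resolving it requires precisely these H\"older-corrected estimates together with the case distinction (as in the paper, whether the vertical displacement $r_1$ is large or small compared to $|t_1|^{1/\delta}$), and it interacts with the resonance issue above; as it stands the proposal does not contain the ideas needed to close either point.
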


This will be shown in \S\ref{s.compatible}. In \S\ref{ss.uniformversions} we discuss and prove some uniform versions of these results when the diffeomorphism is (uniformly) partially hyperbolic. 
\subsection{Applications} 

We restate here a consequence of the main result of \cite{Katz}.  

\begin{theorem}[Katz \cite{Katz}] \label{thm Katz}
Assume that $\mu$ is an ergodic partially hyperbolic measure with $\chi_2>0$ which is a $uu$-state and verifies the QNI property. Then, $\mu$ is SRB. 
\end{theorem}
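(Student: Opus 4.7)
The plan is to follow the strategy of Katz, which adapts the exponential drift / factorization scheme of Eskin--Mirzakhani and Eskin--Lindenstrauss to the partially hyperbolic setting. The goal is to upgrade absolute continuity of $\mu^1_x$ along the strong unstable leaves $\cW^1$ to absolute continuity of $\mu^{12}_x$ along the 2-dimensional Pesin unstable leaves $\cW^{12}$. Since $\chi_2>0$, these latter leaves exist $\mu$-a.e., and the QNI hypothesis is precisely the quantitative non-integrability input that replaces the role played by unipotent dynamics in the homogeneous setting.

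The first step is to set up the coupling. For a $\mu$-typical pair $(x,x')$ with $x' \in W^3_{\mathrm{loc}}(x)$ (a short stable pairing), and for large times $k$, one considers the push-forwards $f^k_*\mu^1_x$ and $f^k_*\mu^1_{x'}$ and compares them via the $0$-good chart coordinates established in \S\ref{s.cocyclenormalforms}. The QNI property (Definition \ref{def.QNI}) ensures that for a positive proportion of such pairs one can find nearby returns where the image leaves sit at distance $\gtrsim e^{-\alpha k}$ inside a neighbourhood of size $1$, so the difference is not swallowed by diffusion. Rescaling by the dynamical unit scale produces a macroscopic transverse displacement, and one records the displacement vector as a point in the center direction (this uses that $\chi_2>0$ so the center stretches along with $\cW^1$).

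The second step is the factorization argument. One considers joinings of $\mu$ with itself over the stable coupling and applies a Chung--Erd\H{o}s / martingale style estimate to deduce that, conditioning on the trajectories staying in a good set $\cP$ of large measure, the distribution of the transverse displacement at time $k$ does not concentrate on any proper algebraic subset and converges, after suitable normalization, to a non-trivial measure on the center direction. Passing to a weak-$\ast$ limit of the resulting couplings and using that $\mu^1_x$ is already Lebesgue (Proposition~\ref{p.ustate}), one obtains an $f$-invariant measure $\bar\nu$ on a product neighbourhood which (i) projects to $\mu$ on each factor and (ii) is invariant under a one-parameter family of translations in the center direction. Combined with the Lebesgue property along $\cW^1$, this produces absolute continuity of the conditional measures on $\cW^{12}$-plaques; by Ledrappier--Young this is equivalent to $\mu$ being SRB.

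The hardest step is running the drift/factorization argument itself, where one must show that the transverse displacements produced by QNI are \emph{equidistributed} rather than merely nonzero: the qualitative bound \eqref{eq:QNI} has to be promoted to a quantitative non-concentration statement for the induced cocycle on the center bundle. This is precisely the content of Katz's work and uses the $1$-dimensionality of $E^c$ together with the matching exponents hypothesis ($\chi_2>0$) to ensure that the normalization giving the limiting translation invariance is well-defined. Once this machinery is in place, the remaining ingredients, namely Lusin-type control on the coupling times and the passage from translation invariance to absolute continuity along $\cW^{12}$, are standard; the full details are carried out in \cite{Katz}.
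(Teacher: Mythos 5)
Your proposal is, in substance, the same as the paper's treatment: the paper does not prove this theorem at all, but states it as a quotation of the main result of \cite{Katz}, and your outline of the exponential drift/factorization scheme explicitly defers the decisive step (upgrading the QNI lower bound to the non-concentration/equidistribution statement needed for the extra invariance) to \cite{Katz} as well, so both arguments ultimately rest on the same black box. The one piece of actual content the paper adds that you omit is the reduction from the flow setting of \cite{Katz} to diffeomorphisms: Katz proves the result for flows, and the paper passes to the suspension flow of $f$ to deduce the statement quoted here; if you want your sketch to match the cited reference precisely, you should include that step (and note that non-degeneracy of $\mu$, needed in the scheme, follows from $\chi_2>0$ via the Ledrappier--Young entropy formula, as in Remark~\ref{rem.nondeg}).
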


 The measure $\mu$ in Theorem \ref{thm Katz} is  clearly non-degenerate by Ledrappier-Young's entropy formula (cf. Remark \ref{rem.nondeg}). 
It is worth pointing out that in \cite{Katz} the flow case is treated. Note that for diffeomorphisms one can take the suspension flow and the results from \cite{Katz} will apply and thus give the statement we just quoted. 

Therefore, our main technical statement has the following consequence: 

\begin{cor}\label{cor-dich}
Assume that an ergodic partially hyperbolic measure $\mu$  is a $uu$-state with $\chi_2>0$, then either $\mu$ is SRB or  $\mu$ is jointly integrable up to order $\ell$ for every $\ell>0$. 
\end{cor}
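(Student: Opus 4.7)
The plan is to combine the main technical result of the paper with Katz's theorem and with the implication from compatibility to joint integrability. The corollary should fall out as essentially a direct chain of implications: the dichotomy of Theorem \ref{teo.maintechnical} splits ergodic non-degenerate partially hyperbolic measures into QNI measures (which yield SRB via Katz) and measures admitting compatible good charts (which yield joint integrability of every order via Proposition \ref{prop.compatibleimpliesjoint}).

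First I would verify the hypothesis of Theorem \ref{teo.maintechnical} is satisfied, namely that $\mu$ is non-degenerate. This is exactly Remark \ref{rem.nondeg}: since $\chi_2 > 0$ we have $\chi_2 > -\chi_1$, so the Ledrappier--Young entropy formula applied to the $uu$-state $\mu$ forces $\hat\mu^1_x$ and $\hat\mu^3_x$ to be atomless for $\mu$-almost every $x$ (in fact $\hat\mu^1_x$ is Lebesgue by Proposition \ref{p.ustate}, and atomlessness of $\hat\mu^3_x$ comes from positivity of the unstable dimension in Ledrappier--Young).

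Next I would apply Theorem \ref{teo.maintechnical} to obtain the dichotomy: either $\mu$ has the QNI property, or $\mu$ admits compatible good charts. In the first case, Theorem \ref{thm Katz} of Katz (applied in the diffeomorphism setting, either directly or via suspension as noted in the paper) tells us that the ergodic $uu$-state $\mu$ with $\chi_2>0$ and QNI must be SRB, giving the first alternative of the corollary.

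In the second case, Proposition \ref{prop.compatibleimpliesjoint} shows directly that admitting compatible good charts implies joint integrability up to order $\ell$ for every integer $\ell>0$, giving the second alternative. There is no real obstacle here — the only thing to double-check is that the notion of ``jointly integrable up to order $\ell$'' used in Proposition \ref{prop.compatibleimpliesjoint} (Definition \ref{defi.jointorderell-meas}) matches what is asserted in the corollary statement; since the corollary speaks of partially hyperbolic \emph{measures}, this is indeed the correct measurable version of the definition, and nothing further is required.
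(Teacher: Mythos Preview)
Your proposal is correct and matches the paper's intended argument: the corollary is stated without proof, immediately after Katz's theorem, as a direct consequence of combining Theorem \ref{teo.maintechnical} (non-degeneracy being ensured by Remark \ref{rem.nondeg}), Theorem \ref{thm Katz}, and Proposition \ref{prop.compatibleimpliesjoint}, exactly as you lay out.
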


\subsection{Uniform versions of the results}\label{ss.uniformversions}
We come back to the context of the introduction. 

\begin{remark}\label{rem.nf}
We will use normal form coordinates for points in $\Lambda$ as in \S\ref{ss.nf}. In the setting of one dimensional stable and unstable manifolds we are working on, it turns out that one can choose the normal form coordinates to vary continuously on the point $x \in \Lambda$. This will be relevant for our statements, and in higher dimensions presents a challenge to generalize our results. See \cite{KS} and references therein. 
\end{remark}

The results announced in the introduction are not a direct consequence of their measurable counterparts stated in the previous subsections due to the fact that the estimates are measurable functions instead of continuous ones. In order to obtain the continuous version, it is just necessary to check that the arguments in the measurable case do give uniform estimates when necessary since there is a continuous invariant splitting to start with. We will explain this in \S\ref{s.uniformversions} (pointing out how the arguments simplify in some places for the uniform case). Here we will provide the corresponding definitions and main statements for the convenience of the reader. 

We will consider a continuous orientation on $E^i$ up to finite cover\footnote{\label{footnote7} Note that if $\Lambda \subset M$ is not everything, there many not be a finite cover of $M$ that orients the bundles (e.g. the Plykin attractor). However, we are only interested on the dynamics in a neighborhood of $\Lambda$ and one can always find a finite cover of such neighborhood with the desired properties. Note that this is just a notational issue, to avoid having to add $\pm$ signs in each equation.} and the induced unit vector fields $e^i(x)$. For $x \in \Lambda$ and $i \in  \{1,2, 3\}$, we define $ \lambda_x^i \in \pm \|D_xf|_{E^i(x)}\|$ by equation:
\begin{equation}\label{eq:exponentpointuni}
 D_xf(  e^i(x) ) =  \lambda_x^i e^i(f(x)),
\end{equation}
\noindent where $E^1 = E^u$, $E^2=E^c$ and $E^3=E^s$. We know by the choice of the Riemannian metric that these are continuous functions which verify that $|\lambda_x^1 |> |\lambda_x^2| > |\lambda_x^3|$ as well as $|\lambda_x^1|>1>|\lambda_x^3|$. We consider the laminations $\cW^1, \cW^3$ tangent respectively to $E^1$ and $E^3$ given by the stable manifold theorem with their corresponding normal form coordinates (cf. Remark \ref{rem.nf} or Proposition \ref{prop-normalforms-uniform} below).

The notion of quantitative non-joint integrability which one obtains in the uniform case is also a bit stronger due to the uniform assumptions.

We have the following result: 

\begin{teo}\label{teo.uniformversion}
Let $\Lambda$ be a partially hyperbolic set of a smooth diffeomorphism $f$. Then, if there is a fully supported  non-degenerate measure  $\mu$ on $\Lambda$ which does not have the QNI property, then the set $\Lambda$ is jointly integrable up to order $\ell$ for every $\ell>0$ (cf. Definition \ref{defi.jointorderell}). 
\end{teo}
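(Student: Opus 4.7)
The plan is to reduce to Theorem~\ref{teo.maintechnical} applied to $\mu$, and then use the full support of $\mu$ together with the continuous normal forms available on $\Lambda$ (Proposition~\ref{prop-normalforms-uniform}) to upgrade the measurable conclusions to continuous ones that hold on all of $\Lambda$. Since $\mu$ is non-degenerate and does not have QNI, Theorem~\ref{teo.maintechnical} produces, for every $L$, families of $L$-compatible good stable and unstable charts $\{\imath_x\}$, $\{\imath'_x\}$ defined on a full $\mu$-measure invariant set $\Omega_L \subset \Lambda$, satisfying the compatibility relation \eqref{eq:compatible}.

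The first step is to extend these charts continuously to all of $\Lambda$. The key observation is that $\ell$-goodness at $x$ is a dynamical condition on the lifted cocycle $F_x$ along $W^1_{\mathrm{loc}}(x)$ (concretely, the polynomial form of $\partial_3^{\ell+1} F_{x,2}(\cdot,0,0)$ demanded in \eqref{eq:goodcoordinate}). Under the continuous normal forms of Proposition~\ref{prop-normalforms-uniform}, the cocycles $F_x$ and their derivatives depend continuously on $x \in \Lambda$, so the polynomial condition is closed in $x$; likewise for the compatibility relation \eqref{eq:compatible}. Since $\mu$ is fully supported, $\Omega_L$ is dense in $\Lambda$, and the charts together with their defining properties extend by continuity to every $x \in \Lambda$. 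This is the content of the uniform versions of Theorems~\ref{teo.maintechnical1} and \ref{teo.maintechnical2} to be developed in \S\ref{s.uniformversions}.

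With continuous $L$-compatible good charts in hand, I would re-run the Whitney extension argument of Proposition~\ref{prop.compatibleimpliesjoint} in the continuous setting. Fix $\ell$ and take $L$ large. The transition map $(\imath'_x)^{-1}\circ \imath_x$, the component $h^x_2$ bounded in \eqref{eq h2bound}, and the boundary data $u^x_{i,j}$ of \eqref{eq varphiijx} all depend continuously on $x \in \Lambda$, and the constants $C_x, C'_x$ together with the radii $\rho_x$ can be chosen uniformly on the compact set $\Lambda$. Theorem~\ref{thm WET}, combined with the Fefferman selection of \cite{Fe}, then produces a continuous family $\{\cS_x\}_{x \in \Lambda}$ of $C^\ell$ surfaces of uniform radius $\rho>0$ with $\cW^u_{\rho}(x) \cup \cW^s_{\rho}(x) \subset \cS_x$; the tangency-to-order-$\ell$ clause of Definition~\ref{defi.jointorderell}(ii) is closed in $x$ and $y$, so it passes from $\Omega_L$ to every $x \in \Lambda$ and every $y \in \cW^u_\rho(x)\cap \Lambda$ (respectively $y \in \cW^s_\rho(x)\cap \Lambda$).

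The main obstacle I expect is precisely this continuous dependence in the Whitney extension: in Proposition~\ref{prop.compatibleimpliesjoint} the construction was obtained measurably via a Lusin argument, and to promote it to continuity on all of $\Lambda$ one must verify that the constants in \eqref{eq boundforvarphiij} and in Theorem~\ref{thm WET} can be chosen uniformly in $x$. This is exactly where uniform partial hyperbolicity and full support of $\mu$ combine: uniformity on the compact set $\Lambda$ replaces the measurable Lusin bounds, and the linearity of the Whitney--Fefferman construction in the data $(u^x_{i,j})$ (as noted in the proof of Proposition~\ref{prop.compatibleimpliesjoint}) yields continuity in $x$ with no loss in the uniform constants, completing the proof of Theorem~\ref{teo.uniformversion}.
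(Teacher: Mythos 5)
There is a genuine gap at the central step, namely your mechanism for passing from the measurable conclusion to a statement on all of $\Lambda$. You propose to take the $L$-compatible good charts furnished by Theorem~\ref{teo.maintechnical} on a full-measure set $\Omega_L$ and ``extend them by continuity'' using full support, on the grounds that the defining conditions are closed in $x$ because the normal forms of Proposition~\ref{prop-normalforms-uniform} are continuous. This does not work as stated: the charts $\imath_x$ produced by the measurable theory (Proposition~\ref{p.CNF}) are built from Lyapunov norms, Pesin-theoretic trivializations and infinite series whose convergence is governed by non-uniform (tempered, Birkhoff-average) constants, so the family $x\mapsto\imath_x$ is only measurable and in general has no continuous version on any dense set. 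Moreover, the polynomial condition \eqref{eq:goodcoordinate} and the compatibility relation \eqref{eq:compatible} are conditions on $F_x=\imath_{f(x)}^{-1}\circ f\circ\imath_x$ and on $(\imath'_x)^{-1}\circ\imath_x$, i.e.\ on the charts themselves; continuity of the one-dimensional parametrizations $\Phi^1_x,\Phi^3_x$ from Proposition~\ref{prop-normalforms-uniform} gives no continuity of $F_x$ in $x$, so ``closedness in $x$'' of these conditions is not available, and neither is the uniformity of the constants you invoke for the Whitney step (the tangency clause with a point-dependent constant $C$ is not a closed condition either).

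The paper's route is different at exactly this point: it does not extend the measurable charts, it rebuilds them at the uniform level. On a uniformly partially hyperbolic set the relevant two-dimensional cocycles carry a continuous dominated splitting, so the uniform cocycle normal forms (Propositions~\ref{p.oseledetssmooth.uniform}, \ref{p.CNF.uniform}, \ref{p.CNFuniform}) produce $0$-good \emph{uniform} charts depending continuously on the base point, and the induction is run on these. Full support of $\mu$ enters only to upgrade ``$\cT^k_x$ is a polynomial for $\mu$-a.e.\ $x$'' (case (i) of Proposition~\ref{p.dichotomytech}) to ``for every $x\in\Lambda$'', which is legitimate there because the uniform templates are continuous thanks to the uniformly bounded smoothness of the invariant leaves; Proposition~\ref{p.CNFuniform} then yields $(k+1)$-good uniform charts. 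If at some order the template fails to be polynomial, the proof of Proposition~\ref{p.qniellgood} applies verbatim and gives QNI for $\mu$, contradicting the hypothesis (this is Lemma~\ref{lem lgoodcharts-unif}). Only after the continuous charts exist for every order does one invoke compatibility (Theorem~\ref{teo.maintechnical}) and rerun the Whitney argument of Proposition~\ref{prop.compatibleimpliesjoint} with continuous data, which is the part of your proposal that is essentially correct. So the final Whitney step of your outline is fine, but the bridge from the measurable Theorem~\ref{teo.maintechnical} to continuous charts on all of $\Lambda$ must go through the uniform normal-form construction, not through a density-plus-closedness extension of the measurable charts.
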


One consequence of this theorem is that having one measure without QNI forces every measure with the same support to have this property:

\begin{defi}\label{defi.topoQNI}
We say that a partially hyperbolic set $\Lambda$ has \emph{topological QNI} if every measure which is fully supported on $\Lambda$ has QNI.  
\end{defi}

Theorem \ref{teo.uniformversion} implies that either $\Lambda$ has topological QNI, or every fully supported measure is degenerate, or $\Lambda$ is jointly integrable up to order $\ell$ for every $\ell$. The second case happens for instance when the set $\Lambda$ is contained in a normally hyperbolic surface tangent to $E^u \oplus E^c$.   

\begin{remark}
We note that it has been proved in \cite{BC} that if $\Lambda$ has no \emph{strong connections} (i.e. for every $x \in \Lambda$ we have that $W^s_{loc}(x) \cap \Lambda = \{x\}$) then  $\Lambda$ is contained in a locally invariant surface, that is, there is a compact surface with boundary $S$ containing $\Lambda$ in its interior and an open neighborhood $U$ of $\Lambda$ in $S$ such that $f(U) \subset S$. In this case, every fully supported measure in $\Lambda$ is degenerate. 
\end{remark}


\begin{figure}[ht]
\begin{center}
\includegraphics[scale=0.71]{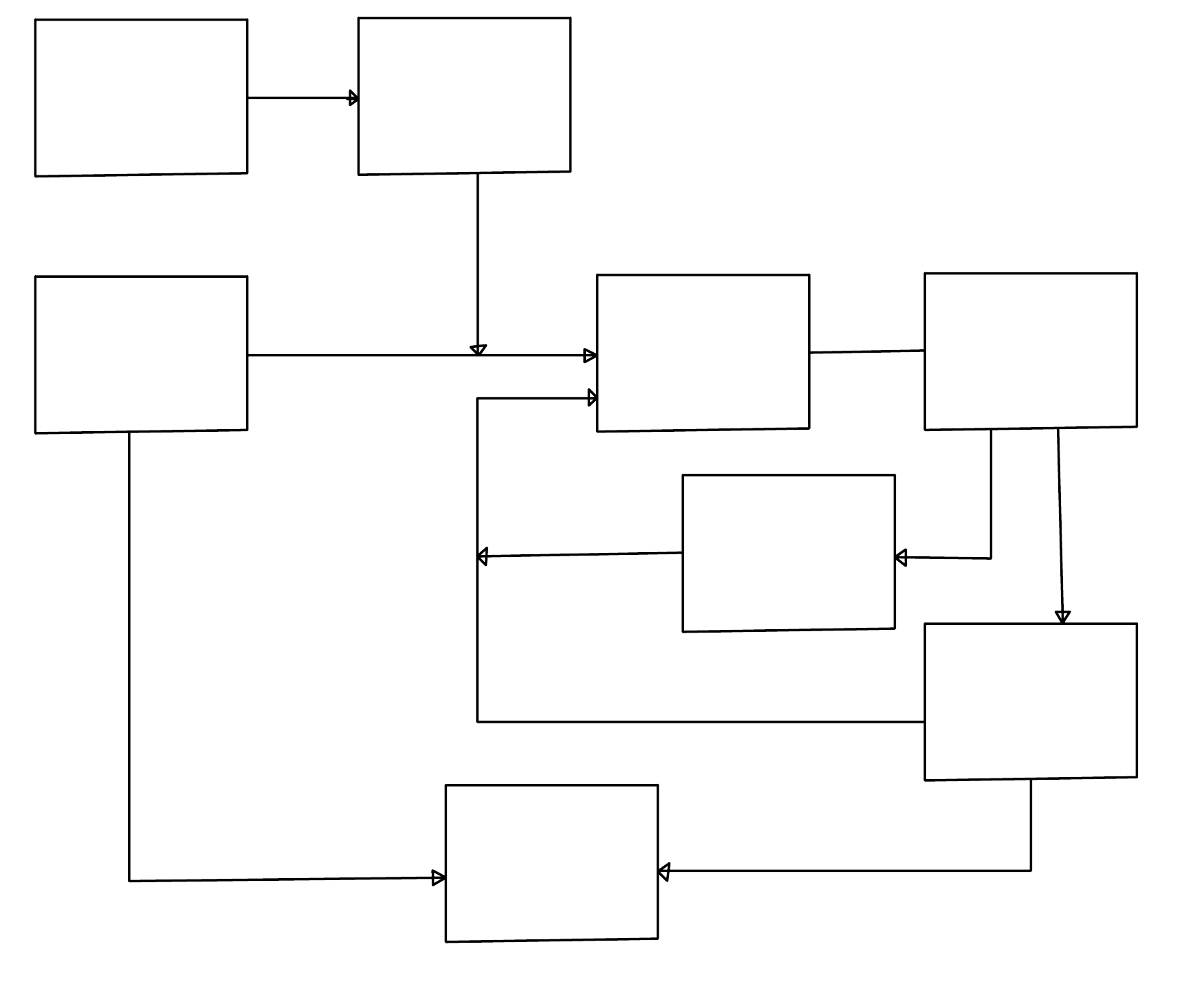}
\begin{picture}(0,0)
\put(-164,292){{\tiny \text{Cocycle NF of}}} 
\put(-162,282){{\tiny $Df$ \text{acting on}}}
\put(-167,272){{\tiny $TM/_{E^1}$ \text{over} $W^1$}} 
\put(-158,262){{\tiny Example \ref{ex.derivative}}} 
\put(-167,215){{\tiny $\cT^0_x$ \text{invariant sect.}}} 
\put(-162,205){{\tiny \text{associated to}}}
\put(-163,195){{\tiny $(E^1\oplus E^3)/_{E^1}$}} 
\put(-164,184){{\tiny \text{Equation} \eqref{eq:highordertemplate}}} 
\put(-95,282){{\tiny \S~\ref{s.cocyclenormalforms}}}
\put(-65,292){{\tiny \text{Polynomial}}} 
\put(-68,282){{\tiny \text{transformation}}}
\put(-64,272){{\tiny \text{law for coord.}}}
\put(-67,262){{\tiny \text{Equation} \eqref{eq:dynamicstemplate}}}
\put(-98,213){{\tiny \text{close to rational}}}
\put(-98,203){{\tiny \text{at some scale}}}
\put(-98,192){{\tiny \text{Proposition} \ref{p.dichotomytech}}}
\put(-28,203){{\tiny $\ell=1$}}
\put(-31,180){{\tiny \text{add} $+1$}}
\put(-28,172){{\tiny \text{to }$\ell$}}
\put(-120,48){{\tiny \text{far from rational}}}
\put(-107,36){{\tiny \S \ref{s.rational} \& \S \ref{s.computations}}}
\put(30,50){{\tiny \text{far from rational}}}
\put(43,38){{\tiny \S \ref{s.rational} \& \S \ref{s.computations}}}
\put(-20,95){{\tiny \text{close to rational at some scale}}}
\put(-5,82){{\tiny \text{Proposition} \ref{p.dichotomytech}}}
\put(-29,44){{\Large \text{QNI}}}
\put(3,215){{\tiny \text{Poly. change of}}} 
\put(8,205){{\tiny \text{coord. to take}}}
 \put(0,195){{\tiny $\cT^{\ell-1}_x$ \text{to zero sect.}}}
 \put(1,185){{\tiny \text{Proposition} \ref{prop.cocycle}}}
 \put(75,205){{\tiny \S \ref{s.cocyclenormalforms}}}
 \put(108,215){{\tiny \text{Cocycle NF}}} 
\put(110,205){{\tiny \text{for} $(\ell+1)$}}
 \put(107,195){{\tiny \text{jets over} $W^1$}}
  \put(103,185){{\tiny \text{Proposition} \ref{p.normalform}}}
   \put(140,165){{\tiny \text{construct}}} 
    \put(140,155){{\tiny \text{good}}}
    \put(140,145){{\tiny \text{charts}}}
\put(140,135){{\tiny \text{Prop.} \ref{p.varphipolyell+1}}}     
\put(99,112){{\tiny $\cT^\ell_x$ \text{invariant sect.}}} 
\put(103,102){{\tiny \text{associated to}}} 
\put(97,92){{\tiny $(\ell+1)$-\text{jets of} $W^3$ }} 
\put(101,82){{\tiny \text{Equation} \eqref{eq:highordertemplate}}}
\put(35,155){{\tiny \text{Polynomial}}}
\put(30,145){{\tiny \text{transf. law for}}}
\put(35,135){{\tiny $(\ell+1)$-\text{jets}}}
\put(30,125){{\tiny  \text{Equation} \eqref{eq:dynamicstemplate}}}

\end{picture}
\end{center}
\vspace{-0.5cm}
\caption{{\small Schematic illustration of the proof of Theorem \ref{teo.maintechnical1}}}\label{fig-flowchart3}
\end{figure}

%
\section{Existence of normal coordinate charts and cocycle normal forms}\label{s.cocyclenormalforms}
In this section we prove Proposition \ref{p.existnormalcoord} and Proposition \ref{p.normalform}.  We restate some results which are done in more generality in \cite[Appendix A]{BEFRH} but in a somewhat different form. 

\subsection{Cocycle normal forms}\label{ss.CNF}

Let $f: M \to M$ be a smooth diffeomorphism preserving a partially hyperbolic measure $\mu$. We let $\cE \to M$ be a (measurable) vector bundle over $(M,\mu)$ and $A: \cE \to \cE$ a vector bundle automorphism that lifts $(f,\mu)$ (i.e. for $\mu$-a.e. $x \in M$ we have that $A_x:=A|_{\cE_x} : \cE_x \to \cE_{f(x)}$ is a linear automorphism). 

We will be concerned only with two dimensional vector bundles (i.e. $\mathrm{dim}(\cE_x) = 2$ for $\mu$-a.e. $x \in M$). We refer to \cite[Appendix A]{BEFRH} for more general results. 

We say that a bundle $\cE$ is \emph{smooth along unstable manifolds} if for $\mu$-a.e. $x$, the restriction of $\cE$ to $\cW^1(x)$  is smooth.
In this case,  a \emph{smooth trivialization along unstable manifolds} of $\cE$ is a family of pairs  $\cY = \{ \cY_x= ( \xi_x,\xi^\perp_x ) \}_{x \in M}$  such that for $\mu$-a.e. $x \in M$, $\xi_x, \xi_x^{\perp}: (-\|Df\|, \|Df\|) \to \cE$ are smooth maps such that $\xi_x(t), \xi^\perp_x(t)$ are linearly independent vectors in $\cE_{\Phi^1_x(t)}$. 

 \begin{remark}\label{rem.smoothunstables}
We note that for a partially hyperbolic measure $\mu$ almost every point has a well defined strong unstable manifold, however, not every point in this manifold is generic with respect to $\mu$.
\end{remark}

We start by presenting an example which corresponds to the first step of our induction.

\begin{ej}\label{ex.derivative}
Consider the two dimensional vector bundle $\cE \to M$ defined for $\mu$-a.e. $x\in M$ as the quotient $\cE_x = T_xM/_{E^1(x)}$.  We fix a non-degenerate inner product on $\cE_x$ on each $x$ which we denote by  $\langle \cdot, \cdot \rangle_{\cE}$ and which is smooth along unstable manifolds $W^1_1$. 
Clearly, since the cocycle $Df$ preserves $E^1$, it induces a vector bundle automorphism $A$ given by:
$$A [v] = [D_xf (v)] = D_{x} f( v ) + E^{1}(f(x)) \in T_{f(x)} M /_{E^{1}(f(x))}$$
  where $v \in T_xM$ and $[v] \in T_xM/_{E^1(x)}$ denotes $v + E^1(x)$ the representative of $v$ in the quotient.

We note that for $\mu$-a.e. $x \in M$, the restriction $\cE_x$ of the bundle $\cE$ to $W^1_1(x)$ is a smooth vector bundle because the local unstable manifold is smooth.

We choose a trivialization of $\cE$ as follows.
We choose a smooth map  $\xi_x: (-\|Df\|, \|Df\|) \to \cE$ such that 
\aryst
\Phi^1_x(t) \mapsto \xi_x(t) \in (E^{1}(\Phi^1_x(t)) \oplus E^2(\Phi^1_x(t)))/_{E^1(\Phi^1_x(t))}
\earyst
 is a section of the bundle $\cE \to M$.  The existence of such smooth map $\xi_x$ is guaranteed by the fact that the weak-unstable bundle $E^1 \oplus E^2$ of the Oseledets decomposition is smooth along strong unstable manifolds (see Proposition \ref{p.oseledetssmooth}). 
We let $\xi^\perp_x: (-\|Df\|, \|Df\|) \to \cE$ be a smooth map such that 
$\xi^\perp_x(t)$ is a unit vector in $\cE_x$, and $\langle \xi^\perp_x(t),   \xi_x(t) \rangle_{\cE} = 0$.

 In this way, if we set $\cY_x= ( \xi_x, \xi_x^\perp )$, then we can write the matrix corresponding to the action of $Df$ from $\cE_{\Phi^1_x(t)}$ to $\cE_{\Phi^1_{f(x)}(\lambda_{1,x}t)}$ in the basis $\cY_x(t)= ( \xi_x(t), \xi_x^\perp(t) )$ and $\cY_{f(x)}(\lambda_{1,x}t) = (  \xi_{f(x)}(\lambda_{1,x} t), \xi_{f(x)}^\perp(\lambda_{1,x} t) )$, for $t \in (-1,1)$,  as
$$A^\cY (t)  := \begin{pmatrix} \alpha_x(t) & r_x(t) \\  0 & \beta_x(t) \end{pmatrix} $$
\noindent where $\alpha_x, r_x, \beta_x$ are smooth functions.   \finobs \end{ej}

We can write the vector bundle automorphism $A$ in a smooth trivialization as a measurable function
\aryst
A^\cY : M \times (-1,1) \to \mathrm{GL}_2(\RR) 
\earyst
such that $A^\cY(x, t)$ denotes the matrix associated to $A_{\Phi^1_x(t)}$ from the ordered basis $( \xi_x(t), \xi^\perp_x(t) )$ to the ordered basis $( \xi_{f(x)}(\lambda_{1,x} t), \xi_{f(x)}^\perp (\lambda_{1,x}t) )$. 

We say that the vector bundle automorphism $A$ is \emph{smooth along unstable manifolds} if there is a smooth trivialization of $\cE$ such that for $\mu$-a.e. $x\in M$ the entries of $A^\cY(x, \cdot)$ are smooth functions of $t$. Note that if there is one such trivialization, the same holds for all smooth trivializations.

The following is the main result from normal forms for cocycles which are smooth along unstable manifolds that we will need. We refer the reader to Appendix \ref{ss.apcocycles} for more discussion on the notions of smoothness along strong unstables and exponents of cocycles. 

\begin{prop}\label{p.CNF} 
Let $\mu$ be a partially hyperbolic measure of a diffeomorphism $f: M \to M$, let $\cE \to M$ be a $\mu$-measurable two dimensional vector bundle which is smooth along unstable manifolds, and let $A$ be a linear cocycle over $(f,\mu)$ which is smooth along unstable manifolds and has exponents $\alpha > \beta$.  Then there exists $\cY = \{\cY_x= (\xi_x, \xi^\perp_x ) \}_{x \in M}$, a smooth trivialization along unstable manifolds of $\cE$, satisfying that for $\mu$-a.e. $x\in M$ we have that for $t\in (-1,1)$
\begin{equation}\label{eq:CNF}
 A^\cY (x, t) = \begin{pmatrix} \alpha_x & p_x(t) \\ 0 & \beta_x \end{pmatrix}
\end{equation}
\noindent where $\alpha_x$ and $\beta_x$ depend measurably on $x$ such that $\int \log \alpha_x d\mu(x)= \alpha$ and $\int \log \beta_x d\mu(x) = \beta$ and $p_x: (-1,1) \to M$ is a polynomial of degree at most $d$, where $d$  depends only on $\chi_1, \alpha, \beta$.
\end{prop}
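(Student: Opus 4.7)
The plan is to follow the cocycle normal form scheme on strong unstable leaves from \cite[App.~A]{BEFRH} (in the tradition of Kalinin--Sadovskaya), specialized to our $2$-dimensional setting. There are three natural stages: produce an $A$-invariant splitting of $\cE$ that is smooth along $W^1$; rescale within each subbundle to make the diagonal entries of $A^\cY$ independent of $t$; and finally use an additive gauge change in the complement to kill the non-polynomial part of the off-diagonal entry.

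First, since $A$ has simple Lyapunov spectrum $\alpha > \beta$, Oseledets gives a measurable $A$-invariant decomposition $\cE_x = L^\alpha_x \oplus L^\beta_x$ on a $\mu$-conull set. The faster subbundle $L^\alpha$ is a forward-attracting fixed point of the projectivized action, so it inherits smoothness along $W^1$ from that of $A$ and $\Phi^1_x$. I would then choose $\xi_x$ to be a smooth non-vanishing section of $L^\alpha$ along $W^1(x)$ and $\xi^\perp_x$ a smooth complement normalized so that $\xi^\perp_x(0) \in L^\beta_x$, putting $A^\cY(x,t)$ into upper-triangular form with smooth entries $\alpha_x(t), r_x(t), \beta_x(t)$. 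Next, a rescaling $\xi_x \mapsto u_x(t)\xi_x$ with $u_x(0) = 1$ transforms the upper-left entry, and demanding that it equal the constant $\alpha_x := \alpha_x(0)$ gives the multiplicative cohomological equation
\begin{equation*}
u_x(t)\,\alpha_x(t) = \alpha_x\, u_{f(x)}(\lambda_{1,x} t),
\end{equation*}
which I solve by iterating in backward time: $t$ is contracted at rate $\lambda_{1,\cdot}^{-1}$, so the resulting telescoping product converges absolutely $\mu$-a.e.\ thanks to the estimate $\log \alpha_y(s) - \log \alpha_y(0) = O(s)$. The symmetric construction on $\cE/L^\alpha$ makes the lower-right entry constant as well; Birkhoff then gives $\int \log \alpha_x\, d\mu = \alpha$ and $\int \log \beta_x\, d\mu = \beta$.

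The crux is the final stage. With the diagonal now constant, an additive change $\xi^\perp_x \mapsto \xi^\perp_x + h_x(t)\xi_x$ affects only the off-diagonal via
\begin{equation*}
\tilde r_x(t) = r_x(t) + \alpha_x h_x(t) - \beta_x h_{f(x)}(\lambda_{1,x} t).
\end{equation*}
Setting $d := \lceil (\alpha-\beta)/\chi_1\rceil$, I take $p_x(t)$ to be the degree-$d$ Taylor polynomial of $r_x$ at $0$ and $R_x(t) := r_x(t) - p_x(t) = O(t^{d+1})$ uniformly on $[-1,1]$. I solve $\alpha_x h_x(t) - \beta_x h_{f(x)}(\lambda_{1,x} t) = -R_x(t)$ by iterating in backward time, producing
\begin{equation*}
h_x(t) = \sum_{j \ge 1} \frac{\alpha_{f^{-1}(x)} \cdots \alpha_{f^{-j+1}(x)}}{\beta_{f^{-1}(x)} \cdots \beta_{f^{-j}(x)}}\, R_{f^{-j}(x)}\big(\lambda_{j,x}^{-1} t\big),
\end{equation*}
which converges because the Lyapunov factor growth $\sim e^{j(\alpha-\beta)}$ is dominated by the Taylor contraction $|\lambda_{j,x}^{-1}t|^{d+1} \sim e^{-j(d+1)\chi_1}|t|^{d+1}$ exactly when $(d+1)\chi_1 > \alpha - \beta$. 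This forces $\tilde r_x \equiv p_x$, a polynomial of degree $\le d$, which completes the normalization.

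The main obstacle I anticipate is controlling the series for $h_x$ in the last step. One has to iterate in the backward direction -- forward iteration fails because $\lambda_{n,x} t$ exits the domain where the Taylor remainder bound is valid -- and the choice of $d$ must be tuned precisely so that the $t$-contraction beats the $(\alpha/\beta)$-growth along the orbit; this quantitative balance is exactly what pins the dependence $d = d(\chi_1, \alpha, \beta)$ in the conclusion. Measurability of $x \mapsto (u_x, h_x)$ follows at each step from measurability of the pointwise data together with the absolute convergence of the defining series.
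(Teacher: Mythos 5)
Your proposal is correct and takes essentially the same route as the paper's proof: first the upper-triangular trivialization coming from the fast Oseledets line field along unstable leaves (this is exactly Proposition~\ref{p.oseledetssmooth}, which you assert with a one-line justification just as the paper outsources it to the appendix), then the two backward-iteration cohomological equations of Claims~\ref{claim-firstcoordinate} and~\ref{c.polinomial}, with the same series solution for the gauge function and the same balance $(d+1)\chi_1>\alpha-\beta$ determining the degree bound. Your transformation law, solution formula and convergence analysis coincide with equations \eqref{eq:solveu}--\eqref{eq:solveu2} of the paper, so no further changes are needed.
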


\begin{proof}
Being smooth along unstable manifolds, we can define a  measurable non-degenerate inner product $\| \cdot \|_{\cE}$ on the fibers of $\cE$ which is smooth along unstable manifolds.  

By Proposition \ref{p.oseledetssmooth},
there exists a family of smooth trivializations $\cY = \{ \cY_{0,x}= (\xi_{0,x}, \xi_{0,x}^\perp) \}_{x \in M}$ such that the cocycle $A^{\cY_0}$ is upper triangular along unstable manifolds. That is, there are smooth functions $\alpha_x, \beta_x, r_x: (-1,1) \to \RR$ such that:  
\begin{equation}\label{eq:CNF0}
 A^{\cY_0} (x, t) = \begin{pmatrix} \alpha_x(t) & r^0_x(t) \\ 0 & \beta_x(t) \end{pmatrix}.
\end{equation} 
We remark that, by uniqueness, $\xi_{0,x}(t)$ belongs to the  Oseledets bundle associated to the exponent $\alpha$ in the fiber $\cE_{\Phi^1_x(t)}$ (whenever it is defined\footnote{Note that actually, the bundle associated to $\alpha$ is defined on backward regular points, so it would make sense to say it is well defined for all $t$, though we will not use this fact in the proof.}). 

We now make the diagonals to be constant. 

\begin{claim}\label{claim-firstcoordinate}  
There exist a measurable family of smooth functions $\{a_x: (-1,1)\to \RR_{>0} \}_{x \in M}$, and a   measurable  family of smooth functions $\{a^\perp_x : (-1,1) \to \R_{>0} \}_{x \in M}$ such that the following is true. 
Let $\xi_{1,x}(t) = a_x(t) \xi_{0,x}(t)$ and $\xi_{1,x}^\perp(t) = a^\perp_x(t) \xi_{0,x}^\perp(t)$. Then $\cY_1 = \{\cY_{1,x}= ( \xi_{1,x}, \xi_{1,x}^\perp) \}_{x\in M}$ is  a smooth trivialization along unstable manifolds of $\cE$ such that
\begin{equation}\label{eq:CNF1}
	A^{\cY_1} (x, t) = \begin{pmatrix} \alpha_x(0) & r_x(t) \\ 0 & \beta_x(0) \end{pmatrix}
\end{equation}
where $\{ r_x: (-1,1) \to \RR \}_{x \in M}$ is a measurable family of smooth functions. Moreover the choice of $\{ a_x \}_{x \in M}$ is unique if we require $a_x(0)=1$ in addition.
\end{claim}

\begin{proof}

The proof is similar to that of the Stable Manifold Theorem. Let us spell out the computations. 

First we construct $\xi_{1,x}$ from $\xi_{0,x}$. We can put coordinates $(t,s)_x$ on the one-dimensional bundle $\RR \xi_{0,x}$ so that $(t,s)_x$ represents the vector $v:= s \xi_{0,x}(t) \in \cE_{\Phi^1_x(t)}$. This way, we can write, for $t \in (- 1,  1)$ and $s \in \RR$: 
$$ \Psi_x (t,s)_x:=  (\lambda_{1,x}t , \alpha_x(t) s)_{f(x)}$$

\noindent that corresponds to the action of $A$ on vectors in the chosen coordinates (from now on we will remove the subindex of the point where the coordinates are chosen in the notation). 

Write $\alpha_x(t) = \pm \mathrm{exp}(b_x(t))$ for some positive smooth function $b_x: (-1,1)\to \RR$ (we will assume from now on that $\alpha_x(t)$ is positive for simplicity).   

We need to find a family of smooth functions $\{  c_x: (-1,1) \to \RR \}_{x \in M}$ so that $c_x(0)=0$ with the property that $\Psi_x(t, \exp(c_x(t))) = (\lambda_{1,x} t, \alpha_x(0) \exp(c_{f(x)}(\lambda_{1,x}t)))$ which we can write as: 
\begin{equation}\label{eq:fiberbunch}
c_{f(x)}(\lambda_{1,x}t ) +  b_x(0) = b_x (t) + c_x(t).
\end{equation}
This holds for almost every $x \in M$, so we can solve $c_x$ as follows: Denote $b_m= b_{f^{-m}(x)}$, $c_m = c_{f^{-m}(x)}$,  $\lambda^-_{1,m}  = \|D_{f^{-m}(x)}f^{m}|_{E^1}\|^{-1} = (\lambda_{1, f^{-1}(x)}^1 \cdots \lambda_{1, f^{-m}(x)}^1)^{-1}$. Then we get that for every $k \in \ZZ_{>0}$ that
\begin{equation}\label{eq:convergence} 
c_{x}(t) = c_{k}(\lambda^-_{1,k} t) + \sum_{j=1}^{k} \left(b_{j}(\lambda^-_{1,j} t) - b_{j}(0) \right). 
\end{equation}
Since $\lambda^-_{1,j}$ tends to $0$ exponentially fast and $c_k$ is smooth satisfying $c_k (0)= 0$, we get that $c_k(\lambda^-_{1,k}t)$ tends to $0$ exponentially fast for a $\mu$-typical $x$ by Birkhoff's ergodic theorem. Similarly, we have that the value of $b_j(\lambda^-_{1,j}t) -b_{j}(0)$ is also exponentially small so that the sum converges uniformly in $t$. Thus for a typical $x$ we have
\aryst
c_{x}(t) = \sum_{j=1}^{\infty} \left(b_{j}(\lambda^-_{1,j} t) - b_{j}(0) \right). 
\earyst
It is clear that the above expression gives the unique solution of equation \eqref{eq:convergence}.
Notice also that the derivatives of $c_x$ can be solved by an analogous computation. Consequently we can show that the functions $c_x$  are $C^\infty$ smooth and derivatives of all orders vary measurably on the point $x$.

To get the family of sections $\xi_{1,x}^\perp$ one argues in the same spirit by looking at the bundle $\cE_{\Phi^1_x(t)}$ quotiented by $\RR\xi_{1,x}(t)$, the same considerations on the smoothness apply. 
\end{proof}

Finally, we will need to add some component of $\xi_{1, x}$ to $\xi_{1, x}^\perp$ in order to change the function $r_x$ in \eqref{eq:CNF1} into a polynomial. In the following let us abbreviate $\alpha_x(0)$ and $\beta_x(0)$ as $\alpha_x$ and $\beta_x$ respectively.

\begin{claim}\label{c.polinomial}
There exists a measurable family of smooth functions $\{u_x: (-1,1) \to \RR\}_{x \in M}$ such that if we take $\xi_x(t) = \xi_{1,x}$ and $\xi_x'(t) = u_x(t) \xi_x(t) + \xi_{1,x}^\perp(t)$ and $\cY = \{\cY_x = (\xi_x, \xi'_x ) \}_{x \in M}$ we get that: 
\begin{equation}\label{eq:CNF2}
 A^{\cY}(x, t) = \begin{pmatrix} \alpha_x & p_x(t) \\ 0 & \beta_x \end{pmatrix}
\end{equation}
\noindent where $p_x$ is a polynomial of degree $\leq d_0$ where\footnote{Caution, here $\alpha$ and $\beta$ denote the Lyapunov exponents of the cocycle, which are the integral of the functions $x \mapsto \log \alpha_x$ and $x \mapsto \log \beta_x$. Similarly, $\chi_1$ denotes the first Lyapunov exponent of $(f,\mu)$ and can be computed as the integral of $x \mapsto \log \lambda_{1,x}$.}  $d_0= \lfloor{\frac{\alpha- \beta}{\chi_1}}\rfloor +1$. 
\end{claim} 

\begin{proof}
Let us then compute the map $A$ in the coordinates $\cY_1$. We have
$$\xi'_x(t) =  u_x(t) \xi_x(t) + \xi^\perp_x(t) \mapsto  (r_x(t) + \alpha_x u_x(t)) \xi_{f(x)} (\lambda_{1,x}t) + \beta_x \xi^\perp_{f(x)}(\lambda_{1,x}t). $$

We can write $r_x(t) = p_x(t) + \hat r_x(t)$ where $p_x(t)$ is a polynomial of degree $\leq d_0$ and $\hat r_x(t) = O(t^{d_0+1})$. We need to solve: 
\begin{equation}\label{eq:solveu}  u_{f(x)}(\lambda_{1,x} t) = \frac{1}{\beta_x} (\hat r_x(t) + \alpha_{x} u_x(t)). 
\end{equation}

Let us then solve $u_x$ formally to see that one can only get a solution for sufficiently large $d_0$. This is why one can only get to remove $r_x(t)$ up to a polynomial of that degree. Write $u_n(t) = u_{f^{-n}(x)}(t)$, $\hat r_n(t) = \hat r_{f^{-n}(x)}(t)$, $\lambda_{1,m}^-=  \|D_{f^{-m}(x)}f^{m}|_{E^1}\|^{-1} = (\lambda_{f^{-1}(x)}^1 \cdots \lambda_{f^{-m}(x)}^1)^{-1}$, $\alpha_n= \alpha_{f^{-1}(x)} \cdots \alpha_{f^{-n}(x)}$ and $\beta_n = \beta_{f^{-1}(x)} \cdots \beta_{f^{-n}(x)}$ so that:
\begin{equation}\label{eq:solveu20} 
u_x(t) =  \frac{\alpha_k}{\beta_k} u_{k}(\lambda_{1,k}^{-} t)  + \sum_{j=1}^{k} \left( \frac{\alpha_{j-1}}{\beta_{j}} \hat r_j(\lambda_{1,j}^- t) \right). 
\end{equation} 

Note that $\frac{1}{j} \log (\lambda_{1,j}^-)^{d}$ converges to $-d \chi_1$ while $\frac{1}{j} \log \frac{\alpha_j}{\beta_j}$ converges to $\alpha-\beta$ we can then choose $d_0$ so that for every $x$ and $d\geq d_0$ we have that $(\lambda_{1,j}^{-})^d \beta_j^{-1} \alpha_{j-1}$ converges exponentially fast to $0$  (uniformly in $t$) as $j \to +\infty$. This happen as long as $d_0$ verifies that $\alpha-\beta - d_0\chi_1 < 0$. 

Using that $\hat r_j(\lambda_{1,j}^-t) = (\lambda_{1,j}^-)^{d_0+1} O(t^{d_0+1})$ we deduce that the function 
\begin{align}  \label{eq:solveu2}
u_x(t) = \sum_{j=1}^{\infty} \left( \frac{\alpha_{j-1}}{\beta_{j}} \hat r_j(\lambda_{1,j}^- t) \right)
\end{align}
is well defined and smooth along unstable manifolds.
\end{proof}

This completes the proof of the proposition. 
\end{proof}

%
%
%
%
%

Using Proposition \ref{p.oseledetssmooth.uniform} instead of Proposition \ref{p.oseledetssmooth} in the above proof, we have the following parallel statement.  We will omit its proof since it is in close parallel to that of Proposition \ref{p.CNF}.

\begin{prop}\label{p.CNF.uniform} 
	Let $f : M \to M$ be a $C^{\infty}$ smooth diffeomorphism preserving a uniform partially hyperbolic set $\Lambda$. Let $\cE \to \Lambda$ be a two-dimensional vector bundle over $\Lambda$, and let $A : \cE \to \cE$ be a bundle automorphism, both of which are smooth along the unstable manifolds. Assume that $A|_{\cE}$ admits a continuous dominated splitting $\cE = E' \oplus E''$, i.e.,  $\| A |_{E'} \|  > \| A|_{E''} \|$ pointwise\footnote{When $E'$ and $E''$ are of higher dimension, the condition for dominated splitting writes $\| A^{-1}|_{E'} \|^{-1} > \| A |_{E''} \|$.}.

Then there exists a continuous family of smooth trivializations
$\cY_0 = \{ \cY_{0,x}= (\xi_{0,x}, \xi_{0,x}^\perp) \}_{x \in \Lambda}$ such that for every $x \in \Lambda$,
\aryst
A^{\cY_0} (x, t) = \begin{pmatrix} \alpha_x & p_x(t) \\ 0 & \beta_x \end{pmatrix}
\earyst
where  $|\alpha_x | = \| A |_{E'(x)} \| $, $| \beta_x| =  \| A |_{E''(x)} \| $, and  $p_x: (-1,1) \to M$ is a polynomial of degree at most $d$, where $d$  depends  only on $f$ and $A$ (but not on $x$).

\end{prop}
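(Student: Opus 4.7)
The proof proceeds in three steps that parallel exactly the three transformations in the proof of Proposition \ref{p.CNF}, with the measure-theoretic arguments replaced by uniform/continuous ones. The key point throughout is that the uniform partial hyperbolicity of $\Lambda$ together with the uniform domination of $\cE = E' \oplus E''$ produce series that converge uniformly in $x \in \Lambda$ (as opposed to $\mu$-a.e. as in Proposition \ref{p.CNF}), so the resulting trivialization depends continuously on $x$.

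First I would invoke Proposition \ref{p.oseledetssmooth.uniform} to obtain an initial continuous family of smooth trivializations $\{(\xi_{0,x}^{\mathrm{pre}},(\xi_{0,x}^{\mathrm{pre}})^{\perp})\}_{x\in \Lambda}$ in which the subbundle $E'$ corresponds to the first coordinate and with respect to which $A$ is upper triangular along $W^1_1(x)$:
\[
A^{\cY_0^{\mathrm{pre}}}(x,t) = \begin{pmatrix} \alpha_x(t) & r_x^0(t) \\ 0 & \beta_x(t) \end{pmatrix},
\]
with $|\alpha_x(0)| = \|A|_{E'(x)}\|$, $|\beta_x(0)| = \|A|_{E''(x)}\|$, and all $t$-derivatives continuous in $x$. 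Second, as in Claim \ref{claim-firstcoordinate}, I would rescale $\xi_{0,x}^{\mathrm{pre}}$ by a positive factor $a_x(t)$ with $a_x(0)=1$ so that the diagonal becomes $t$-independent. Writing $\alpha_x(t)=\pm\exp(b_x(t))$ and $\lambda_{1,j}^-=(\lambda_{1,f^{-1}(x)}\cdots\lambda_{1,f^{-j}(x)})^{-1}$, the required function $c_x(t)=\log a_x(t)$ is given by
\[
c_x(t) = \sum_{j=1}^\infty \bigl( b_{f^{-j}(x)}(\lambda_{1,j}^- t) - b_{f^{-j}(x)}(0)\bigr).
\]
Uniform partial hyperbolicity gives $\lambda_{1,j}^- \leq Ce^{-j\gamma}$ for some $\gamma>0$ independent of $x$, and continuity on the compact set $\Lambda$ yields uniform $C^r$ bounds on $b_x$ for every $r$; therefore the series converges uniformly together with all of its $t$-derivatives and varies continuously in $x$. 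Rescaling $(\xi_{0,x}^{\mathrm{pre}})^\perp$ in the analogous way produces a continuous trivialization $\cY_1$ with cocycle of the form $\begin{pmatrix} \alpha_x & r_x(t) \\ 0 & \beta_x\end{pmatrix}$ where $\alpha_x, \beta_x$ are the desired constants.

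Third, I would eliminate the non-polynomial part of $r_x(t)$ by the shear $\xi'_x(t) = u_x(t)\xi_x(t)+\xi_x^\perp(t)$, as in Claim \ref{c.polinomial}. Decomposing $r_x(t) = p_x(t) + \hat r_x(t)$ with $p_x$ polynomial of degree $\leq d$ and $\hat r_x(t) = O(t^{d+1})$, the conjugating function is the telescoped series
\[
u_x(t) = \sum_{j=1}^\infty \frac{\alpha_{j-1}}{\beta_j}\,\hat r_{f^{-j}(x)}(\lambda_{1,j}^- t),
\]
where $\alpha_j = \alpha_{f^{-1}(x)}\cdots \alpha_{f^{-j}(x)}$ and similarly for $\beta_j$. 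By the domination hypothesis one has a uniform bound $|\alpha_j|/|\beta_j| \leq C e^{j\Delta}$ with $\Delta := \sup_{x\in\Lambda}(\log|\alpha_x|-\log|\beta_x|)$, and by uniform unstable expansion $(\lambda_{1,j}^-)^{d+1}\leq Ce^{-(d+1)j\gamma_1}$ with $\gamma_1 := \inf_{x\in\Lambda}\log\lambda_{1,x} > 0$. Choosing any integer $d$ with $\Delta - (d+1)\gamma_1 < 0$ makes the summands uniformly exponentially small, so $u_x$ exists, is smooth in $t$, and depends continuously on $x$. The resulting trivialization is the sought-after $\cY_0$, and $d$ depends only on $f$ and $A$, not on $x$.

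The main obstacle, compared with Proposition \ref{p.CNF}, is exactly this last uniform choice of $d$: in the measurable setting one uses Birkhoff's ergodic theorem so that $\frac{1}{j}\log(\alpha_{j-1}/\beta_j)$ and $\frac{1}{j}\log\lambda_{1,j}^-$ converge $\mu$-a.e.\ to Lyapunov exponents, and $d$ is fixed by those average exponents. In the uniform setting one must replace these Birkhoff averages by the pointwise suprema/infima $\Delta$ and $\gamma_1$ above, which is precisely what the dominated splitting on $\cE$ and the uniform partial hyperbolicity of $\Lambda$ provide. Once the geometric decay of the two series is established uniformly in $x$, continuous dependence of $c_x$ and $u_x$ on $x$ is automatic, and the proof concludes as in Proposition \ref{p.CNF}.
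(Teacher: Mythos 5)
Your proposal is correct and matches the paper's intended argument: the paper omits the proof precisely because it runs in close parallel to Proposition \ref{p.CNF}, with Proposition \ref{p.oseledetssmooth.uniform} replacing Proposition \ref{p.oseledetssmooth}, which is exactly the route you take. Your replacement of the Birkhoff-average choice of $d$ by the uniform bounds $\Delta=\sup_x(\log|\alpha_x|-\log|\beta_x|)$ and $\gamma_1=\inf_x\log|\lambda_{1,x}|$, yielding uniformly convergent series and hence continuity in $x$, is precisely the adaptation the authors have in mind.
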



\subsection{Construction of $0$-good unstable coordinate charts} 
In this subsection we complete the proof of Proposition \ref{p.existnormalcoord}. 
Let us recall the statement.

\begin{prop}\label{p.exist1goodchart} 
Every partially hyperbolic measure $\mu$ admits a family of $0$-good unstable charts. 
\end{prop}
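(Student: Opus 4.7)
The strategy is to apply the cocycle normal form result (Proposition~\ref{p.CNF}) to the derivative cocycle on the quotient bundle $\cE = TM/E^1$ introduced in Example~\ref{ex.derivative}, and then promote the resulting frame on $W^1_1(x)$ to an honest chart on a neighborhood of $x$ in $M$. The cocycle of Example~\ref{ex.derivative} is two-dimensional, smooth along unstable manifolds, and has Lyapunov exponents $\chi_2>\chi_3$, so Proposition~\ref{p.CNF} applies. It produces, for $\mu$-a.e.\ $x$, a smooth trivialization $(\xi_x,\xi^\perp_x)$ of $\cE$ along $W^1_1(x)$ in which the cocycle is
\begin{equation*}
A^\cY(x,t) = \begin{pmatrix} \lambda_{2,x} & p_x(t) \\ 0 & \lambda_{3,x}\end{pmatrix},
\end{equation*}
with $p_x(t)$ a polynomial of degree bounded by some $d=d(\chi_1,\chi_2,\chi_3)$. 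Inspecting the proof of the proposition, one can arrange that $\xi_x(0)$ is the image of $e^2(x)$ and $\xi^\perp_x(0)$ is the image of $e^3(x)$ in $\cE_x$, so that the frame is compatible with the two distinguished axes at the base point.

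Next, I will lift this frame to an honest chart. Using Proposition~\ref{p.oseledetssmooth}, the bundle $E^2$ is smooth along $W^1_1(x)$, so $\xi_x(t_1)$ admits a unique smooth lift $\widetilde{\xi}_x(t_1)\in E^2(\Phi^1_x(t_1))$ along the unstable axis. I then integrate $\widetilde\xi_x$ to obtain a smooth embedded surface $\sigma_x:(-1,1)^2\to M$ with $\sigma_x(t_1,0)=\Phi^1_x(t_1)$ and $\partial_{t_2}\sigma_x(t_1,0)=\widetilde\xi_x(t_1)$. The stable normal forms $\Phi^3_y$ then foliate a neighborhood of $\sigma_x$ by smooth stable curves, and I define
\begin{equation*}
\imath_x(t_1,t_2,t_3) \;=\; \Phi^3_{\sigma_x(t_1,t_2)}\!\bigl(h_x(t_1)\,t_3\bigr),
\end{equation*}
where $h_x$ is a measurable smooth scaling function along $W^1$ normalized by $h_x(0)=1$. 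This choice enforces $\imath_x(t_1,0,0)=\Phi^1_x(t_1)$, $\imath_x(0,0,t_3)=\Phi^3_x(t_3)$, and $\partial_2\imath_x(0,0,0)=e^2(x)$ as required.

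The role of $h_x$ is to make $\partial_3 F_{x,3}(t,0,0)$ equal to the constant $\lambda_{3,x}$ along the $t_1$-axis, rather than the pointwise stable expansion rate $\lambda_{3,\Phi^1_x(t)}$ coming directly from the normal forms $\Phi^3$. This amounts to solving a cohomological equation of the form
\begin{equation*}
h_{f(x)}(\lambda_{1,x}t)\;\lambda_{3,x} \;=\; h_x(t)\,\lambda_{3,\Phi^1_x(t)}
\end{equation*}
along $W^1$, which I would solve by a telescoping sum over backward iterates exactly as in equations~\eqref{eq:fiberbunch}--\eqref{eq:convergence} in the proof of Claim~\ref{claim-firstcoordinate}; convergence comes from the uniform contraction $\lambda^-_{1,k}\to 0$ and Birkhoff's ergodic theorem, giving a smooth solution depending measurably on $x$. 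With $h_x$ in place, conditions~(i), (ii), (iii) of Definition~\ref{def.normalcharts} translate one-to-one into the diagonal and vanishing entries of the upper triangular matrix $A^\cY(x,t)$, while the polynomiality requirement~\eqref{eq:goodcoordinate0} on $\partial_3 F_{x,2}(t,0,0)$ corresponds to the polynomial off-diagonal entry $p_x(t)$ produced by Proposition~\ref{p.CNF}.

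The main obstacle is not conceptual but bookkeeping: one must check that the lift $\widetilde\xi_x$, the surface $\sigma_x$, and the scaling function $h_x$ can all be chosen measurably in $x$ while simultaneously respecting the three distinguished boundary normalizations (the $t_1$-axis along $\Phi^1_x$, the $t_3$-axis along $\Phi^3_x$, and the $t_2$-direction at the origin being $e^2(x)$), so that no contradiction arises between the CNF frame on $\cE$ and the fixed stable axis at the base point. Separating the construction into first fixing frames along $W^1$ via Proposition~\ref{p.CNF} and only then parameterizing transversely using $\Phi^3$ with the cohomological scaling $h_x$ resolves this without conflict.
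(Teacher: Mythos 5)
The first half of your argument coincides with the paper's: both apply Proposition~\ref{p.CNF} to the quotient cocycle on $\cE=TM/E^1$ from Example~\ref{ex.derivative}. The divergence, and the gap, is in how you turn the resulting frame into a chart. Two steps fail. First, $E^2$ is \emph{not} smooth along unstable manifolds; what Proposition~\ref{p.oseledetssmooth} (via Example~\ref{ex.derivative}) gives is smoothness of the weak-unstable bundle $E^1\oplus E^2$ along $\cW^1$, and $E^2$ itself is only measurable and is not even defined at the non-$\mu$-generic points of $W^1_1(x)$ (cf.\ Remark~\ref{rem.smoothunstables}). So the ``unique smooth lift $\widetilde\xi_x(t)\in E^2(\Phi^1_x(t))$'' does not exist; at best you can lift smoothly into $E^1\oplus E^2$, modulo $E^1$. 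Second, and more fatally, the formula $\imath_x(t_1,t_2,t_3)=\Phi^3_{\sigma_x(t_1,t_2)}(h_x(t_1)t_3)$ invokes the stable normal form $\Phi^3_y$ at every point $y=\sigma_x(t_1,t_2)$, but Pesin stable manifolds and the maps $\Phi^3_y$ exist only for $\mu$-a.e.\ $y$ (in particular only for $\hat\mu^1_x$-a.e.\ point of the unstable axis, and for essentially no point of the surface $\sigma_x$ off that axis), and where they exist they vary only measurably in $y$. Hence your map is neither everywhere defined nor smooth in $(t_1,t_2)$, while Definition~\ref{def.normalcharts} requires $\imath_x$ to be a smooth diffeomorphism of the cube. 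This is not a bookkeeping issue: if such a smooth chart existed, the surface $\{t_2=0\}$ would be mapped onto the stable Hopf brush $\cH^s_x=\bigcup_{t}W^3_{loc}(\Phi^1_x(t))$, forcing it to be a smooth surface (equivalently, a vanishing template / joint integrability along $W^1_1(x)$), which is exactly the structure the paper cannot assume and whose failure it is designed to quantify (cf.\ Remark~\ref{rem-HB}).

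The paper avoids this entirely: it starts from smooth Pesin charts $\imath^0_x$ normalized only along the two axes through the base point (so that $\imath^0_x(t,0,0)=\Phi^1_x(t)$ and $\imath^0_x(0,0,t)=\Phi^3_x(t)$, both individually smooth curves), observes that these charts trivialize the $TM/E^1$ cocycle along $W^1_1(x)$, and then corrects by the change of coordinates
$(x_1,x_2,x_3)\mapsto(x_1,\,x_2+a_2(x_1)x_2+a_3(x_1)x_3,\,x_3+b_2(x_1)x_2+b_3(x_1)x_3)$
dictated by Proposition~\ref{p.CNF}. Since conditions (i)--(iii) of Definition~\ref{def.normalcharts} and \eqref{eq:goodcoordinate0} only constrain first-order data of $F_x$ along the unstable axis, no control of actual stable leaves away from the base point is needed; also note that in your chart the $\partial_3$-direction along the axis would be the genuine (non-smooth) stable direction rather than the smooth CNF frame vector $\xi^\perp_x(t)$, so even the claimed dictionary between the chart conditions and the entries of $A^{\cY}(x,t)$ would not apply. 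You should rework the second half along the paper's lines: keep the chart smooth and arbitrary transversally, and encode the normal form purely through a fiberwise-linear correction over the unstable coordinate.
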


\begin{proof}
	First we fix  a family of Pesin charts $\{\imath^0_x\}_{x}$ (which are smooth charts varying measurably) from $(-100\lambda_{1,x}, 100 \lambda_{1,x})^3 \to M$ as in \cite{BP}. These are chosen to verify: 

\begin{itemize}
\item $\imath^0_x(0,0,0)= x$, 
\item for $i \in \{1,2,3\}$, $\partial_i \imath^0_x(0,0,0) \in E^i_x$ is unit vector. 
\end{itemize}

Using the normal form coordinates, we can make a coordinate change (which we still call $\{\imath^0_x\}_x$) and further assume that the charts verify: 

\begin{itemize}
\item $\imath^0_x(t,0,0) = \Phi^1_x(t)$, 
\item $\imath^0_x(0,0,t) = \Phi^3_x(t)$.
\end{itemize}

Recall the construction in  Example \ref{ex.derivative}.
The derivative map $Df$ on $TM$ descends to a vector bundle automorphism on $\cE = TM / E^1$ over $f$. This vector bundle automorphism is clearly smooth along the unstable manifolds. 
We now write $F^0_x := (\imath^0_{f(x)})^{-1} \circ f \circ (\imath_x^0) = (F_{x,1}^0, F_{x,2}^0, F_{x,3}^0)$. 
Then  $\{\imath^0_x\}_x$ gives us a smooth trivialization of $\cE$ along unstable manifolds under which the bundle map takes form 
$$ t \mapsto \begin{pmatrix}  \partial_2 F_{x,2}^0(t,0,0) & \partial_3 F_{x,2}^0 (t,0,0) \\  \partial_2 F_{x,3}^0(t,0,0) & \partial_3 F_{x,3}^0 (t,0,0) \end{pmatrix}. $$

Now, if we apply Proposition \ref{p.CNF} to this cocycle we can find a change of coordinates of the form: 
$$ (x_1, x_2, x_3) \mapsto (x_1, x_2 + a_2(x_1)x_2 + a_3(x_1) x_3, x_3 + b_2(x_1) x_2 + b_3(x_1)x_3) $$

\noindent that produces new charts $\imath_x$ for which the conditions of $0$-good charts are verified since it takes the derivative cocycle along unstable manifolds to normal form.  
\end{proof}

\begin{remark}\label{rem.0goodchartuniform} 
As in  Proposition  \ref{p.CNF.uniform}, the $0$-good unstable chart in Proposition \ref{p.exist1goodchart} depends H\"older continuously on the base point near any predetermined point in $M$.
\end{remark}

\subsection{Two-dimensional cocycles for the $\ell$-jets of the stable manifolds} \label{subsec TwoDimCoc}


We have the following. 
\begin{prop}\label{p.normalform}
If there is a family of $(\ell-1)$-good unstable charts $\{ \imath_x \}_{x \in M}$ which moreover verify \eqref{eq:highordertemplate} for $\mu$-almost every $x \in M$ and some functions $\cT_x^{\ell}$, $a_x$, $b_x$, then:
\begin{enumerate}
\item\label{it.1propNF} the derivatives $\partial_3^k F_{x,2}(t,0,0) = 0$ for $\hat\mu^1_x$-a.e.  $t \in (-1,1)$ and $1\leq k \leq \ell$,
\item\label{it.2propNF} there is a $\ell$-good family of unstable charts, i.e. so that \eqref{eq:goodcoordinate} also holds. 
\end{enumerate}
\end{prop}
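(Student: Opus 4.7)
The plan is to split the proposition into its two parts and handle them by induction on $\ell$, with the parts feeding into one another. The basic observation to exploit is that \eqref{eq:highordertemplate} at level $\ell$ forces the second coordinate of the parametrization $\gamma_{x,t}(s) = (t + a_x(t,s)s,\cT^\ell_x(t)s^{\ell+1}+b_x(t,s)s^{\ell+2},s)$ of $W^3_{loc}(\Phi^1_x(t))$ to vanish to order $\ell+1$ in $s$; by $f$-invariance of $\cW^3$, $F_x(\gamma_{x,t}(s))$ parametrizes (a portion of) $W^3_{loc}(\Phi^1_{f(x)}(\lambda_{1,x}t))$, whose third coordinate is $\lambda_{3,x}s+O(s^2)$ and whose second coordinate inherits the same order of vanishing. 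So at $\hat\mu^1_x$-a.e.\ $t$,
\[
F_{x,2}(\gamma_{x,t}(s)) = O(s^{\ell+1}).
\]
Part (i) will follow by reading Taylor coefficients in $s$ of this identity, and part (ii) by running Proposition \ref{p.CNF} on a two-dimensional cocycle built from the $(\ell+1)$-jets of $\cW^3$ along $\cW^1$.

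For part (i), I Taylor expand $F_{x,2}$ at $(t,0,0)$ with increments $\Delta_1 = a_x(t,s)s$, $\Delta_2 = \cT_x^\ell(t)s^{\ell+1}+O(s^{\ell+2})$, $\Delta_3 = s$. Since the charts are unstable in the sense of Definition \ref{def.normalcharts}, $F_{x,2}(t_1,0,0)\equiv 0$, so $\partial_1^j F_{x,2}(t,0,0)=0$ for every $j$; and $\Delta_2 = O(s^{\ell+1})$ makes all terms with $\alpha_2\ge 1$ contribute only to $s^{\ge \ell+1}$. So the $s^k$-coefficient for $k\le\ell$ involves only $\partial_1^{\alpha_1}\partial_3^{\alpha_3} F_{x,2}(t,0,0)$ with $\alpha_3\ge 1$ and $\alpha_1+\alpha_3\le k$. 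The induction is on $k$: at $k=1$ only $\partial_3 F_{x,2}(t,0,0)$ appears, which is a polynomial in $t$ by \eqref{eq:goodcoordinate0} and vanishes on a set of positive $\hat\mu^1_x$-measure, hence identically by non-degeneracy (which makes that set infinite). Identical vanishing kills the mixed derivatives $\partial_1^j\partial_3 F_{x,2}(\cdot,0,0)$, so at $k=2$ the $s^2$-coefficient collapses to $\tfrac{1}{2}\partial_3^2 F_{x,2}(t,0,0)$ and a.e.\ vanishes, and so on. The subtlety for $1<k<\ell$ is that $\partial_3^k F_{x,2}(\cdot,0,0)$ is not a priori polynomial, so a.e.\ vanishing does not immediately upgrade to identical vanishing; my remedy is to couple parts (i) and (ii) into a joint induction on $\ell$: the level-$\ell$ template trivially implies the level-$k$ template with zero coefficient (by uniqueness of the parametrization of $W^3_{loc}(\Phi^1_x(t))$), so by the inductive hypothesis at level $k$ there is a $k$-good chart in which $\partial_3^k F_{x,2}(\cdot,0,0)$ acquires the needed polynomial structure. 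At $k=\ell$ this polynomial structure is already given by the $(\ell-1)$-good hypothesis.

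For part (ii), once (i) is in hand the Taylor expansion at order $s^{\ell+1}$ cleans up to the transformation law \eqref{eq:dynamicstemplate}, which I read as an upper-triangular $2\times 2$ cocycle over $f$ acting along $W^1$, with diagonal $(\lambda_{2,x},\lambda_{3,x}^{\ell+1})$ and off-diagonal entry $\tfrac{1}{(\ell+1)!}\partial_3^{\ell+1}F_{x,2}(t,0,0)$, and with $\cT_x^\ell$ an invariant section. This cocycle is smooth along unstables (because the leaves of $\cW^3$ depend smoothly on the transverse parameter along $W^1$) and has the exponent separation required to apply Proposition \ref{p.CNF}. The normal form provided by Proposition \ref{p.CNF} gives a smooth trivialization in which the off-diagonal entry is a polynomial in $t$ of degree bounded in terms of $\chi_1,\chi_2,\chi_3,\ell$. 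Realising this trivialization as a polynomial change of chart of the form $(t_1,t_2,t_3)\mapsto(t_1, t_2+q_x(t_1)t_3^{\ell+1},t_3)$ preserves the level-$\ell$ template and all vanishings from (i), and places $\partial_3^{\ell+1}F_{x,2}(\cdot,0,0)$ in polynomial form, verifying \eqref{eq:goodcoordinate}.

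The main obstacle is precisely the coupling between (i) and (ii) at intermediate levels $1<k<\ell$: collapsing the chain of a.e.-vanishings into identical vanishings requires polynomial structure not present in the hypothesis, which must be supplied by running (ii) at each intermediate level as part of a joint induction on $\ell$. Once this scaffolding is in place, the Taylor-coefficient bookkeeping in (i) and the cocycle-normal-form input in (ii) feed cleanly into each other.
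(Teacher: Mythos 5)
Your part (ii) is essentially the paper's own argument: the action on $(\ell+1)$-jets of $\cW^3$ along $W^1$ is read as an upper-triangular $2\times 2$ cocycle with diagonal $(\lambda_{2,x},\lambda_{3,x}^{\ell+1})$ and off-diagonal $\tfrac{1}{(\ell+1)!}\partial_3^{\ell+1}F_{x,2}(t,0,0)$, Proposition \ref{p.CNF} is applied, and the resulting trivialization is realized by a chart change $(t_1,t_2,t_3)\mapsto(t_1,t_2+u_x(t_1)t_3^{\ell+1},t_3)$; this matches Propositions \ref{prop.cocycle} and \ref{p.changepoly}. The genuine gap is in part (i), exactly at the point you flag as the main obstacle. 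Your joint induction does not close: to invoke the proposition at an intermediate level $k<\ell$ for the given family you would need that family to be $(k-1)$-good, i.e.\ that $\partial_3^{k}F_{x,2}(\cdot,0,0)$ is already a polynomial in $t$ -- which is precisely the kind of information you lack; and even if you could produce \emph{some} $k$-good family, $k$-goodness constrains $\partial_3^{k+1}F_{x,2}$ (not $\partial_3^{k}F_{x,2}$) and does so in the \emph{new} chart, whereas conclusion (i) is a statement about the derivatives of $F_{x,2}$ in the original chart $\imath_x$. So the polynomial structure you hope to import at intermediate orders is neither available nor of the right kind, and the circularity is not removed. (The appeal to \eqref{eq:goodcoordinate0} at $k=1$ is also not literally granted by the ``$(\ell-1)$-good'' hypothesis.)

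The detour is in any case unnecessary, and this is how the paper's Lemma \ref{lem.derivativezero} proceeds: no polynomiality is used at any stage. Since $\mu$ is non-degenerate, $\hat\mu^1_x$ has no atoms, so the full-$\hat\mu^1_x$-measure set on which $\partial_3^{k'}F_{x,2}(\cdot,0,0)$ is known to vanish contains a full-measure subset $S$ with no isolated points; a function smooth in $t$ that vanishes on such an $S$ has all its $t$-derivatives vanishing on $S$ (take difference quotients along $S$, iteratively). Hence $\partial_1^{j}\partial_3^{k'}F_{x,2}(\cdot,0,0)=0$ $\hat\mu^1_x$-a.e.\ for every $j\geq 0$, which is exactly what your Taylor bookkeeping needs to collapse the $s^k$-coefficient to $\partial_3^{k}F_{x,2}(t,0,0)$ and run the induction on $k$ up to $\ell$. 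With that replacement your part (i) closes, and part (ii) then goes through as you describe.
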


 The proof of this proposition relies on the study of cocycle normal forms. Related results have appeared in  \cite[Appendix A]{BEFRH} and \cite[Section 4]{TZ}.


In the rest of this subsection, we will assume that $(f,\mu)$ admits a family of $(\ell-1)$-good unstable charts verifying property \eqref{eq:highordertemplate} for some $\ell \geq 1$.  Our goal here is to construct a two-dimensional cocycle in order to apply Proposition \ref{p.CNF} to obtain  Proposition \ref{p.normalform}(ii).

Given a $\mu$-typical $x \in M$ and set $y=f(x)$.  
We are going to work in the $(\ell-1)$-good charts centered at $x$ and $y$. Let $F_x=(F_1,F_2,F_3)$ be $f$ written out in the $(\ell-1)$-good chart coordinates as in equation \eqref{eq:derivativenormalchart}, then we have
\begin{itemize}
\item $F_x(0,0,0) = (0,0,0)$,
\item  and
\begin{equation}\label{eq:F_23} F_x(x_1, 0, 0) = (\lambda_{1,x}  x_1, 0, 0),
\end{equation}
\item  and for $\hat\mu^1_x$-a.e. $x_1$ we have that
$$\begin{pmatrix} \frac{\partial F_2}{\partial x_2} & \frac{\partial F_2}{\partial x_3} \\[6pt] \frac{\partial F_3}{\partial x_2} & \frac{\partial F_3}{\partial x_3}  \end{pmatrix}  = \begin{pmatrix} \lambda_{2,x} & 0 \\ 0 & \lambda_{3,x} \end{pmatrix}$$ 
where derivatives in the above expression are evaluated at $p=(x_1, 0, 0)$.
\end{itemize}

Let $x_1 \in (-1,1)$ be a $\hat\mu^1_x$-typical value. In other words, $\Phi^1_x(x_1)$ is a $\mu$-typical point. In particular, we may assume that $W^3_1(\Phi^1_x(x_1))$ is defined. 
By \eqref{eq:highordertemplate}, we may define $\hat a(x_1, s), \hat b(x_1, s)$  by 
\aryst
 \imath_x^{-1}(W^3_{loc}(\Phi^1_x(x_1))) 
= \{ (x_1  +  \hat a(x_1, s), \hat b(x_1, s) s^{\ell + 1} , s) :  s \in (-1,1) \}.
\earyst

We can deduce Proposition \ref{p.normalform}(\ref{it.1propNF}) from the following lemma.
\begin{lemma}\label{lem.derivativezero}
 If a family of unstable normal coordinate charts verifies \eqref{eq:highordertemplate} then for any  $j \geq 0$ and any $0 \leq i \leq \ell$,  we have that $  \partial_1^{j} \partial_3^{i} F_2(x_1,0,0)=0$ for every $x_1 \in \supp(\hat\mu^1_x)$.
 \end{lemma}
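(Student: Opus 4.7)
The plan is to show by induction on $i \in \{0, 1, \ldots, \ell\}$ that for every $j \geq 0$ the smooth function $t \mapsto G_{j, i}(t) := \partial_1^j \partial_3^i F_{x, 2}(t, 0, 0)$ vanishes on $\supp(\hat\mu^1_x)$. The three ingredients are (a) the identity $F_{x, 2}(t, 0, 0) \equiv 0$ from \eqref{eq:F_23}, which handles the base case; (b) a Taylor expansion of $F_{x, 2}$ along the stable-leaf parametrization \eqref{eq:highordertemplate} yielding a family of coefficient identities $c_n(t) = 0$ for $0 \leq n \leq \ell$ at $\hat\mu^1_x$-a.e.\ $t$; and (c) the perfectness of $\supp(\hat\mu^1_x)$ (guaranteed by Definition \ref{standing-noatoms}), which upgrades the vanishing of a smooth function on the support to the vanishing of all its derivatives there.

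For a $\hat\mu^1_x$-typical $t$, I consider the smooth curve
\[
\gamma_t(s) := (t + a_x(t, s)\, s,\; \cT^\ell_x(t)\, s^{\ell+1} + b_x(t, s)\, s^{\ell+2},\; s)
\]
parametrizing $\imath_x^{-1}(W^3_{loc}(\Phi^1_x(t)))$ via \eqref{eq:highordertemplate}. By $f$-invariance of the strong stable lamination, $F_x(\gamma_t(s))$ lies in the analogous parametrization of $\imath_{f(x)}^{-1}(W^3_{loc}(\Phi^1_{f(x)}(\lambda_{1, x} t)))$. Using $F_3(t, 0, 0) \equiv 0$, $\partial_2 F_3(t, 0, 0) = 0$, and $\partial_3 F_3(t, 0, 0) = \lambda_{3, x}$ (valid in any unstable chart), one computes $F_3(\gamma_t(s)) = \lambda_{3, x} s + O(s^2)$ and hence
\[
F_{x, 2}(\gamma_t(s)) \;=\; (\lambda_{3, x})^{\ell+1}\, \cT^\ell_{f(x)}(\lambda_{1, x} t)\, s^{\ell+1} \;+\; O(s^{\ell+2})\, ,
\]
so $F_{x, 2}(\gamma_t(s)) = O(s^{\ell+1})$. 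Taylor expanding $F_{x, 2}$ at $(t, 0, 0)$ and substituting $\gamma_t(s)$, every monomial containing a $\partial_2$-factor contributes $O(s^{\ell+1})$ since $\gamma_{t, 2}(s) = O(s^{\ell+1})$; collecting the $s^n$ coefficient for $0 \leq n \leq \ell$ yields
\[
c_n(t) \;=\; \sum_{p + q + m = n}\; \frac{G_{p, q}(t)}{p!\, q!}\, A^{(p)}_m(t) \;=\; 0,
\]
where $A^{(p)}_m(t)$ denotes the $s^m$-coefficient of $a_x(t, s)^p$ in $s$ (so $A^{(0)}_0 = 1$), the identity holding at $\hat\mu^1_x$-a.e.\ $t$.

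For the induction, the base case $i = 0$ is immediate from $F_{x, 2}(t, 0, 0) \equiv 0$. For the inductive step, assume $G_{j, q}|_{\supp(\hat\mu^1_x)} = 0$ for every $j \geq 0$ and $0 \leq q < i$. In the sum defining $c_i(t)$, the only term with $q = i$ forces $p = m = 0$ and contributes $G_{0, i}(t)/i!$, while every other term has $q < i$ and vanishes on $\supp(\hat\mu^1_x)$ by the inductive hypothesis, regardless of the (possibly non-smooth in $t$) coefficients $A^{(p)}_m(t)$. Therefore $G_{0, i}(t) = 0$ at $\hat\mu^1_x$-a.e.\ $t$; by continuity of the smooth function $G_{0, i}$, this extends to all of $\supp(\hat\mu^1_x)$. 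Since $\supp(\hat\mu^1_x)$ is a perfect subset of $(-1, 1)$, a standard Rolle-type argument shows that if $G \in C^\infty((-1, 1))$ vanishes on a perfect set $S$ then so does each $G^{(j)}$: at $t \in S$ pick $t_n \in S \setminus \{t\}$ with $t_n \to t$ and apply the mean value theorem to get $G'(t) = \lim_n (G(t_n) - G(t))/(t_n - t) = 0$, then iterate on the successive smooth functions $G^{(j)}$. Applied to $G_{0, i}$, this yields $G_{j, i}|_{\supp(\hat\mu^1_x)} = 0$ for all $j \geq 0$, completing the induction.

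The main delicate step is the clean extraction of $G_{0, i}(t)$ from the identity $c_i(t) = 0$: the inductive hypothesis is essential for discarding all lower-order mixed-derivative terms even though the coefficients $A^{(p)}_m(t)$ may fail to be continuous in $t$, leaving behind only the single smooth term $G_{0, i}(t)/i!$. The other conceptual input is perfectness of the support, without which one could not upgrade vanishing of $\partial_3^i F_{x, 2}$ on $\supp(\hat\mu^1_x)$ to vanishing of all mixed derivatives $\partial_1^j \partial_3^i F_{x, 2}$ there.
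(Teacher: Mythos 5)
Your proof is correct and follows essentially the same route as the paper's: the identity $F_2(x_1,0,0)\equiv 0$, the bound $F_2(\gamma_t(s))=O(s^{\ell+1})$ coming from the $f$-invariance of $\cW^3$ together with \eqref{eq:highordertemplate}, and the non-atomicity of $\hat\mu^1_x$ (perfectness of the support) to pass from $\partial_3^i F_2$ to the mixed derivatives $\partial_1^j\partial_3^i F_2$. The only difference is presentational: your explicit induction on $i$ spells out why the lower-order mixed terms in the chain-rule expansion of $\partial_s^i\{F_2(\gamma_t(s))\}|_{s=0}$ drop out, a point the paper's computation leaves implicit.
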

\begin{proof}
Since $F_2(x_1, 0, 0) = 0$, then, for every $i \geq 0$, we have that $\partial_1^i F_2(x_1,0,0)= 0$ for all $i \geq 0$. Also, by \eqref{eq:highordertemplate} and the $f$-invariance of $\cW^3$ we have that for $\hat \mu_x^1$-a.e. $x_1 \in (-1,1)$, 
\aryst
F_2(x_1  + \hat a(x_1, s), \hat b(x_1, s)  s^{\ell+1}, s) = O(s^{\ell+1}).
\earyst
Here we allow the implicit constant in $O(\cdot)$ on the right hand side above to depend on $F$ and $x_1$, but of course independent of $s$.
 We deduce that for every $i \in \{1, \cdots, \ell \}$, and $\hat \mu^1_x$-almost every $x_1 \in (-1,1)$ we have
\aryst
0 &=& \partial^{i}_{s} \{F_2(x_1  + \hat a(x_1, s),  \hat b(x_1, s)  s^{\ell+1}, s)\}|_{s = 0} \\
&=& \partial^{i}_3 F_2(x_1, 0, 0).
\earyst
By our hypothesis that $\hat\mu^1_x$ is not atomic, there is a subset of $x_1$ with full $\hat\mu^1_x$-measure and no isolated points.
Then we deduce for every $i$ as above and every $j \geq 0$ and $\hat\mu^1_x$-a.e. $x_1$ that
$$ \frac{\partial^{i+j}F_{2}}{\partial x_1^j \partial x_3^i}(x_1,0,0)=0. $$
This concludes the proof.
\end{proof}

For every $x_1 \in (-1, 1)$, we consider the collection of germs of curves  of the form:  
\ary \label{eq formofcurve}
t \mapsto (x_1 +   O(t), bt^{\ell + 1} + O(t^{\ell+2}), ct + O(t^2)). 
\eary
Within this collection, we declare two curves to be equivalent if they have the same $b$-value (resp.  $c$-value) in the above expression. 
Then for each $x_1 \in (-1,1)$, we may use $(b, c)$ in $\R^2$ to parametrize the equivalence classes of smooth curves through $(x_1, 0, 0)$ in chart $\imath_x$.

We now construct a $\R^2$-bundle $\cE_{\ell}$ over a $\mu$-full measure set of $M$.

Let $x$ be a $\mu$-typical point. 
The trivial $\R^2$-bundle over $W^1_1(x)$ can be identified with $(-1,1) \times \R^2$ using $\Phi^1_x$.
We let $(x_1, (b, \hat{c})) \in (-1,1) \times \R^2$  represent the union of all the equivalence classes of curves through $(0, 0, 0)$ in  chart $\imath_x$ parametrized by some $(b, c)$ satisfying $\hat{c} = c^{\ell+1}$.  By definition, it is clear that for a fixed $x_1$, each $(b, \hat{c})$ corresponds to the union of $0$, $1$ or $2$ equivalence classes.
 
The following proposition shows that the $\R^2$-bundles over $W^1_1(x)$ and $W^1_1(y)$ given above for different $x, y$ can be naturally glued together via a smooth bundle automorphism on their intersection.
We let $\cE_{\ell}$ denote the resulting $\R^2$-bundle over a $\mu$-full measure set.

\begin{prop}\label{prop.cocycle2}
	Given a $\mu$-typical $x$  and a $\mu^1_x$-typical $y$, such that $I = (\Phi^1_x)^{-1}(W^1_1(x) \cap W^1_1(y))$ has positive $\hat\mu^1_x$-measure.  
	We denote $H =  \imath_y^{-1} \circ \imath_x$.
	Then there is a smooth one-parameter family of upper triangular matrices $\Big \{ \begin{bmatrix}  \alpha(x_1) & r(x_1) \\ 0 & \beta(x_1) \end{bmatrix}  \in GL(2, \R)  \Big \}_{x_1 \in I}$ such that for every $x_1 \in I$, $H$ maps the equivalence class of curves through $p = (x_1, 0, 0)$ in   chart $\imath_x$ parametrized by $(b, c)$ to the equivalence class of curves through $H(p)$ in   chart $\imath_y$ parametrized by 
	$(  \alpha(x_1) b +  r(x_1) c^{\ell + 1},  \beta(x_1) c)$.
\end{prop}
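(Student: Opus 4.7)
Both $\imath_x$ and $\imath_y$ are smooth normal form unstable charts parametrizing neighbourhoods of $\cW^1(x) = \cW^1(y)$, so $H = \imath_y^{-1} \circ \imath_x$ is smooth near $(x_1, 0, 0)$ for each $x_1 \in I$. First I would record that $H$ preserves the $x_1$-axis: Proposition~\ref{prop-normalforms}(iv) gives an affine $\tau : \R \to \R$ with $\Phi^1_y \circ \tau = \Phi^1_x$, so $H(x_1, 0, 0) = (\tau(x_1), 0, 0)$ for $x_1 \in I$. In particular, writing $H = (H_1, H_2, H_3)$, the functions $H_2$ and $H_3$ vanish identically along the $x_1$-axis over $I$.

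The heart of the argument is a vanishing statement for mixed derivatives of $H_2$, obtained from comparing the two template descriptions of the common strong stable leaf. For $\hat\mu^1_x$-typical $x_1 \in I$, applying \eqref{eq:highordertemplate} in both charts at $\Phi^1_x(x_1) = \Phi^1_y(\tau(x_1))$ yields a smooth reparametrization $s \mapsto s'(s)$ with $s'(0) = 0$, $\dot s'(0) \neq 0$, such that
\aryst
 H\bigl(x_1 + a_x(x_1, s) s,\ \cT^\ell_x(x_1) s^{\ell+1} + O(s^{\ell+2}),\ s\bigr) = \bigl(\tau(x_1) + a_y(\tau(x_1), s') s',\ \cT^\ell_y(\tau(x_1))\, s'^{\,\ell+1} + O(s'^{\,\ell+2}),\ s'\bigr).
\earyst
Matching second coordinates gives $H_2(\gamma^x_{x_1}(s)) = O(s^{\ell+1})$. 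Expanding $H_2$ in Taylor series at $(x_1, 0, 0)$ and identifying the coefficients of $s, s^2, \ldots, s^\ell$ (which all must vanish), one obtains $\partial_3^i H_2(x_1, 0, 0) = 0$ for $1 \leq i \leq \ell$ and every $\hat\mu^1_x$-typical $x_1 \in I$. By the non-degeneracy of $\mu$, the measure $\hat\mu^1_x$ is atomless, so its support has no isolated points; differentiating in $x_1$ as in the proof of Lemma~\ref{lem.derivativezero} propagates the vanishing to $\partial_1^j \partial_3^i H_2(x_1, 0, 0) = 0$ for every $j \geq 0$ and $1 \leq i \leq \ell$, on all of $I$.

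With this vanishing in hand I would define the smooth functions
\aryst
\alpha(x_1) := \partial_2 H_2(x_1, 0, 0), \quad \beta(x_1) := \partial_3 H_3(x_1, 0, 0), \quad r(x_1) := \tfrac{1}{(\ell+1)!}\, \partial_3^{\ell+1} H_2(x_1, 0, 0).
\earyst
Given a representative curve $\xi(t) = (x_1 + O(t),\ b t^{\ell+1} + O(t^{\ell+2}),\ c t + O(t^2))$ of the class $(b, c)$ at $p = (x_1, 0, 0)$, I substitute into the Taylor expansion of $H$ at $p$. In $H_3 \circ \xi$, the only contribution of order $t$ is $\partial_3 H_3(p)\, c\, t$; the term $\partial_2 H_3(p)\, b\, t^{\ell+1}$ (whether or not this coefficient vanishes) is absorbed into $O(t^2)$ since $\ell \geq 1$. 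In $H_2 \circ \xi$, every Taylor term of the form $\partial_1^j \partial_3^i H_2(p)\, u_1^j u_3^i$ with $1 \leq i \leq \ell$ vanishes by the previous step; pure $\partial_1^j H_2(p)$ terms vanish because $H_2$ is identically zero on the $x_1$-axis; and terms involving $u_2 \geq$ first power contribute only from $\partial_2 H_2(p)\, u_2 = \alpha(x_1) b t^{\ell+1} + O(t^{\ell+2})$, since $u_2 = O(t^{\ell+1})$. The remaining surviving contribution at order $t^{\ell+1}$ is $\tfrac{1}{(\ell+1)!}\partial_3^{\ell+1} H_2(p)\, u_3^{\ell+1} = r(x_1) c^{\ell+1} t^{\ell+1}$. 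Altogether
\aryst
 H_1 \circ \xi(t) = \tau(x_1) + O(t), \qquad H_2 \circ \xi(t) = (\alpha(x_1) b + r(x_1) c^{\ell+1}) t^{\ell+1} + O(t^{\ell+2}), \qquad H_3 \circ \xi(t) = \beta(x_1) c\, t + O(t^2),
\earyst
so $H \circ \xi$ is a curve of the form \eqref{eq formofcurve} through $H(p)$ with parameters $(\alpha(x_1) b + r(x_1) c^{\ell+1},\ \beta(x_1) c)$, as required. Smoothness of $\alpha, \beta, r$ in $x_1$ is immediate from smoothness of $H$.

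\textbf{Main obstacle.} The delicate point is the propagation step: the vanishing $\partial_3^i H_2(x_1, 0, 0) = 0$ is first obtained only on the $\hat\mu^1_x$-typical subset of $I$, while we need it (together with all its $x_1$-derivatives) on all of $I$ to expand $H$ freely. The non-atomicity hypothesis is exactly what allows the argument of Lemma~\ref{lem.derivativezero} to be imported verbatim, turning pointwise measure-theoretic vanishing into smooth vanishing along the whole relevant piece of the unstable axis.
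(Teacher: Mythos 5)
Your proposal is correct and takes essentially the same route as the paper: the affine behaviour of $H$ on the $x_1$-axis with $H_2=H_3=0$ there, the vanishing $\partial_1^j\partial_3^i H_2(x_1,0,0)=0$ for $0\leq i\leq \ell$ (the paper's Lemma \ref{lem.derivativezero2}, stated there with proof omitted as "almost identical" to Lemma \ref{lem.derivativezero}, and which you actually sketch by comparing the two template descriptions of the common stable leaf), and then the Taylor expansion of $H\circ\gamma$ identifying $\alpha=\partial_2H_2$, $r=\tfrac{1}{(\ell+1)!}\partial_3^{\ell+1}H_2$, $\beta=\partial_3H_3$. The only caveat, shared with the statement itself, is that the derivative vanishing is obtained on $\supp(\hat\mu^1_x)\cap I$ rather than literally on all of $I$, which is all that is needed for the way the proposition is used.
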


\begin{proof}

	We write $H = (H_1, H_2, H_3)$. By definition, there exist $a, b \in \R$ such that for every $s \in (-1,1)$, we have
	\ary \label{eq valueofH}
	H_1(s, 0, 0) = a + b s, \quad H_2(s, 0, 0) = H_3(s, 0, 0) = 0.
	\eary
	
	We note that the following statement, analogous to Lemma \ref{lem.derivativezero}, holds.
	\begin{lemma}\label{lem.derivativezero2}
		For any  $j \geq 0$ and any $0 \leq i \leq \ell$,  we have that $  \partial_1^{j} \partial_3^{i} H_2(x_1,0,0)=0$ for 
		every $x_1 \in \supp(\hat\mu^1_x)$.
	\end{lemma}
	We omit the proof of Lemma \ref{lem.derivativezero2}, which is almost identical to that of Lemma \ref{lem.derivativezero}.

	Take a curve $\gamma(t) = (x_1(t), x_2(t), x_3(t))$ of form
	\aryst
	\gamma(t) = (x_1 + O(t), bt^{\ell + 1} + O(t^{\ell + 2}), ct + O(t^2)).
	\earyst
	Then by Lemma \ref{lem.derivativezero2} and \eqref{eq valueofH}, we can write $H \circ \gamma(t) $ as
	\aryst
	&& ( a + b x_1 + O(t),  \\
	&& [  \frac{\partial H_2}{\partial x_{2}}(p) b + \frac{1}{(\ell+1)!} \frac{\partial^{\ell + 1} H_2}{\partial x_3^{\ell + 1}}(p)  c^{\ell + 1}] t^{\ell + 1}  + O(t^{\ell + 2}),     \frac{\partial H_3}{\partial x_3}(p) c t + O(t^2)).
	\earyst
	From the above expression it is straightforward to conclude the proof.
\end{proof}

Clearly, the map $f$ induces a map  $A : \cE_{\ell} \to \cE_{\ell}$,  through its action on the level of curves along with the identification above (here we have assumed for simplicity the orientability of the invariant bundles).
 
We now show that $A$ is a vector bundle automorphism, which is smooth along $W^1$. This is an immediate consequence of the following.

\begin{prop}\label{prop.cocycle}
     Given a $\mu$-typical $x \in M$. Recall that $F_x = (F_1, F_2, F_3)$. Then  we have
	\begin{equation}\label{eq:elljetNF}
		\begin{pmatrix} \partial^{(\ell + 1)} y_2 \\ (\partial y_3)^{\ell + 1} \end{pmatrix}(0) = \begin{pmatrix} \lambda_{2,x} &  \frac{\partial^{\ell + 1} F_2}{\partial x_3^{\ell + 1}}(x_1, 0, 0) \\[6pt] 0 & \lambda_{3,x}^{\ell + 1} \end{pmatrix} \begin{pmatrix} \partial^{(\ell+1)}x_2 \\ (\partial x_3)^{\ell + 1} \end{pmatrix}(0)
	\end{equation}
	where $t \mapsto (x_1(t), x_2(t), x_3(t))$ represents a curve of form \eqref{eq formofcurve} through $(x_1, 0, 0)$ in chart $\imath_x$, and  $t \mapsto (y_1(t), y_2(t), y_3(t)) = F_x(x_1(t), x_2(t), x_3(t))$ represents a curve of form \eqref{eq formofcurve} through $( \lambda_{1,x} x_1, 0, 0)$ in chart $\imath_{f(x)}$.
\end{prop}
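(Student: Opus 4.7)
The plan is a direct Taylor expansion in the $(\ell-1)$-good charts, using the vanishing derivatives obtained from Lemma \ref{lem.derivativezero} to see that only two explicit terms survive at order $t^{\ell+1}$.

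First I would record the ``normal'' derivatives of $F_x$ at the point $p = (x_1, 0, 0)$. Since $f$ preserves $\cW^1$ and the $x_1$-axis of the chart parametrises $W^1_1(x)$, we have $F_x(x_1, 0, 0) = (\lambda_{1,x} x_1, 0, 0)$, hence $\partial_1^j F_2(p) = \partial_1^j F_3(p) = 0$ for every $j \geq 0$. From \eqref{eq:derivativenormalchart} we read off $\partial_2 F_2(p) = \lambda_{2,x}$, $\partial_3 F_3(p) = \lambda_{3,x}$ and $\partial_2 F_3(p) = 0$, and Lemma \ref{lem.derivativezero} gives
\begin{equation*}
\partial_1^{j} \partial_3^{i} F_2(p) = 0 \qquad \text{for all } j \geq 0, \ 0 \leq i \leq \ell,
\end{equation*}
for $\hat\mu^1_x$-a.e.\ $x_1$.

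Next I would Taylor-expand $y_2(t) = F_2(x_1(t), x_2(t), x_3(t))$ at $t = 0$. Writing $x_1(t) - x_1 = O(t)$, $x_2(t) = O(t^{\ell+1})$ and $x_3(t) = O(t)$, the generic monomial in the Taylor series of $F_2$ around $p$ contributes a term of order at least $t^{j + k(\ell+1) + i}$, where $j,k,i$ are the orders of differentiation in $x_1, x_2, x_3$ respectively. The key point is that the vanishing from Lemma \ref{lem.derivativezero} eliminates every term with $k = 0$ and $i \leq \ell$, so the remaining terms all contribute at order $t^{\ell+1}$ or higher; in particular $\partial_t^m y_2(0) = 0$ for $0 \leq m \leq \ell$, confirming that the image curve has the form \eqref{eq formofcurve}. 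Collecting the coefficients of $t^{\ell+1}$, only the cases $(j,k,i) = (0,1,0)$ and $(j,k,i) = (0,0,\ell+1)$ survive, because any other combination forces $j + k(\ell+1) + i > \ell+1$. This yields
\begin{equation*}
\partial_t^{\ell+1} y_2(0) = (\ell+1)! \,\lambda_{2,x} \,b + \partial_3^{\ell+1} F_2(p)\, c^{\ell+1}.
\end{equation*}

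Finally, for $y_3(t) = F_3(x_1(t), x_2(t), x_3(t))$, the vanishing of $\partial_1^j F_3(p)$ for all $j$ and of $\partial_2 F_3(p)$ together with $x_2(t) = O(t^{\ell+1})$ implies $\partial_t y_3(0) = \lambda_{3,x}\, c$, so $(\partial y_3)^{\ell+1}(0) = \lambda_{3,x}^{\ell+1} c^{\ell+1}$. Assembling the two identities into a matrix equation with respect to the column vector $((\ell+1)!\, b,\, c^{\ell+1})^{\top} = (\partial^{(\ell+1)} x_2(0),\, (\partial x_3)^{\ell+1}(0))^{\top}$ produces exactly \eqref{eq:elljetNF}. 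The main (mild) obstacle is simply the bookkeeping of which multi-indices $(j,k,i)$ can contribute to order $t^{\ell+1}$, and checking that Lemma \ref{lem.derivativezero} kills all of them except the two recorded above.
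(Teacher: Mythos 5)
Your proof is correct and follows essentially the same route as the paper: a Taylor expansion of $F_x$ along the curve at $(x_1,0,0)$, using the vanishing $\partial_1^j\partial_3^i F_2(p)=0$ for $0\le i\le\ell$ from Lemma \ref{lem.derivativezero} to see that only the $\partial_2 F_2$ and $\partial_3^{\ell+1}F_2$ terms contribute at order $t^{\ell+1}$, and the diagonal entries of \eqref{eq:derivativenormalchart} to get the $y_3$-component. The explicit multi-index bookkeeping you carry out is exactly what the paper leaves implicit, and the resulting coefficients match \eqref{eq:elljetNF}.
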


\begin{proof} 
 We look at the image by $F_x$ of a curve $\gamma:(-\eps,\eps) \to (-1,1)^3$ through $(x_1, 0, 0)$ of the form \eqref{eq formofcurve} for some values of $b, c \in \RR$.

	Using Lemma \ref{lem.derivativezero} and Taylor's expansion we get that the map $F_x\circ \gamma$  is of the form: 
	\[
	\begin{aligned}
		t \mapsto & (   \lambda_{1,x } x_1 +  O( t), \\ & \lambda_{2,x} b t^{\ell + 1} + \frac{1}{(\ell+1)!} \frac{\partial^{\ell + 1} F_2}{\partial x_3^{\ell + 1}}(x_1, 0, 0) c^{\ell+1} t^{\ell + 1} + O(t^{\ell+2}),  \lambda_{3,x} c t + O(t^2)). 
	\end{aligned}
	\]
	By a substitution $(b, \hat c) =(b,c^{\ell + 1})$, we have 
	$$(b,\hat c)\mapsto (\lambda_{2,x} b + \frac{1}{(\ell + 1)!} \frac{\partial^{\ell + 1} F_2}{\partial x_3^{\ell+1}}(x_1,0,0) \hat c, \lambda_{3,x}^{\ell + 1} \hat c). $$
	Since we have $(b, c) = (\frac{1}{(\ell+1) !} \partial^{(\ell+1)} x_2(t)|_{t = 0}, \partial x_3(t)|_{t = 0})$, this completes the proof. 
\end{proof}

%

%

%
 
\begin{remark}
The bundles defined in this section correspond to some components of the $(\ell + 1)$-jet bundle of curves through typical points in unstable manifolds of generic points. 
\end{remark}

\subsection{Construction of $\ell$-good charts}  \label{subsec Conlgoodchart}
Proposition \ref{p.normalform} is a consequence of the following proposition. 
\begin{prop}\label{p.changepoly}
Assume that $f$ admits $(\ell-1)$-good unstable charts $\{\imath_x\}_x$ and that equation \eqref{eq:highordertemplate} is verified, then there is a smooth change of coordinates which produces $\ell$-good unstable charts for $f$. 
\end{prop}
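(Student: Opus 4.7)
The plan is to apply the cocycle normal form result, Proposition \ref{p.CNF}, to the two-dimensional bundle $\cE_\ell \to M$ together with the induced bundle automorphism $A$ that were constructed in \S\ref{subsec TwoDimCoc}, and then to translate the resulting smooth trivialization change back into a smooth change of chart.

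First I would note that by Proposition \ref{prop.cocycle2} the bundle $\cE_\ell$ and the automorphism $A$ are smooth along unstable manifolds, and that Proposition \ref{prop.cocycle} gives the natural trivialization induced by the $(\ell-1)$-good charts $\{\imath_x\}_{x \in M}$ in which $A$ is already upper-triangular with constant diagonal entries $\lambda_{2,x}$ and $\lambda_{3,x}^{\ell+1}$, and with off-diagonal entry $\frac{1}{(\ell+1)!}\partial_3^{\ell+1}F_{x,2}(t,0,0)$. Its Lyapunov exponents $\chi_2$ and $(\ell+1)\chi_3$ are distinct, so Proposition \ref{p.CNF} applies and produces a smooth (along unstables) trivialization in which the off-diagonal entry becomes a polynomial $p_x(t)$ of degree bounded by a constant $d = d(\chi_1, \chi_2, \chi_3, \ell)$.

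Next I would translate this change of trivialization into a change of chart. Since the diagonal is already constant in our setting, only the second step of the proof of Proposition \ref{p.CNF} (Claim \ref{c.polinomial}, solving a twisted cohomological equation for a smooth function $u_x$) is needed; in terms of the parametrization $(b, \hat c)$ of the $(\ell+1)$-jet fiber, the resulting change is a shear $(b, \hat c) \mapsto (b - u_x(t_1) \hat c, \hat c)$. Recalling that $b$ is the $t^{\ell+1}$-coefficient of $x_2(t)$ and $\hat c = c^{\ell+1}$, this shear translates back into the coordinate change
\[
	\tilde \imath_x(y_1, y_2, y_3) := \imath_x\bigl(y_1,\; y_2 + u_x(y_1)\, y_3^{\ell+1},\; y_3\bigr).
\]

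The hard part will be verifying that $\{\tilde \imath_x\}_x$ is a family of $\ell$-good unstable charts in the sense of Definition \ref{def.lgoodcharts}. The axis conditions $\tilde \imath_x(t,0,0) = \Phi^1_x(t)$ and $\tilde \imath_x(0,0,t) = \Phi^3_x(t)$ are preserved because the explicit iterative solution \eqref{eq:solveu2} of the cohomological equation in Claim \ref{c.polinomial} gives $u_x(0) = 0$. The conditions (i)-(iii) of Definition \ref{def.normalcharts} survive since the modification is of order $y_3^{\ell+1}$ with $\ell \geq 1$, hence does not affect the relevant first-order derivatives of $\tilde F_{x,\cdot}$ along the unstable axis $\{y_2 = y_3 = 0\}$. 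By Lemma \ref{lem.derivativezero} the lower-order derivatives $\partial_3^k F_{x,2}(t,0,0)$ for $1 \leq k \leq \ell$ already vanish in the old chart and a direct Taylor expansion shows this is preserved, while a computation parallel to that in Proposition \ref{prop.cocycle} shows that $\partial_3^{\ell+1} \tilde F_{x,2}(t,0,0)$ becomes exactly (a constant multiple of) the polynomial $p_x(t)$, yielding condition \eqref{eq:goodcoordinate}. Finally, the existence of a stable template of $(\ell+1)$-jets for the new chart follows from the existence of such a template in the old chart by composing with the smooth change of coordinates and extracting the leading terms. The main bookkeeping difficulty is ensuring that all these normalizations and the template structure \eqref{eq:highordertemplate} remain valid simultaneously after the coordinate change.
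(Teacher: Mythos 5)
Your proposal is correct and follows essentially the same route as the paper: apply Proposition \ref{p.CNF} to the $(\ell+1)$-jet cocycle of Proposition \ref{prop.cocycle}, note that only the shear step is needed since the diagonal is already constant, and translate the resulting trivialization change into the coordinate change $(x_1,x_2,x_3)\mapsto(x_1, x_2+u_x(x_1)x_3^{\ell+1},x_3)$, which is exactly the paper's formula \eqref{eq:changepoly}. Your additional verifications (that $u_x(0)=0$ preserves the axis normalizations, that the chart conditions and the template structure \eqref{eq:highordertemplate} survive, and that $\partial_3^{\ell+1}\tilde F_{x,2}(t,0,0)$ becomes the polynomial $p_x$) are correct details the paper leaves implicit.
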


Besides proving Proposition \ref{p.normalform}, the proof of this proposition allows us to obtain a formula for the change of coordinates. 

\begin{proof}
We are in the situation of Proposition \ref{prop.cocycle} and thus we can write the action on $(\ell + 1)$-jets as a cocycle as in formula \eqref{eq:elljetNF}. Applying Proposition \ref{p.CNF} one can obtain a smooth change of  coordinates of the form 
\begin{equation}\label{eq:changepoly}
 (x_1, x_2, x_3) \mapsto (x_1, x_2 + u_{x,\ell}(x_1) x_3^{\ell+1} , x_3) 
\end{equation}

\noindent giving that the action on $(\ell + 1)$-jets is polynomial and thus providing $\ell$-good unstable charts as desired.  
  \end{proof}

\begin{remark}\label{rem.HolderJet} 
Assume that $f$ is uniformly partially hyperbolic, 
then we can inductively show that $\ell$-good unstable charts, if they exist, can be made to depend H\"older continuously in a neighborhood of any predetermined $x \in M$.  

When $\ell = 0$, this is the content of Remark \ref{rem.0goodchartuniform}. 
Now we consider the general case. Given an arbitrary $x \in M$, and a family of $(\ell-1)$-good unstable charts $\{ \iota_x \}_{x \in M}$ depending H\"older continuously on the base point near $x$, both the bundle $\tilde \cE_{\ell}$ and $\tilde F$ constructed above depend H\"older continuously on the base point near $x$. Since $x$ is arbitrary, the bundle $\tilde \cE_{\ell}$ and $\tilde F$ are H\"older, and smooth along the unstable manifolds.
Then by Proposition \ref{p.CNF.uniform}, the chart we obtained by applying Proposition \ref{p.changepoly} satisfies the inductive hypothesis: they can be made H\"older, possibly after a coordinate change, in a neighborhood of any predetermined $x$.
\end{remark}

\subsection{Improvement of charts}\label{s.improvechart}

Here we prove the following proposition that is the starting point of the proof of Theorem \ref{teo.maintechnical1}.

\begin{prop}\label{p.varphipolyell+1}
Let $\mu$ be a partially hyperbolic measure and $\{\imath_x\}$ a family of $\ell$-good unstable charts. If there exists an integer $d_0 > 0$ such that the stable templates of $(\ell+1)$-jets $\cT^\ell_x$ (given by \eqref{eq:highordertemplate}) are polynomials of degree at most $d_0$ for almost every $x \in M$, then $\mu$ admits $(\ell+1)$-good unstable charts. The symmetric statement holds for $\ell$-good stable charts. 
\end{prop}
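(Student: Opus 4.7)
The plan is to make a polynomial change of coordinates that kills the leading template $\cT^\ell_x$ in each chart, so that the new family verifies \eqref{eq:highordertemplate} at order $\ell+1$, and then to invoke Proposition \ref{p.normalform} with $\ell$ replaced by $\ell+1$. First I would note that evaluating \eqref{eq:highordertemplate} at $t = 0$ (where $\imath_x^{-1}(W^3_{loc}(x))$ is the $x_3$-axis) forces $a_x(0,s) = b_x(0,s) = 0$ and $\cT^\ell_x(0) = 0$. Since $\cT^\ell_x$ is a polynomial of degree at most $d_0$ vanishing at $0$, the map
\[ \Psi_x : (x_1,x_2,x_3) \longmapsto \bigl(x_1,\ x_2 + \cT^\ell_x(x_1)\,x_3^{\ell+1},\ x_3\bigr) \]
is a polynomial diffeomorphism preserving both the $x_1$-axis and the $x_3$-axis. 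Setting $\imath_x^{\sharp} := \imath_x \circ \Psi_x$ and $F^{\sharp}_x := \Psi_{f(x)}^{-1} \circ F_x \circ \Psi_x$, a chain-rule computation using $\partial_2 F_{x,3}(t,0,0) = 0$, $F_{x,3}(t,0,0) = 0$ and $F_{x,1}(t,0,0) = \lambda_{1,x} t$ gives $\partial_2 F^{\sharp}_{x,2}(t,0,0) = \lambda_{2,x}$, $\partial_3 F^{\sharp}_{x,3}(t,0,0) = \lambda_{3,x}$ and $\partial_2 F^{\sharp}_{x,3}(t,0,0) = 0$, so $\{\imath_x^{\sharp}\}_{x \in M}$ is again a family of unstable charts in the sense of Definition \ref{def.normalcharts}.

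Next I would compute the parametrization of $\cW^3$ in the new chart. Applying $\Psi_x^{-1}$ to the right-hand side of \eqref{eq:highordertemplate}, the second coordinate of $(\imath_x^{\sharp})^{-1}(W^3_{loc}(\Phi^1_x(t)))$ becomes
\[ \cT^\ell_x(t)\,s^{\ell+1} + b_x(t,s)\,s^{\ell+2} - \cT^\ell_x\bigl(t + a_x(t,s) s\bigr)\,s^{\ell+1}. \]
Because $\cT^\ell_x$ is a polynomial, the Taylor expansion
\[ \cT^\ell_x\bigl(t + a_x(t,s) s\bigr) = \cT^\ell_x(t) + [\cT^\ell_x]'(t)\,a_x(t,s)\,s + O(s^2) \]
holds with a smooth remainder uniformly controlled in $s$, so the $s^{\ell+1}$ terms cancel and the whole expression rewrites as
\[ \bigl(b_x(t,0) - [\cT^\ell_x]'(t)\,a_x(t,0)\bigr)\,s^{\ell+2} + \tilde b_x(t,s)\,s^{\ell+3}, \]
which is exactly the form of \eqref{eq:highordertemplate} at order $\ell+1$ with new template $\cT^{\ell+1}_x(t) := b_x(t,0) - [\cT^\ell_x]'(t)\,a_x(t,0)$.

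To finish, I would apply Lemma \ref{lem.derivativezero} (equivalently Proposition \ref{p.normalform}(\ref{it.1propNF})) to $\{\imath_x^{\sharp}\}$ at order $\ell+1$, obtaining $\partial_3^k F^{\sharp}_{x,2}(t,0,0) = 0$ for $\hat\mu^1_x$-a.e.\ $t$ and all $0 \leq k \leq \ell+1$. In particular $\partial_3^{\ell+1} F^{\sharp}_{x,2}(t,0,0) \equiv 0$ trivially satisfies \eqref{eq:goodcoordinate}, so $\{\imath_x^{\sharp}\}$ is $\ell$-good and verifies \eqref{eq:highordertemplate} at order $\ell+1$, and Proposition \ref{p.normalform}(\ref{it.2propNF}) applied with $\ell$ replaced by $\ell+1$ produces the sought $(\ell+1)$-good unstable charts. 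The symmetric statement for stable charts follows by the same construction applied to $f^{-1}$. The essential step (and the only place where the polynomiality of $\cT^\ell_x$ is used) is the cancellation of the $s^{\ell+1}$ term in the second paragraph: a merely measurable $\cT^\ell_x$ would not admit a Taylor expansion of this form and the argument would collapse.
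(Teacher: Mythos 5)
Your proof is correct and follows essentially the same route as the paper: the change of coordinates $\Psi_x$ is exactly the map $\psi_x$ of \eqref{eq:psichange}, the cancellation of the $s^{\ell+1}$ term uses the polynomiality (hence smoothness) of $\cT^\ell_x$ just as in the paper, and the conclusion is the same appeal to Proposition \ref{p.normalform} with $\ell$ replaced by $\ell+1$. The extra verifications you include (that $\{\imath_x^{\sharp}\}$ is still a family of unstable charts, and the vanishing of lower-order derivatives via Lemma \ref{lem.derivativezero}) are details the paper leaves implicit, not a different argument.
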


\begin{proof}
By equation \eqref{eq:highordertemplate} we have:  
$$\imath_x^{-1}(\cW^3_{loc}(\Phi^1_x(t)))  = \{ (t  + O(s),\cT^\ell_{x}(t) s^{\ell+1} + O(s^{\ell+2}),s ) \ : \ s \in (-1,1) \} $$
for some $\rho' > 0$ depending only on $f$ and $\ell$. 

Since we know by assumption that $\cT^\ell_x$ is a polynomial, we can consider the new smooth charts $\imath'_x = \imath_x \circ \psi_x$ where: 
\begin{equation}\label{eq:psichange}
\psi_x(t,u,s) = (t, u + \cT^\ell_x(t)s^{\ell+1}, s). 
\end{equation}
\noindent 
We have $\psi_x^{-1}(t,u,s) = (t, u - \cT^\ell_x(t)s^{\ell+1}, s)$. Then 
$$(\imath_x')^{-1}(\cW^3_{\rho'}(\Phi^1_x(t)))  = \psi_x^{-1} \circ \imath_x^{-1}(\cW^3_{\rho'}(\Phi^1_x(t)))= \{ (t  + O(s), O(s^{\ell+2}),s ) \ : \ s \in (-1,1) \}. $$

Thus the new charts verify condition \eqref{eq:highordertemplate}. Using Proposition \ref{p.normalform} we complete the proof of the proposition. 
\end{proof}

\subsection{The uniform case} 
The results in this section extend to the uniform setting with minor modifications. Let us state the results we will use and discuss briefly the adaptations needed to obtain such statements.

We first need the notion of $\ell$-good uniform unstable charts, parallel to Definition \ref{def.normalcharts} as the measurability there will be replaced by continuity. This makes sense in view of the uniformity and the fact that normal form coordinates vary continuously with the point (in higher dimension, the existence of analogous uniform charts in general remains obscure). 

\begin{defi}[$0$-good uniform unstable charts]\label{def.normalchartsunif}
Let $\Lambda$ be a partially hyperbolic set of a smooth diffeomorphism $f$ on a closed 3-manifold. A continuous collection of smooth diffeomorphisms $\{\imath_x:  (-2 \|Df\|, 2 \|Df\|)^3  \to M\}_{x \in M}$ is a family of \emph{ uniform unstable coordinate charts} if it verifies that for every $x \in \Lambda$ we have that $\imath_x(t_1,0,0)= \Phi^1_x(t_1)$, $\imath_x(0,0,t_3)= \Phi^3_x(t_3)$ for $t_1,t_3 \in (-1,1)$, $\partial_2 \imath_x(0,0,0)$ is a unit vector in $E^2(x)$ and if we write $F_x := \imath_{f(x)}^{-1} \circ f \circ \imath_x = (F_{x,1},F_{x,2}, F_{x,3})$, then $F_x: (-1,1)^3 \to \RR^3$ verifies: 
\begin{enumerate}
\item $\partial_2 F_{x,2}(t,0,0) =  \lambda_{2,x}$ for all $t \in (-1,1)$,
\item  $\partial_3 F_{x,3}(t,0,0) =  \lambda_{3,x}$ for all $t \in (-1,1)$,
\item $\partial_2 F_{x,3}(t,0,0) = 0$ for all $t \in (-1,1)$. 
\end{enumerate}
A family of unstable uniform coordinate charts is called $0$-\emph{good} if moreover, there is some $d\geq 1$(independent of $x \in \Lambda$) such that
\begin{equation}
\partial_3F_{x,2}(t,0,0) \text{ is a polynomial of degree } \leq d \text{ in } t \in (-1,1).
\end{equation}
\end{defi} 

\begin{remark}\label{remark.continuityuniformcharts}
Technically, since $\Lambda$ may have some non-trivial topology, it is possible that the tangent space $T_\Lambda M$ which splits in 3-bundles $E^1 \oplus E^2 \oplus E^3$ cannot be coherently oriented.  This imposes an obstruction for the existence of uniform coordinate charts. There are several solutions for this issue. One is to take a finite cover of (a neighborhood of) $\Lambda$ and work there. Note that our results are independent of this finite cover and thus this will not result in a loss of generality.  Taking charts defined on a fixed square is convenient to avoid charging the notation. We will thus implicitly assume throughout that the bundles are orientable and therefore this obstruction is not existing. The reader not comfortable with this assumption can consider either local families of smooth diffeomorphisms or directly parametrize the charts in cubes defined in the tangent space of each point.  
\end{remark}

The following definition is parallel to Definition \ref{def.lgoodcharts}.

\begin{defi}[$\ell$-good uniform unstable charts]\label{def.lgoodchartsuni}
Let $\{\imath_x\}_{x \in \Lambda}$ be a family of $0$-good unstable charts  a partially hyperbolic set $\Lambda$.  We say the family is $\ell$-\emph{good} if for  every $x\in \Lambda$ there are (unique) continuous functions $\cT^\ell_{x}: (-1,1) \to \RR$, $a_x: (-1,1)^2 \to \RR$ and $b_x: (-1,1)^2 \to \RR$ such that for every $t \in (-1,1)$ so that $\Phi_x^1(t) \in \Lambda$ we have that: 
\begin{equation}\label{eq:highordertemplateuni}
\imath_x^{-1}(W^3_{ loc }(\Phi^1_x(t))) = \{ (t  + a_x(t,s)s, \cT^\ell_{x}(t) s^{\ell+1} + b_x(t,s)s^{\ell+2},s ) \ : \ s \in (-1,1) \}. 
\end{equation}
and for some uniform constant $d:=d(\ell,f)$ (independent of $x$) we have that 
\begin{equation}\label{eq:goodcoordinateuni} 
\partial_3^\ell F_{x,2}(t,0,0)  \text{ is a polynomial of degree } \leq d \text{ in } t \in (-1,1).
\end{equation}
\end{defi}

We will need the following result whose proof is omitted as it is in close parallel with whose of Proposition \ref{p.exist1goodchart} and  Proposition \ref{p.varphipolyell+1}  (In particular,  the same steps  as explained in Remark \ref{rem.HolderJet} can be used).   

\begin{prop}\label{p.CNFuniform}
Every partially hyperbolic set $\Lambda$ admits a family of $0$-good uniform unstable charts. Moreover, if $\Lambda$ admits a family of $\ell$-good uniform unstable charts and the template $\cT^\ell_x$ given by equation \eqref{eq:highordertemplateuni} is a polynomial for every $x$, then $\Lambda$ admits $(\ell+1)$-good uniform unstable charts.  Moreover, the $(\ell+1)$-good uniform unstable charts can be chosen to depend continuously on  base point if the $\ell$-good uniform unstable charts depend continuously on base point.
\end{prop}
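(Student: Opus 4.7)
\textbf{Proof plan for Proposition \ref{p.CNFuniform}.} The strategy is to revisit the proofs of Proposition \ref{p.exist1goodchart} and Proposition \ref{p.varphipolyell+1} line by line, replacing every use of Proposition \ref{p.CNF} by its uniform counterpart Proposition \ref{p.CNF.uniform}, every measurable section by a continuous one, and every $\mu$-a.e. statement by an everywhere-on-$\Lambda$ statement. Since all the basic ingredients (the invariant splitting $E^1\oplus E^2\oplus E^3$, the normal form coordinates $\Phi^i_x$ from Proposition \ref{prop-normalforms-uniform}, the adapted metric, and the functions $\lambda_{i,x}$) are already continuous on $\Lambda$, each step of the earlier constructions goes through continuously.

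\emph{Existence of $0$-good uniform charts.} The plan is to first produce a continuous family $\{\imath^0_x\}_{x\in\Lambda}$ of smooth embeddings from a fixed cube into $M$ with $\imath^0_x(0,0,0)=x$ and with $\partial_i\imath^0_x(0,0,0)$ a unit vector in $E^i(x)$; modulo the orientability issue discussed in Remark \ref{remark.continuityuniformcharts}, this is routine since the bundles are continuous. Post-composing with the continuous normal form maps of Proposition \ref{prop-normalforms-uniform} along the first and third coordinate axes, we can arrange $\imath^0_x(t,0,0)=\Phi^1_x(t)$ and $\imath^0_x(0,0,t)=\Phi^3_x(t)$ while preserving continuity. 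Exactly as in Example \ref{ex.derivative}, the derivative cocycle of $f$ on $TM/E^1$ over $\cW^1$ is smooth along unstable manifolds and admits a continuous dominated splitting (the images of $E^2$ and $E^3$), so Proposition \ref{p.CNF.uniform} produces a continuous family of smooth trivializations putting this cocycle in the form \eqref{eq:CNF} with a polynomial off-diagonal of uniformly bounded degree. Translating this back into the chart coordinates gives a continuous coordinate change of the type appearing in the proof of Proposition \ref{p.exist1goodchart}, and the resulting $\{\imath_x\}$ are $0$-good uniform unstable charts.

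\emph{Inductive step.} Assume we have a continuous family of $\ell$-good uniform unstable charts with templates $\cT^\ell_x$ that are polynomials of degree $\le d$ for every $x\in\Lambda$. Since $\cT^\ell_x$ is the unique polynomial of controlled degree satisfying the relation \eqref{eq:highordertemplateuni} (which depends continuously on $x$ by hypothesis), its coefficients vary continuously in $x$; hence the map $\psi_x(t,u,s)=(t,u+\cT^\ell_x(t)s^{\ell+1},s)$ from \eqref{eq:psichange} depends continuously on $x$. Setting $\imath'_x=\imath_x\circ\psi_x$ kills the $(\ell+1)$-order term of the stable templates, putting us in the situation where the hypothesis of the uniform analog of Proposition \ref{p.normalform} is satisfied. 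Applying Proposition \ref{p.CNF.uniform} to the continuous two-dimensional cocycle on $(\ell+1)$-jets constructed in Subsection \ref{subsec TwoDimCoc} (the continuous dominated splitting now being given by the uniform inequality $|\lambda_{2,x}|<|\lambda_{3,x}|^{\ell+1}$ or its opposite, noting that the cocycle is triangular so domination is automatic) yields, just as in Proposition \ref{p.changepoly}, a continuous coordinate change of the form \eqref{eq:changepoly} producing $(\ell+1)$-good uniform unstable charts.

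\emph{Main obstacle.} The only non-cosmetic point is checking that each appeal to Proposition \ref{p.CNF.uniform} really gives a \emph{continuous} family of trivializations and that the bound on the polynomial degree remains uniform in $x\in\Lambda$; this is already encoded in Remark \ref{rem.HolderJet}, which shows by induction on $\ell$ that $\ell$-good charts (when they exist) can be chosen to depend H\"older continuously near any base point. The rest is bookkeeping: uniqueness of $\cT^\ell_x$ under the degree bound ensures the templates depend continuously on the charts, and continuity propagates through the change of variables $\psi_x$ and through the subsequent polynomial change of coordinates provided by Proposition \ref{p.CNF.uniform}.
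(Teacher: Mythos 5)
Your proposal is correct and follows essentially the same route the paper indicates: it omits the proof precisely because it parallels Proposition \ref{p.exist1goodchart} and Proposition \ref{p.varphipolyell+1} with Proposition \ref{p.CNF.uniform} replacing Proposition \ref{p.CNF}, and with continuity of the charts handled as in Remark \ref{rem.HolderJet}. Your additional observations (uniform domination $|\lambda_{2,x}|>|\lambda_{3,x}|^{\ell+1}$ for the jet cocycle, continuity of the polynomial templates' coefficients via uniqueness) are exactly the bookkeeping the paper leaves to the reader.
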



\section{Proof of the dichotomy: Proposition \ref{p.dichotomy}}\label{s.dichotomy} 

We let $f: M \to M$ be a smooth diffeomorphism and let $\mu$ be a non-degenerate partially hyperbolic ergodic measure (cf. Definition \ref{standing-noatoms}) with $\ell$-good unstable charts (cf. Definition \ref{def.lgoodcharts}).  Let $\chi_1 > \chi_2 > \chi_3$ be the Lyapunov exponents of $\mu$.

For a compact set $K \subset M$ and $x \in K$ we denote $\hat K_x = (\Phi^1_x|_{(-1,1)})^{-1}(K)$. 
 Under our non-degeneracy assumption we have: 
\begin{lema}\label{l.compactintersectmanypoints}
For every compact subset $K \subset M$  with $\mu(K)>0$ we have that for $\mu$-almost every $x \in K$ the set $W^{1}_1(x) \cap K$ is infinite. 
\end{lema}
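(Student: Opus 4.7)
The plan is to reduce the statement to a measure-theoretic positivity claim and then combine it with non-atomicity. Since $\hat\mu^1_x$ is non-atomic for $\mu$-a.e.\ $x$ (by non-degeneracy), the pushforward $\mu^1_x = (\Phi^1_x)_*\hat\mu^1_x$ is non-atomic as well, so any subset of $\cW^1(x)$ of positive $\mu^1_x$-measure is automatically uncountable (hence infinite). It therefore suffices to show that $\mu^1_x(K \cap W^1_1(x)) > 0$ for $\mu$-a.e.\ $x \in K$.

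For this, I would invoke a Rokhlin disintegration with respect to a partition finer than $\cW^1$. Using the measurability of $\Phi^1_x$ together with a Lusin-type construction, build a measurable partition $\xi$ of $M$ subordinate to $\cW^1$ whose atoms satisfy $\xi(x) \subset W^1_1(x)$ for $\mu$-a.e.\ $x$. Denote by $\mu^\xi_x$ the associated Rokhlin conditionals; uniqueness of the disintegration along the refinement gives $\mu^\xi_x \propto \mu^1_x|_{\xi(x)}$ for $\mu$-a.e.\ $x$.

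Set $A = \{ x \in K : \mu^\xi_x(K) = 0 \}$; the goal becomes showing $\mu(A)=0$. For $x \in A$, since $A \subset K$ we have $\mu^\xi_x(A) \le \mu^\xi_x(K) = 0$. For $x \notin A$, suppose towards a contradiction that $\mu^\xi_x(A) > 0$: by the self-consistency of Rokhlin conditionals (namely $\mu^\xi_y = \mu^\xi_x$ for $\mu^\xi_x$-a.e.\ $y \in \xi(x)$, coming from the essentially unique choice of disintegration), there exists $y \in A \cap \xi(x)$ with $\mu^\xi_y = \mu^\xi_x$, forcing $\mu^\xi_x(K) = \mu^\xi_y(K) = 0$, hence $\mu^\xi_x(A) \le \mu^\xi_x(K) = 0$, a contradiction. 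Thus $\mu^\xi_x(A) = 0$ for $\mu$-a.e.\ $x$, and integrating yields $\mu(A) = \int \mu^\xi_x(A)\, d\mu(x) = 0$. Consequently, for $\mu$-a.e.\ $x \in K$ one has $\mu^\xi_x(K) > 0$, whence $\mu^1_x(K \cap W^1_1(x)) \ge \mu^1_x(K \cap \xi(x)) > 0$, and non-atomicity of $\mu^1_x$ completes the argument.

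The main technical obstacle is the construction of $\xi$ with the scale bound $\xi(x) \subset W^1_1(x)$ in this non-uniform (Pesin) setting: because $\cW^1$ has only measurable structure, the partition must be built by restricting to compact Lusin sets on which $\Phi^1_x$ varies continuously and carving out foliated boxes of diameter below $1$ in normal-form coordinates there, then iterating on the complement. Everything else in the argument is a routine use of the Rokhlin disintegration and the non-atomicity hypothesis.
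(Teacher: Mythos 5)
Your proof is correct and follows essentially the same route as the paper: non-degeneracy gives non-atomicity of $\mu^1_x$ for a.e.\ $x$, and the heart of the matter is that $\mu^1_x(W^1_1(x)\cap K)>0$ for $\mu$-a.e.\ $x\in K$, which together force $W^1_1(x)\cap K$ to be infinite (indeed uncountable). The only difference is one of detail: the paper asserts the positivity of the conditional measure of $K$ as a standard consequence of $\mu(K)>0$, whereas you spell it out via a subordinate measurable partition and the self-consistency of the Rokhlin conditionals, which is a legitimate (and standard) way to justify that step.
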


\begin{proof}
Let $\cA= \{ x \in M  :  \hat \mu_x^1 \ \text{has at least one atom} \}$.  Since $\cA$ is $f$-invariant, by ergodicity it either has zero or full $\mu$-measure. Since $\mu$ is non-degenerate, $\cA$ has zero measure. 
For almost every $x \in K$, we have $x \notin \cA$ and $\mu^1_x(W^1_1(x) \cap K) > 0$ since $\mu(K) > 0$.
For any such $x$, $W^1_1(x) \cap K$ is infinite.
\end{proof}

Proposition \ref{p.dichotomy} is a consequence of the following: 

\begin{prop}\label{p.dichotomytech} 
Let $\ell \geq 0$ and let $\mu$ be a partially hyperbolic measure admitting $\ell$-good unstable charts. Then there exists an integer $d:=d(\ell, f, \mu)  > 0$ such that:
\begin{enumerate}
\item either for $\mu$-a.e. $x \in M$ we have that $\cT^\ell_x$ is a polynomial of degree $d$  when restricted to 
 a full measure set with respect to  $\hat \mu^1_x$, 
\item\label{it.2section4} or for $\mu$-a.e. $x \in M$ if $S_x \subset (-1,1)$ is a subset with positive $\hat \mu_x^1$ measure, then $\cT^\ell_x|_{S_x}$ is not smooth in the sense of Whitney. 
\end{enumerate}
\end{prop}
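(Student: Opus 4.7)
I will prove a zero--one law combined with a cocycle-rigidity argument for the templates $\cT^\ell_x$. Setting $c_x := \lambda_{2,x}/\lambda_{3,x}^{\ell+1}$ and $Q_x(t) := \tfrac{1}{(\ell+1)!\,\lambda_{3,x}^{\ell+1}}\partial_3^{\ell+1}F_{x,2}(t,0,0)$, the $\ell$-goodness condition \eqref{eq:goodcoordinate} together with \eqref{eq:dynamicstemplate} give the polynomial cocycle relation
\[
c_x\,\cT^\ell_x(t) + Q_x(t) \;=\; \cT^\ell_{f(x)}(\lambda_{1,x}\, t),
\]
where $Q_x$ is a polynomial in $t$ of some uniformly bounded degree $d_0$. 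Applying the series construction of Claim \ref{c.polinomial} from the proof of Proposition \ref{p.CNF} to the scalar cocycle $(c_x,Q_x)$, I fix the integer $d$ of the statement so that $(d+1)\chi_1 > \chi_2 - (\ell+1)\chi_3$. Let $\cB := \{x \in M : \cT^\ell_x$ is Whitney smooth on some $\hat\mu^1_x$-positive measure subset of $(-1,1)\}$; case (ii) of the proposition is precisely $\mu(\cB) = 0$, so I shall assume $\mu(\cB) > 0$ and derive case (i).

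\textbf{Ergodicity and Taylor expansion at the origin.} The displayed cocycle shows that Whitney smoothness on $S \subset (-1,1)$ for $\cT^\ell_x$ propagates to Whitney smoothness on $\lambda_{1,x} S \cap (-1,1)$ for $\cT^\ell_{f(x)}$, and vice-versa for $f^{-1}$. Combining this with Lemma \ref{l.compactintersectmanypoints} and the fact that $\hat\mu^1_{f(x)}$ is a renormalization of $(\lambda_{1,x}\cdot)_*\hat\mu^1_x$, one checks that $\cB$ coincides mod $\mu$-null sets with an $f$-invariant set, so ergodicity forces $\mu(\cB)=1$. For $\mu$-a.e.\ $x$, Besicovitch's density theorem applied to the non-atomic measure $\hat\mu^1_x$ produces a positive-measure subset of density points of $S_x$ where $\cT^\ell_x$ admits a Whitney formal Taylor series of every order. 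Using Proposition \ref{p.normalform}, any two $\ell$-good unstable charts at two points on the same leaf of $\cW^1$ are related by an explicit polynomial change of coordinates, so smoothness of $\cT^\ell_x$ at a density point $t$ is equivalent to smoothness at $0$ of the template $\cT^\ell_y$ at $y = \Phi^1_x(t)$. Disintegrating $\mu$ along the unstable lamination and pushing the set of density points forward under $(x,t)\mapsto \Phi^1_x(t)$ gives that for $\mu$-a.e.\ $y$ the template $\cT^\ell_y$ admits a formal Taylor expansion
\[
\cT^\ell_y(h) \;=\; \sum_{i\ge 0}\tfrac{a_i(y)}{i!}\,h^i \;+\; o(h^r) \qquad \text{for every } r \ge 0,
\]
with measurable Taylor coefficients $a_i : M \to \R$ (defined on a full $\mu$-measure set).

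\textbf{Cocycle vanishing of high jets and polynomial identity.} Inserting this expansion into the cocycle identity and matching Taylor coefficients at $0$ yields, for every integer $i > d_0$,
\[
a_i(f(x)) \;=\; c_x\,\lambda_{1,x}^{-i}\,a_i(x),
\]
whose Birkhoff average equals $(\chi_2-(\ell+1)\chi_3) - i\chi_1 < 0$ whenever $i > d$. Hence $a_i(f^n(x)) \to 0$ $\mu$-a.e., and applying Poincar\'e recurrence to the measurable sets $\{|a_i|>\eps\}$ forces $a_i\equiv 0$ $\mu$-a.e.\ for every $i>d$. Let $P_x$ be the unique family of polynomials of degree $\le d$ produced by the Livsic-type summation of Claim \ref{c.polinomial} solving $P_{f(x)}(\lambda_{1,x}t) = c_x P_x(t) + Q_x(t)$; then $D_x := \cT^\ell_x - P_x$ satisfies the homogeneous cocycle $D_{f(x)}(\lambda_{1,x}t) = c_x D_x(t)$, whence backward iteration gives
\[
D_x(s) \;=\; \Big(\textstyle\prod_{k=1}^n c_{f^{-k}(x)}\Big)\,D_{f^{-n}(x)}\!\Big(s\Big/\!\!\prod_{k=1}^n \lambda_{1,f^{-k}(x)}\Big).
\]
By the previous paragraph $D_{f^{-n}(x)}(\tau)=o(\tau^{d+1})$ at $\tau=0$ through the Whitney set at $f^{-n}(x)$, and by the choice of $d$ the prefactor times $\big(\prod \lambda_{1,f^{-k}(x)}\big)^{-(d+1)}$ tends to $0$ Birkhoff-a.e.\ as $n\to\infty$. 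Hence the right-hand side tends to $0$ uniformly in $s$ on compact subsets of $(-1,1)$, so $\cT^\ell_x = P_x$ on a full $\hat\mu^1_x$-measure subset of $(-1,1)$, establishing case (i).

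\textbf{Main obstacle.} The delicate step is the change-of-basepoint argument used to translate Whitney smoothness at an arbitrary density point $t \in S_x$ into smoothness at the origin of the template at $y=\Phi^1_x(t)$. This transfer rests on a careful comparison of the $\ell$-good charts at two points on the same unstable leaf, which follows from the affine structure of the normal forms in Proposition \ref{prop-normalforms}(iv) together with the polynomial cocycle description supplied by Proposition \ref{p.normalform}; making this comparison rigorous in the purely measurable, non-atomic-measure setting is where most of the technical work lies. Once it is in place, the remaining pieces amount to routine applications of Birkhoff's ergodic theorem and Poincar\'e recurrence to the scalar cocycles $c_x\lambda_{1,x}^{-i}$.
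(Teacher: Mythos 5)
Your proposal runs on the same engine as the paper's argument (the polynomial transformation law \eqref{eq:dynamicstemplate}, Whitney smoothness at density points, and the choice of $d$ with $(d+1)\chi_1>\chi_2-(\ell+1)\chi_3$ so that the dynamics kills the non-polynomial part), but as written it has genuine gaps. The step where you ``match Taylor coefficients at $0$'' to obtain $a_i(f(x))=c_x\lambda_{1,x}^{-i}a_i(x)$ is not justified: the jets $a_i(y)$ are not intrinsic to the a.e.-defined function $\cT^\ell_y$. Whitney smoothness only provides Taylor data relative to a chosen positive-measure set $S_y$, and two different sets with $0$ as a density point can carry different (equally legitimate) expansions of the same measurable function. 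To match coefficients in $\cT^\ell_{f(x)}(\lambda_{1,x}t)=c_x\cT^\ell_x(t)+Q_x(t)$ you must know that the set used to define the jets at $f(x)$ agrees near $0$, mod $\hat\mu^1$-null sets, with $\lambda_{1,x}S_x$; this requires a measurable, $f$-equivariant choice of the sets $S_x$ and of the jets, which neither ergodicity nor the leafwise change-of-basepoint transfer you flag (Proposition \ref{prop.cocycle2}) supplies. The measurability of $x\mapsto a_i(x)$ also needs a selection argument you do not give.

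The final step has two further problems. First, Claim \ref{c.polinomial} does not produce a polynomial particular solution $P_x$ of $P_{f(x)}(\lambda_{1,x}t)=c_xP_x(t)+Q_x(t)$: it removes the high-degree tail of a smooth $r_x$, and the analogous one-sided series for the low-degree coefficients need not converge a.e. without integrability or temperedness of the data $Q_x$ along orbits, which is not established. Worse, if $P_x$ is not the degree-$d$ Taylor polynomial of $\cT^\ell_x$ at $0$, then $D_{f^{-n}(x)}$ is not $o(\tau^{d+1})$ (only $\cT^\ell$ minus its own Taylor polynomial is flat), so your displayed estimate fails; and if you instead take $P_x$ to be the Taylor polynomial, its cocycle equation rests on the unproved coefficient matching above. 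Second, even granting flatness, to pass to the limit in $D_x(s)=\bigl(\prod_{k\le n} c_{f^{-k}(x)}\bigr)D_{f^{-n}(x)}\bigl(s/\prod_{k\le n}\lambda_{1,f^{-k}(x)}\bigr)$ you need, for $\hat\mu^1_x$-a.e. $s$, infinitely many $n$ for which the rescaled point actually lies in the Whitney set of $f^{-n}(x)$ and for which the merely measurable constant implicit in $o(\tau^{d+1})$ at $f^{-n}(x)$ is controlled; the former is a density estimate for the Whitney sets inside the dynamical windows with respect to the conditional measures, the latter a Lusin-plus-recurrence argument, and neither appears. The paper's proof is organized precisely to avoid these issues: it fixes one Lusin-typical point (so the Whitney constant is attached to a fixed point), proves the density statement for $\hat S_x\cap J^{(n)}_x$, pushes the order-$(d+1)$ expansion forward along returns to the Lusin set, and concludes via continuity of $y\mapsto\cT^\ell_y$ there together with Lemma \ref{l.compactintersectmanypoints}, since a degree-$d$ polynomial is determined by $d+1$ points.
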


\begin{proof} 

We write $\lambda_{i,x}^{(n)} := \lambda_{i, f^{n-1}(x)} \cdots \lambda_{i, x} \in \{ \pm \|D_xf^n |_{E^i(x)}\| \}$ for $i \in \{1, 3\}$, for each integer $n \geq 0$ and for $\mu$-a.e. $x$. We will use the notation $J_x^{(n)}:= (- (\lambda_{1,x}^{(n)})^{-1},  (\lambda_{1,x}^{(n)})^{-1})$.

Since we have $\ell$-good unstable charts,  by definition there is an integer $d$ such that \eqref{eq:goodcoordinate} holds.
Iterating \eqref{eq:dynamicstemplate} we get the following formula for $t \in J_x^{(n)}$:  
\begin{equation}\label{eq:templatentimes}
\cT^\ell_{f^n(x)}(\lambda_{1,x}^{(n)} t) = \frac{\lambda_{2,x}^{(n)}}{(\lambda_{3,x}^{(n)})^{\ell+1}} \cT^\ell_x(t) + P^{(n)}_x(t) 
\end{equation}

\noindent where $P^{(n)}_x$ is a polynomial of degree $\leq d$.
After a change of variables in \eqref{eq:templatentimes} we get: 
\begin{equation}\label{eq:templatentimes2}
\cT^\ell_{x}(t) = \alpha_x^{(n)} \cT^\ell_{f^{-n}(x)}(\beta_x^{(n)} t) + Q^{(n)}_x(t) 
\end{equation}
\noindent where $$\alpha^{(n)}_x= \frac{\lambda_{2,f^{-n}(x)}^{(n)}}{(\lambda_{3,f^{-n}(x)}^{(n)})^{\ell+1}} \ \ , \ \ \beta_x^{(n)}=(\lambda_{1,f^{-n}(x)}^{(n)})^{-1}$$ \noindent and $Q^{(n)}_x(t) = P_{f^{-n}(x)}^{(n)}(\beta_x^{(n)} t )$ is also a polynomial in $t$ of degree $\leq d$. By enlarging $d$ if necessary, we may assume without loss of generality that
\begin{equation}\label{eq:choiceofd} 
\lim_{n} \frac{1}{n} \log( \alpha_x^{(n)} (\beta_x^{(n)})^m) < 0  \quad \text{ for every } m\geq d \text{ and } \mu\text{-a.e. } x \in M.
\end{equation}
Note that it suffices to take
\aryst 
d > \frac{\chi_2 - (\ell+1)  \chi_3 + (\ell + 2)\epsilon}{ \chi_1 - \epsilon}.
\earyst

We denote by $\cA \subset M$ the set of $x \in M$ with the following property: there is a compact set $S_x$ of positive $\hat \mu_x^1$-measure such that $\cT^\ell_x$ is smooth in the sense of Whitney on $S_x$. We assume that $\mu(\cA)>0$, for otherwise we already have (ii). Then by ergodicity and by \eqref{eq:dynamicstemplate}, we have $\mu(\cA) = 1$.
By definition, it is clear that for $\mu$-almost every $x \in \cA$ and for almost every $y$ (with respect to $\mu^1_x$) in a neighborhood of $x$,  we have $y \in \cA$.

We may upgrade the set $\cA$ in the following the way.
We denote by $\cB \subset M$ the subset of $x \in M$ such that for every $x \in  \cB$, there is a compact set $S_x$ of positive $\hat\mu^1_x$-measure such that $\cT^{\ell}_x$ is smooth in the sense of Whitney on $S_x$ and moreover {\it $x$ is a density point of $S_x$ with respect to $\hat\mu^1_x$}. By definition, we see that for every $x \in \cA$, the set $\cB \cap W^1_{loc}(x)$ has positive $\hat\mu^1_x$-measure. Since we have seen that $\cA$ is a full measure set. This means that $\mu(\cB) > 0$. Then by ergodicity, we have $\mu(\cB) = 1$.

 We fix some small constant $\eps > 0$.
 By Lusin's lemma, there is a compact subset $\cQ \subset \cB$ such that $\mu(\cQ) > 1 - \frac{\eps}{100}$, and the conditional measure $\mu^1_x$ depend continuously on $x \in \cQ$.  Moreover, by slightly reducing the size of $\cQ$ if necessary, we may assume in addition to the above that $\cT^{\ell}_x$, as a function defined $\hat\mu^1_x$-almost everywhere, depends continuously on $x \in \cQ$, in the following sense. 
 For every Cauchy sequence $\{ x_n \}_{n \geq 0}$ in $\cQ$ converging to some $x \in \cQ$, 
 there exists a compact subset $E_n \subset (-1,1)$ for each $n \geq 0$ such that, as $n$ tends to infinity, $\hat\mu^1_{x_n}(E_n)$ converges to $1$, and $E_n$ converges in Hausdorff's distance to a compact subset $E$ of $\hat\mu^1_x$-measure $1$, such that for every sequence $\{ t_n \in E_n \}_{n \geq 0}$ converging to $t \in E$, we have that $\cT^{\ell}_{x_n}(t_n)$ converges to $\cT^{\ell}_x(t)$.

Summarizing the above, we deduce that  there is \clb a point $x \in \cQ$ with the following properties: 
\begin{itemize}
\item one has that  $\mu(\overline{\{f^n(x)\}_{n \geq 0}  \cap \cQ } ) = \mu(\cQ)$,
\item there is a compact set $\hat S_x \subset \mathrm{supp}(\hat \mu_x^1)$ so that $\cT^\ell_x|_{\hat S_x}$ is smooth in the sense of Whitney, and
\item $\frac{\hat \mu_x^1(\hat S_x \cap  J_x^{(n)} )}{\hat \mu_x^1( J_x^{(n)})}$ tends to $1$ as $n$ tends to infinity. 
\end{itemize}

We can write (cf. \eqref{eq:whitney}) for some $c > 1$ that: 
\begin{equation}\label{eq:taylorphi}
\cT^\ell_x(t) = a_{x, 1} t + \ldots + a_{x, d}t^d + \hat \cT^\ell_x(t) \ \text{where} \ |\hat \cT^\ell_x(t)| \leq c |t|^{d+1} \text{ if }  t \in \left(-\frac{1}{c}, \frac{1}{c}\right) \cap \hat  S_x.  
\end{equation}

Pick $y \in \cQ$ so that $\mu(B_\eps(y) \cap \cQ)>0$ for all $\eps>0$, and a sequence $n_i \to \infty$ so that $f^{n_i}(x) \in \cQ$, and converges to $y$. Notice that we can deduce from \eqref{eq:templatentimes2} and \eqref{eq:taylorphi}:
\begin{equation} \label{eq:ctellxandqn}
\cT^\ell_{f^{n_i}(x)}(t) = \hat Q^{(n_i)}_{f^{n_i}(x)}(t) + \alpha_{f^{n_i}(x)}^{(n_i)} \hat \cT^\ell_{x}( \beta_{f^{n_i}(x)}^{(n_i)} t) 
\end{equation}

\noindent where $\hat Q^{(n_i)}_x(t)$ is a polynomial of degree $\leq d$. If $\beta_{f^{n_i}(x)}^{(n_i)} t \in \hat S_x \cap J_x^{(n_i)}$ we have that
\begin{equation}
 \alpha_{f^{n_i}(x)}^{(n_i)} \hat \cT^\ell_{x}(\beta_{f^{n_i}(x)}^{(n_i)} t) \leq \alpha_{f^{n_i}(x)}^{(n_i)} c (\beta_{f^{n_i}(x)}^{(n_i)})^{d+1} |t|^{d+1}.
\end{equation}
Notice that the $\hat\mu^1_{f^{n_i}(x)}$-measure of the set of $t$ satisfying $\beta_{f^{n_i}(x)}^{(n_i)} t \in \hat S_x \cap J_x^{(n_i)}$ is at least
\aryst
\hat\mu^1_x( \hat S_x \cap J_x^{(n_i)}) / \hat\mu^1_x( J_x^{(n_i)} )
\earyst
which tends to $1$ as $i$ tends to infinity.

Up to passing to a subsequence of $(n_i)_{i \geq 0}$, we have that for every $i \geq 0$ there exist a polynomial $R_i$ of degree $\leq d$, and a subset $E_i \subset (-1,1)$ such that:  as $i$ tends to infinity,
$\hat\mu^1_{f^{n_i}(x)}(E_i)$ converges to $1$;
$E_i$ converges in the Hausdorff's distance to a subset $E \subset (-1,1)$ of full $\hat\mu^1_y$-measure; and for every sequence $\{t_i \in E_i\}_{i \geq 0}$, we have $R_i(t_i)$ converges to $\cT^{\ell}_y(t)$.

By Lemma \ref{l.compactintersectmanypoints}, $E$ contains infinitely many points.
 Since a polynomial with degree $\leq d$ is determined by its values at $d+1$ points,  we deduce that $\cT^\ell_y$ is a polynomial of degree $d$ on a full $\hat \mu^1_y$-measure set for every $y \in \cQ$.
 By letting $\eps$ tend to $0$ we deduce that $\cT^\ell_y$ is a polynomial of degree $d$ when restricted to the support of $\hat \mu^1_x$ for $\mu$-a.e. $x \in M$.  
\end{proof}

\section{Polynomials and rational functions}\label{s.rational}

We consider the collections of functions $\mathrm{Poly}^{d}= \{ p: [-1,1] \to \RR \ : \ p \text{ is a polynomial of degree } \leq d \}$ and $\cR^d= \{ \frac{q}{p}:[-1,1] \to \RR \ : \ p,q \in \mathrm{Poly}^d \ \mbox{ and } \ p(t)\neq 0 \ \forall t \in [-1,1] \}$. Clearly we have that $\mathrm{Poly}^d \subset \cR^d$.  
 We note that $\mathrm{Poly}^d$ is a linear subspace of $C^0([-1,1])$, but $\cR^d$ is not. 
  
We will need a compactness result which is standard for polynomials. We first give a definition. Given constants $k \in \ZZ_{>0}$, $\sigma,\eta>0$, we say that a subset $E \subset [-1,1]$ is $(k,\sigma,\eta)$-\emph{spread} if for any intervals $I_0, \ldots, I_k$ such that $\sum_i |I_i|< \eta$ we have that $E \setminus \bigcup I_i$ has at least $k+1$ points with pairwise distances strictly larger than $\sigma$. 

\begin{prop}\label{p.compactnessrational} 
For every $d \in \ZZ_{>0}$, $\sigma, \eta>0$ there is $C:=C(d,\sigma,\eta) > 0$  such that for any $(d,\sigma,\eta)$-spread subset $E\subset [-1,1]$, and any $R \in \cR^d$ satisfying $\sup_{t \in E} |R(t)| = 1$, the following is true:
\begin{enumerate}
\item\label{it1prop}  there are intervals $I_0, \ldots, I_d$ such that $\sum_i |I_i| < \eta$ and $|R'(t)|< C$ for every $t \in [-1,1] \setminus \bigcup I_i$,
\item\label{it2prop}  there are intervals $J_0, \ldots, J_{2d}$ such that $\sum_i |J_i| < \eta$ and $|R(t)|> C^{-1}$ for every $t \in [-1,1] \setminus \bigcup J_i$.
\end{enumerate}
\end{prop}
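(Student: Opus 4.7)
Plan. I would argue by contradiction using a compactness argument in the finite-dimensional space $\mathrm{Poly}^d \times \mathrm{Poly}^d$. Suppose the proposition fails: there exist a sequence of $(d,\sigma,\eta)$-spread subsets $E_n \subset [-1,1]$, rational functions $R_n = q_n/p_n \in \cR^d$ with $\sup_{E_n}|R_n|=1$, and constants $C_n \to \infty$ such that for each $n$ at least one of (i), (ii) fails with $C = C_n$. Since $R_n$ depends on $(p_n, q_n)$ only up to a common scalar, I would normalize so that $\max(\|p_n\|_{C^0([-1,1])}, \|q_n\|_{C^0([-1,1])}) = 1$. By finite-dimensionality and the equivalence of norms, a subsequence of $(p_n, q_n)$ converges uniformly on $[-1,1]$ to a limit $(p_*, q_*)$ which is not identically zero.

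The spread hypothesis is used first to force $p_* \not\equiv 0$. Extract from $E_n$ a collection $t_0^{(n)}, \ldots, t_d^{(n)}$ of $d+1$ points with pairwise distances $> \sigma$, converging along a further subsequence to $t_0^*, \ldots, t_d^* \in [-1,1]$ with pairwise distances $\geq \sigma$. The inequality $|q_n(t_j^{(n)})| \leq |p_n(t_j^{(n)})|$, coming from $\sup_{E_n}|R_n| = 1$, passes to the limit; if $p_* \equiv 0$, this gives $q_*(t_j^*) = 0$ for $d+1$ well-separated points, forcing $q_* \equiv 0$ since $\deg q_* \leq d$, which contradicts the normalization. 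Hence $p_* \not\equiv 0$, and the real zero set $Z \subset [-1,1]$ of $p_*$ has at most $d$ points. Enlarging by the (at most $d$) real zeros of $q_*$, I cover the resulting finite set by a union $U$ of at most $2d+1$ open intervals of total length $< \eta$. On the compact set $[-1,1] \setminus U$, the denominator $|p_*|$ and the function $|R_*| := |q_*/p_*|$ are bounded below while $|R_*'|$ is bounded above; uniform convergence $(p_n, q_n) \to (p_*, q_*)$ then yields $R_n \to R_*$ and $R_n' \to R_*'$ uniformly on $[-1,1] \setminus U$. Consequently, taking $C$ larger than the resulting uniform bounds and $n$ large, both $\{|R_n'| > C\}$ and $\{|R_n| < C^{-1}\}$ lie inside $U$, contradicting $C_n \to \infty$.

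The main obstacle I anticipate is the degenerate subcase $q_* \equiv 0$ with $p_* \not\equiv 0$: then $R_* \equiv 0$ on $[-1,1] \setminus Z$, and the maximizer realizing $\sup_{E_n}|R_n| = 1$ must collapse into the neighborhood $U$ of $Z(p_*)$, so the limiting argument for (ii) above is insufficient. Resolving this requires a quantitative use of the spread condition: apply $(d,\sigma,\eta)$-spreadness to the collection of intervals forming $U$ itself to obtain $d+1$ interpolation points $t_j^{(n)} \in E_n \setminus U$, and combine a Lagrange-interpolation bound of the form $\|q_n\|_{C^0([-1,1])} \leq C(d,\sigma) \max_j |q_n(t_j^{(n)})|$ with $|q_n(t_j^{(n)})| \leq |p_n(t_j^{(n)})|$ and the uniform lower bound on $|p_n|$ outside $U$ (valid for large $n$ since $p_* \neq 0$ there) to rule out $q_*$ degenerating to zero. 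This delicate interplay between the spread hypothesis on $E_n$, the normalization of $(p_n, q_n)$, and the structure of the zero sets of $p_*$ and $q_*$ is what makes the argument nontrivial, and it is here that the specific choice of constants $d+1$ and $2d+1$ in the two conclusions becomes essential.
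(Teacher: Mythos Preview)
Your compactness argument in $\mathrm{Poly}^d \times \mathrm{Poly}^d$ is essentially the same route as the paper's, which packages the extraction step as a separate lemma on sequences in $\cR^d$ bounded at $d+1$ well-separated points. For part~(i) your argument is sound: once $p_* \not\equiv 0$, covering its at most $d$ real zeros suffices, and $R_n' \to R_*'$ uniformly on the complement. (For~(i) you should use only those intervals; adding the zeros of $q_*$ makes $U$ consist of up to $2d+1$ intervals, too many for the conclusion $I_0,\ldots,I_d$.)

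For part~(ii) there is a genuine gap, and your proposed fix does not close it. The Lagrange bound $\|q_n\|_{C^0} \le C(d,\sigma)\max_j |q_n(t_j^{(n)})|$ together with $|q_n(t_j^{(n)})| \le |p_n(t_j^{(n)})| \le 1$ yields only an \emph{upper} bound on $\|q_n\|$; nothing in the spread hypothesis forces the near-maximizer of $|R_n|$ on $E_n$ to lie among the $t_j^{(n)}$, so you obtain no lower bound and cannot exclude $q_* \equiv 0$. In fact this degeneration does occur: with $d=2$, $E=[-1,1]$, and $R_n(t)=1/(nt^2+1)$ one has $\sup_E |R_n|=1$, yet for any fixed $C$ the set $\{t:|R_n(t)|\le C^{-1}\}$ has length tending to $2$ as $n\to\infty$, so it cannot be covered by intervals of total length $<\eta$. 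The paper's sketch for~(ii), which argues via the logarithmic derivative of the limit $R_\infty$ and covers its zeros and poles, also tacitly assumes $R_\infty \not\equiv 0$ and does not address this case; your instinct to flag it as the crux was correct, but the interpolation inequality points the wrong way.
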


 \begin{remark}\label{remark-rescale}
We will use this result in intervals of varying length (not always $[-1,1]$) and for rational functions with possibly different normalizations (not always $\sup_{t \in E} |R(t)| = 1$). Assume that the rational function $R$ is defined on $[a,b]$, and $\sup_{t \in E} |R(t)|=A$ for some $E \subset [a,b]$ such that $\xi(E)$ is $(d,\sigma,\eta)$-spread, where $\xi: [a,b] \to [-1,1]$ is the unique  affine bijection. Then we can apply the result to $\hat R(t) = \frac{1}{A} R(\xi^{-1}(t))$ which is a rational function of the same degree defined on $[-1,1]$. We obtain that the derivative of $\hat R$ is less than $C=C(d,\sigma,\eta)$ except in a finite family of intervals which cover a small proportion (less than $\eta$) and thus the derivative of $R$ is less than $\frac{C}{A(b-a)}$ by the chain rule. In the same way, the lower bound for $|R(t)|$ in (\ref{it2prop}) becomes $\frac{A}{C}$. 
\end{remark} 

To prove this proposition we will need the following elementary result that will also serve other purposes:

\begin{lemma}\label{l.subsequencerational}
Let $C_0, \sigma > 0$ and 
let $( R_n \in \cR^d )_{n \geq 1}$ be a sequence of rational functions such that for every $n \geq 1$ there exist  points $t_{0,n}, \ldots, t_{d, n} \in [-1,1]$ with pairwise distances strictly larger than $\sigma$ verifying that $\sup_{i} |R_n(t_{i,n})| \leq C_0$. Then, there exist a subsequence $n_j \to \infty$, points $s_1, \ldots, s_d \in \DD_2$, and a rational function $R_\infty \in \cR^d$ (whose poles are contained in $\{s_1, \ldots, s_d\}$) such that $R_{n_j}$ converges to $R_{\infty}$ uniformly on compact subsets of $\DD_2 \setminus \{s_1, \ldots, s_d\}$ (where $\DD_2 = \{ z\in \CC \ : \ |z| \leq 2 \}$).
\end{lemma}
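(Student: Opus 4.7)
The plan is to embed $\cR^d$ into a finite-dimensional (hence compact) space by recording the coefficients of a numerator/denominator pair, extract a convergent subsequence, and then use the $d+1$ separated control points to prevent the limiting denominator from being identically zero.

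First, for each $n$ I would pick polynomials $p_n, q_n$ of degree at most $d$ with $R_n = q_n/p_n$ and $p_n(t) \neq 0$ for $t \in [-1,1]$, and rescale so that the combined vector of coefficients of $(p_n, q_n)$ in $\RR^{2d+2}$ has Euclidean norm one. By compactness of the unit sphere, a subsequence (still denoted by $n$) satisfies $p_n \to p_\infty$ and $q_n \to q_\infty$ coefficient-wise, hence uniformly on any compact subset of $\CC$, with $(p_\infty, q_\infty) \neq (0,0)$. Passing to a further subsequence, $t_{i,n} \to t_{i,\infty}$ for every $i$, and by continuity of distances $|t_{i,\infty} - t_{j,\infty}| \geq \sigma$ for $i \neq j$, so the $d+1$ points $t_{0,\infty},\ldots,t_{d,\infty}$ are pairwise distinct.

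The crux is to show $p_\infty \not\equiv 0$. Suppose for contradiction that $p_\infty \equiv 0$; then $q_\infty$ is a nonzero polynomial of degree at most $d$, so it has at most $d$ zeros. Consequently at least one of the $t_{i,\infty}$, say $t_{0,\infty}$, is not a zero of $q_\infty$. Then $q_n(t_{0,n}) \to q_\infty(t_{0,\infty}) \neq 0$ whereas $p_n(t_{0,n}) \to 0$, which gives $|R_n(t_{0,n})| \to \infty$, contradicting the uniform bound $|R_n(t_{0,n})| \leq C_0$. Hence $p_\infty \not\equiv 0$.

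Finally, let $s_1,\ldots,s_k$ be the zeros of $p_\infty$ in $\DD_2$ counted with multiplicity (so $k \leq d$ since $\deg p_\infty \leq d$), and pad with arbitrary elements of $\DD_2$ to obtain $s_1,\ldots,s_d \in \DD_2$. Define $R_\infty := q_\infty/p_\infty$, whose poles in $\DD_2$ lie in $\{s_1,\ldots,s_d\}$. On any compact $K \subset \DD_2 \setminus \{s_1,\ldots,s_d\}$, $|p_\infty|$ admits a positive lower bound; together with the uniform convergence $p_n \to p_\infty$ on $\DD_2$, this forces $|p_n|$ to be uniformly bounded below on $K$ for large $n$, whence $R_n = q_n/p_n \to R_\infty$ uniformly on $K$. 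The main obstacle is the nonvanishing of $p_\infty$, which relies precisely on having $d+1$ separated control points to exceed the number of zeros of a nonzero degree-$d$ polynomial; the remainder of the argument is standard compactness.
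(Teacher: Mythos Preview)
Your proof is correct and takes a genuinely different route from the paper. The paper writes each $R_n$ in factored form $c_n \prod (z-a_{i,n})/\prod (z-b_{j,n})$, passes to a subsequence so that the roots and leading constant converge in $\overline{\CC}$, separates the roots that escape to infinity from those that stay bounded, and then uses a logarithmic-derivative argument (together with one well-chosen control point) to show that the ``escaping'' part $\hat c_n(z)$ converges uniformly on $\DD_2$ to a constant. Your argument instead normalizes the coefficient vector of $(p_n,q_n)$ on the unit sphere in $\RR^{2d+2}$, extracts a limit $(p_\infty,q_\infty)\neq(0,0)$, and rules out $p_\infty\equiv 0$ by the pigeonhole observation that a nonzero degree-$\leq d$ polynomial cannot vanish at all $d+1$ limit control points. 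This is shorter and more elementary; it avoids tracking individual roots and the logarithmic-derivative computation entirely. The paper's approach, on the other hand, yields an explicit description of $R_\infty$ in terms of the limiting roots (equation for $R_\infty$ in their proof), which they then reuse when proving the lower bound in item~(\ref{it2prop}) of Proposition~\ref{p.compactnessrational} via another logarithmic-derivative estimate. Your zero set $\{s_1,\dots,s_d\}$ (the zeros of $p_\infty$) may strictly contain the actual poles of $R_\infty$ when $p_\infty$ and $q_\infty$ share a common factor, but that is harmless for the stated conclusion.
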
 

To see the need to take out some points from the interval, we may consider the sequence $\{ R_n(z) = \frac{1}{n z^2 + 1} \}_{n \geq 1}$. 

\begin{proof}
We can write 
\begin{equation}\label{eq:rational1}
R_n(z) = c_n \frac{\prod_{i=1}^{k_n} (z-a_{i, n})}{\prod_{j=1}^{m_n}(z-b_{j, n})}. 
\end{equation}
\noindent where $c_n, a_{i, n}, b_{j, n} \in \CC $ and $0 \leq k_n, m_n \leq d$\footnote{We use the convention that $\prod_{i=1}^0 (z-\gamma_i) =1$.}. 
Up to considering a subsequence, we can assume that $k_n = k$ and $m_n=m$ are constant for all $n$ and that $c_n \to c_\infty$, $a_{i, n} \to a_{i, \infty}, b_{j, n} \to b_{j, \infty}$ all converge in $\overline{\CC}= \CC \cup \{\infty\}$.

We order $a_{i, n}$ and $b_{j, n}$ so that they decrease in modulus. We let $\hat k \in \{1,\ldots, k+1\}$ and $\hat m \in \{1,\ldots m+1\}$ the smallest integers so that $a_{\hat k, \infty}, b_{\hat m, \infty} \in \CC$ (so that $a_{i, \infty} = \infty$ if $i<\hat k$ and $b_{j, \infty} = \infty$ if $j < \hat m$; if $\hat k = k+1$ or $\hat m=m+1$ means that all coefficients diverge).  

We have the following.
\begin{claim}
Up to taking  further a subsequence we have that the sequence of functions 
$$\hat c_n(z):= c_n \frac{\prod_{i=1}^{\hat k-1} (z- a_{i, n})}{\prod_{j=1}^{\hat m -1} (z-b_{j, n})} $$
\noindent converges uniformly in $\DD_2$ to a constant function $\hat c_\infty \in \CC$. 
\end{claim}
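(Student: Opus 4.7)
The plan is to prove this by a compactness/contradiction argument, exploiting the fact that after a convenient factorisation, $\hat c_n$ differs from an explicit scalar by a factor that automatically tends to $1$ on $\DD_2$.

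First, I would rewrite
\[
\hat c_n(z) \;=\; \gamma_n \cdot f_n(z), \qquad \gamma_n \;:=\; c_n \frac{\prod_{i=1}^{\hat k-1}(-a_{i,n})}{\prod_{j=1}^{\hat m-1}(-b_{j,n})}, \qquad f_n(z) \;:=\; \frac{\prod_{i=1}^{\hat k-1}(1-z/a_{i,n})}{\prod_{j=1}^{\hat m-1}(1-z/b_{j,n})}.
\]
By the very definition of $\hat k$ and $\hat m$, we have $|a_{i,n}|,|b_{j,n}| \to \infty$ for the indices occurring in $f_n$. Consequently $f_n \to 1$ uniformly on $\DD_2$, so the whole claim reduces to showing that $\gamma_n$ admits a bounded subsequence; once that is known, extracting a convergent subsequence $\gamma_{n_j} \to \hat c_\infty \in \CC$ immediately yields $\hat c_{n_j} \to \hat c_\infty$ uniformly on $\DD_2$.

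To show boundedness of $\gamma_n$ I would argue by contradiction, assuming $|\gamma_n| \to \infty$. Write
\[
R_n(z) \;=\; \hat c_n(z)\,\frac{P_n(z)}{Q_n(z)}, \qquad P_n(z) := \prod_{i=\hat k}^{k}(z-a_{i,n}), \qquad Q_n(z) := \prod_{j=\hat m}^{m}(z-b_{j,n}),
\]
where now $a_{i,n} \to a_{i,\infty}\in\CC$ and $b_{j,n} \to b_{j,\infty}\in\CC$. The key manoeuvre is to clear denominators:
\[
R_n(z)\,Q_n(z) \;=\; \hat c_n(z)\,P_n(z).
\]
By passing to a further subsequence, I can assume $t_{i,n} \to t_{i,\infty}$ with pairwise distances $\geq \sigma$, so the $d+1$ limit points are distinct. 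Evaluating at $z = t_{i,n}$, the left side stays bounded, because $|R_n(t_{i,n})| \leq C_0$ and $|Q_n(t_{i,n})|$ is controlled by the bounded sequences $b_{j,n}$; on the right, $|\hat c_n(t_{i,n})| \geq |\gamma_n|/2$ for large $n$ (using uniform convergence of $f_n \to 1$). Therefore $|P_n(t_{i,n})| \to 0$ for every $i=0,\ldots,d$.

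Finally, since $P_n$ is monic of degree $k-\hat k + 1 \leq d$ with bounded coefficients (converging to $P_\infty(z)=\prod_{i=\hat k}^{k}(z-a_{i,\infty})$, which is monic, hence not identically zero), the limits give $P_\infty(t_{i,\infty})=0$ for all $i$, producing $d+1$ distinct roots of a polynomial of degree at most $d$, a contradiction. The main conceptual obstacle is the possibility that some $b_{j,\infty}$ lands in $[-1,1]$ and coincides with some $t_{i,\infty}$, which would ruin a naive argument working directly with $g_n = P_n/Q_n$; the trick of multiplying through by $Q_n$ before taking limits is precisely what sidesteps this difficulty, since $Q_n$ is bounded on $[-1,1]$ even when it has small values there.
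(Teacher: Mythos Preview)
Your proof is correct. The factorisation $\hat c_n = \gamma_n f_n$ with $f_n\to 1$ uniformly on $\DD_2$ is equivalent to the paper's observation that the logarithmic derivative $\hat c_n'/\hat c_n$ tends to zero uniformly on $\DD_2$; both reduce the claim to showing that $\hat c_n$ is bounded at some point.

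Where the two arguments diverge is in establishing that boundedness. The paper proceeds directly: since $P_n$ has at most $k-\hat k+1\le d$ roots, one of the $d+1$ test points $t_{i,n}$ (after passing to limits) must stay uniformly far from all the $a_{i,n}$ with $i\ge \hat k$, and for that point the identity $R_n(t_{0,n})=\hat c_n(t_{0,n})\,P_n(t_{0,n})/Q_n(t_{0,n})$ immediately bounds $\hat c_n(t_{0,n})$. Your route is by contradiction, using all $d+1$ test points simultaneously: clearing the denominator $Q_n$ and sending $|\gamma_n|\to\infty$ forces $P_\infty$ to vanish at all $d+1$ limit points $t_{i,\infty}$, which is impossible for a monic polynomial of degree at most $d$. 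Your trick of multiplying through by $Q_n$ is a clean way to avoid having to locate a test point away from the $b_{j,\infty}$; the paper instead relies on the fact that $Q_n(t_{0,n})$ is automatically bounded above (since all $b_{j,n}$ with $j\ge\hat m$ are bounded) and only needs $P_n(t_{0,n})$ bounded below. Both arguments are short; yours is perhaps slightly more symmetric, while the paper's avoids the contradiction detour.
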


\begin{proof}
Up to taking a subsequence we can assume that the tuple of points $(t_{i, n})_{i=0}^{d}$ converge to the tuple of points $(t_{i, \infty})_{i=0}^{d} \in [-1,1]^d$ which are pairwise at distance $\geq \sigma$. By  $k- \hat k \leq d$, we can assume without loss of generality that $t_{0, n}$ is uniformly far from $a_{i, n}$ for all $\hat k \leq i \leq k$ (and therefore for all $1\leq i \leq k$ as $a_{i, n} \to \infty$ if $i< \hat k$).

It is enough to show that the functions $\hat c_n(z)$ are bounded uniformly in some point of $\DD_2$ since one can compute the logarithmic derivative as: 

$$ \frac{\hat c_n'(z)}{\hat c_n(z)} = \sum_{i=1}^{\hat k-1} \frac{1}{z - a_{i, n}} - \sum_{j=1}^{\hat m-1} \frac{1}{z- b_{j, n}}, $$
\noindent which converges uniformly to $0$ in $\DD_2$ because the coefficients $a_{i, n}$ and $b_{j, n}$ diverge. 

To get the uniform boundedness, we compute the value of $\hat c_n$ in the point $t_{0, n} \in [-1,1] \subset \DD_2$. Notice that

$$R_n(t_{0, n}) = \hat c_n(t_{0, n})\frac{\prod_{i=\hat k}^k (t_{0, n} - a_{i, n})}{\prod_{i = \hat m}^m (t_{0, n} - b_{i, n})}$$ \noindent is uniformly bounded. Since the product $ \prod_{i=\hat k}^k (t_{0, n} - a_{i, n})$ is uniformly bounded from below  and $R_n(t_{0, n})$ is uniformly bounded from above, we get the desired result.  
\end{proof}

Now it is easy to show that outside any given neighborhood of $\{b_{\hat m, \infty}, \ldots, b_{m, \infty} \}$ in $\DD_2$ the sequence $(R_n)_{n \geq 1}$ converges uniformly to 
\begin{equation}\label{eq:rinfty}
R_\infty (z) = \hat c_\infty  \frac{\prod_{i=\hat k}^{k} (z-a_{i, \infty})}{\prod_{i = \hat m}^{m}(z-b_{i, \infty} )}.
\end{equation}

The rational function $R_\infty$ verifies the desired properties. 
\end{proof}

\begin{proof}[Proof of Proposition \ref{p.compactnessrational}]

We detail the proof fo  (\ref{it1prop}), and give some sketch for the proof of (\ref{it2prop}) as it is similar.

Assuming to the contrary that (\ref{it1prop}) fails.
Then there is a sequence $R_n = \frac{Q_n}{P_n} \in \cR^d$ so that: (1) there are $(d,\delta,\eta)$-spread sets $E_n \subset [-1,1]$  with $|R_n(t)| = 1$ for all $t \in E_n$; and (2)  there is some $t \in [-1,1] \setminus \bigcup I_i$ so that $|R'_n(t)|>n$  for every family of intervals $I_1, \ldots, I_{d}$ whose sum of lengths do not excede $\delta$.

Using Lemma \ref{l.subsequencerational} and our hypotheses on $E_n$, we can find a rational function $R_\infty \in \cR^d$, a subsequence $n_j \to \infty$ and points $s_1, \ldots, s_d \in \CC$ containing the poles of $R_\infty$ such that $R_{n_j} \to R_\infty$ on every compact subset of $\DD_2 \setminus \{s_1, \ldots, s_d\}$. In particular, on every compact subset of $\DD_2 \setminus \{s_1, \ldots, s_d\}$ we have that $R_{n_j}' \to R'_\infty$ uniformly.   By covering the set $\{ s_1, \ldots, s_d \} \cap \RR$ by small  open intervals whose lengths add up to less than $\sigma$ we find a contradiction since $R'_\infty$ is bounded away from those intervals.  This proves (\ref{it1prop}).

To prove (\ref{it2prop}), we may construct $R_n$, $E_n$ as before, but instead of (2) we assume that 
there is some $t \in [-1,1] \setminus \bigcup J_i$ so that $|R_n(t)| < 1/n$  for every family of intervals $J_1, \ldots, J_{2d}$ whose sum of lengths do not excede $\delta$.  
 Then one can use that $R_\infty$ has the form given by equation \eqref{eq:rinfty} and use logarithmic derivatives (i.e. consider the derivative of $\log (R_\infty)$) to see that 
$$ \frac{ R'_\infty (z)}{R_\infty(z)} = \sum_i \frac{1}{z- a_i} - \sum_j \frac{1}{z-b_j}, \ \ \ \ a_i, b_j \in \CC. $$
We let $\cup_{i = 1}^{2d} J_i$ cover a neighborhood of $\R \cap \{ a_i, b_j \}$ in $\R$. Then it is direct to deduce (\ref{it2prop}) by contradiction. 

This concludes the proof of the proposition. 
\end{proof}

\section{Distance to rational functions}\label{s.computations} 

This section is devoted to showing the following statement. 

\begin{prop}\label{p.qniellgood}
Let $\mu$ be a partially hyperbolic measure with $\ell$-good unstable charts $\{\imath_x\}$ such that  the stable templates $\cT^\ell_x$ (cf. \eqref{eq:highordertemplate}) are not in $\mathrm{Poly}^d$ for some $d = d(f, \ell) = O(\ell)$. Then $\mu$ has QNI. 
\end{prop}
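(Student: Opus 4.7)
The plan is to combine three ingredients: the dichotomy from Proposition~\ref{p.dichotomy}, the iterated dynamical transformation law~\eqref{eq:templatentimes2} for the templates $\cT^\ell_x$, and the compactness and rigidity for the class $\cR^d$ of rational functions from Proposition~\ref{p.compactnessrational}.

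First I would invoke Lusin's lemma to restrict to a compact set $\cP \subset M$ of measure arbitrarily close to $1$ on which the chart $x \mapsto \imath_x$, the template $\cT^\ell_x$, the conditional measures $\hat\mu^i_x$, and the auxiliary functions $c_x$ from \eqref{eq:boundab} all depend continuously, with all implicit constants uniform. Since $\cT^\ell_x \notin \mathrm{Poly}^d$ by hypothesis, Proposition~\ref{p.dichotomy} forces $\cT^\ell_x$ to fail Whitney-smoothness on every positive-measure subset of $\mathrm{supp}(\hat\mu^1_x)$; in particular it cannot agree on any such subset with an element of $\cR^d$, whose members are smooth on $(-1,1)$ away from at most $d$ poles. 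Continuity on $\cP$ combined with Lemma~\ref{l.subsequencerational} (which rules out the only obstruction to uniformity, namely that $\cR^d$ is not closed) then upgrades this to a uniform quantitative statement: there exists $\eps_0>0$ such that for every $x \in \cP$ and every $R \in \cR^d$,
$$\sup \{ |\cT^\ell_x(t) - R(t)| \,:\, t \in \mathrm{supp}(\hat\mu^1_x) \cap (-1,1) \} > \eps_0.$$

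Next I would transport this behaviour to the dynamical scale $\beta^{(k)}_x \asymp e^{-k\chi_1}$. For $x$ with $f^{\pm k}(x) \in \cP$, writing $y = f^{-k}(x)$, the iterated transformation law~\eqref{eq:templatentimes2} reads
$$\cT^\ell_x(t) = \alpha^{(k)}_x \cT^\ell_y(\beta^{(k)}_x t) + Q^{(k)}_x(t), \qquad Q^{(k)}_x \in \mathrm{Poly}^d.$$
Applying the previous step to $y$, rescaling by $\beta^{(k)}_x$ (see Remark~\ref{remark-rescale}), and invoking Proposition~\ref{p.compactnessrational}, one obtains a subset of $W^{1,k}_1(x)$ of uniformly large conditional $\hat\mu^1_x$-measure on which $\cT^\ell_x$ has, at many pairs of points $t_1, t_2$, a nontrivial oscillation $|\cT^\ell_x(t_1) - \cT^\ell_x(t_2)|$ whose lower bound depends on $|\alpha^{(k)}_x|\eps_0$ and scales exponentially in the Lyapunov exponents.

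The final step converts template oscillation into the distance inequality~\eqref{eq:QNI}. From \eqref{eq:highordertemplate}, in the chart $\imath_x$ the local stable leaf through $\Phi^1_x(t)$ is parametrized by $s$ as $(t + O(s), \cT^\ell_x(t) s^{\ell+1} + O(s^{\ell+2}), s)$, so two such leaves evaluated at a common transverse parameter are separated in the $x_2$-direction by $|\cT^\ell_x(t_1) - \cT^\ell_x(t_2)| s^{\ell+1}$ plus lower-order terms. For $z \in W^{1,k}_1(x)$ in the good subset just produced, the leaf $W^3_1(z)$ therefore sits in a quantitatively different place in the chart, relative to any fixed long unstable leaf $W^1_1(y)$ through $y \in W^{3,k}_1(x)$. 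A Fubini argument across $W^{3,k}_1(x) \times W^{1,k}_1(x)$, combined with the symmetric information on the $W^1$-side obtained by applying our conclusion to $f^{-1}$ via Proposition~\ref{prop.symmetry}, then yields $d(W^1_1(y), W^3_1(z)) > Ce^{-\alpha k}$ on a product set of the conditional measure prescribed by Definition~\ref{def.QNI}.

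The main obstacle I anticipate is the exponent book-keeping: one must choose $\alpha > 0$ depending only on $\chi_1, \chi_2, \chi_3, \ell$ so that the combined effect of the growth of $|\alpha^{(k)}_x|$, the contraction $\beta^{(k)}_x$, the power $s^{\ell+1}$, and the passage between Lyapunov and Riemannian norms yields the uniform lower bound $Ce^{-\alpha k}$ on a set of measure at least $(1-\nu)^2$ inside $W^{3,k}_1(x) \times W^{1,k}_1(x)$. A secondary subtlety is that the relation \eqref{eq:templatentimes2} is only modulo polynomials of degree $d$, so the natural class to measure template distance from is $\cR^d$ rather than $\mathrm{Poly}^d$, matching exactly the formulation of Section~\ref{s.rational}.
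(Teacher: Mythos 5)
Your overall strategy---non-polynomiality of the templates upgraded to a quantitative distance from rational functions, transported to dynamical scales via \eqref{eq:templatentimes2}, and converted into \eqref{eq:QNI} inside the good charts---is the paper's strategy, but two steps have genuine gaps. First, the quantitative statement you extract is a sup bound: at \emph{some} point of $\supp(\hat\mu^1_x)$ the template is $\eps_0$-far from every $R\in\cR^d$. QNI, however, requires that for \emph{every} $\nu>0$ the exceptional set have conditional measure at most $\nu$; what is needed is that for every $\nu$ there is $c(\nu)$ with $\hat\mu^1_x\{t:\ |\cT^\ell_x(t)-Q(t)/P(t)|\le c\}\le\nu$ uniformly over $P,Q$ (Proposition \ref{prop.estimatepoly}), together with its rescaled dynamical version with lower bound $c\,e^{-\delta k}$ (Proposition \ref{prop.estimatepoly2}). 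This measure version uses the non-atomicity of $\hat\mu^1_x$ through the $(d,\sigma,\eta)$-spread property and does not follow from a sup bound plus continuity on a Lusin set; accordingly, your claim that one then gets a large-measure set carrying ``nontrivial oscillation at many pairs of points'' is unsubstantiated, and in any case a pairwise statement can at best certify half of the points, not a proportion $1-\nu$.

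Second, and more seriously, your conversion to \eqref{eq:QNI} compares two \emph{stable} leaves: you estimate the $x_2$-separation $|\cT^\ell_x(t_1)-\cT^\ell_x(t_2)|\,s^{\ell+1}$ and conclude that $W^3_1(z)$ must be far from ``any fixed'' $W^1_1(y)$. This does not follow: the unstable leaf $W^1_1(y)$ passes near different stable leaves at different transverse parameters, so mutual separation of the stable leaves is perfectly compatible with each of them being close to $W^1_1(y)$. One must control where $W^1_1(y)$ itself sits: write it in the chart as $\{(t',\hat Q(t'),\hat P(t'))\}$, bound $d(W^3_1(z),W^1_1(y))$ below by $c\,|\hat P(t')^{\ell+1}\cT^\ell_x(t)-\hat Q(t')|-c^{-1}|\hat P(t')|^{\ell+2}$ (Lemma \ref{lem distancebetweenleavesincoordinate}), Taylor-approximate $\hat Q/\hat P^{\ell+1}$ by a rational function of large degree $d_1\gg d$, dispose of the regime where $|\hat P(t')|$ is tiny using the two-sided H\"older holonomy bound \eqref{eq:pt'}, and control the shift $|R(t)-R(t')|$ with the derivative bounds of Proposition \ref{p.compactnessrational} on the rescaled interval. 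None of these ingredients appears in your sketch, and without them the passage from template non-smoothness to the leafwise distance bound does not go through; the exponent book-keeping you flag is real but secondary. (Minor points: once the estimate is proved in the asymmetric form of Lemma \ref{lem altdefQNI} no Fubini/symmetry step via Proposition \ref{prop.symmetry} is needed, and rational functions enter chiefly because of the ratio $\hat Q/\hat P^{\ell+1}$, not only because \eqref{eq:templatentimes2} holds modulo polynomials.)
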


As a consequence, using Proposition \ref{p.varphipolyell+1} we deduce the following. 
\begin{cor}
 If $\mu$ does not have QNI then it admits $\ell$-good stable and unstable charts for every integer $\ell\geq 1$. 
 \end{cor}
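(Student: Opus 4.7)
The plan is to prove the corollary by induction on $\ell \geq 0$, using Proposition \ref{p.qniellgood} together with Proposition \ref{p.varphipolyell+1}, and then invoking the symmetry in Proposition \ref{prop.symmetry} to handle stable charts.

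For the base case $\ell = 0$, I would simply cite Proposition \ref{p.existnormalcoord} (= Proposition \ref{p.exist1goodchart}), which guarantees the existence of $0$-good unstable charts for every partially hyperbolic measure unconditionally; the same statement for $0$-good stable charts follows by replacing $f$ with $f^{-1}$.

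For the inductive step, assume that $\mu$ admits $\ell$-good unstable charts $\{\imath_x\}$. Proposition \ref{p.qniellgood} asserts that if the associated stable templates $\cT^\ell_x$ fail to lie in $\mathrm{Poly}^d$ for the specific integer $d = d(f,\ell) = O(\ell)$ appearing there, then $\mu$ must have the QNI property. Contrapositively, since by hypothesis $\mu$ does \emph{not} have QNI, the templates $\cT^\ell_x$ belong to $\mathrm{Poly}^d$ for $\mu$-almost every $x$. Then Proposition \ref{p.varphipolyell+1}, applied with $d_0 = d$, produces $(\ell+1)$-good unstable charts, closing the induction.

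For the stable side, I would apply Proposition \ref{prop.symmetry}: $\mu$ has QNI for $f$ iff it has QNI for $f^{-1}$. Under $f^{-1}$, the roles of $\cW^1$ and $\cW^3$ interchange, so $\ell$-good unstable charts for $(f^{-1},\mu)$ are exactly $\ell$-good stable charts for $(f,\mu)$. Running the same induction for $f^{-1}$ therefore yields $\ell$-good stable charts for every $\ell \geq 1$. The only mildly delicate point --- and the one worth double-checking --- is that the integer $d$ produced in Proposition \ref{p.qniellgood} is a function only of $f$ and $\ell$ (and not of the chart choice), so that Proposition \ref{p.varphipolyell+1} can legitimately be applied with this very $d$; this is exactly how the two propositions are designed to interface, and no additional estimate is needed.
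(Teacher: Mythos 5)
Your proof is correct and is essentially the argument the paper intends: the corollary is stated as an immediate consequence of Proposition \ref{p.qniellgood} combined with Proposition \ref{p.varphipolyell+1} (base case from Proposition \ref{p.existnormalcoord}, stable side via Proposition \ref{prop.symmetry} and the symmetric statements), which is exactly your induction. Your ``delicate point'' about the degree bound being chart-independent is already settled in the paper by Remark \ref{rem.dynamicstemplate}, and in any case Proposition \ref{p.varphipolyell+1} only needs some uniform $d_0$, which the dichotomy of Proposition \ref{p.dichotomytech} supplies.
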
 
 
 \begin{remark}\label{remarklipschitz}
 The proof of Proposition \ref{p.qniellgood} can be simplified if one knows that the center unstable direction is more regular, for instance, if $f$ where an Anosov diffeomorphism with expanding center direction, then it gets simpler as the full unstable foliation is of class $C^{1+}$. This allows us to consider only polynomials instead of general rational functions in the proof below. However, to apply the result for $f^{-1}$ one would need to deal with the lack of integrability and regularity of the center stable subspaces (note that it is very rare for both the center stable and the center unstable subspaces be more regular than H\"{o}lder). In the next subsection we treat a toy case with some artificial simplifications to show the idea more transparently. 
 \end{remark}
 
 \subsection{A toy case}
 
 In this section, we will show that the fact that the templates are not polynomials provides some kind of non-integrability. To avoid technicalities and show the key ideas in a transparent way, we will make some simplifying assumptions. 
 
 Let us consider $\Lambda \subset M$ be a partially hyperbolic set saturated by $\cW^1$-leaves and which is minimal (i.e. for every $x \in \Lambda$ we have that $\cW^1(x)$) and let $\mu$ be a fully supported invariant ergodic measure on $\Lambda$ so that it is non-degenerate (for instance, this is automatic if $\chi_2(\mu)> -\chi_1(\mu)$ by an entropy argument). We will assume that  Proposition \ref{p.dichotomytech} (ii) holds with $\ell = 0$.
 In other words, for a $\mu$-typical $x$,  $\cT^0_x$ is not polynomial on any subset with positive $\hat \mu^1_x$-measure. 

 We wish to show some form of non-integrability (compare with \cite{CPS}): 
 
 \begin{prop}
 Under these assumptions, given $x\in \Lambda$ and a connected neighborhood $I$ of $x$ in $W^1_1(x)$, we have that for every $y \in \Lambda \cap  W^3_1(x)  \setminus \{x\}$ sufficiently close to $x$ there is a point $z \in I$ such that $W^3_1(z) \cap W^1_1(y) = \emptyset$. 
 \end{prop}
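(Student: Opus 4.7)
The plan is to argue by contradiction. Suppose the conclusion fails; then there is a sequence $y_n \in \Lambda \cap W^3_1(x) \setminus \{x\}$ with $y_n \to x$ such that for every $n$ and every $z \in I$ the intersection $W^3_1(z) \cap W^1_1(y_n)$ is nonempty. I would derive from this that $\cT^0_x$ is Whitney smooth on a subset of $\hat I := (\Phi^1_x)^{-1}(I)$ of positive $\hat\mu^1_x$-measure, contradicting the assumption that Proposition~\ref{p.dichotomytech}(ii) holds at $\ell = 0$.

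First I would work in a $0$-good unstable chart $\imath_x$ provided by Proposition~\ref{p.exist1goodchart}. Writing $y_n = \imath_x(0,0,s_n)$ with $s_n \to 0$ and parametrizing $W^1_1(y_n)$ by its first chart-coordinate as $r \mapsto \imath_x(r,\phi_n(r),\psi_n(r))$ with $\phi_n, \psi_n$ smooth, $\phi_n(0) = 0$, and $\psi_n(0) = s_n$, the nonempty-intersection hypothesis, combined with the template formula \eqref{eq:highordertemplate} for $W^3_{loc}(\Phi^1_x(t))$ at $\ell=0$, yields via the implicit function theorem, for every $t \in \hat I$, a continuous $r_n(t) = t + O(s_n)$ satisfying the identity
\[
\cT^0_x(t) \;=\; \frac{\phi_n(r_n(t))}{\psi_n(r_n(t))} \;-\; b_x\!\bigl(t, \psi_n(r_n(t))\bigr)\,\psi_n(r_n(t)).
\]
The second term vanishes uniformly on $\hat I$ as $n\to\infty$, so $\cT^0_x$ is uniformly approximated on $\hat I$ by the composition $g_n \circ r_n$, where $g_n := \phi_n/\psi_n$ is smooth.

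The principal obstacle is then to upgrade this uniform $C^0$-approximation to Whitney smoothness of $\cT^0_x$ on a positive-measure subset: the map $r_n$ records the stable holonomy $W^1_1(x) \to W^1_1(y_n)$, which is only H\"older transversally, so $g_n \circ r_n$ need only be H\"older in $t$. To circumvent this I would iterate forward: the transformation law \eqref{eq:dynamicstemplate} at $\ell = 0$ gives that $\cT^0_{f^k(x)}(\lambda^{(k)}_{1,x} t)$ equals $\frac{\lambda^{(k)}_{2,x}}{\lambda^{(k)}_{3,x}}\,\cT^0_x(t)$ plus a polynomial of degree bounded independently of $k$. Rescaling to unit scale on $W^1_1(f^k(x))$ and applying Proposition~\ref{p.compactnessrational} (compactness of rational functions of bounded degree) to the rescaled family, one extracts subsequential rational limits; the minimality of $\Lambda$ and the non-degeneracy of $\mu$ supply both the recurrence of $x$ to a set where the approximation above is uniform and an infinite supply of evaluation points on $W^1_1(x)$ (Lemma~\ref{l.compactintersectmanypoints} style) on which these limits can be tested. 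One concludes that $\cT^0_x$ coincides with a polynomial of bounded degree on a positive $\hat\mu^1_x$-measure subset of $\hat I$, in direct contradiction with Proposition~\ref{p.dichotomytech}(ii) at $\ell=0$, finishing the argument.
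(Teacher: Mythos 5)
Your starting identity is essentially the contrapositive of the estimate the paper's argument is built on (compare \eqref{eqstimate1'}), and you correctly locate the crux: the holonomy $r_n$ is only H\"older. But the step you propose to close the gap is where the entire proof lives, and as sketched it does not go through. With $I$ fixed and $s_n\to 0$, all you know is that $\cT^0_x$ is a uniform limit of compositions $g_n\circ r_n$, where the derivative norms of $g_n=\phi_n/\psi_n$ blow up like negative powers of $s_n$: the numerator and denominator have uniformly bounded derivatives, but $\psi_n$ is only bounded below (at the intersection points) by the H\"older-holonomy estimate \eqref{eq:holderholon}, and it may be very small or vanish elsewhere, so $g_n$ need not even be globally smooth on the relevant interval, and it certainly carries no uniform derivative bounds. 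A uniform $C^0$ limit of such compositions can be an arbitrary continuous function, so no Whitney smoothness, polynomiality, or rationality of $\cT^0_x$ can be extracted from the approximation alone. Invoking Proposition~\ref{p.compactnessrational} together with minimality and recurrence does not supply the missing input: that compactness statement applies to rational functions of uniformly bounded degree on a fixed (rescaled) interval, and nothing in your setup produces such a family approximating $\cT^0_x$ at a definite scale.

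What is missing is precisely the scale bookkeeping that the paper sets up from the start: it first normalizes $\log s/\log r_1$ to lie in a fixed window by pulling back under $f$, chooses the Taylor degree $d_1$ much larger than that window, and then plays the degree-$d_1$ polynomial (in general, rational) approximation of $(\hat Q,\hat P)$ against the quantitative non-polynomiality bound \eqref{eqstimate3}, a lower bound of the form $c_0 r_1^{\alpha}$ whose exponent $\alpha$ is independent of $d_1$; that bound is exactly what Proposition~\ref{p.dichotomytech}(ii) yields after being transported to scale $r_1$ via the transformation law \eqref{eq:dynamicstemplate} and the compactness Proposition~\ref{p.compactnessrational} (this is Proposition~\ref{prop.estimatepoly2} in the general, measurable setting), together with a case split according to whether $|\hat P(t')|$ is above or below a definite power of $|s|$ --- a case your division by $\psi_n$ silently assumes away. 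In your fixed-$I$, $s_n\to 0$ contradiction scheme this interplay of scales is destroyed ($\log s_n/\log r_1\to\infty$), and repairing it --- restricting to subintervals of size a fixed power of $s_n$ and blowing them back up to unit scale with \eqref{eq:dynamicstemplate} --- amounts to redoing the paper's direct, quantitative estimate rather than bypassing it. Note also that the paper argues directly and obtains a quantitative lower bound on $d(W^3_1(z),W^1_1(y))$ (which is what is ultimately needed for QNI), whereas your route, even if completed, would only give the qualitative disjointness.
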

 
 This statement clearly follows from Proposition \ref{p.qniellgood} but the proof here allows one to avoid some technical details  which makes the proof easier to follow. The full proof of Proposition \ref{p.qniellgood} will be given in \S \ref{s.proof6}.

 
Without loss of generality, let $I = \Phi^1_x((0,r_1))$ for some $0<r_1 \ll 1$, and 
let  $y = \Phi_x^3(s)$ for some $s \in (0,1)$. We may let $s$ be small depending on both $x$ and $I$. 

Whenever $s$ is sufficiently small, the statement $W^3_1(z) \cap W^1_1(y) = \emptyset$ for some $z \in I$ can be reduced to $W^3_1(f^{-1}(z)) \cap W^1_1(f^{-1}(y)) = \emptyset$. By pull-backing a few more times if necessary, we may assume that 
\ary \label{eq ratiologsr1}
\log s / \log r_1 \in (V/2, 2V)
\eary
where $V > 1$ is a large constant to be given in due course.

We fix $1$-good unstable charts $\{\iota_{x'}\}_{x' \in \Lambda}$ and we can thus write in coordinates $\iota_x: (-1,1)^3 \to M$ the following sets: 
\begin{equation}\label{eq:W1x}
\iota_x^{-1}(W^1_1(x)) = \{(t,0,0) \ : \ t\in (-1,1) \}
\end{equation}
and\footnote{We write $W^1_{loc}(y)$ instead of $W^1_1(y)$ because the charts may change slightly the parametrization, but of course these two sets are very close to one another.} 
\begin{equation}\label{eq:W1y}
\iota_x^{-1}(W^1_{loc}(y)) = \{ (t', \hat Q(t'), \hat P(t')) \ : \ t' \in (-1,1) \}. 
\end{equation}

We will make the following simplifying assumption:
 \begin{equation}\label{eq:lipass}
\hat P(t') \equiv s 
\end{equation}
This is unreasonable in general, but in some cases it is not far from what happens (see Remark \ref{remarklipschitz}). At the end of this subsection we will explain how to lift this assumption.

Consider now, for small $t \in (0,r_1)$ the point $z = \Phi_x^1(t) \in I$ and we denote:
\begin{equation}\label{eq:W3z}
\iota_x^{-1}(W^3_{loc}(z)) = \{ (t + a(t)u + O(u^2), \cT^0_x(t)u + O(u^2), u) \ : \ u \in (-1,1) \}. 
\end{equation}

Under the assumption of \eqref{eq:lipass} we have
\begin{equation}\label{eqstimate1}
d(W^1_1(y), W^3_1(z)) \approx  |\hat Q(t') - \cT^0_x(t) s | + O(s^2)
\end{equation}
where $t' = t + a(t)s + O(s^2)$. Therefore,  it suffices to show for some $\rho \in (0, 1)$
\begin{equation}\label{eqstimate11}
\sup_{t \in (0,r_1)} \left|\frac{\hat Q(t')}{s} - \cT^0_x(t)\right| >  s^{\rho}.
\end{equation} 

Let $D=\sup_{t'\in(0, r_1)} | \hat{Q}(t')|$.
By Taylor's expansion, there is a polynomial $Q$ of degree $d_1$ (large, to be chosen)  and some $C_0$ depending only on $d_1$   such that  
\begin{equation}\label{eqstimate2}
\left|\frac{\hat Q(t')}{s} - Q(t')\right| <  \frac{ C_0 D r_1^{d_1}}{s}.
\end{equation}

By  \eqref{eq:dynamicstemplate}, and the hypothesis that $\cT^0_x$ does not coincide with a polynomial on any interval intersecting $\supp(\hat\mu^1_x)$,
 we deduce that there exist a constant $c_0$  depending on $d_1$ but independent of   $r_1$ (compare with Proposition \ref{prop.estimatepoly} below), and some $\alpha > 0$ depending only on the (uniform) expansion and contraction rates of $f$ ( in particular, it is independent of   $V$ and $d_1$), such that 
\begin{equation}\label{eqstimate3}
\inf_{P \in Poly^{d_1}} \sup_{t \in (0,r_1) \cap \supp(\hat\mu^1_x)} |P(t)  - \cT^0_x(t)| > c_0 r_1^{\alpha}. 
\end{equation}
See Proposition \ref{prop.estimatepoly2} for a detailed proof of \eqref{eqstimate3} in a more general setting.
Let $\hat D = \sup_{t \in (0,r_1)} |\cT^0_x(t)|$. Then \eqref{eqstimate3} implies that $| \hat D | \gtrsim r_1^{\alpha} \gtrsim s^{2\alpha / V}$.

We first assume that $D >  2 \hat D$. Then
there exists $t \in (-r_1, r_1)$ with $|Q(t') - \cT^0_x(t)| > D/2$. Then by \eqref{eqstimate2} and by letting $d_1 \gg V$, we have 
\aryst
\left|\frac{\hat Q(t')}{s} - \cT^0_x(t)\right| \geq | Q(t')  - \cT^0_x(t)|   - \left|\frac{\hat Q(t')}{s} - Q(t')\right|  \gtrsim  D -  \frac{C_0 D r_1^{d_1}}{s}  \gtrsim s^{2\alpha / V}.  
\earyst
 An important point is that here $\alpha$ can be chosen to be independent of the degree $d_1$.
 Indeed, in the following we will let $d_1$ to be large when needed, while keeping $\alpha$ unchanged.

Now we assume that $D < 2 \hat D$.
Then $ \sup_{t'\in(0, r_1)} | \partial Q(t') | < C' D$ for some $C'$ depending only on $d_1$.
By \eqref{eq ratiologsr1}, we have
\begin{equation}\label{eqstimate4} 
	|Q(t') - Q(t)| <  C' D |t-t'| \lesssim  C' D s.
\end{equation} 
Putting together   \eqref{eqstimate2} to \eqref{eqstimate4} we see that there exists $t \in (-r_1, r_1) \cap \supp(\hat\mu^1_x)$ with
 \ary
&&\left|\frac{\hat Q(t')}{s} - \cT^0_x(t)\right| \geq | Q(t)  - \cT^0_x(t)| - |Q(t') - Q(t)| - \left|\frac{\hat Q(t')}{s} - Q(t')\right|   \nonumber \\
&& \qquad \gtrsim \quad c_0 r_1^{\alpha}  -   C' D s -  \frac{C_0 D r_1^{d_1}}{s}.   \label{eqstimate5} 
\eary
By letting $d_1 \gg V \gg 1$, we deduce  \eqref{eqstimate11}. 
This completes the proof under the simplifying assumption \eqref{eq:lipass}. 

In the following we sketch the proof without assuming \eqref{eq:lipass}.  
In this case, equation \eqref{eqstimate1} becomes
\begin{equation}\label{eqstimate1'}
d(W^1_1(y), W^3_1(z)) \approx  |\hat Q(t') - \cT^0_x(t) \hat P(t')| + O(|\hat P(t')|^2). 
\end{equation} 
Instead of \eqref{eqstimate11}, we need a lower bound for  $|\cT^0_x(t) - \frac{\hat Q(t')}{\hat P(t')}|$.

Following the same strategy as above, we use Taylor's expansion to get $\hat Q(t')= Q(t')  + \eps_Q(t')$ and $\hat P(t') = P(t') + \eps_P(t')$. This gives 
\begin{equation}\label{eqstimate2'}
\left| \frac{\hat Q(t')}{\hat P(t')} - \frac{Q(t')}{\hat P(t')} \right| < C_0 D t^{d_1} |\hat P(t')|^{-2}. 
\end{equation}
The term $|\hat P(t')|^{-2}$ could be a problem if it is not bounded from below.
To overcome this, we need to use  H\"{o}lder holonomies of the strong unstable lamination (see e.g. \cite{PSW}):
There exist $c_1,c_2>0$ and $\gamma_1>1>\gamma_2>0$ such that:
\begin{equation}\label{eq:holderholon}
c_1 s^{\gamma_1} < |\hat P(t')| + |\hat Q(t')| < c_2 s^{\gamma_2}. 
\end{equation}

We separate the argument in two cases. Recall that $\cT^0_x$ is uniformly bounded by $\hat D \leq 1$. We first assume that $|\hat P(t')| \leq \frac{c_1}{10} s^{\gamma_1}$. In this case $|\hat Q(t')|$ must be larger than $\frac{9c_1}{10} s^{\gamma_1}$ and therefore, using \eqref{eqstimate1'} we get that for some constant $c_3>0$ 
\begin{equation}\label{eqstimate7'}
d(W^1_1(y), W^3_1(z)) \geq |\hat Q(t') - \cT^0_x(t) \hat P(t')| - c_3 |\hat P(t')|^2 \geq \frac{c_1}{2} s^{\gamma_1} >0
\end{equation}

\noindent which gives the desired non-integrability. 

Now assume that  $|\hat P(t')| \geq \frac{c_1}{10} s^{\gamma_1}$.  In this case  the right hand side of \eqref{eqstimate2'} is bounded by $C C_0 D  c_1^{-2} t^{d_1} s^{ - 2 \gamma_1} \ll s$ when $d_1 \gg V_1$. Denote $R(t') = \frac{Q(t')}{P(t')}$. We will use 
\begin{equation}\label{eqstimate8'}
|R(t')- \cT^0_x(t)| \geq |R(t) - \cT^0_x(t)| - |R(t')-R(t)|.  \nonumber
\end{equation}
We can bound $|R(t')-R(t)|$ from above following the similar strategy in the polynomial case.
 As in \eqref{eqstimate4} we treat the case where $R$ attains very large values with respect to $\cT^0_x$ (in which case there is no need to estimate $|R(t')-R(t)|$), and we treat the complementary case using Proposition \ref{p.compactnessrational}, which provides good bounds for the derivative of $R$. The conclusion of the proof is now very similar to the simplified case.

The rest of this section will be devoted to refine the above argument in order to obtain a more quantitative version of non-joint integrability.

\subsection{Some uniform distance} 
 We now deduce some consequences from the hypothesis that the functions $\cT^\ell_x$ are not polynomials.

Let $d_1$ be a sufficiently large positive integer depending only on $f, \ell$, to be determined later.
The underlying assumption of this section is that $\mu$ is a  non-degenerate  partially hyperbolic measure with $\ell$-good charts and the functions $\cT^\ell_x$ are not polynomials  of degree $\leq d_1$  restricted to the support of $\hat \mu^1_x$ for almost every $x$.  By ergodicity, we see that item (\ref{it.2section4}) of Proposition \ref{p.dichotomytech} holds.

\begin{prop}\label{prop.estimatepoly}
For every $\eps>0$,  for every integer $d > 0$, there is a compact set $K \subset M$ with $\mu(K)> 1-\eps$ such that for every $\nu>0$ there is $c:=c(d, \nu,\eps) >0$ such that for any $x \in K$ and any polynomials $P, Q$ of degree $\leq d$, the set 
\begin{equation}\label{eq:polydist}
I^{P,Q}_{x,c} = \left \{ t \in (-1,1)  \ : \  \left|\cT^\ell_x(t) - \frac{Q(t)}{P(t)} \right| \leq c \right\}
  \end{equation}
satisfies that $\hat \mu^1_x(I^{P,Q}_{x,c}) \leq \nu$.  
\end{prop}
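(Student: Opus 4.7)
The plan is to argue by contradiction, using item (\ref{it.2section4}) of Proposition \ref{p.dichotomytech} together with the compactness of bounded-degree rational functions (Lemma \ref{l.subsequencerational}). Fix $\eps > 0$ and an integer $d > 0$. I would use Lusin's theorem to construct a compact set $K \subset M$ with $\mu(K) > 1 - \eps$ on which the following hold simultaneously:
\begin{enumerate}
\item[(a)] there is a constant $M_1 > 0$ with $|\cT^\ell_x(t)| \leq M_1$ for all $x \in K$ and $\hat\mu^1_x$-a.e.\ $t \in (-1,1)$;
\item[(b)] the measures $\hat\mu^1_x$ depend weak-$*$ continuously on $x \in K$ and are uniformly non-atomic, i.e.\ for every $\nu' > 0$ there exists $\sigma > 0$ with $\hat\mu^1_x(I) < \nu'$ for every interval $I \subset (-1,1)$ of length $\leq \sigma$ and every $x \in K$;
\item[(c)] $\{\cT^\ell_x\}_{x \in K}$ satisfies the continuity property used in the proof of Proposition \ref{p.dichotomytech}: for every convergent sequence $x_n \to x_\infty$ in $K$ there exist compact sets $E_n \subset (-1,1)$ with $\hat\mu^1_{x_n}(E_n) \to 1$, converging in Hausdorff distance to a compact $E_\infty$ of full $\hat\mu^1_{x_\infty}$-measure, such that $\cT^\ell_{x_n}(t_n) \to \cT^\ell_{x_\infty}(t^*)$ whenever $t_n \in E_n$ and $t_n \to t^* \in E_\infty$.
\end{enumerate}
By further intersecting with the $\mu$-full measure set where item (\ref{it.2section4}) of Proposition \ref{p.dichotomytech} holds, I may also assume every $x \in K$ has this property.

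Suppose this $K$ fails the conclusion. Then there exist $\nu_0 > 0$ and sequences $c_n \to 0$, $x_n \in K$, and polynomials $P_n, Q_n$ of degree $\leq d$ with $\hat\mu^1_{x_n}(S_n) > \nu_0$, for $S_n := I^{P_n, Q_n}_{x_n, c_n}$. Writing $R_n := Q_n/P_n$, (a) gives $|R_n(t)| \leq M_1 + 1$ for $t \in S_n$ and $n$ large. By (b), choosing $\sigma$ so that any interval of length $\leq 2\sigma$ has $\hat\mu^1_x$-mass less than $\nu_0/(100(d+1))$ uniformly in $x \in K$, a greedy argument inside $S_n$ (iteratively removing a $2\sigma$-ball around the previously chosen point) produces $d+1$ points $t_{0,n}, \ldots, t_{d,n} \in S_n$ at pairwise distance $> \sigma$. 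Lemma \ref{l.subsequencerational} then furnishes a subsequence along which $R_n \to R_\infty \in \cR^d$ uniformly on compact subsets of $\DD_2 \setminus \{s_1, \ldots, s_d\}$ for some $s_j \in \CC$; compactness of $K$ yields $x_n \to x_\infty \in K$ along a further subsequence.

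The core step is to match $\cT^\ell_{x_\infty}$ with $R_\infty$ on a set of positive $\hat\mu^1_{x_\infty}$-measure. Setting $\tilde S_n := S_n \cap E_n$, property (c) gives $\hat\mu^1_{x_n}(\tilde S_n) > \nu_0/2$ for large $n$. The decreasing closed sets $F_N := \overline{\bigcup_{n \geq N} \tilde S_n}$ have intersection $\tilde S_\infty := \bigcap_N F_N$, and Portmanteau's theorem combined with the weak-$*$ convergence in (b) yields $\hat\mu^1_{x_\infty}(\tilde S_\infty) \geq \nu_0/2$. For any $t^* \in \tilde S_\infty \setminus \{s_1, \ldots, s_d\}$, selecting $t_{n_k} \in \tilde S_{n_k}$ with $t_{n_k} \to t^*$, the uniform convergence of $R_{n_k}$ off the poles gives $R_{n_k}(t_{n_k}) \to R_\infty(t^*)$, while the definition of $S_{n_k}$ forces $\cT^\ell_{x_{n_k}}(t_{n_k}) - R_{n_k}(t_{n_k}) \to 0$; combined with (c) and $t^* \in E_\infty$, this yields $\cT^\ell_{x_\infty}(t^*) = R_\infty(t^*)$. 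Since the finitely many real poles of $R_\infty$ are $\hat\mu^1_{x_\infty}$-null (by non-atomicity), inner regularity provides a compact set $\Xi \subset (-1,1)$ of positive $\hat\mu^1_{x_\infty}$-measure, disjoint from the real poles, on which $\cT^\ell_{x_\infty}$ coincides with the smooth function $R_\infty$; Whitney's extension theorem then shows $\cT^\ell_{x_\infty}|_\Xi$ is smooth in the sense of Whitney, contradicting Proposition \ref{p.dichotomytech}(\ref{it.2section4}). The main obstacle is arranging (c) rigorously so that the intersections $\tilde S_n = S_n \cap E_n$ preserve the $\nu_0/2$ mass bound, and compatibly with the weak-$*$ convergence used later; this is subtle because $\cT^\ell_x$ is only defined $\hat\mu^1_x$-a.e.\ and has no a priori continuity in $t$.
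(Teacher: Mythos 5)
Your argument is sound and reaches the same contradiction as the paper (Lemma \ref{l.subsequencerational} plus item (\ref{it.2section4}) of Proposition \ref{p.dichotomytech}), but you organize the quantifiers differently, and this changes the amount of machinery you need. The paper negates the statement so as to produce a \emph{positive-measure compact set} $K_0$ of points $x$ such that, for that fixed $x$, there are bad polynomials $P_n,Q_n$ at every scale $1/n$ (this is an Egorov/measurable-selection type reduction: the pointwise quantity $\sup_{P,Q}\hat\mu^1_x(I^{P,Q}_{x,c})$ decreases in $c$, so its failure to go to $0$ uniformly on large sets means it stays bounded below on a positive-measure set of $x$). Fixing $x$, only the rational functions vary, so the paper needs nothing about the dependence of $\cT^\ell_x$ or $\hat\mu^1_x$ on the base point: one passes to the limit $R_\infty$ away from its poles, runs a limsup-in-measure argument at the single point $x$, and contradicts (\ref{it.2section4}) there. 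You instead fix one Lusin set $K$ in advance and let the base points $x_n$ vary, which forces you to carry the weak-$*$ continuity of $x\mapsto\hat\mu^1_x$, the Portmanteau/Kuratowski upper-limit argument for $\tilde S_\infty$, and the continuity property (c) of $x\mapsto\cT^\ell_x$ (the same device the paper invokes only inside the proof of Proposition \ref{p.dichotomytech}, so it is legitimate, but it makes your proof strictly heavier). Your limit step is correct: $\hat\mu^1_{x_\infty}(\tilde S_\infty)\geq \nu_0/2$ follows from Portmanteau on the closed sets $F_N$, every $t^*\in\tilde S_\infty\cap E_\infty$ off the poles is a limit of points $t_{n_k}\in\tilde S_{n_k}$, and the identity $\cT^\ell_{x_\infty}=R_\infty$ on a positive-measure compact set away from the real poles gives Whitney smoothness, contradicting (\ref{it.2section4}) at $x_\infty\in K$. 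One small imprecision: condition (a) as stated (a uniform essential bound $|\cT^\ell_x(t)|\leq M_1$ for all $x\in K$ and a.e.\ $t$) is stronger than what Lusin/Egorov actually delivers, since $\cT^\ell_x$ need not be essentially bounded in $t$ (compare Lemma \ref{lem uniformboundab}); what you can and should arrange is that for every $\nu'>0$ there is $M_1$ with $\hat\mu^1_x(\{|\cT^\ell_x|>M_1\})<\nu'$ uniformly on $K$, and this weaker form suffices both to bound $|R_n|$ at the $d+1$ spread points and for the rest of your argument after shrinking $S_n$ by a set of measure $<\nu_0/100$.
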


Note that since $\cT^\ell_x$ is defined on a $\hat \mu^1_x$-full measure set, the set $I^{P, Q}_{x,c}$ is also only  defined up to a $\hat \mu^1_x$-null measure set (also recall that $\hat \mu^1_x$ is normalized so that it is a probability measure in $(-1,1)$).

\begin{proof} 
We proceed by contradiction. We notice that  if $c<c'$ then $I^{P,Q}_{x,c} \subset I^{P,Q}_{x,c'}$.
If the result does not hold then there exist an integer $d > 0$, a constant $\nu>0$, and a compact set $K_0 \subset M$ with $\mu(K_0)>0$ such that for every $x \in K_0$, for every integer $n > 0$ there exist polynomials $P_n, Q_n : (-1,1) \to \RR$ of degree $\leq d$ such that $\hat \mu^1_x(I^{P_n,Q_n}_{x, 1/n}) > \nu$. By reducing the size of $K_0$ if necessary, we may assume in addition that  all objects we will consider vary continuously on $K_0$ (cf. Proposition \ref{prop.measurable}). 

 We now show that there exists $\delta>0$ such that each set $I^{P_n, Q_n}_{x,1/n}$ contains $d +1$ points with pairwise distances larger than $\delta$.  
 
   Since by assumption $\mu$ is non-degenerate,  we may assume that $\hat \mu_x^1$ are non-atomic probabilities varying continuously on $x$ restricted to the compact set $K_0$, for any $\nu>0$ there exists $\delta > 0$ (which depends on $\mu$, $\nu$, $d$ and $K_0$) so that for every $x \in K_0$, every subset of $(-1,1)$ with $\hat \mu^1_x$-measure larger than $\nu$ must contain $d+1$-points with pairwise distances larger than $\delta$.

Let us fix an arbitrary $x \in K_0$.
Up to taking some subsequence, we can apply Lemma \ref{l.subsequencerational} to obtain a rational function $R_\infty \in {\cR}^{d}$ such that $\frac{Q_n}{P_n} \to R_\infty$ uniformly away from finitely many points in $[-1,1]$. We deduce that $\cT^\ell_{x}$ coincides with a rational function in a set of positive $\hat \mu^1_x$ measure (this is because we can remove intervals of uniform size around the points where the convergence is not uniform, and this will cover no more than half the measure of $\hat \mu_x^1$, so there is a positive measure set where the template coincides with a rational function, in particular smooth).  Since $\mu(K_0) > 0$ and $x$ is arbitrary, we can apply Proposition \ref{p.dichotomytech} to get a contradiction.
\end{proof}

\begin{remark}
Note that we cannot ensure with the limiting process that the template will coincide with a smooth function in some open set of the support a priori. This is why we need to deal with density points and apply Proposition \ref{p.dichotomytech}. 
\end{remark}

Before stating the next proposition, we recall the notation \eqref{eq:notationscale}: $W^{1,k}_1(x)= f^{-k}(W^1_1(f^k(x)))$ and $W^{3,k}_1(x) = f^k(W^3_1(f^{-k}))$. 
\begin{prop}\label{prop.estimatepoly2}
 There exists $\delta:=\delta(f, \ell, \mu)>0$ such that for every integer $d_1 > 0$,  every $\eps > 0$ there is  $\cQ \subset M$ with $\mu(\cQ) > 1-\frac{\eps}{10}$
  such that for every $\nu >0$, there is $c:= c(f, \mu, d_1, \eps, \nu)>0$ such that for every $x \in \cQ$, every $k >0$ such that $f^{k}(x) \in \cQ$ and every pair of  polynomials $Q,P$ of degree $\leq d_1$ there is a set $U_{Q,P,x,k} \subset W^{1,k}_1(x)$ such that $\mu_x^1(U_{Q,P,x,k} \cap W^{1,k}_1(x)) >(1-\nu) \mu_{x}^1(W^{1,k}_1(x))$ and 
   \begin{equation}\label{eq:estimatepoly2} 
 \left|\cT^\ell_{x}(t) -\frac{Q(t)}{ P(t)}\right|>  c e^{-\delta k}, \quad \forall t \in (\Phi^1_x)^{-1}(U_{Q,P,x,k}).
 \end{equation}
\end{prop}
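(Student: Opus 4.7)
My strategy will be to use the dynamical transformation law \eqref{eq:templatentimes} as a magnifying glass, transporting the unit-scale nonpolynomial estimate of Proposition \ref{prop.estimatepoly} down to the scale of $W^{1,k}_1(x)$, with the factor $e^{-\delta k}$ arising naturally from the cocycle $\lambda_{2,x}^{(k)}(\lambda_{3,x}^{(k)})^{-(\ell+1)}$.

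Setting $y=f^k(x)$ and making the affine substitution $s=\lambda_{1,x}^{(k)}t$, the map $t\mapsto s$ identifies the $\Phi^1_x$-parameter $J_x^{(k)}$ of $W^{1,k}_1(x)$ with the $\Phi^1_y$-parameter $(-1,1)$ of $W^1_1(y)$. Rearranging \eqref{eq:templatentimes} yields
\begin{equation*}
\cT^\ell_x(t) = (A_x^{(k)})^{-1}\bigl(\cT^\ell_y(s) - P^{(k)}_x(t)\bigr), \qquad A_x^{(k)}:=\frac{\lambda_{2,x}^{(k)}}{(\lambda_{3,x}^{(k)})^{\ell+1}},
\end{equation*}
with $P^{(k)}_x$ a polynomial of degree $\leq d$ uniformly in $k$. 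Given arbitrary polynomials $Q,P$ of degree $\leq d_1$, a direct manipulation rewrites
\begin{equation*}
\cT^\ell_x(t) - \frac{Q(t)}{P(t)} = (A_x^{(k)})^{-1}\left(\cT^\ell_y(s) - \frac{\widetilde Q(s)}{\widetilde P(s)}\right),
\end{equation*}
where, after the affine change of variables and clearing denominators, $\widetilde P$ has degree $\leq d_1$ and $\widetilde Q$ has degree $\leq d+d_1$. The key point is that both bounds are independent of $k$, thanks to the $\ell$-good chart hypothesis, which bounds $\deg P^{(k)}_x$ uniformly.

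Next I will apply Proposition \ref{prop.estimatepoly} at the point $y$ with the enlarged degree bound $d+d_1$ in place of $d$: given $\epsilon>0$ there is a compact set $K$ with $\mu(K)>1-\epsilon/10$ such that for each $\nu>0$ there is $c_0>0$ with the property that, whenever $y\in K$, the bad set $\{s : |\cT^\ell_y(s)-\widetilde Q(s)/\widetilde P(s)|\leq c_0\}$ has $\hat\mu^1_y$-measure at most $\nu$. Using the pushforward relation $\hat\mu^1_y=c_k(f^k)_*\hat\mu^1_x$ (combined with the normalizations on $(-1,1)$), the preimage under $s\mapsto(\lambda_{1,x}^{(k)})^{-1}s$ of the complementary good set supplies the required $U_{Q,P,x,k}\subset W^{1,k}_1(x)$ with $\mu^1_x(U_{Q,P,x,k})\geq(1-\nu)\mu^1_x(W^{1,k}_1(x))$, on which
\begin{equation*}
\left|\cT^\ell_x(t) - \frac{Q(t)}{P(t)}\right| \;\geq\; |A_x^{(k)}|^{-1}\,c_0.
\end{equation*}

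Finally I will define $\cQ\subset K$ by intersecting with a Lusin-type compact subset on which the pointwise Lyapunov-norm estimates \eqref{eq:exponentpoint} hold with uniform constants, so that for every $x\in\cQ$ and every $k\geq 1$ with $f^k(x)\in\cQ$ one has $|A_x^{(k)}|^{-1}\geq e^{-k(\chi_2-(\ell+1)\chi_3)-k(\ell+2)\epsilon}$. Setting $\delta:=|\chi_2-(\ell+1)\chi_3|+(\ell+2)\epsilon$ and $c:=c_0$ then gives the required inequality. The only real obstacle is the bookkeeping at the degree-counting step: one must verify that after the affine rescaling $t=(\lambda_{1,x}^{(k)})^{-1}s$ the resulting rational function $\widetilde Q/\widetilde P$ has numerator and denominator degrees bounded by constants depending only on $d_1,f,\ell,\mu$ (not on $k$), for otherwise one could not invoke Proposition \ref{prop.estimatepoly} with a uniform compact set $K$; once this is settled, everything else is routine Lusin/Birkhoff packaging.
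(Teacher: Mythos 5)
Your plan is correct and follows essentially the same route as the paper's proof: rewrite $\cT^\ell_x$ via the iterated transformation law \eqref{eq:templatentimes}, observe that the resulting rational function at $f^k(x)$ has degree bounded independently of $k$ (the paper absorbs the bound $d$ into $d_1$, you keep $d+d_1$, which is immaterial), apply Proposition \ref{prop.estimatepoly} at $f^k(x)$, pull the good set back by $f^{-k}$ using invariance of the conditional measures, and bound the cocycle factor $(\lambda_{3,x}^{(k)})^{\ell+1}/\lambda_{2,x}^{(k)}$ from below by $c\,e^{-\delta k}$. The degree-counting point you flag is exactly the one the paper settles via the $\ell$-good chart condition \eqref{eq:goodcoordinate}, so no gap remains.
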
 

\begin{proof}
Let $\cQ_0$ be a compact set such that $\mu(\cQ_0)> 1-\frac{\eps}{100}$ and  every object we will consider varies continuously as in Proposition \ref{prop.measurable}. 

We apply Proposition \ref{prop.estimatepoly} to $\eps/100$ and get a set $\cQ_1$ verifying Proposition \ref{prop.estimatepoly} in place of $K$ (in particular, $\mu(\cQ_1)>1 - \frac{\eps}{100}$). Then the set $\cQ= \cQ_0 \cap \cQ_1$ satisfies $\mu(\cQ)>1 - \frac{\eps}{10}$. Proposition \ref{prop.estimatepoly} gives a constant $c_0 > 0$ so that for every pair of polynomials $P_0,Q_0$ of degree $\leq d_1$ and a point $x \in M$ with $f^k(x) \in \cQ$, we have that $\hat\mu_{f^k(x)}^1(I^{P_0,Q_0}_{f^k(x), c_0}) < \nu \mu_{f^k(x)}^1(W^1_1(f^k(x)))=
\nu$ (recall that $\mu_{f^{k}(x)}^1$ is of unit mass restricted to $W^1_1(f^{k}(x))$).

Let $d_1$ be sufficiently large so that   $\partial^{\ell+1}_3 F_{x,2}(\cdot,0,0)$ is a polynomial of degree $\leq d_1$ for $\mu$-a.e. $x$.
Now fix some point $x \in \cQ$ such that $f^k(x) \in \cQ$ and polynomials $P,Q$ of degree $\leq d_1$. 
By formula \eqref{eq:dynamicstemplate} and the fact that $f$ has $\ell$-good charts, we see that there is a polynomial of degree $\leq d_1$, denoted by $R$, such that for every $t \in (-(\lambda_{1,x}^{(k)})^{-1}, (\lambda_{1,x}^{(k)})^{-1})$ we have that
\begin{equation} \label{eq tlxtequation}
\cT^\ell_x(t) = \frac{(\lambda_{3,x}^{(k)})^{\ell+1}}{\lambda_{2,x}^{(k)}} \cT^\ell_{f^k(x)}(\lambda_{1,x}^{(k)} t) + R(\lambda_{1,x}^{(k)}t). 
\end{equation}
Therefore, to estimate $ \left|\cT^\ell_{x}(t) -\frac{Q(t)}{ P(t)}\right|$ for $t  \in (-(\lambda_{1,x}^{(k)})^{-1}, (\lambda_{1,x}^{(k)})^{-1})$ it is enough to estimate: 
\begin{equation} \label{eq distancefromtlfkxtorational}
 \left|  \frac{(\lambda_{3,x}^{(k)})^{\ell+1}}{\lambda_{2,x}^{(k)}}(  \cT^\ell_{f^k(x)}(\lambda_{1,x}^{(k)} t) -\frac{ Q_0(t)}{ P_0(t)} ) \right|
\end{equation}
for some polynomials $P_0,  Q_0$ of degree at most $2d_1$. 

We let $U_{P,Q,x,k}$ be the set of points in $W^{1,k}_1(x)$ such that their images under $f^k$ do not belong to $I^{P_0,Q_0}_{f^k(x),c_0}$. Since the measure $\mu$ is invariant, we have that 

$$\frac{\mu_x^1(U_{P,Q,x,k})}{\mu_x^1(W^{1,k}_1(x) )} = 1- \frac{\mu_{f^k(x)}^1(\Phi^1_{f^k(x)}(I^{P_0,Q_0}_{f^k(x),c_0}) )}{\mu_{f_k(x)}^1(W^1_1(f^k(x)) )} \geq 1-\nu. $$

Since $x$ and $f^k(x)$ both belongs to $\cQ$, there exist $\delta = \delta(f, \ell, \mu) >0$ and $c_1 = c_1(f, \cQ) > 0$, so that $ \frac{\lambda_{2,x}^{(k)}}{(\lambda_{3,x}^{(k)})^{\ell+1}} \geq c_1 e^{-\delta k}$.  By \eqref{eq distancefromtlfkxtorational} and \eqref{eq tlxtequation}, we can choose $c = c_0 c_1$ so that \eqref{eq:estimatepoly2} holds for points in $U_{P,Q,x, k}$. 
\end{proof}

\subsection{Proof of Proposition \ref{p.qniellgood}}\label{s.proof6}
To show  that Definition \ref{def.QNI} is verified we will use the equivalent characterization of QNI in Lemma \ref{lem altdefQNI}.  

 Let $V, \alpha > 0$ be two constants, and let $d_1 > 0$ be an integer, chosen depending only on $f, \mu$ at the end of proof. 
We fix an arbitrary constant $\eps>0$.    Using Proposition \ref{prop.measurable}, we choose a compact set $\cP_1 \subset M$ with $ \mu(\cP_1) > 1-\eps/100$ which verifies the following properties:

\begin{enumerate}

\item\label{it.Holdercont} $W^{1}_1(x)$ and $W^{3}_1(x)$ vary H\"{o}lder continuously with respect to $x \in \cP_1$ in the smooth topology (see \cite[\S 8]{BP}); and the chart $\imath_x$ has uniformly bounded smooth norm for all $x \in \cP_1$;

\item   given $\nu>0$, we have that for large enough $j>0$ and for every $x \in \cP_1$ one has $\mu^i_x (W^{i,j}_1(x) \cap \cP_1) > (1- \frac{\nu}{10}) \mu^i_x(W^{i,j}_1(x)) $  for $i\in \{1,3\}$.  

\end{enumerate}

 Consider $\nu_n= 2^{-n}$ and let $\cQ_n$ be the set given by Proposition \ref{prop.estimatepoly2} for the values  $\nu_n$ so that $\mu(\cQ_n) > 1- (\eps/100)2^{-n}$. Consider $\cQ= \cap \cQ_n$ and $\cP_0= \cP_1 \cap \cQ$ which also verifies the previous properties (and $\mu(\cP_0) > 1-\eps$). Moreover, we know that given $\nu>0$ we know that if $x, f^{k}(x) \in \cQ$ then equation \eqref{eq:estimatepoly2} is verified for every rational function $\frac{Q}{P}$ of degree at most $d_1$ with $\delta$ depending only on $f, \ell, \mu$; and $c$ depending only on $f, \ell, \nu, \eps, \mu$.  

We fix some $\nu \in (0, 1)$ from now on. 
In the following, we say that a constant $C$ is uniform if $C > 0$ and it depends only on $f, \mu$ and the sets given above. We will use $c$ to denote a generic uniform constant which may vary from line to line.

We fix an arbitrary $x' \in \cP_0$. 

There is a uniform constant $r_0>0$ such that for $y' =  \Phi^3_{x'}(s) \in W^3_{r_0}(x') \cap \cP_1$, we may write
\begin{equation} \label{eq defQP}
 \imath_{x'}^{-1}(W^1_{loc}(y')) = \{ (t', \hat Q(t'), \hat P(t'))) \ : \ t' \in (-r_0,r_0)\} ,\end{equation} 
\noindent where $\hat Q$ and $\hat P$ are smooth functions (with uniformly bounded derivatives of any given order). Note that $\hat Q(0)=0$ and $\hat P(0)=s \in (-r_0,r_0)$.

The H\"{o}lder condition in (\ref{it.Holdercont}) ensures a property of uniform H\"{o}lder holonomies as in \eqref{eq:holderholon}. Since this is the non-uniform hyperbolic setting, we expand the argument. Note that condition (\ref{it.Holdercont}) says that there exist uniform constants $c_0>0$ and $\gamma_1 \in (0,1]$ so that for all  $y' =  \Phi^3_{x'}(s) \in W^3_{r_0}(x') \cap \cP_1$, we have
\begin{equation}\label{eq:holderPQ} |\hat P'(t')| < c_0 v^{\gamma_1} \text{ and } |\hat Q'(t')| < c_0 v^{\gamma_1} 
\end{equation} \noindent for all $t' \in (-r_0,r_0)$ where $v = \min \{ |\hat P(t')|+|\hat Q(t')| \ : \ t'\in (-r_0,r_0) \text{ s.t. } \Phi_y^1(t') \in \cP_1\} \leq |s| = |\hat P(0)| + |\hat Q(0)|$. Integrating, we get that: 
\begin{equation}\label{eq:holderPQ2} ||\hat P(t')|-|s|| < c_0 v^{\gamma_1}t'  \text{ and } |\hat Q(t')| < c_0 v^{\gamma_1}t'.  
\end{equation}

Now, choosing $\gamma_2 \gg \frac{1}{\gamma_1} + 1$ and some small $c_2>0$ we see that if there is some $t' \in (-r_0,r_0)$ so that $|\hat P(t')| + |\hat Q(t')| \leq c_2 s^{\gamma_2}$ then we will have that $v< s^{\gamma_2}$ and so get that  $|\hat P(0)| + |\hat Q(0)| < s$ which is a contradiction.  This shows that there are $c_1, \gamma_1,c_2, \gamma_2 >0$ so that for $t' \in (-r_0,r_0)$ we have (as in \eqref{eq:holderholon}): 

\begin{equation}\label{eq:pt'} 
 c_2 |s|^{\gamma_2} < |\hat P(t')| + |\hat Q(t')|   < c_1 |s|^{\gamma_1}.
\end{equation}

By making $r_0$ smaller if necessary, for any $t \in (-r_0, r_0)$, we denote $z' = \Phi^1_{x'}(t)$, and we have a well-defined $t'$ as the unique constant depending on $t$ and $y$ such that $(t', \hat P(t') )$ belongs to $\pi_{1,3}( \imath_{x'}^{-1}(W^3_{1}(z')) )$ where $\pi_{1,3}$ is the projection from $\R^3$ to its $1$st and $3$rd coordinates.  By the H\"older condition, we deduce that $|t'| \leq C|t|^{\gamma_3}$ for some $\gamma_3 > 0$ depending only on $f$ and $\mu$. 
We may write
 \begin{equation}\label{eq:W3}  
 \imath_{x'}^{-1}(W^3_{loc}(z')) = \{ (t + a(t)u +  e_{z'}(u),\cT^\ell_{x'}(t)u^{\ell+1}  +  \hat e_{z'}( u) , u) \ : \ u \in (-r_0,r_0)\}. \end{equation}
 Now assume that $t$ is chosen as that $z' \in \cP_1$. Then there is a uniform constant  $c_3>0$ such that $|a(t)| \leq c_3$, $| e_{z'}( u) | \leq c_3 u^2$ and $| \hat e_{z'}( u)  | \leq c_3 u^{\ell+2}$. 
Notice that we may deduce from the above bound that
 \begin{equation}\label{eq:tt}
 |t-t'| < c_3 |\hat P(t')| + c_3 |\hat P(t')|^2 \leq 2 c_3 |\hat P(t')|.
 \end{equation}

\begin{lemma} \label{lem distancebetweenleavesincoordinate}
There is a uniform constant $c_4 > 0$ such that we have
\begin{equation} \label{eq:distance1'} 
d(W^3_{1}(z'), W^1_{1}(y'))  \geq c_4 |\hat P(t')^{\ell+1} \cT^\ell_{x'}(t) - \hat Q(t')| - c_4^{-1} |\hat P(t')|^{\ell+2}.
\end{equation}
\end{lemma}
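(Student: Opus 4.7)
The plan is to carry out the estimate inside the chart $\imath_{x'}$. Since $\imath_{x'}$ has uniformly bounded smooth norm on $\cP_0$ by (\ref{it.Holdercont}), it is bi-Lipschitz with uniform constants, so the distance in $M$ differs from the Euclidean distance in $\R^3$ by a uniform factor that can be absorbed into $c_4$. Using the graph representations \eqref{eq defQP} and \eqref{eq:W3}, every $p \in W^1_1(y')$ is of the form $p = (x_1, \hat Q(x_1), \hat P(x_1))$ and every $q \in W^3_1(z')$ is of the form $q = (\psi_1(x_3), \psi_2(x_3), x_3)$ with $\psi_1(x_3) := t + a(t)x_3 + e_{z'}(x_3)$ and $\psi_2(x_3) := \cT^\ell_{x'}(t) x_3^{\ell+1} + \hat e_{z'}(x_3)$. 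Thus
\begin{equation*}
|p-q|^2 = f(x_1, x_3) + g(x_1, x_3),
\end{equation*}
where $f(x_1,x_3) := (x_1 - \psi_1(x_3))^2 + (\hat P(x_1) - x_3)^2$ and $g(x_1,x_3) := (\hat Q(x_1) - \psi_2(x_3))^2$. By the very definition of $t'$, and with the shorthand $u^* := \hat P(t')$, the function $f$ vanishes at $(t',u^*)$, while $\sqrt{g(t',u^*)} = |\hat Q(t') - \cT^\ell_{x'}(t) u^{*\, \ell+1} - \hat e_{z'}(u^*)|$.

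The heart of the argument is to show that $\inf_{(x_1,x_3)}(f+g)$ is bounded below by a uniform multiple of $g(t',u^*)$. The tangent to the projection of $W^1_1(y')$ to the $(x_1,x_3)$-plane at $(t',u^*)$ is $(1,\hat P'(t'))$, which is close to $(1,0)$ since the H\"older holonomy estimate \eqref{eq:holderPQ} gives $|\hat P'|, |\hat Q'| \leq c_0 |s|^{\gamma_1}$. The tangent to the projection of $W^3_1(z')$ at the same point is $(\psi_1'(u^*), 1) = (a(t) + e_{z'}'(u^*), 1)$, which is bounded since $|a(t)| \leq c_3$. Hence the two projected curves are uniformly transverse at $(t',u^*)$. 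Applying the inverse function theorem to $(x_1,x_3) \mapsto (x_1 - \psi_1(x_3), \hat P(x_1) - x_3)$ -- whose Jacobian at $(t',u^*)$ has determinant $-1 + \psi_1'(u^*)\hat P'(t')$, close to $-1$ uniformly on $\cP_0$ -- yields uniform constants $c_*, R > 0$ such that $f(x_1,x_3) \geq c_* \min(r^2, R^2)$ on the chart domain, with $r := \sqrt{(x_1-t')^2 + (x_3-u^*)^2}$.

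At the same time, the same bounds on $\hat Q'$ and $\psi_2'$ (together with $|\hat e_{z'}'|$ controlled by $c_3$) provide a uniform, small constant $C_1$ such that $\sqrt{g(x_1,x_3)} \geq \sqrt{g(t',u^*)} - C_1 r$. Combining the two inequalities and minimizing the resulting lower bound for $f+g$ over $r \geq 0$ -- an elementary one-variable quadratic optimization that separates the cases $r \leq \sqrt{g(t',u^*)}/(2C_1)$ and $r > \sqrt{g(t',u^*)}/(2C_1)$ -- gives $\inf(f+g) \geq c'\, g(t',u^*)$ for some uniform $c' > 0$. Taking square roots, absorbing the bi-Lipschitz constant of $\imath_{x'}$, and using the triangle inequality together with $|\hat e_{z'}(u^*)| \leq c_3 |\hat P(t')|^{\ell+2}$, we obtain the stated inequality for a suitably small uniform $c_4 > 0$. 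The only subtle point is ensuring that the transversality constants $c_*, R$ and the derivative bound $C_1$ are genuinely uniform in $x' \in \cP_0$; this is guaranteed by the continuity and H\"older bounds built into the definitions of $\cP_1$ and $\cP_0$.
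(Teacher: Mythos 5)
Your argument is correct and takes essentially the same route as the paper: both work in the chart $\imath_{x'}$, use the intersection point $(t',\hat P(t'))$ of the two curves projected to the $(x_1,x_3)$-plane (which is exactly how $t'$ is defined), exploit the uniform transversality coming from the bounded slopes $|\hat P'|,|\hat Q'|\lesssim |s|^{\gamma_1}$ and $|a(t)|\le c_3$ for $y',z'\in\cP_1$ to bound the distance from below by the second-coordinate gap at that point, and then absorb $\hat e_{z'}$ via $|\hat e_{z'}(u)|\le c_3|u|^{\ell+2}$. The paper expresses the transversality as a cone condition on the tangent spaces of the two curves, whereas you make the same estimate quantitative through the $f+g$ decomposition and the one-variable optimization, so the difference is one of presentation rather than of method.
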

\begin{proof}
Since $\{ \imath_{x} \}$ is a family of $\ell$-good unstable coordinates, we can see that  
the tangent spaces of the curves $\imath_{x'}^{-1}(W^3_{loc}(z'))$ and $\imath_{x'}^{-1}(W^1_{loc}(y'))$ are both disjoint from 
a closed cone $\{ (v_1, v_2, v_3) : |v_1|+|v_3| \leq c |v_2| \}$  for some constant $c > 0$ independent of all choices of $x' \in \cP_0$, $y', z' \in \cP_1$ given above. This follows from the fact that the manifolds $W^1_{loc}(y')$ and $W^3_{loc}(z')$ have uniformly bounded derivatives because $y',z' \in \cP_1$. The choice of $t'$ is made so that when projecting along the second coordinate we get that the graphs $v \mapsto (v, \hat P(v))$ and $u \mapsto (t + a(t)u +  e_{z'}(u), u)$ intersect exactly at $v=t'$ and $u=\hat P(t')$. The distance between the second coordinates, for values of $v$ and $u$ close to $t'$ and $\hat P(t')$ can vary no more than by $c$ defined above, while the distance between the other coordinates can only increase. 

Thus, by matching the 1st and 3rd coordinates of the expressions in \eqref{eq defQP} and  \eqref{eq:W3}, we have 
\aryst
 d(W^3_{1}(z'), W^1_{1}(y'))  &\geq& c |\hat P(t')^{\ell+1} \cT^\ell_{x'}(t) + \hat e_{z'}(\hat P(t') ) - \hat Q(t')|.
\earyst
By \eqref{eq:W3} and the choices of $t, t'$, we deduce that 
 \ary\label{eq:distance1} 
 c |\hat P(t')^{\ell+1} \cT^\ell_{x'}(t) + \hat e_{z'}(\hat P(t') ) - \hat Q(t')|  
&\geq&c_4 |\hat P(t')^{\ell+1} \cT^\ell_{x'}(t) - \hat Q(t')| -  c_4^{-1} |\hat P(t')|^{\ell+2} \nonumber
 \eary
\noindent for some uniform constant $c_4 > 0$.  This concludes the proof.
\end{proof}

We first consider the case where $|\hat P(t')| < c_2|s|^{\gamma_2}/2$. In this case, by \eqref{eq:pt'}, we have 
\ary \label{eq lowerboundhatQ}
|\hat Q(t')| > c_2|s|^{\gamma_2}/2.
\eary
Then by Lemma \ref{lem distancebetweenleavesincoordinate}   and  by reducing the size of $r_0$ if necessary\footnote{Note that if $\ell=0$ we need to change slightly the constants for this to work and choose, for instance, $|\hat P(t')| < \frac{\hat D c_2 c_4^2 }{10}|s|^{\gamma_2}$ where $\hat D$ is a uniform bound for $\cT^0_x$. But the argument is the same: the point is to treat one the case when $\hat P$ is small (and therefore $\hat Q$ is big) and the other when $\hat P$ it is uniformly bounded from below, so that we can control the quantity in equation \eqref{eq:taylor}.},  we have
\ary \label{eq:distance111}
\ \ \mbox{RHS of } \eqref{eq:distance1'} &\geq& c_4 |\hat Q(t')| - c_4^{-1} |\hat P(t')^{\ell+1} \cT^\ell_{x'}(t)| - c_4^{-1} |\hat P(t')|^{\ell+2} \\
&\geq& c_4 c_2|s|^{\gamma_2}/2 - c c_4^{-1} (c_2|s|^{\gamma_2}/2)^{\ell+1} \nonumber \\
&\geq& c_4 c_2|s|^{\gamma_2}/4. \nonumber
\eary

Now it remains to consider the case where  $|\hat P(t')| \geq c_2 |s|^{\gamma_2}/2$. Then we have
\begin{equation}\label{eq:pt} 
 c_2 |s|^{\gamma_2}/2 \leq |\hat P(t')|  < c_1 |s|^{\gamma_1}.
\end{equation}

We let $d_1$ be large depending only on $f$, $\mu$ and $\ell$.
Fix some $s \in ( - r_0, r_0 )$. 
Denote
$r_s = |s|^{C_*}$
where
 \ary \label{eq defCstar}
 C_* = \frac{2 \gamma_2 (2 \ell + 3)}{ \gamma_3(d_1 + 1)}.
 \eary
 
Now we fix an arbitrary $t \in (-r_s, r_s)$ such that $z' \in \cP_1$.
Then we have $ |t'|  \leq Cr_s^{\gamma_3}$, and
\begin{align}
\mbox{RHS of } \eqref{eq:distance1'} \geq c_4 |\hat P(t')|^{\ell+1} \left| \cT^\ell_{x'}(t) - \frac{\hat Q(t')}{\hat P(t')^{\ell + 1}} \right|  - c_4^{-1} |\hat P(t')|^{\ell+2}.  
\end{align}
Since $y' \in \cP_1$, there exists a uniform constant $c_5>0$ so that Taylor's expansion gives $\hat Q(\tau)= Q(\tau) + q(\tau)$ and $\hat P^{\ell+1}(\tau)= P(\tau) + p(\tau)$ such that $|q(\tau)|, |p(\tau)| < c_5 |\tau|^{d_1+1}$, and $P,Q$ are  polynomials of degree $\leq d_1$. 
 By $C_* >  \frac{2\gamma_2(\ell + 1)}{ \gamma_3 (d_1 + 1)}$ and by reducing $r_0$ if necessary,  we deduce $|P(t')| \geq |\hat P(t')|^{\ell+1}/2 > 0$ by \eqref{eq:pt} and $|t'| \leq Cr_s^{\gamma_3}$. Enlarging $c_5$ if necessary, the function $\beta_5(t') := | \frac{\hat Q(t')}{\hat P(t')^{\ell+1}} - \frac{Q(t')}{P(t')} |$ satisfies that
 \begin{equation}\label{eq:taylor}
\beta_5(t') \leq  2c_1 c_5 \frac{ |t'|^{d_1+1}   |s|^{\gamma_1}  }{|\hat P(t')|^{ 2 (\ell+1 ) \clb}}.   
 \end{equation}
 
Denote $R(t) = \frac{Q(t)}{P(t)} \in {\cR}^{d_1}$. We have $R(0) = 0$.  By  \eqref{eq:distance1'}  and our choice of $Q,P$, we get 
\ary
  d(W^3_{1}(z'), W^1_{1}(y')) \geq c_4  |\hat P(t')|^{\ell+1} | \cT^\ell_{x'}(t) - R(t') | -  c_4^{-1} |\hat P(t')|^{\ell+2} - c_4^{-1} |\hat P(t')|^{\ell+1}   \beta_5(t).   \nonumber  
 \eary
 Then by \eqref{eq:taylor} and by reducing $c_4$ if necessary, the distance $d(W^3_{1}(z'), W^1_{1}(y'))$  is bounded from below by 
 \ary
&& c_4  |\hat P(t')|^{\ell+1} | \cT^\ell_{x'}(t) - R(t') | - 2c_1 c_5 c_4^{-1}  |\hat P(t')|^{- (\ell+1)}  |t'|^{d_1+1}  |s|^{\gamma_1}  \nonumber \\
&&  \clblue  -  \clb c_4^{-1} |\hat P(t')|^{\ell+2}    \nonumber  \\
&\geq& c_4   |\hat P(t')|^{\ell+1} | \cT^\ell_{x'}(t) - R(t') | - c c_4^{-1} |\hat P(t')|^{\ell+2}.  \label{eq:distance2} 
\eary
 The last inequality above follows from  \eqref{eq:pt}, \eqref{eq defCstar} and $ |t'| \leq Cr_s^{\gamma_3}$.

Recall that $\cQ= \cap \cQ_n$ is the set defined at the beginning and that $\nu > 0$ is a small constant also fixed at the beginning of the proof. We have the following.

\begin{claim}\label{l.bound}
There exist constants  $V_0, m_0 > 0$, $d_1 = O(\ell)$ and $\alpha_0,C_0>0$ such that the following is true. Given any $m \geq m_0$, denote by $I^{(m)}= (- (\lambda^{(m)}_{1, x'})^{-1}, (\lambda^{(m)}_{1, x'})^{-1} )$. Then, if $x' \in \cQ$ is such that $f^m(x') \in \cQ$, and  $s \in (-r_0, r_0)$  is such that
\begin{equation}\label{eq:conditionQNI}
\frac{ - \log s}{\log{ \lambda^{(m)}_{1, x'}}} \in  \left[\frac{3}{5} V_0, \frac{5}{3}V_0\right],
\end{equation}
and  $y' = \Phi^3_{x'}(s) \in \cP_1 $, there is a subset $U_{y'}$ of  $W^{1,m}_1(x')  =\Phi_{x'}^1(I^{(m)})$  such that $\mu^1_{x'}(U_{y'}) > (1-\nu) \mu^1_{x'}(W^{1,m}_1(x'))$, and for any $z' = \Phi^1_{x'}(t) \in U_{y'}$ we have
 \begin{equation}\label{eq:distanceQNI} 
 d(W^3_{1}(z'), W^1_{1}(y')) > C_0 e^{-\alpha_0 m}. 
 \end{equation}
 \end{claim}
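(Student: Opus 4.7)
\medskip

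\noindent\emph{Plan of proof.} The starting point is the case analysis already begun in Lemma \ref{lem distancebetweenleavesincoordinate}. In the regime $|\hat P(t')|<c_2|s|^{\gamma_2}/2$ the lower bound \eqref{eq:distance111} gives $d(W^3_1(z'),W^1_1(y'))\gtrsim |s|^{\gamma_2}$ uniformly in $z'$. Under \eqref{eq:conditionQNI} this is already of the form $e^{-\alpha_0' m}$ with $\alpha_0'\asymp V_0\chi_1\gamma_2$, so no restriction on $U_{y'}$ is needed in this regime and the claim is clear. Hence I would devote the proof to the complementary regime \eqref{eq:pt}, where \eqref{eq:distance2} applies and the task reduces to bounding $|\cT^\ell_{x'}(t)-R(t')|$ from below for $t$ ranging over a subset of $W^{1,m}_1(x')$ carrying at least $(1-\nu)$-proportion of $\mu^1_{x'}$, where $R=Q/P\in\cR^{d_1}$ is the Taylor truncation produced at the end of the previous section.

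The key inequality is the triangle inequality
\[
|\cT^\ell_{x'}(t)-R(t')|\;\geq\;|\cT^\ell_{x'}(t)-R(t)|\;-\;|R(t)-R(t')|.
\]
For the first summand I would invoke Proposition \ref{prop.estimatepoly2} applied with tolerance $\nu/2$: since $x',f^m(x')\in\cQ$ it yields a subset $U^{0}_{y'}\subset W^{1,m}_1(x')$ of conditional measure at least $1-\nu/2$ on which $|\cT^\ell_{x'}(t)-R(t)|\geq c e^{-\delta m}$, with $\delta=\delta(f,\ell,\mu)$ and $c=c(f,\ell,\mu,d_1,\eps,\nu)$. For the second summand, by \eqref{eq:tt} and \eqref{eq:pt} we have $|t-t'|\leq 2c_3|\hat P(t')|\leq 2c_1c_3|s|^{\gamma_1}$, so I would control $|R(t)-R(t')|$ via a mean-value argument after establishing a Lipschitz estimate on $R$ at this scale.

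The Lipschitz estimate is the one place where the rationality of $R$ (as opposed to polynomiality) really matters, and it is here that I would use Proposition \ref{p.compactnessrational}(i) via the rescaling of Remark \ref{remark-rescale}. Concretely, I would rescale $W^{1,m}_1(x')$ to $[-1,1]$ and use the non-atomicity Lemma \ref{l.compactintersectmanypoints} together with the continuity of $\hat\mu^1_{x'}$ on $\cP_0$ and the $f$-invariance of the conditional measures to produce, inside the rescaled $(\Phi^1_{x'})^{-1}(W^{1,m}_1(x')\cap\cP_0)$, a $(d_1,\sigma,\eta)$-spread set of positive proportional $\hat\mu^1_{x'}$-mass. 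Taking the normalization amplitude $A\asymp \max(\sup|\cT^\ell_{x'}|,\,ce^{-\delta m})$, which is uniformly bounded on $\cQ$, Proposition \ref{p.compactnessrational}(i) then gives $|R'|\leq C_7 e^{\chi_1 m}$ outside a family of exceptional intervals of total rescaled length $\leq\eta$; choosing $\eta<\nu/4$ ensures the corresponding set has proportional $\mu^1_{x'}$-measure $\leq\nu/2$. Defining $U_{y'}$ as the intersection of $U^0_{y'}$ with the complement of this exceptional set yields the claimed $(1-\nu)$-proportion, and on $U_{y'}$ we obtain $|R(t)-R(t')|\leq C_8 e^{(\chi_1-V_0\gamma_1\chi_1)m}$.

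The last step is the parameter balance. Plugging the two bounds into \eqref{eq:distance2}, the main term is $\gtrsim |\hat P(t')|^{\ell+1}e^{-\delta m}$, while the cross term is of order $|\hat P(t')|^{\ell+1}e^{(1-V_0\gamma_1)\chi_1 m}$ and the leftover error of order $|\hat P(t')|^{\ell+2}$. Absorption of the first error requires $V_0\gamma_1\chi_1>\delta+\chi_1$, while the second is absorbed once $|\hat P(t')|\leq c_2|s|^{\gamma_2}/2$ is replaced by $|\hat P(t')|\lesssim e^{-\delta m}$, which again holds as long as $V_0\gamma_1\chi_1>\delta$. Both are satisfied by choosing $V_0$ larger than some explicit constant $V_0(f,\mu,\ell)$, and then \eqref{eq:distanceQNI} holds with $\alpha_0$ depending only on $V_0$, $\chi_1$, $\gamma_2$ and $\ell$. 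The simultaneous constraint that $W^{1,m}_1(x')\subset(-r_s,r_s)$, required for the Taylor expansion \eqref{eq:taylor} to be valid throughout, reads $V_0 C_*<1$ and by \eqref{eq defCstar} forces $d_1=O(V_0\ell)=O(\ell)$, consistent with the statement. The main obstacle, and the step requiring the most care, is precisely this simultaneous tuning: $V_0$ must be pushed above a threshold set by $\delta,\chi_1,\gamma_1$, while remaining beneath the reciprocal of $C_*$ (itself shrinking in $d_1$), so the whole argument hinges on being allowed to take $d_1$ large — which is available because Proposition \ref{p.dichotomytech} is applied at a degree chosen only at the end.
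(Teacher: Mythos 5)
Your outline follows the paper's own route: the same split according to whether $|\hat P(t')|$ is below or above $c_2|s|^{\gamma_2}/2$, Proposition \ref{prop.estimatepoly2} for the lower bound on $|\cT^\ell_{x'}(t)-R(t)|$, Proposition \ref{p.compactnessrational} via Remark \ref{remark-rescale} for the regularity of $R$, and the same $V_0$/$d_1$ tuning with $d_1=O(\ell)$. However, the step where you control $|R(t)-R(t')|$ has two genuine gaps. First, the normalization. In Remark \ref{remark-rescale} the amplitude is forced to be $A=\sup_{E}|R|$; you do not get to set $A\asymp\max(\sup|\cT^\ell_{x'}|,ce^{-\delta m})$ and call it ``uniformly bounded on $\cQ$''. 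Since $R=Q/P$ with $P$ a truncation of $\hat P^{\ell+1}$, and $|\hat P|$ can be as small as $\sim|s|^{\gamma_2}$ on $I^{(m)}_>$ while $|\hat Q|\lesssim|s|^{\gamma_1}$, the quantity $D=\sup_{I^{(m)}}|R|$ can be exponentially large in $m$ (and $R$ may have near-poles in $I^{(m)}$), so your asserted bound $|R'|\le C_7e^{\chi_1 m}$ is not justified; the honest bound is $|R'|\le CD\lambda^{(m)}_{1,x'}$. The paper closes this by a dichotomy: if $D\geq C\hat D+1$ (with $\hat D=\sup|\cT^\ell_{x'}|$), then part (\ref{it2prop}) of Proposition \ref{p.compactnessrational} gives $|R(t')|>D/C$ off the exceptional intervals, so $|\cT^\ell_{x'}(t)-R(t')|$ is bounded below by a constant and no Lipschitz estimate is needed; only when $D\leq C\hat D+1$ is the Lipschitz constant uniform and your balance valid. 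Your plan never treats the large-$D$ case.

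Second, even in the bounded-$D$ case, the mean-value bound $|R(t)-R(t')|\le\sup|R'|\,|t-t'|$ requires the derivative bound along the whole segment from $t$ to $t'$, whereas Proposition \ref{p.compactnessrational} only gives it off the exceptional intervals, which is exactly where the poles of $R$ may sit; removing only those intervals from $U_{y'}$ does not prevent $t'$, or the segment $[t,t']$, from entering one of them (e.g.\ when $t$ sits at the edge of a bad interval), and across a pole $R$ can jump by an arbitrarily large amount. The missing device is a buffer: the paper removes enlarged intervals $\hat I_i$ containing the $\kappa|I^{(m)}|$-neighborhoods of the bad intervals $I_i$, and then uses $|t-t'|\lesssim|s|^{\gamma_1}\le(\lambda^{(m)}_{1,x'})^{-3\gamma_1V_0/5}\ll\kappa|I^{(m)}|$ (this is where $V_0>2/\gamma_1$ enters) to guarantee $t$ and $t'$ lie in the same component of the good set. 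Relatedly, your claim that total rescaled length $\eta<\nu/4$ of the exceptional intervals forces proportional $\hat\mu^1_{x'}$-measure at most $\nu/2$ is not automatic: Lebesgue length does not control the conditional measure; one must choose $\eta$ (and $\kappa$) through the uniform modulus coming from the continuity of $x\mapsto\hat\mu^1_x$ on the compact set $\cQ$, transported to the scale $I^{(m)}$ by invariance and the fact that $f^m(x')\in\cQ$ — an ingredient you gesture at but do not actually wire into the estimate.
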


\begin{proof}

Given  some $s \in (-r_0,r_0)$  with $\Phi^3_{x'}(s) \in \cP_1$ we can define the functions $\hat P$ and $\hat Q$ as in equation \eqref{eq defQP}. Note that the functions $\hat P$ and $\hat Q$ are well defined as longs as $y' \in \cP_1$ so the rest of the constructions can be made. 

We will fix $V_0 > \frac{2}{\gamma_1}$ and $d_1 > 10 V_0 \gamma_3^{-1} \gamma_2(\ell+1)$. Note that this will ensure that $C_\ast < \frac{3}{5 V_0}$ from our choice of $C_\ast$. 

We will consider $C_0$ sufficiently small and $\alpha_0, m_0$ sufficiently large verifying some conditions that will be explicit in the proof.  For a given $m>m_0$ and $s$ verifying  \eqref{eq:conditionQNI} and $\Phi^3_{x'}(s) \in \cP_1$, we consider $U_s$ to be the set of points  for  which \eqref{eq:distanceQNI} holds. 

We will divide the set $I^{(m)} = I^{(m)}_> \cup I^{(m)}_{<}$ where 
\begin{itemize}
\item $t' \in I^{(m)}_{>}$ if  $|\hat P(t')| > c_2|s|^{\gamma_2}/2$  (cf. equation \eqref{eq:pt}) and,
\item $t' \in I^{(m)}_{<}$ if  $|\hat P(t')| \leq c_2|s|^{\gamma_2}/2$. 
\end{itemize}

Note that if we have  $|\hat P(t')| \leq c_2|s|^{\gamma_2}/2$ (i.e. $t' \in I^{(m)}_{<}$), then by  \eqref{eq:distance1'} and \eqref{eq:distance111}, we have
\aryst
 d(W^3_{1}(z'), W^1_{1}(y'))  \geq c_2 c_4 |s|^{\gamma_2}/4. 
\earyst
We can then deduce \eqref{eq:distanceQNI} with appropriate $C_0$ , $\alpha_0$ and $m_0$ for all $t' \in I^{(m)}_{<}$. Thus, if $C_0$ is sufficiently small and $\alpha_0$ sufficiently large and $m$ sufficiently large, we can consider $I^{(m)}_{<}$ to be  fully contained in $U_s$.

We will now deal with $I^{(m)}_{>}$ and show that for an appropriate choice of $C_0$, $\alpha_0$, if $m$ is large we get that $U_s \cap I^{(m)}_{>}$ covers $I^{(m)}_{>}$ except for a subset whose measure is at most $\nu \hat \mu_{x'}^1(I^{(m)})$. 

There exists a uniform $\sigma>0$ (independent on $m$) so that for every subset $T \subset I^{(m)}$ with $\hat \mu_{x'}^1 (T) \geq (1-\nu/2) \hat \mu_{x'}^1(I^{(m)})$ it verifies that $T$ is $(d_1,\sigma,\nu/2)$-spread in $I^{(m)}$. (See the proof of Proposition \ref{prop.estimatepoly} for a similar argument.)

By  Proposition \ref{p.compactnessrational} (see Remark \ref{remark-rescale})  there exists $C:= C(d_1,\sigma, \nu/2) > 1$ such that if $\hat R \in \mathrm{Rat}^{d_1}$ and $D= \sup_{t \in I^{(m)}} |\hat R(t)|$ then $|\hat R'(\tau)| \leq C  D  \lambda^{(m)}_{1, x'}$ and $|\hat R(\tau)| > D/C$ for every $\tau \in I^{(m)} \setminus \cup I_i$ where $I_i \Subset \hat I_i$, $0 \leq i \leq 3d$, are open subintervals of $I^{(m)}$ such that  $\hat I_0, \cdots, \hat I_{3d}$ are mutually disjoint, whose union is of $\hat\mu^1_x$-measure at most $\frac{\nu}{2}\hat\mu^1_{x'}(I^{(m)})$.
We may choose the intervals so that   $\hat I_i$ contains the $\kappa|I^{(m)}|$-neighborhood of $I_i$, where $\kappa > 0$ depends only on $\mu$, $f$ and $\cQ$, but is independent of $i$, $x$ and $m$: 
the existence of such $\kappa$ is guaranteed by the fact that $\hat\mu^1_x|_{(-1,1)}$ depends continuous on $x \in \cQ$.

Let the rational function $R$ be constructed as before so that \eqref{eq:distance2} holds. Consider  $D = \sup_{t \in I^{(m)}} |R(t)|$ and $\hat D =  \sup_{t \in (-1,1)} |\cT_{x'}^\ell(t)|$.   

Assume first that $D \geq C \hat D +1$. For every $t' \in I^{(m)} \setminus \bigcup \hat I_i$ we have that $|R(t') - \cT_{x'}^\ell(t)| >1$.
We can without loss of generality assume that $t' \in I^{(m)}_{>}$ since we already know that $I^{(m)}_{<} \subset U_s$ for well-chosen values of the constants.   
Therefore by equation \eqref{eq:distance2} we have that: 
$$ d(W^3_1(z'), W^1_1(y') \geq \frac{c_4}{2} |\hat P(t')|^{\ell+1} \geq C_0 e^{-\alpha_0 m}, $$
where $C_0 \ll c_4 c_2^{\ell+1}$ and $\alpha_0$ and $m_0$ are large enough so that $|s|^{\gamma_2(\ell+1)} \geq e^{-\alpha_0 m}$ if $m \geq m_0$. So in this case it holds that $U_s$ contains $I^{(m)} \setminus \bigcup_i \hat I_i$. 

We can therefore assume from now on that $D \leq C \hat D +1$. 

By Proposition \ref{prop.estimatepoly2} and our choice of $\nu$, there exist $\delta = \delta(f, \ell, \mu) > 0$ and  a subset $U'_s \subset I^{(m)}$ such that  (as long as $m_0$ is sufficiently large) $\hat \mu^1_{x'}(U'_{s}) >(1-\nu/10) \mu_{x'}^1(I^{(m)})$  and if $t \in ( \Phi^1_{x'})^{-1}(U'_s)$, then  
$$ |\cT^\ell_{x'}(t) - R(t)| > c e^{-\delta m}. $$

We claim that $U_s$ contains $U'_s \cap (I^{(m)} \setminus \bigcup \hat I_i)$ for well chosen values of $C_0$, $\alpha_0$ and sufficiently large $m$. 

Fix an arbitrary  $t \in U'_s \cap (I^{(m)} \setminus \bigcup \hat I_i)$.
By \eqref{eq:tt}, \eqref{eq:pt} and \eqref{eq:conditionQNI}, we have 
\aryst
|t - t'| \leq 2c_3c_1 |s|^{\gamma_1} < 2c_3c_1 (\lambda_{1, x'}^{(m)})^{- \frac{3}{5}\gamma_1 V_0}.
\earyst
 By the choice of intervals $I_i \Subset \hat I_i$, and by letting $V_0$ be sufficiently bigger,   we see that $t$ and $t'$ belong to the same component of $I^{(m)} \setminus \bigcup_i I_i$ (in particular, $t' \in I^{(m)} \setminus \bigcup_i I_i$). Consequently we have 
\ary \label{eq smallRcase}
|R(t) - R(t')| \leq CD \lambda^{(m)}_{1, x'} |t-t'| \leq c_6 \lambda^{(m)}_{1, x'}   |\hat P(t')|
\eary
for some uniform constant $c_6 > 0$ (note that here we used that $D$ is uniformly bounded). 

 Putting together  \eqref{eq:distance2}, \eqref{eq smallRcase} and $t \in U'_s \cap (I^{(m)} \setminus \bigcup \hat I_i)$,  we see that there is a constant $\beta_7(t)$ and a uniform constant $c_7>0$ such that 
\ary\label{eq:distance3}
\begin{aligned}  d(W^3_{1}(z'), W^1_{1}(y')) \geq c_4 |\hat P(t')|^{\ell+1} | \cT^\ell_{x'}(t) - R(t) | - \beta_7(t) \end{aligned}
\eary
where 
\aryst
|\beta_7(t)| &\leq& c_4^{-1} (|\hat P(t')|^{\ell+1}|R(t) - R(t')| +  |\hat P(t')|^{\ell+2}) \\
&\leq&  c_7 \lambda^{(m)}_{1, x'}|\hat P(t')|^{\ell+2}.
\earyst

By  \eqref{eq:pt}, we have
\aryst
|\hat P(t')|^{\ell+1} > c_2^{\ell+1} |s|^{\gamma_2 (\ell+1)} \ \text{ and } \ |\hat P(t')| \leq c_1 |s|^{\gamma_1}. 
\earyst

By \eqref{eq:distance3} we deduce that 
\ary
  d(W^3_{1}(z'), W^1_{1}(y'))  &\geq& c_4 |\hat P(t')|^{\ell+1} \left( c e^{-\delta m}  - \frac{|\beta_7(t)|}{|\hat P(t')|^{\ell+1}}\right) \nonumber \\
 &\geq& c_4 |\hat P(t')|^{\ell+1} \left( c e^{-\delta m}  -  c_7 \lambda^{(m)}_{1, x'}  |\hat P(t') | \right) \nonumber \\
 &\geq& c_8 |s|^{\gamma_2(\ell + 1)}  \left( c_8 e^{-\delta m}  -  c_8^{-1} \lambda^{(m)}_{1, x'} |s|^{\gamma_1}  \right) \label{eq lowerboundfordw3w11}
  \eary
  for some uniform constant $c_8 > 0$.
  
We fix a large constant $m_0 > 0$  such that  for every $m > m_0$, and every $|s| < (\lambda^{(m)}_{1, x'})^{- 3 V_0 / 5}$,  we have 
\aryst
\lambda^{(m)}_{1, x'} |s|^{\gamma_1} < \frac{c_8^2}{2} e^{- \delta m}.
\earyst
Then for every $s$ satisfying \eqref{eq:conditionQNI}, we have
\aryst
  d(W^3_{1}(z'), W^1_{1}(y'))  &\geq& \frac{c_8^2}{2} e^{-\delta m} (\lambda^{(m)}_{1, x'})^{-   5 V_0 \gamma_2 (\ell + 1)/3}.
\earyst

\noindent which gives \eqref{eq:distanceQNI} for $C_0< \frac{c_8^2}{2}$ and $\alpha_0$ so that $e^{-\delta m} (\lambda^{(m)}_{1, x'})^{-   5 V_0 \gamma_2 (\ell + 1)/3} \geq e^{-\alpha_0 m}$. 

Notice that we have
\aryst
C_* \log |s| \geq -\frac{5 C_*}{3} V_0 \log  \lambda^{(m)}_{1, x'} \geq - \log \lambda^{(m)}_{1, x'}  \geq \log |t|,
\earyst
\noindent which is ensured by our choice of $V_0$ and $d_1$.    This gives us the hypothesis $|t| < r_s$, on which the estimate  \eqref{eq:distance2}  is based. 

This shows that $U_s$ contains $U'_s \cap (I^{(m)} \setminus \bigcup \hat I_i)$ for well chosen values of $C_0, \alpha_0,m_0$ and thus completes the proof.   
\end{proof}

Note that Claim \ref{l.bound} has put us under the conditions of Lemma \ref{lem altdefQNI} from which we can deduce that QNI is verified for $f$. Indeed, let $V_0, \alpha_0,$ be given by  Claim \ref{l.bound}. Then by slightly reducing the size of $\cP_0$ if necessary, and by letting integer $k_0 \geq m_0$ be sufficiently large depending only on $f, \mu$, we may assume that for any $x \in \cP_0$ and any $k > k_0$, we have 
$k^{-1}\log \lambda^{(k)}_{3, f^{-k}(x)} \in ( \frac{99}{100} \chi_3,  \frac{100}{99} \chi_3)$ and $k^{-1}\log \lambda^{(k)}_{1, x} \in ( \frac{99}{100} \chi_1,  \frac{100}{99} \chi_1)$. We set $V = - \chi_1 V_0/ \chi_3$ and  $\alpha = \alpha_0$. Then for any integers $k_1, k_2 \geq k_0$ such that $\frac{k_2}{k_1} \in (\frac{2}{3}V, \frac{3}{2}V)$ we choose  $S_x = W^{3,k_2}_1(x) \cap \cP_1$. Then for each $y = \Phi^3_{x}(s) \in S_x$, we have
\eqref{eq:conditionQNI} for $m = k_1$.
 We set $U_y =  \Phi^1_x(U_s)$ where $U_s$ is given by Claim \ref{l.bound}. Then we can see that the conditions of Lemma \ref{lem altdefQNI} is satisfied.

 
  \section{Compatibility of good charts or QNI: Proof of Theorem \ref{teo.maintechnical2}}\label{s.compatible}

Throughout this section, we let $\mu$ be a partially hyperbolic measure of $f$, which admits $L$-good stable charts $\{\imath_x\}_{x \in M}$ and $L$-good unstable charts $\{\imath'_x\}_{x\in M}$ for some large integer $L$, which will be determined later depending on $f, \mu$ and $\ell$. We fix a subset $\Omega \subset M$ with full measure so that $W^1_1(x)$, $W^3_1(x)$, $\imath_x$ and  $\imath'_x$ are defined for every $x \in \Omega$.

To facilitate the proof, we introduce the following notation. We denote by $T_0$ the hyperplane $\{(t_1, t_2, t_3) : t_2 = 0\}$. Given a function $\phi: (-1,1)^2 \to \R$, we denote by $\tau_\phi: \R^3 \to \R^3$ the diffeomorphism $\tau_\phi(x, y, z) = (x, y + \phi(x, z), z)$. 
We define $T_{\phi} = \tau_\phi(T_0)$.

Given $x \in \Omega$. We define 
\aryst
S_{1,x} = \imath_x^{-1} \big ( \bigcup_{\substack{t \in (-1,1) \\ \Phi^3_x(t) \in \Omega  }} W^1_1(\Phi^3_x(t)) \big ).
\earyst
In the following, we say that $S_{1,x}$ and $T_\phi$ (for some function $\phi$) are {\it tangent to order $L$ on a subset} $U \subset W^3_1(x)$ if there exists $C > 0$ depending on $f, \mu, x$ and $U$ such that for any $t$ with $\Phi^3_x(t) \in U$ we have 
\begin{eqnarray} \label{eq iotaxtangentalongu1x}
 \ \ \  \tau_{\phi}^{-1} \imath_x^{-1}(W^1_{loc}(\Phi^3_x(t))) = \{ (s, O(C|s|^{L}) ,t + O(C s))  : s \in (-1,1) \}.
\end{eqnarray}
Similarly, we define
\aryst
S'_{3,x} = (\imath'_x)^{-1}\big (\bigcup_{\substack{t \in (-1,1) \\ \Phi^1_x(t) \in \Omega  }} W^3_1(\Phi^1_x(t)) \big ),
\earyst
and say that $S'_{3,x}$ and  $T_\phi$  are tangent to order $L$ on a subset $U' \subset W^1_1(x)$ if there exists $C > 0$ depending on $f$, $\mu$, $x$ and $U'$ such that for any $t$ with $\Phi^1_x(t) \in U'$ we have
\begin{eqnarray} \label{eq iotaxtangentalongu1x2}
 \ \ \ \ \  \tau_{\phi}^{-1} (\imath'_x)^{-1}(W^3_{loc}(\Phi^1_x(t))) = \{ (t + O(C |s|), O(C|s|^{L}) ,s)  : s \in (-1,1) \}.
\end{eqnarray}

Given a $\mu$-typical $x \in \Omega$, the smooth surface  $(\imath'_x)^{-1} \circ \imath_x(T_0)$ contains a graph of a function  $\psi_x:   (- r_x, r_x)^2  \to \R$ for some $r_x \in (0, 1)$.   
 We also denote 
\aryst
I_x =  \{ (a,b) \in \N^2 : a + b \leq 2\ell \mbox{ and }  \partial_1^a \partial_3^b \psi_x(0,0) \neq 0 \}.
\earyst
By definition, $\partial^k_1 \psi_x(0,0) = \partial^k_3 \psi_x(0,0) = 0$ for every integer $k \geq 0$, and consequently we have
\ary \label{eq Ixstructure}
( \{(0, i) : i \geq 0 \}  \cup \{(i, 0) : i \geq 0 \}  ) \cap I_x = \emptyset.
\eary
We have the following.
\begin{lemma} \label{lem Ixisempty} 

For $\mu$-a.e. $x \in M$,
if $I_x  = \emptyset$ then we have $I_{f(x)} = \emptyset$, and  \eqref{eq:compatible} holds at $x$.
\end{lemma}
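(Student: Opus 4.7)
The plan is to handle the two conclusions separately. First, I would address \eqref{eq:compatible} at $x$: combining the hypothesis $I_x=\emptyset$ with the vanishing relation \eqref{eq Ixstructure}, every partial derivative $\partial_1^a\partial_3^b\psi_x(0,0)$ with $a+b\leq 2\ell$ vanishes. Taylor's theorem applied at the origin then yields
\[
\psi_x(s_1,s_3) = O\bigl((|s_1|+|s_3|)^{2\ell+1}\bigr).
\]
Since the statement of Theorem \ref{teo.maintechnical2} permits us to take $L$ as large as we wish in terms of $\ell$, this bound is strictly stronger than the one required by \eqref{eq:compatible}, so the compatibility at $x$ is immediate.

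The substantive content is the propagation $I_x=\emptyset \Rightarrow I_{f(x)}=\emptyset$. I would rely on the cocycle relation $H_{f(x)} = F'_x \circ H_x \circ F_x^{-1}$, where $H_x := (\imath'_x)^{-1}\circ\imath_x$ and $F_x$, $F'_x$ are the expressions of $f$ in the $L$-good stable and unstable charts. The first step is to describe $\Sigma_x := F_x^{-1}(T_0)$: since $F_x$ preserves the two coordinate axes and satisfies a normal-form identity with $\partial_2 F_{x,2}\neq 0$, the implicit function theorem yields a graph representation $\Sigma_x=\{u_2=g_x(u_1,u_3)\}$ with $g_x$ vanishing on both axes, and analogously for $\Sigma'_x:=(F'_x)^{-1}(T_0)$ and $g'_x$.

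Next, I would compute
\[
H_{f(x)}(t_1,0,t_3) = F'_x\bigl(H_x(u_1,g_x(u_1,u_3),u_3)\bigr),
\]
and expand the middle factor as $H_{x,2}(u_1,u_2,u_3)=H_{x,2}(u_1,0,u_3) + u_2\,\partial_2 H_{x,2}(u_1,0,u_3) + O(u_2^2)$. The key observation is that applying $F'_x$ has the effect of subtracting off the graph of $g'_x$ (modulo a linear change of variables along the axes), so that the final $2$nd-coordinate function $\psi_{f(x)}$ is, up to pure-axis terms, the pushforward of $\psi_x - (g_x - g'_x)\cdot\partial_2 H_{x,2}(u_1,0,u_3)$ under a linear map whose action on the monomial $s_1^as_3^b$ is multiplication by $\lambda_{2,x}/(\lambda_{1,x}^a\lambda_{3,x}^b)$. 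The mixed jets of the combination $g_x-g'_x$ up to order $2\ell$ can then be identified, through the $L$-good structure provided by Proposition \ref{p.CNF} and the $(\ell+1)$-jet cocycle coordinates of \S\ref{subsec TwoDimCoc}, with a finite linear combination of the mixed jets of $\psi_x$ of comparable orders, which all vanish by hypothesis. This forces $I_{f(x)}=\emptyset$.

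The main obstacle is exactly this algebraic identification: each of $g_x$ and $g'_x$ contains the mixed monomial $u_1u_3$ at leading order, so their individual contributions are not a priori controlled by $\psi_x$ alone; one needs to exploit the fact that the difference $g_x-g'_x$ is precisely the $2\ell$-jet of the twist between $\imath_x(T_0)$ and $\imath'_x(T_0)$ at $x$, i.e.\ of $\psi_x$ itself. Working in the cocycle normal form for the $(\ell+1)$-jet bundle, where the action is upper-triangular and linear on mixed jets, I would make this identification precise: the vanishing of all mixed jets of $\psi_x$ of order $\leq 2\ell$ is then preserved under the cocycle step to $f(x)$, completing the proof.
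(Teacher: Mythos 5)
Your first half (deducing \eqref{eq:compatible} at $x$ from $I_x=\emptyset$, \eqref{eq Ixstructure} and Taylor's theorem) is exactly what the paper does. The propagation step, however, has a genuine gap, and it is precisely the point you flag as "the main obstacle". Your argument hinges on identifying the mixed $2\ell$-jets of $g_x-g'_x$ with (combinations of) the mixed jets of $\psi_x$, but this identification is not available and is in fact circular: from $F'_x=H_{f(x)}\circ F_x\circ H_x^{-1}$ one computes that, to the relevant order, $g_x-g'_x$ is essentially $\lambda_{2,x}^{-1}\,\psi_{f(x)}\circ(\text{linear scaling})-\psi_x$ plus controlled corrections, so your relation $\psi_{f(x)}\approx\lambda_{2,x}\bigl(\psi_x+(g_x-g'_x)\bigr)$ reduces to a tautology containing the unknown $\psi_{f(x)}$ on both sides. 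Nor do the good-chart normal forms rescue this: the conditions in Definitions \ref{def.normalcharts} and \ref{def.lgoodcharts}, Proposition \ref{p.CNF}, and the jet cocycle of \S\ref{subsec TwoDimCoc} only constrain derivatives \emph{along the invariant axes} (e.g. $\partial_3^{k}F_{x,2}(t,0,0)$ being polynomial in $t$), whereas the genuinely mixed jets entering $I_x$ (already $\partial_1\partial_3F_{x,2}(0,0,0)=r_x'(0)$, hence the $u_1u_3$-coefficients of $g_x$ and $g'_x$) are unconstrained and generically nonzero. So the cancellation you need between the stable-chart and unstable-chart contributions is exactly the content of the lemma, not an input.

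The paper avoids this algebra entirely by a soft dynamical argument that supplies the missing input, namely the $f$-invariance of the actual laminations: since $L>2\ell$ and, by Lemma \ref{lem uniformboundab}, the leaves $W^3_1(\Phi^1_x(t_1))$ and $W^1_1(\Phi^3_x(t_3))$ are tangent to the surfaces $\imath'_x(T_0)$, $\imath_x(T_0)$ to order $L$ with uniform constants, the hypothesis $I_x=\emptyset$ forces $d\bigl(W^3_1(\Phi^1_x(t_1)),W^1_1(\Phi^3_x(t_3))\bigr)\le C_x\eps^{2\ell+1}$ for suitable $|t_1|,|t_3|<\eps$, while $I_{f(x)}\neq\emptyset$ would force the $f$-images of these leaves to be at distance at least $\eps^{2\ell}/C_x$; since $f$ is Lipschitz this is impossible for small $\eps$. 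If you want to salvage a jet-theoretic proof, you must import this same dynamical information (high-order tangency of the charts to the invariant Hopf brushes plus invariance of $W^1,W^3$ under $f$); without it, no purely formal manipulation of $H_{f(x)}=F'_x\circ H_x\circ F_x^{-1}$ can yield $I_{f(x)}=\emptyset$.
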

\begin{proof}
 Without loss of generality, we may assume that $x \in M$ satisfies that $I_x = \emptyset$ and $\psi_x$ is defined.
We can deduce \eqref{eq:compatible} from Taylor's expansion.

Assume to the contrary that $I_{f(x)} \neq \emptyset$.
  By letting $L > 2\ell$ among other things, and 
 by Lemma \ref{lem uniformboundab} and and by restricting $x$ to a $\mu$-conull subset, there exists $C_x > 0$ such that for every $\eps > 0$ there exist $t_1, t_3$ with $|t_1|, |t_3| < \eps$ satisfying
\aryst  
d(W^3_1(\Phi^1_x(t_1)), W^1_1(\Phi^3_x(t_3))) \leq C_x \eps^{2\ell +1}
\earyst
and
\aryst  
d(f(W^3_1(\Phi^1_x(t_1))),  f(W^1_1(\Phi^3_x(t_3))) ) \geq \eps^{2\ell }/C_x.
\earyst
We obtain a contradiction by letting $\eps$ be sufficiently small. Consequently, we deduce that $I_{f(x)} = \emptyset$.
\end{proof}

The main result of this section is the following.
 \begin{prop} \label{prop IxnonemptytoQNI}
Given an integer $\ell > 0$ large, there exists $L = L(\mu, f, \ell ) > 0$ such that the following is true. Assume that there is a set $\cP_0 \subset M$ with $\mu(\cP_0)  > 0$ such that for any $x \in \cP_0$ we have $I_x  \neq \emptyset$. Then $\mu$ has QNI. 
 \end{prop}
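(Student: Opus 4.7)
The plan is to exploit the fact that $I_x \neq \emptyset$ encodes a precise non-trivial twisting of the surface $\imath_x(T_0)$ inside the unstable chart $\imath'_x$, and to convert this twisting into a quantitative separation between strong stable and strong unstable manifolds, in analogy with Proposition \ref{p.qniellgood}. First, since the set $\{I_x = \emptyset\}$ is forward invariant by Lemma \ref{lem Ixisempty} and $\mu$ is ergodic, the hypothesis $\mu(\cP_0)>0$ forces $I_x\neq \emptyset$ for $\mu$-a.e.\ $x$. I would then apply Lusin's theorem to the measurable data ($\imath_x$, $\imath'_x$, $\psi_x$ together with all of its mixed partials of order $\leq 2\ell$, and the functions $c_x$ in Lemma \ref{lem uniformboundab}) to extract a compact set $\cP \subset M$ of measure arbitrarily close to $1$ on which all these objects are continuous, on which the lexicographically minimal element $(a_0, b_0)\in I_x$ is constant, and on which $|\partial_1^{a_0}\partial_3^{b_0}\psi_x(0,0)|\geq c_0>0$ uniformly. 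By \eqref{eq Ixstructure} we have $a_0, b_0 \geq 1$. A Taylor expansion then gives
\[
\psi_x(t_1, t_3) = c_x\, t_1^{a_0} t_3^{b_0} + R_x(t_1, t_3), \qquad |c_x|\geq c_0,
\]
where, using $\psi_x(t_1, 0)=\psi_x(0, t_3)=0$, the remainder $R_x$ is a sum of mixed monomials $t_1^{a}t_3^{b}$ with $a, b\geq 1$ and $(a, b)$ lexicographically strictly after $(a_0, b_0)$, plus a term of size $O((|t_1|+|t_3|)^{2\ell+1})$ uniformly on $\cP$.

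Now, given a $\mu$-typical $x \in \cP$ and integers $k, m$ to be tuned, I would consider $z = \Phi^1_x(t) \in W^{1, k}_1(x)$ with $|t|\asymp (\lambda^{(k)}_{1,x})^{-1}$ and $y = \Phi^3_x(s) \in W^{3, m}_1(x)$ with $|s|\asymp (\lambda^{(-m)}_{3,x})$. Using the $L$-good structure of both families of charts with $L \gg 2\ell$, and following the comparison argument from \S\ref{s.proof6} (express $W^1_{\mathrm{loc}}(y)$ and $W^3_{\mathrm{loc}}(z)$ in the common chart $\imath'_x$, substitute the graph of $\psi_x$ for $\imath_x(T_0)$, and use Lemma \ref{lem uniformboundab} to bound the holonomy coefficients on $\cP$), the mutual distance satisfies
\[
d(W^1_1(y), W^3_1(z)) \geq C^{-1}\bigl|\psi_x(t', s')\bigr| - C\bigl(|t|+|s|\bigr)^{L+1}
\]
for some $(t', s')$ $C$-close to $(t, s)$. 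Substituting the Taylor expansion above, the leading term is $|c_x|\cdot|t|^{a_0}|s|^{b_0}$, the intermediate mixed monomials contribute at most $C|t|^{a_0}|s|^{b_0}(|t|+|s|)$, and the final chart error is $(|t|+|s|)^{L+1}$.

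Finally, to match Lemma \ref{lem altdefQNI} I would balance $k$ and $m$ by a fixed linear relation depending on $a_0, b_0$ and the exponents $\chi_1, \chi_3$, so that both $|t|^{a_0}|s|^{b_0}\asymp e^{-\alpha k}$ for some explicit $\alpha > 0$, and the error terms are negligible against this — this is what dictates the choice $L = L(\mu, f, \ell)$ sufficiently large. The set of bad points on $W^{1, k}_1(x)$ and $W^{3, m}_1(x)$ (those where Lusin's uniform constants or the chart remainders fail) has relative measure at most $\nu$ as $k, m \to \infty$, by the Birkhoff-type arguments in Proposition \ref{prop.measurable} and Lemma \ref{lem uniformboundab}. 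The main obstacle is the bookkeeping of three separate kinds of errors at once: (i)~the higher mixed monomials in the Taylor expansion of $\psi_x$, which only decay one factor of $|t|+|s|$ faster than the leading term and so force the scale balancing to be chosen carefully; (ii)~the $O((|t|+|s|)^{L+1})$ error from the $L$-good chart remainders, absorbed by taking $L$ large in terms of $\ell$ and the Lyapunov exponents; and (iii)~the holonomy/coupling between $(t, s)$ and $(t', s')$ when transferring $W^1_{\mathrm{loc}}(y)$ from the stable chart to the unstable chart, which requires the uniform estimate from Lemma \ref{lem uniformboundab} to ensure the leading Taylor term is not destroyed by the change of coordinates.
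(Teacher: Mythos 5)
Your overall strategy coincides with the paper's (ergodicity plus Lemma \ref{lem Ixisempty} to get $I_x\neq\emptyset$ a.e., Lusin, Taylor expansion of $\psi_x$, a lower bound on $d(W^1_1(y),W^3_1(z))$ by a leading monomial of $\psi_x$ minus order-$L$ chart errors, conclusion via Lemma \ref{lem altdefQNI}), but two steps fail as written. First, the lexicographically minimal $(a_0,b_0)\in I_x$ is in general \emph{not} the dominant monomial at the relevant scales, and your claim that the remaining monomials contribute at most $C|t|^{a_0}|s|^{b_0}(|t|+|s|)$ is false: by \eqref{eq Ixstructure} the set $I_x$ may contain pairs $(a,b)$ with $a>a_0$ but $b<b_0$ (e.g.\ $(a_0+1,1)$ when $b_0\geq 2$), and then $|t|^{a_0+1}|s|$ exceeds $|t|^{a_0}|s|^{b_0}$ unless $|t|\lesssim|s|^{b_0-1}$. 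Which monomial wins depends on the ratio $\log|s|/\log|t|$, so one must minimize the weighted quantity $a+bV'$ over $I_x$ uniformly for $V'$ ranging over the whole window of ratios that can occur; this is precisely the mechanism behind \eqref{eq lowerboundofpsix} in the paper, where $V$ is chosen so large that the minimizer is unique over a window of multiplicative width $\delta^{-4}$. Tying the scale balancing to a fixed $(a_0,b_0)$ also conflicts with Lemma \ref{lem altdefQNI}, which requires a single ratio window $V$ valid on sets of measure $1-\eps$, whereas pigeonholing only makes $(a_0,b_0)$ constant on a set of positive measure (it is not shown to be a.e.\ constant).

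Second, the passage from the normal-form coordinates $(t,s)$ of the two base points to the unstable-chart coordinates $(s_1,s_3)$ of the intersection point at which $\psi_x$ is evaluated is only H\"older, not ``$C$-close'': on the Pesin--Lusin set one only has $C_1^{-1}|t_1|^{1/\delta}\leq|s_1|+|r_1|\leq C_1|t_1|^{\delta}$ and $|s_3|\leq C_1|t_3|^{\delta}$ (cf.\ \eqref{eq ratio t1t3}), coming from the $\delta$-H\"older continuity of $E^1,E^3$ on Pesin sets --- an input absent from your argument; Lemma \ref{lem uniformboundab} only bounds the coefficients in \eqref{eq:highordertemplate} and does not control this holonomy. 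Two consequences: (i) the ratio window for $\log|s_3|/\log|s_1|$ is distorted by factors $\delta^{\pm2}$, so the dominance of the chosen monomial must be robust under this distortion (this is why the paper takes $V\asymp\delta^{-2}$ and requires $L$ to exceed quantities like $2\delta^{-2}K$ and $-4\chi_1/(\chi_3\delta^{2}V)$); (ii) the lower bound $|\psi_x(s_1,s_3)|\gtrsim|s_1|^{K}$ is useless when $|s_1|$ is very small, and this degenerate case must be handled separately --- the paper's dichotomy $|r_1|\gtrless(2C_1)^{-1}|t_1|^{1/\delta}$ shows that when $s_1$ degenerates the height $r_1$ is itself large and gives the separation directly. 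Your proposal has no analogue of this case split, and without it the inequality $d(W^1_1(y),W^3_1(z))\geq C^{-1}|\psi_x(t',s')|-C(|t|+|s|)^{L+1}$ cannot be converted into the required bound $Ce^{-\alpha k}$.
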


\begin{proof} 
By Lemma \ref{lem Ixisempty}, the set of $x$ such that $I_x \neq \emptyset$ is $f$-invariant. Then by ergodicity we may assume without loss of generality that $\mu(\cP_0) = 1$.

By Pesin's theory, there is a constant $\delta > 0$, depending only on $f, \mu$, such that 
for any $\varepsilon > 0$, there is a compact set $\cP_{\varepsilon} \subset \Omega$ with $\mu(\cP_{\varepsilon}) \geq 1 - \varepsilon$ such that $E^1$ and $E^3$ are uniformly $\delta$-H\"older continuous on $\cP_{\varepsilon}$.
 
 By  \eqref{eq Ixstructure},  
 there are numbers $ \frac{- 10 \chi_1 \ell^{2}  }{\chi_3  \delta^{2}}  > V >    \frac{- 10 \chi_1 }{\chi_3  \delta^{2}} $   and $K > 1$ such that for any $V' \in  (\frac{- \chi_3}{4 \chi_1}V\delta^2,  \frac{- 4 \chi_3}{ \chi_1}V\delta^{-2})$, for any $x \in \cP_0$, the set $\{ a + b V'	 :  (a,b) \in I_x \}$ admits a unique minimum $K(V', x) \leq K$. 
 By the choices of $V$ and $K$, we may assume that there exists a measurable positive function $x \mapsto c_x$ such that  for every $x \in \cP_0$,  for any $s_1, s_3 \in (- c_x, c_x) \setminus \{0\}$ with $\frac{\log |s_3|}{ \log |s_1|} \in (\frac{- \chi_3}{4 \chi_1}V\delta^2,  \frac{- 4 \chi_3}{ \chi_1}V\delta^{-2})$, we have 
\begin{eqnarray} \label{eq lowerboundofpsix}
|\psi_x(s_1, s_3)| \geq c_x |s_1|^K.
\end{eqnarray} 

We fix a small constant $\varepsilon  > 0$.  

Let $C_1 > 1$ be a large constant to be determined in due course.
By Lusin's theorem and by enlarging $C_1$ if necessary, we  may take a compact subset $B_0 \subset \cP_0$  with 
\begin{eqnarray}
\mu(B_0) > 1 - \eps/2, 
\end{eqnarray} 
satisfying the following properties:

\enmt
\item  we have
\begin{eqnarray}
r_x, c_x > C_1^{-1}, \quad x \in B_0;
\end{eqnarray}

\item the smooth norms of the charts $\imath_x$ and $\imath'_x$ are bounded by $C_1$ whenever $x \in B_0$;

\item for any $n \in \mathbb{Z}$, for any $i\in \{1,3\}$ and any $x \in B_0$ we have
\begin{eqnarray}  
C_1^{-1} e^{n(\chi_i - \epsilon)} < \| D_x f^n|_{E^i(x)} \|   < C_1 e^{n(\chi_i + \epsilon)};
\end{eqnarray}

\item $E^1$ and $E^3$ are uniformly $\delta$-H\"older continuous on $B_0$, with  $\delta$-H\"older norms bounded by $C_1$.
 
\eenmt

By Proposition \ref{prop.measurable}, there is a compact subset $B \subset B_0$ with \begin{eqnarray} \label{eq mumeasureofb}
 \mu(B) > 1 -  \eps, 
\end{eqnarray}
such that  the following holds:
for every $\nu > 0$, 
 there exist $m_0 > 0$ such that for every $x \in B$ and every $m > m_0$ such that $f^m(x) \in B_0$  there exist a subset $U^{1, m}_x \subset B_0 \cap W^{1, m}_{1}(x)$ such that 
\begin{eqnarray} \label{eq conditionalmeasureofintersectionwithb01}
\mu^1_x(U^{1, m}_x) > (1-\nu) \mu^1_x(W^{1 , m}_{1}(x)),
\end{eqnarray}
and $S'_{3,x}$, $T_0$  are tangent to order $L$ on $U^{1, m}_x$;
and a subset $U^{3, m}_x \subset B_0 \cap W^{3, m}_{1}(x)$ such that 
\begin{eqnarray} \label{eq conditionalmeasureofintersectionwithb03}
\mu^3_x(U^{3, m}_x) > (1-\nu) \mu^3_x(W^{3, m}_{1}(x)),
\end{eqnarray}
and  $S_{1,x}$, $T_0$ are tangent to order $L$ on $U^{3, m}_x$. Moreover, by Lemma \ref{lem uniformboundab}, we may assume that the implicit constants for the above tangencies are uniformly bounded.

Let us denote 
\begin{eqnarray}
S'_{1,x} = (\imath'_x)^{-1} \circ \imath_x(S_{1, x}). 
\end{eqnarray}
 Then $S'_{1,x}$ and $(\imath'_x)^{-1} \circ \imath_x(T_0) = T_{\psi_x}$ are  tangent  to order $L$ on $U^{3, m}_x$.  By  enlarging $C_1$ if necessary, we  may assume that 
 \enmt
\item[(v)] \eqref{eq iotaxtangentalongu1x} holds for $C = C_1$ whenever $x \in B$, $U = U^{3, m}_x$ and $\phi = \psi_x$; and  \eqref{eq iotaxtangentalongu1x2} holds for $C = C_1$ whenever $x \in B$, $U' = U^{1, m}_x$ and $\phi = 0$.
\eenmt
We may assume without loss of generality that $U^{3, m}_x$, resp. $U^{1, m}_x$, is disjoint from $W^{1, m}_{r}(x)$, resp. $W^{3, m}_{r}(x)$, for some $r = r(f, \mu, \nu, \varepsilon) > 0$.

Now take an arbitrary $x \in B$ and two large integers $k_1, k_2 > m_0$ such that $f^{k_1}(x), f^{-k_2}(x) \in B$ and
\begin{eqnarray} \label{eqk1k2}
\frac{k_2 }{k_1 } \in \left(\frac{2}{3}V, \frac{3}{2}V \right). 
\end{eqnarray}

Let us now suppose that $t_1, t_3  \in (- C_1^{-1},  C_1^{-1}) \setminus \{0\}$  satisfy that  
\begin{eqnarray} \label{eq t3inW3}
\Phi^3_x(t_3)  \in \widetilde U^3_x := U^{3, k_2}_{x} \subset W^{3, k_2 }_{1}(x)
\end{eqnarray} 
and 
\begin{eqnarray} \label{eq t1inW1}
\Phi^1_x(t_1) \in \widetilde U^1_x :=  U^{1, k_1}_{x} \subset W^{1,k_1}_{1}(x). 
\end{eqnarray}
By \eqref{eq conditionalmeasureofintersectionwithb01} and \eqref{eq conditionalmeasureofintersectionwithb03}, we have
\begin{eqnarray} \label{eq sxuxlarge}
\frac{\mu^1_x(\widetilde U^1_x)}{\mu^1_x(W^{1,k_1 }_{1}(x))}, \frac{\mu^3_x( \widetilde U^3_x)}{\mu^3_x(W^{3,k_2 }_{1}(x))} > 1- \nu. 
\end{eqnarray}
Thus, by \eqref{eqk1k2} and by enlarging $k_1,k_2$ if necessary, we may assume that for any $t_1, t_3$ satisfying \eqref{eq t3inW3} and \eqref{eq t1inW1}, the following also holds:
\begin{eqnarray} \label{eq logt3logt1} 
\frac{\log |t_3|}{\log |t_1|} \in (\frac{-3\chi_3}{5\chi_1}V, \frac{-5\chi_3}{3\chi_1}V).
\end{eqnarray}

Recall that  $\pi_{1,3}: (-1,1)^3 \to (-1,1)^2$ denotes the projection to the first and the third coordinates.
Consider the curves $\gamma_3 = (\imath'_x)^{-1}(W^3_1(\Phi^1_x(t_1)))$ and $\gamma_1 = (\imath'_x)^{-1} (W^1_1(\Phi^3_x(t_3)))$. 
By enlarging $k_1$ (and $k_2$ at the same time), we can ensure that $\pi_{1,3}(\gamma_3)$ and $\pi_{1,3}(\gamma_1)$ have a unique intersection $(s_1, s_3)$. In other words, there exist $r_1, r_3 \in \R$ such that
\begin{align}
(s_1, r_1, s_3) \in S'_{1,x},\quad (s_1, r_3, s_3) \in S'_{3,x}.
\end{align}
We denote
\begin{eqnarray} \label{eq r'1}
r'_1 = \psi_x(s_1, s_3).
\end{eqnarray}
By definition, we have $(s_1, r'_1, s_3) \in T_{\psi_x}$.
 By the tangency between $S'_{1,x}$ and $(\imath'_x)^{-1} \circ \imath_x(T_0) = T_{\psi_x}$  on  $\widetilde U^3_{x}$; and the tangency between $S'_{3, x}$ and $T_0$ on $\widetilde U^1_{x}$, we have  
\begin{eqnarray} 
|r'_1 - r_1| \leq C_1 |s_1|^{L}, \quad |r_3| \leq C_1  |s_3|^{L}. \label{eqr'r1r3}
\end{eqnarray}
Moreover,  by the $\delta$-H\"older continuity of $E^1$ and $E^3$ on $B_0$,  as in \eqref{eq:pt'} we have
\begin{align} \label{eq ratio t1t3}
C_1^{-1}|t_1|^{1/\delta} \leq |s_1| + |r_1| \leq C_1|t_1|^\delta, 
\quad 
|s_3| \leq C_1 |t_3|^\delta.
\end{align}

Let us first assume that $|r_1| \geq (2C_1)^{-1}|t_1|^{1/\delta}$. Then by  \eqref{eq ratio t1t3}, the second inequality in \eqref{eqr'r1r3} and a similar argument as in Lemma \ref{lem distancebetweenleavesincoordinate}, we deduce that
\aryst
  d(W^3_{1}(\Phi^1_x(t_1)), W^1_{1}(\Phi^3_x(t_3)))  &\geq& C^{-1}|r_1 - r_3| \\
  &\geq& C^{-1}|r_1| - C^{-  1}|r_3| \\
  &\geq& C^{-1}(2C_1)^{-1}|t_1|^{1/\delta}- 2C  C_1| t_3|^{L \delta}.
\earyst
By  \eqref{eq logt3logt1}, and by assuming that
\aryst
L >  \frac{- 4 \chi_1}{\chi_3 \delta^{2}  V},
\earyst
we have
\begin{eqnarray}
  d(W^3_{1}(\Phi^1_x(t_1)), W^1_{1}(\Phi^3_x(t_3))) > \frac{1}{2}C^{-1}C_1^{-1}|t_1|^{1/\delta}.
\end{eqnarray} 

Now we assume that $|r_1| < (2C_1)^{-1}|t_1|^{1/\delta}$. In this case we have that 
\aryst
 |s_1| >  (2C_1)^{-1}|t_1|^{1/\delta}.
\earyst
Then, together with \eqref{eq ratio t1t3} and \eqref{eq logt3logt1}, we deduce that
\begin{align}
\frac{\log |s_3|}{\log |s_1|} \in (\frac{- \chi_3}{4 \chi_1}V\delta^2,  \frac{- 4 \chi_3}{ \chi_1}V\delta^{-2}).
\end{align}
In particular, we have $|s_3| < |s_1|$.
By our choice of $V$, by \eqref{eq lowerboundofpsix}, \eqref{eq r'1}, and by enlarging $C_1$ if necessary, we have
\begin{align}
|r'_1| > C_1^{-1} |s_1|^K >  C^{-1}C_1^{- K -1}|t_1|^{K/\delta}.
\end{align}
Thus we have
\aryst
  d(W^3_{1}(\Phi^1_x(t_1)), W^1_{1}(\Phi^3_x(t_3)))  &\geq& C^{-1}|r_1 - r_3| \\
  &\geq& C^{-1}|r'_1| - C^{-1}|r'_1 - r_1| - C^{-  1}|r_3| \\
  &\geq& C^{-1}C_1^{-K-1} |t_1|^{K/\delta} - C C_1(|s_1|^{L} + |s_3|^{L}) \\
  &\geq& C^{-1}C_1^{-K-1} |t_1|^{K/\delta} - 2C  C_1| t_1|^{L \delta}.
\earyst
By assuming that 
\begin{eqnarray}
L > 2\delta^{-2} K,
\end{eqnarray}
we have that for any $x \in B$, for any sufficiently large $k_1,k_2$ satisfying \eqref{eqk1k2}, for any $t_1, t_3$ satisfying \eqref{eq t3inW3}, \eqref{eq t1inW1} and  \eqref{eq logt3logt1}, we have
\begin{eqnarray}
  d(W^3_{1}(\Phi^1_x(t_1)), W^1_{1}(\Phi^3_x(t_3))) > \frac{1}{2}C^{-1}C_1^{-K-1}|t_1|^{K/\delta}.
\end{eqnarray} 
By Lemma \ref{lem altdefQNI}, we see that $f$ has the QNI property.
\end{proof}

\begin{proof}[Proof of Theorem \ref{teo.maintechnical2}]
It suffices to combine Lemma \ref{lem Ixisempty} and Proposition \ref{prop IxnonemptytoQNI}.
\end{proof}

 \section{Continuous and uniform versions for partially hyperbolic diffeomorphisms}\label{s.uniformversions}
In this section we explain how to adapt the results in the previous sections to the case where the measure is supported in a (uniformly) partially hyperbolic set.  

Let $f: M \to M$ be a smooth diffeomorphism and $\Lambda \subset M$ a compact $f$-invariant subset. Assume that there is a continuous splitting of $T_\Lambda M = E^u \oplus E^c \oplus E^s = E^1 \oplus E^2 \oplus E^3$ and consider the functions $\lambda_{i,x}$ defined in equation \eqref{eq:exponentpointuni} which are continuous on $\Lambda$ and verify (for an appropriate metric) that  $|\lambda_x^1 |> |\lambda_x^2| > |\lambda_x^3|$ as well as $|\lambda_x^1|>1>|\lambda_x^3|$.

We will show the following. 
\begin{teo}\label{teo.QNIuniform}  
Let $f: M \to M$ be a smooth diffeomorphism of a closed 3 manifold $M$ and let $\Lambda \subset M$ be a compact $f$-invariant partially hyperbolic subset. Then the following dichotomy holds: 
\begin{itemize}
\item Either for every non degenerate $\mu$ with full support on $\Lambda$, $\mu$ has QNI (cf. Definition \ref{def.QNI}), or,
\item for every $\ell \geq 1$, the set $\Lambda$ is jointly integrable up to order $\ell$ (cf. Definition \ref{defi.jointorderell}). 
\end{itemize}
\end{teo}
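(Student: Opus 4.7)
The plan is to argue the contrapositive: assuming there exists a non-degenerate measure $\mu$ fully supported on $\Lambda$ that fails to have QNI, we will upgrade the measurable constructions of Sections \ref{s.cocyclenormalforms}--\ref{s.compatible} to continuous objects defined on all of $\Lambda$, thereby producing, for each $\ell\ge 1$, a continuous family of $C^\ell$ surfaces witnessing joint integrability up to order $\ell$ in the sense of Definition \ref{defi.jointorderell}. Since QNI for $\mu$ and for $\mu\circ f^{-1}$ are equivalent by Proposition \ref{prop.symmetry}, we may work symmetrically with stable and unstable structures throughout.

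\emph{Step 1 (Uniform good charts on $\Lambda$).} Applying Theorem \ref{teo.maintechnical1} (and its stable counterpart) to the non-degenerate measure $\mu$, we obtain $\ell$-good measurable unstable and stable charts for every $\ell\ge 1$, together with templates $\cT^\ell_x$ defined $\mu$-almost everywhere. By Proposition \ref{p.dichotomytech}(i) and Proposition \ref{p.varphipolyell+1}, each template is (equivalent to) a polynomial of degree $\le d(\ell,f,\mu)$ on a $\mu$-full measure set. The recursive formulas producing these charts (cf.~\eqref{eq:convergence}, \eqref{eq:solveu2}, \eqref{eq:changepoly}) are uniformly contracting series whose summands are continuous in $x\in\Lambda$ once one starts from continuous $0$-good uniform charts (Proposition \ref{p.CNFuniform}, Remark \ref{rem.HolderJet}). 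Because $\mu$ has full support, the polynomial identity for $\cT^\ell_x$ on a dense set together with continuity of the continuous extension forces the template to be polynomial of degree $\le d$ for \emph{every} $x\in\Lambda$. Iterating Proposition \ref{p.CNFuniform} inductively on $\ell$ then yields a continuous family of $\ell$-good uniform unstable charts on $\Lambda$ for every $\ell$, and symmetrically for the stable direction.

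\emph{Step 2 (Topological compatibility).} With uniform $L$-good charts $\{\imath_x\}_{x\in\Lambda}$ and $\{\imath'_x\}_{x\in\Lambda}$ in hand, reconsider the obstruction set $I_x$ defined at the start of Section \ref{s.compatible}. Because the charts now depend continuously on $x\in\Lambda$, the function $x\mapsto \partial_1^a\partial_3^b\psi_x(0,0)$ is continuous on $\Lambda$, so $\{x:I_x\neq\emptyset\}$ is an $F_\sigma$ set whose $\mu$-measure is, by Proposition \ref{prop IxnonemptytoQNI}, equal to zero (else $\mu$ would have QNI). Since $\mu$ has full support, this forces $I_x=\emptyset$ for every $x\in\Lambda$. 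Equivalently, by Lemma \ref{lem Ixisempty}, the compatibility identity \eqref{eq:compatible} holds pointwise on $\Lambda$ for every prescribed order $\ell$ (up to enlarging $L$). Thus $\Lambda$ carries $\ell$-compatible uniform good charts.

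\emph{Step 3 (Continuous family of surfaces).} Adapt the proof of Proposition \ref{prop.compatibleimpliesjoint} to the uniform setting. The data $(u^x_{i,j})_{i+j\le\ell}$ given by \eqref{eq varphiijx} and the bounds \eqref{eq boundforvarphiij}, \eqref{eq varphiijt1t3} are now continuous and uniform in $x\in\Lambda$, with uniform constants $C_x$ and radii $\rho_x$ bounded away from $0$ by compactness. Theorem \ref{thm WET} combined with the linear continuous extension of \cite{Fe} produces the function $\varphi_x$ depending continuously on $x\in\Lambda$, hence a continuous family of charts $\imath^\flat_x=\imath'_x\circ H$, whose restriction $\cS_x=(\imath^\flat_x)^{-1}(T_0)$ is a continuous family of $C^\ell$ surfaces on a uniform scale $\rho>0$, satisfying conditions (i) and (ii) of Definition \ref{defi.jointorderell}. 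Since $\ell$ is arbitrary, $\Lambda$ is jointly integrable up to order $\ell$ for every $\ell$.

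The main obstacle will be Step 2: promoting the measurable vanishing of $I_x$ (which is what Proposition \ref{prop IxnonemptytoQNI} really delivers) to a \emph{pointwise} statement on $\Lambda$. The argument relies crucially on (a) the continuous dependence of the charts on the base point, which is ensured by the uniform constructions of Section \ref{s.cocyclenormalforms} together with Remark \ref{rem.HolderJet}, and (b) the full support of $\mu$, which turns density of the measurable condition into continuity-based propagation. Once these are in place, Step 3 reduces to checking that the Whitney extension procedure respects continuity in parameters, which follows from the linearity of the extension operator in \cite{Fe}.
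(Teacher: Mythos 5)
Your proposal is correct and follows essentially the same route as the paper: \S\ref{s.uniformversions} likewise upgrades the measurable charts to continuous uniform ones via Proposition \ref{p.CNFuniform} (using that polynomiality of the templates, known $\mu$-a.e.\ from the dichotomy, propagates to all of $\Lambda$ by full support and continuity of the data along the uniformly smooth invariant manifolds), then invokes the compatibility machinery of \S\ref{s.compatible} (Theorem \ref{teo.maintechnical2}/Proposition \ref{prop IxnonemptytoQNI}), and finally reruns the Whitney-extension construction of Proposition \ref{prop.compatibleimpliesjoint} continuously in the base point. One small correction to your Step 2: since $x\mapsto\psi_x$ is continuous in the relevant $C^{2\ell}$ sense, the set $\{x\in\Lambda: I_x\neq\emptyset\}$ is in fact \emph{open}, and openness (not mere $F_\sigma$ structure, which would not suffice) is what forces a $\mu$-null set to be empty when $\mu$ has full support.
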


Note that the second condition is independent of the measure, and forces every non-degenerate measure with full support on $\Lambda$ to not verify QNI. Also, while not obvious from the definition of the QNI property, our result implies that having this property for all non-degenerate invariant measures with full support  on some partially hyperbolic subset with good continuation properties (e.g. the whole manifold) is an open property   in the smooth topology.

\subsection{Proof of Theorem \ref{teo.QNIuniform}}  
 As for the measurable case, the proof has three stages\footnote{Note that whenever possible, we will use the results from previous sections, particularly \S \ref{s.computations} and \S \ref{s.compatible}. We note that in those sections, the fact that templates are measurable functions included an extra difficulty that here we could do without if we wanted to show the results here directly. We leave those simplifications to the interested reader.}: 
 
 \begin{itemize}
 \item First we show that if QNI is not verified, then there are $\ell$-good stable and unstable charts for all $\ell$. In this case,  the normal form coordinate depends continuously on its base point. We will need to check that  these $\ell$-good charts will  depend continuously on the base point. The proof mimics what is done in \S\ref{s.computations}.
 \item Then we show that if QNI is not verified, then, the approximations of the stable and unstable  Hopf brushes (cf. Remark \ref{rem-HB})  are at the same up to order $\ell$. This proof mimics the one done in \S\ref{s.compatible} and indeed, in this case, no continuity is needed.
 \item Finally, we show that this compatibility of charts implies that there is a  continuous  family of surfaces that approximates well the Hopf brushes up to order $\ell$. 
 \end{itemize}
 
 Let us give the main arguments and see how to adapt what has already been done:   

\begin{lemma} \label{lem lgoodcharts-unif}
Let $\mu$ be a measure of full support on $\Lambda$ and assume that $\Lambda$ does not admit $\ell$-good uniform unstable charts for some $\ell \geq 1$. Then, $\mu$ has QNI. 
\end{lemma}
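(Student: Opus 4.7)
\medskip

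\noindent \textbf{Plan of proof.} The strategy is to run, in the uniformly partially hyperbolic setting, the same inductive scheme that produced Theorem \ref{teo.maintechnical1}, replacing measurable normal forms by their continuous/uniform counterparts on $\Lambda$ established in Proposition \ref{p.CNFuniform} and Remark \ref{rem.HolderJet}, and then invoking the quantitative distance-to-rational estimates of \S\ref{s.computations} in their uniform form.

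\medskip

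First I would reduce to the first order at which things break. By Proposition \ref{p.CNFuniform}, $\Lambda$ admits a family of $0$-good uniform unstable charts. Hence, since by hypothesis there is some $\ell \geq 1$ for which $\Lambda$ does not admit $\ell$-good uniform unstable charts, there is a minimal integer $\ell_0 \in \{1,\dots,\ell\}$ such that $\Lambda$ admits $(\ell_0-1)$-good uniform unstable charts $\{\imath_x\}_{x\in \Lambda}$ but no $\ell_0$-good uniform unstable charts. By the second half of Proposition \ref{p.CNFuniform}, the obstruction is that the stable templates $\cT^{\ell_0-1}_x$ appearing in \eqref{eq:highordertemplateuni} cannot all be polynomials of degree $\leq d$ (for the integer $d=d(\ell_0-1, f)$ produced by the $(\ell_0-1)$-good structure): otherwise Proposition \ref{p.CNFuniform} would promote them to $\ell_0$-good uniform charts, contradicting minimality.

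\medskip

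Next I would upgrade this single-point failure to a positive-$\mu$-measure one. Since the charts $\{\imath_x\}$ depend continuously on $x\in\Lambda$ (Remark \ref{rem.HolderJet}) and $W^3_{loc}(\Phi^1_x(t))$ varies continuously with $x$ and $t$, the map $x \mapsto \cT^{\ell_0-1}_x$ is continuous in the sup topology on compact subsets of $(-1,1)$. Consequently
\[
\cU \;=\; \{\, x \in \Lambda : \cT^{\ell_0-1}_x \notin \mathrm{Poly}^d \,\}
\]
is open in $\Lambda$, nonempty by the previous step, and, by the polynomial transformation law \eqref{eq:dynamicstemplate}, $f$-invariant. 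As $\mu$ is fully supported on $\Lambda$, $\mu(\cU)>0$.

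\medskip

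Then I would adapt \S\ref{s.computations} in the uniform setting. The goal is the analogue of Proposition \ref{prop.estimatepoly}: there exist $\nu_0\in(0,1)$ and $c_0>0$, both independent of the base point in a compact subset $K\subset \cU$ with $\mu(K)$ close to $1$, such that for every $x \in K$ and every rational $Q/P \in \cR^d$,
\[
\hat\mu^1_x\!\left( \{\, t\in(-1,1) : |\cT^{\ell_0-1}_x(t)-Q(t)/P(t)|\leq c_0 \,\} \right) \;\leq\; \nu_0.
\]
The proof is a compactness argument identical to the one in Proposition \ref{prop.estimatepoly}, but now we may exploit the continuous dependence of $\cT^{\ell_0-1}_x$ and of $\hat\mu^1_x$ on $x\in K$ (non-degeneracy of $\mu$ supplies sufficiently many points in the support of $\hat\mu^1_x$ with uniform spread); the limit via Lemma \ref{l.subsequencerational} would force $\cT^{\ell_0-1}_{x_\infty}$ to coincide with a rational function of degree $\leq d$ on a positive-measure set, against $x_\infty\in K \subset \cU$. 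From here, the pull-back identity \eqref{eq:dynamicstemplate} (or rather its iterate \eqref{eq tlxtequation}) gives the uniform Proposition \ref{prop.estimatepoly2}, with constants depending only on $f,\Lambda,\ell_0,\nu,\eps$. Once these uniform estimates are in hand, the distance computations in \S\ref{s.proof6} proceed verbatim: the H\"older holonomy bound \eqref{eq:holderPQ}, the two-case split according to the size of $|\hat P(t')|$, and the uniform compactness of rational functions supplied by Proposition \ref{p.compactnessrational} combine (via Lemma \ref{lem altdefQNI}) to produce constants $V, \alpha, C_0, k_0$ witnessing QNI for $\mu$.

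\medskip

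The main obstacle I expect is the second step: converting the qualitative failure ``$\cT^{\ell_0-1}_{x_0}\notin \mathrm{Poly}^d$ at one point'' into a quantitative uniform lower bound on the sup-distance from $\cT^{\ell_0-1}_x$ to $\mathrm{Poly}^d$ over a large compact subset of $\cU$. Pointwise continuity of $x\mapsto \cT^{\ell_0-1}_x$ and openness of $\cU$ are not enough by themselves; one has to rule out that along a sequence $x_n\to x_\infty\in K$ the templates $\cT^{\ell_0-1}_{x_n}$ approach $\mathrm{Poly}^d$ in the sup norm on $\mathrm{supp}(\hat\mu^1_{x_n})$. This is precisely where the compactness argument of Lemma \ref{l.subsequencerational} (together with non-degeneracy of $\mu$, guaranteeing enough spread points in the supports of the $\hat\mu^1_{x_n}$) is indispensable, and this is the step where the hypotheses ``uniform'', ``non-degenerate'' and ``fully supported'' all genuinely enter.
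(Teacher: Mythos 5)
Your skeleton is the same as the paper's (take the maximal order $k=\ell_0-1$ for which uniform good charts exist, use Proposition~\ref{p.CNFuniform} to see that the templates $\cT^{\ell_0-1}_x$ cannot all be polynomials, then feed this into the machinery of \S\ref{s.computations}), but there is a genuine gap exactly at the step where the pointwise failure must be upgraded to the hypothesis that actually drives \S\ref{s.computations}. What Proposition~\ref{prop.estimatepoly} (and hence Proposition~\ref{prop.estimatepoly2} and the distance estimates leading to Lemma~\ref{lem altdefQNI}) needs is item~(\ref{it.2section4}) of Proposition~\ref{p.dichotomytech}: for $\mu$-a.e.\ $x$, \emph{no} restriction of $\cT^{\ell_0-1}_x$ to a set of positive $\hat\mu^1_x$-measure is smooth in the sense of Whitney. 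Your compactness argument tries to reach a contradiction with the weaker statement ``$\cT^{\ell_0-1}_{x_\infty}\notin\mathrm{Poly}^d$'': but the limit produced by Lemma~\ref{l.subsequencerational} only shows that $\cT^{\ell_0-1}_{x_\infty}$ coincides with a rational (hence smooth) function on a subset of positive $\hat\mu^1_{x_\infty}$-measure, which is perfectly compatible with $\cT^{\ell_0-1}_{x_\infty}$ failing to be a polynomial on its whole domain, so no contradiction is obtained and your uniform analogue of Proposition~\ref{prop.estimatepoly} does not close. The missing idea is to go through the dichotomy: one must refute alternative (i) of Proposition~\ref{p.dichotomytech}, and this is what the paper does — if $\cT^{\ell_0-1}_x$ were polynomial for $\mu$-a.e.\ $x$, then, since the invariant manifolds through $\Lambda$ have uniformly bounded smoothness (so the templates vary continuously) and $\mu$ has full support, it would be polynomial for \emph{every} $x\in\Lambda$, and Proposition~\ref{p.CNFuniform} would then yield $\ell_0$-good uniform charts, a contradiction; alternative (\ref{it.2section4}) then holds $\mu$-a.e.\ and the proof of Proposition~\ref{p.qniellgood} applies verbatim, with no uniform re-derivation of the estimates needed.

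A secondary weak point is your claim that $\cU=\{x:\cT^{\ell_0-1}_x\notin\mathrm{Poly}^d\}$ is open and $f$-invariant. In the uniform setting the template is only pinned down at parameters $t$ with $\Phi^1_x(t)\in\Lambda$ (cf.\ Definition~\ref{def.lgoodchartsuni}); off this set the continuous extension is not canonical, and $\Lambda\cap W^1_1(x)$ varies only semicontinuously in $x$, so the finitely many points witnessing non-polynomiality at $x$ need not have counterparts on nearby leaves — continuity of $x\mapsto\cT^{\ell_0-1}_x$ plus closedness of $\mathrm{Poly}^d$ does not by itself give openness. Likewise, the transformation law \eqref{eq:dynamicstemplate} transports polynomiality between intervals of different lengths, so it only gives one-sided invariance of $\cU$ (not that this matters: full support plus openness plus nonemptiness would already give $\mu(\cU)>0$ without invariance). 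These points can be repaired, but the repair amounts to working with the conditional measures and the dichotomy, i.e.\ to the paper's argument, which your proposal as written bypasses.
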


\begin{proof}
	By Proposition \ref{p.CNFuniform},  $0$-good uniform unstable charts exist.
Let $0 \leq k < \ell$ be the largest number such that $\Lambda$ admits $k$-good uniform unstable charts. We claim that $\cT^k_x$ is not polynomial for $\mu$-a.e. $x \in \Lambda$, and $\mu$ has QNI.   

 Suppose that $\cT^k_x$ is polynomial for $\mu$-a.e. $x \in \Lambda$. Then we would have case (i) in  Proposition \ref{p.dichotomytech}.  As the stable and unstable manifolds of $f$ through $\Lambda$ have uniformly bounded smoothness, we deduce that in fact $\cT^k_x$ is polynomial for every $x \in \Lambda$.
 This would allow us to construct $(k+1)$-good uniform unstable charts using Proposition \ref{p.CNFuniform}, contradicting the choice of $k$.

Now, the rest of the proof of Proposition \ref{p.qniellgood} works verbatim. 
\end{proof}

  Assume that $\mu$ does not have QNI, then by Lemma \ref{lem lgoodcharts-unif}, 
 there are $\ell$-good stable charts and $\ell$-good unstable charts for every integer $\ell \geq 0$ which form a collection of  compatible good charts by Theorem \ref{teo.maintechnical}. 
 Moreover, under the hypothesis of Theorem \ref{teo.QNIuniform}, we may apply Lemma \ref{p.CNFuniform} to show that for each $\ell \geq 0$, the $\ell$-good stable (unstable) charts may be chosen to depend continuously on the base point.
 Then by the proof of Proposition \ref{prop.compatibleimpliesjoint}, 
 we see that $\Lambda$ is jointly integrable up to order $\ell$ for every $\ell \geq 0$. This concludes the proof of Theorem \ref{teo.QNIuniform}.

%
%

\appendix

\section{Discussion on the notion of QNI}\label{app.symmetry}

In this appendix we provide some alternative ways to understand the QNI property and prove Proposition \ref{prop.symmetry}.

\begin{proof}[Proof of Proposition \ref{prop.symmetry}] 
Assume $\mu$ has the QNI property for $f$ as stated in Definition \ref{def.QNI}. We wish to show that it also verifies the property for $f^{-1}$. For this, consider $\alpha>0$, $\eps>0$ and $\nu>0$ and we will consider the set $\cP$ given by the fact that $\mu$ has the QNI property and some value of $C = C(\nu,\eps)$ (which may differ from the one given for $\mu$) and $k_0$ as given for $\mu$. 

To get the result, it is enough to show that there is a function $\rho(\nu)$ such that $\rho(\nu) \to 0$ as $\nu\to 0$ so that if $k>k_0$ and $x, f^k(x),f^{-k}(x) \in \cP$ there is a subset $\hat U_x \subset W^{1,k}_1(x)$ with $\mu_x^1(\hat U_x)> (1-\rho(\nu)) \mu_x^1(W^{1,k}_1(x))$ with the property that given $z \in \hat U_x$ there is $\hat S_z \subset W^{3,k}_1(x)$ with  $\mu_x^3(\hat S_z) > (1-\rho(\nu)) \mu_x^3(W^{3,k}_1(x))$ so that if $y \in \hat S_z$ then 
\begin{equation}\label{eq:QNI2}
d(W^1_1(y), W^3_1(z)) > \hat C  e^{-\alpha k}.   
\end{equation}

Consider the set of pairs $(y,z) \in W^{3,k}_1(x) \times W^{1,k}_1(x)$ which verify equation \eqref{eq:QNI2}. It follows from the fact that $\mu$ verifies QNI that this set has measure larger than $(1-\nu)^2$ with respect to the probability measure $\frac{\mu^{3}_x}{\mu^3_x(W^{3,k}_1(x))} \times \frac{\mu^{1}_x}{\mu^1_x(W^{1,k}_1(x))}$ and thus, by Fubini's theorem it follows that considering $\rho(\nu) = 2 \sqrt{\nu}$ the result follows. 
\end{proof}

The following characterization of QNI is the one we establish to prove our main results.

\begin{lemma} \label{lem altdefQNI}
Assume that $\mu$ is a partially hyperbolic measure for a smooth diffeomorphism $f$ satisfying the following property. 
\begin{itemize}
\item there exist $V > 2$, $\alpha>0$ and, 
\item for every $\eps>0$, there exists a subset $\cP_0 \subset M$ of measure $\mu(\cP_0)>1-\eps$

and,
\item for every $\nu > 0$,  there exist $k_* = k_*(  \nu,\eps)$, and  a constant $C=C(\nu,\eps)$ so that 
  such that:
\end{itemize}
\qquad if $k_1, k_2 \geq k_*$ with $\frac{k_2}{k_1} \in (\frac{2}{3}V, \frac{3}{2}V)$ and  $x, f^{k_1}(x), f^{-k_2}(x) \in \cP_0$,  then
\begin{itemize}
\item
there is a subset $S_x \subset W^{3,k_2 }_1(x)$ with $\mu_x^3(S_x) > (1-\nu) \mu_x^3(W^{3,k_2  }_1(x))$ with the following property: 
\item  For all $y \in S_x$ there exists $U_y \subset  W^{1,k_1  }_1(x)$ with $\mu_x^1(U_y) > (1-\nu) \mu_x^1(W^{1,k_1 }_1(x))$ so that if $z \in U_y$ then 
\begin{equation}\label{eq:QNI3}
d(W^3_1(z), W^1_1(y) ) > C  e^{-\alpha k_1}. 
\end{equation}
\end{itemize}
Then $\mu$ has the QNI property (cf. Definition \ref{def.QNI}).
\end{lemma}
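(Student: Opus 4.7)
The idea is to deduce Definition~\ref{def.QNI} (with $k_1=k_2=k$) from the hypothesis (which provides information only at ratios $k_2/k_1\approx V>2$) by a time shift: given a target scale $k$, apply the hypothesis at $x':=f^n(x)$ for an integer $n\approx k(V-1)/(V+1)$, so that $k_1:=k-n$ and $k_2:=k+n$ satisfy $k_2/k_1\approx V$, and pull the conclusion back by $f^{-n}$.

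Given $\eps>0$, I first apply the hypothesis with $\eps/2$ in place of $\eps$ to obtain $\cP_0$ with $\mu(\cP_0)>1-\eps/2$, and shrink $\cP_0$ to a single Pesin block on which the Pesin charts, the bundles $E^i$ and the foliations $W^i_1$ have uniformly bounded smooth and H\"older norms, keeping $\mu(\cP_0)>1/2$. By Egorov's theorem applied to Birkhoff averages of $\mathbf{1}_{\cP_0}$, I extract $\cP\subset\cP_0$ with $\mu(\cP)>1-\eps$ on which these averages converge uniformly. For any fixed $0<a<b$ and any $x\in\cP$, the density in $[ak,bk]\cap\mathbb N$ of integers $j$ with $f^j(x)\in\cP_0$ then tends to $\mu(\cP_0)>1/2$ as $k\to\infty$. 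Fixing $\delta>0$ small enough that any $n\in[k(V-1)/(V+1)-\delta k,\,k(V-1)/(V+1)+\delta k]$ gives $(k+n)/(k-n)\in(2V/3,3V/2)$, for $x\in\cP$ with $f^{\pm k}(x)\in\cP$ and $k$ large I choose such an $n$ with $f^n(x)\in\cP_0$.

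Setting $x':=f^n(x)$, $k_1:=k-n$, $k_2:=k+n$, the three points $x'$, $f^{k_1}(x')=f^k(x)$ and $f^{-k_2}(x')=f^{-k}(x)$ all belong to $\cP_0$, and $k_1,k_2\geq k_*$ for $k$ large. The hypothesis yields $S_{x'}\subset W^{3,k_2}_1(x')$ and, for each $y'\in S_{x'}$, $U_{y'}\subset W^{1,k_1}_1(x')$, both of large relative conditional measure, satisfying $d(W^3_1(z'),W^1_1(y'))>Ce^{-\alpha k_1}$ for $z'\in U_{y'}$. The identities $W^{3,k_2}_1(x')=f^n(W^{3,k}_1(x))$ and $W^{1,k_1}_1(x')=f^n(W^{1,k}_1(x))$ let me define $S_x:=f^{-n}(S_{x'})\subset W^{3,k}_1(x)$ and, for $y=f^{-n}(y')\in S_x$, $U_y:=f^{-n}(U_{y'})\subset W^{1,k}_1(x)$. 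The $f$-quasi-invariance of the conditionals $\mu^i$ on stable/unstable leaves transfers the normalized measure estimates verbatim from $x'$ to $x$.

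The hardest step will be the distance bound, for which the uniformity of the Pesin block is essential. In uniform coordinate charts at $x$ and $x'$, the distance between a local stable leaf and a local unstable leaf through nearby regular points is comparable, up to bounded distortion, to the displacement along the center direction $E^2$, and this displacement transforms under $f^n$ by the factor $|\lambda^{(n)}_{2,x}|=\prod_{j=0}^{n-1}|\lambda_{2,f^j(x)}|$, bounded above by $e^{(|\chi_2|+\epsilon)n}$. Combining with $k_1=k-n$ and $n\leq k$ yields
\[
 d(W^3_1(z),W^1_1(y))\ \gtrsim\ e^{-(|\chi_2|+\epsilon)n}\,d(W^3_1(z'),W^1_1(y'))\ >\ Ce^{-\alpha(k-n)-(|\chi_2|+\epsilon)n}\ \geq\ C' e^{-\alpha' k},
\]
with $\alpha'=\max(\alpha,|\chi_2|+\epsilon)$ independent of $\nu$ and $\eps$, verifying Definition~\ref{def.QNI} for $\cP$ with exponent $\alpha'$.
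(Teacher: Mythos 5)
Your proposal takes essentially the same route as the paper's proof: use a Birkhoff/Egorov density argument to find, for $x$ in a large set $\cP$ with $f^{\pm k}(x)\in\cP$, an intermediate time $n\approx \frac{V-1}{V+1}k$ with $x':=f^{n}(x)\in\cP_0$, apply the hypothesis at $x'$ with $(k_1,k_2)=(k-n,k+n)$, pull the sets $S_{x'}$ and $U_{y'}$ back by $f^{-n}$ using $W^{3,k_2}_1(x')=f^{n}(W^{3,k}_1(x))$, $W^{1,k_1}_1(x')=f^{n}(W^{1,k}_1(x))$ and the equivariance of the conditional measures, and absorb the loss in the distance estimate into a larger exponent $\alpha$. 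This is exactly the paper's argument (the paper chooses $j$ from a density condition rather than via Egorov, which is an immaterial difference).

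Two points in your write-up need repair. First, the measure bookkeeping is internally inconsistent: you shrink $\cP_0$ to a Pesin block keeping only $\mu(\cP_0)>1/2$, but you later need $\cP\subset\cP_0$ (so that $f^{k}(x),f^{-k}(x)\in\cP$ give $f^{k_1}(x'),f^{-k_2}(x')\in\cP_0$) together with $\mu(\cP)>1-\eps$, which is impossible for small $\eps$; the fix is trivial, since regular sets exhaust $\mu$, so take the Pesin block with measure $>1-\eps/2$ (the density argument for finding $n$ in the window only needs the visit frequency to $\cP_0$ to be positive, so nothing else changes). Second, the assertion that the leaf distance is ``comparable, up to bounded distortion, to the displacement along $E^2$'' and hence scales by $|\lambda^{(n)}_{2,x}|$ is neither justified nor needed: the hypothesis gives no chart or regularity control at the points $y'\in S_{x'}$, $z'\in U_{y'}$, so the safe (and sufficient) estimate is the crude Lipschitz bound $d(p,q)\ \ge\ \|Df\|_{C^0}^{-n}\, d(f^{n}p,f^{n}q)$ applied with $f^{n}q\in W^3_1(z')$ and $f^{n}p\in W^1_1(y')$, giving $d>Ce^{-\alpha k_1 - n\log\|Df\|_{C^0}}\ge C e^{-\alpha' k}$ with $\alpha'$ depending only on $\alpha$ and $f$; this is precisely what the paper means by ``letting $\alpha$ be larger.'' (One caveat you share with the paper's own terse conclusion: this controls only the portion $f^{-n}(W^1_1(y'))$ of the full unit leaf $W^1_1(y)$, which strictly contains it; your sketch, like the paper's, does not address the remaining piece, so you are at the same level of rigor as the source on that point.)
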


Note that the condition on $k_1,k_2$ says that equation \eqref{eq:QNI3} (up to possibly changing $\alpha$) is the same as asking that $d(W^1_1(y), W^3_1(z)) > C  e^{-\alpha \min \{ k_1, k_2\}}$ or other variations.

\begin{proof}
We fix some $\eps \in (0,1)$.
We set
\begin{eqnarray} \label{eq a0b0} 
a_0 = \frac{1 + \frac{2}{3}V }{\frac{2}{3}V  - 1}, \quad
b_0 = \frac{1 + \frac{3}{2}V }{\frac{3}{2}V  - 1}  \in ( a_0, 1).
\end{eqnarray}
We can without loss of generality suppose that $\cP_0$ satisfies that 
\begin{eqnarray} \label{eq mumeasureofb}
 \mu(\cP_0) > 1- \min( \frac{\varepsilon}{100},  \frac{b_0 - a_0}{4b_0}).
\end{eqnarray}

We define in the following way the set $\cP$ in Definition \ref{def.QNI}. 
Given a constant $N > 0$,
we let $\cP = \cP(\varepsilon, N)$ be the set of points $x \in \cP_0$ such that for every $k > N$, 
\begin{eqnarray}
\frac{1}{k} |\{ 0 \leq j \leq k-1 :  f^{j}(x) \in \cP_0  \}| > 1 - \min(\frac{\eps}{50},  \frac{b_0 - a_0}{2b_0}), \\
\frac{1}{k} |\{ - k \leq j \leq -1 :  f^{j}(x) \in \cP_0  \}| >  1 -  \min(\frac{\eps}{50},  \frac{b_0 - a_0}{2b_0}).
\end{eqnarray}
 Using Birkhoff's theorem we may assume by letting $N$ be sufficiently large that
\begin{eqnarray}
\mu(\cP) > 1 - \varepsilon.
\end{eqnarray}
We now show that the statement in Definiton \ref{def.QNI} is satisfied for this $\cP$.

Fix some $\nu \in (0, 1)$.
We let  $k_* = k_*(\eps, \nu)$ be given by the hypothesis of the lemma.  

Let us take some $x \in \cP$ and an integer $k \geq k_0$ such that $f^k(x) \in \cP$ and $f^{-k}(x) \in \cP$.

We denote $k' = (\frac{a_0+b_0}{2b_0})k$.
By the definition of $\cP$, we have
\aryst
|\{ a_0 k < l < b_0 k' : f^{l}(x) \in \cP_0 \}| &>&  (1 - \frac{b_0 - a_0}{2b_0} ) b_0k' - a_0 k - 1 > 0.
\earyst
Consequently, there exists some $j \in \{ a_0 k + 1, \cdots, b_0 k' \}$ such that $x' := f^j(x) \in \cP_0$.

Denote $k_2 = k  + j$ and $k_1 = k  - j$. Then we have
\aryst
f^{k_1}(x') \in \cP_0, \ f^{-k_2}(x') \in \cP_0 \ \mbox{ and } \
 \frac{k_2}{k_1} \in (\frac{2}{3}V, \frac{3}{2}V).
\earyst

By the hypothesis of the lemma,  there exists a subset $S' \in W^{3, k_2 }_{1}(x')$ with $\mu^3_{x'}(S') > (1- \nu)\mu^3_{x'}( W^{3, k_2 }_{1}(x'))$ such that for any $y \in S'$ there exists $U'_y \subset  W^{1, k_1 }_{1}(x')$ with $\mu^1_{x'}(U'_y) > (1- \nu)\mu^1_{x'}(W^{1, k_1 }_{1}(x'))$ such that if $z \in U'_y$, then
\begin{eqnarray}
d(W^1_1(y), W^3_1(z)) > Ce^{-\alpha k}.
\end{eqnarray}
We define $S_x = f^{-j}(S')$ and for each $y \in S_x$, define $U_y = f^{-j}(U'_{f^j(y)})$.
Notice that $\mu^3_{x'} = (f^{j})_* \mu^3_x$ and $\mu^1_{x'} = (f^{j})_* \mu^1_x$.
Then it is clear that the statement in Definition \ref{def.QNI} holds for $x$ by letting $\alpha$ be larger.
\end{proof}

We end this appendix by commenting the difference between our definition of QNI and that in \cite{Katz}. 

The only difference between the definitions is the choice of the notion of local stable/unstable manifolds. We have chosen to work with $W^i_1(x)$ (with $i\in \{1,3\}$) to be the unstable/stable manifold of lenght $1$ with respect to the normal form coordinates. Note that the Riemannian length of these manifolds is not continously variating as the normal form coordinates are just measurable, but they vary continuously in sets of arbitrarily large measure. To choose the scales, we have chosen to use $W^{1,k}_1(x) = f^{-k}(W^{1}_1(f^k(x)))$ and $W^{3,k}_1(x) = f^k(W^3_1(f^{-k}(x))$. In \cite{Katz} he first introduces a (sufficiently small) measurable partition $\cB$ of the lamination with a Markov property and defines $W^1_{loc}(x) = W^1_1(x) \cap \cB(x)$. Then, he takes $W^{1,k}_{loc}(x)$ to be $f^{-k}(W^1_{loc}(f^k(x))$ (a symmetric partition allows to define local stable manifolds). The definition of QNI in \cite{Katz} it is then identical to Definition \ref{def.QNI} where the sets $W^{1,k}_1(x)$ and $W^{3,k}_1(x)$ are replaced by $W^{1,k}_{loc}(x)$ and $W^{3,k}_{loc}(x)$. 

As it is usual, to see the equivalence, one considers large measure sets of points where the \lq boundary\rq  of the leaves $W^{1}_{loc}(x)$ and $W^{3}_{loc}(x)$ is 'far' from the center point $x$. In those sets, and for iterates which return to those sets there is an easy way to relate the sets $W^{i,k}_{loc}(x)$ and $W^{i,k}_1(x)$ and thus one can go from one definition to the other without difficulty.

\section{An application of Lusin's theorem} 

We have used the following general result repeatedly.

\begin{prop}\label{prop.measurable}
Let $\mu$ be a partially hyperbolic measure for a diffeomorphism $f$ of a $3$-dimensional closed manifold $M$. Assume that $c_1, \ldots, c_k$ are measurable functions with respect to $\mu$. Then, for every $\eps > 0$ and compact set $\cQ\subset M$ with $\mu(\cQ)>1-\eps/2$ there exists compact subsets $\cP \subset \cP_0 \subset \cP_1 \subset \cQ$ and $C,k_0>0$ such that $\mu(\cP)> 1-\eps$ and such that: 
\begin{enumerate}
\item\label{item.continuity} all functions $c_1, \ldots, c_k$ are uniformly continuous on $\cP_1$, 
\item \label{item.density}
 for every $\nu > 0$, 
 there exists an integer $m_0 > 1$ such that for every integer $m > m_0$, for every $x \in \cP_0$ and for both $i\in \{1,3\}$ we have that  $\mu^i_x(\cP_1 \cap W^{i, m}_{1}(x)) \geq (1- \nu ) \mu^i_x(W^{i, m}_{1}(x))$.
\end{enumerate}
\end{prop}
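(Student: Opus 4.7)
The plan is to build $\cP_1 \supset \cP_0 \supset \cP$ by combining Lusin's theorem with a leafwise Lebesgue differentiation argument followed by Egorov's theorem. First, apply Lusin's theorem to each $c_j$ and intersect with $\cQ$ to produce a compact set $\cP_1 \subset \cQ$ with $\mu(\cP_1) > 1 - 3\eps/4$ on which every $c_j$ is (uniformly) continuous; this yields item~(\ref{item.continuity}).

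For $i \in \{1,3\}$, I would set
\[
h^i_m(x) := \frac{\mu^i_x(\cP_1 \cap W^{i,m}_1(x))}{\mu^i_x(W^{i,m}_1(x))}
\]
and show that $h^i_m(x) \to 1$ for $\mu$-a.e.\ $x \in \cP_1$. The key observation is that, by the affine compatibility of the normal-form charts (Proposition~\ref{prop-normalforms}(iv)) together with the equivariance~(iii), once a $\mu$-typical base point $y$ and its chart $\Phi^i_y$ are fixed, the dynamically defined window $W^{i,m}_1(y')$ around a point $y' = \Phi^i_y(t') \in \cW^i(y)$ is exactly a centered interval $(t'-r_m, t'+r_m)$ in this chart, with $r_m \to 0$ as $m \to \infty$. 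The ratio $h^i_m(y')$ then coincides with a classical Besicovitch centered ratio for the Radon measure $\hat\mu^i_y$ on $(-1,1)$ and the set $A := (\Phi^i_y)^{-1}(\cP_1 \cap W^i_1(y))$. The one-dimensional Lebesgue differentiation theorem for Radon measures gives $h^i_m(y') \to \mathbf{1}_A(t') = 1$ for $\hat\mu^i_y$-a.e.\ $t' \in A$; disintegrating $\mu|_{\cP_1}$ along $\cW^i$ then promotes this to $\mu$-a.e.\ convergence on $\cP_1$.

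Finally, apply Egorov's theorem to the two countable families $\{h^1_m\}_m$ and $\{h^3_m\}_m$ on $\cP_1$ to extract a compact subset $\cP_0 \subset \cP_1$ with $\mu(\cP_0) > 1-\eps$ on which both families converge to $1$ uniformly; set $\cP := \cP_0$. The uniform convergence is precisely the statement that for every $\nu>0$ there exists $m_0$ such that $h^i_m(x) \geq 1-\nu$ for every $m \geq m_0$, every $x \in \cP_0$ and $i \in \{1,3\}$, which is item~(\ref{item.density}).

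The main technical point is the density step: one must correctly identify the shrinking dynamical windows $W^{i,m}_1(y')$ with centered shrinking intervals in a single fixed chart $\Phi^i_y$ so as to invoke classical one-dimensional Besicovitch differentiation, and then carefully disintegrate to upgrade the resulting leafwise-a.e.\ statement into a $\mu$-a.e.\ statement. No non-degeneracy hypothesis on $\mu$ is needed: atoms of $\hat\mu^i_y$ cause no problem since $h^i_m(y')$ is eventually exactly~$1$ at any atom lying in $\cP_1$.
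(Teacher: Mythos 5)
Your proof is correct and follows essentially the same route as the paper: Lusin's theorem for item (\ref{item.continuity}), and for item (\ref{item.density}) an almost-everywhere leafwise density statement followed by a uniformization over a large-measure compact set. The paper simply asserts the key limit $\lim_{m}\mu(\cP_1\setminus \cQ_{q,m})=0$ for its sets $\cQ_{q,m}$ and uniformizes via a countable intersection $\cP_0=\bigcap_q \cQ_{q,m_q}$, whereas you justify that same a.e.\ statement explicitly (using the affine normal-form identification of $W^{i,m}_1$ with shrinking centered intervals plus Besicovitch differentiation) and invoke Egorov, which amounts to the same mechanism.
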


\begin{proof}
Item \eqref{item.continuity} is a standard application of Lusin's theorem.   Without loss of generality, let us assume that $\mu(\cP_1) > 1 - 2\eps/3$.

To see item \eqref{item.density}, we define for any integers $q, m \geq 2$ a subset of $\cP_1$ by the formula
\aryst
\cQ_{q, m} = \{ x \in \cP_1 : \mu^i_x(\cP_1 \cap W^{i, m'}_{1}(x)) \geq (1- q^{-1} ) \mu^i_x(W^{i, m'}_{1}(x)), \forall i \in \{1,3\}, m' \geq m \}.
\earyst
Fix an arbitrary integer $q \geq 2$. We have $\lim_{m \to \infty} \mu(\cP_1 \setminus \cQ_{q, m}) = 0$.
We choose some $m_q \geq 2$ such that  $\mu(\cP_1 \setminus \cQ_{q, m_q}) < \eps/(100q^2)$. Then we take
$\cP_0 = \cap_{q \geq 2} \cQ_{q, m_q}$.
It is clear that $\mu(\cP_0) > 1 - 3\eps/4$, and satisfies item \eqref{item.density}.

\end{proof}

\section{Some statements about cocycles}\label{ss.apcocycles}
Here we give some proofs of some results which are probably well-known but not available in the literature. The reason is that not many references deal with cocycles which are only smooth along unstable manifolds. We state a particular case since it is the one we will use, but of course it holds in more generality. We use the notation and definitions from \S\ref{s.cocyclenormalforms}. Note that this can be seen as just a generalization of the fact that Pesin unstable manifolds are smooth. Note that the following result is implicit in \cite[Remark 5.2(b)]{Ruelle}.
\begin{prop}\label{p.oseledetssmooth} 
Let $f: M \to M$ be a $C^{\infty}$ smooth diffeomorphism preserving an ergodic partially hyperbolic measure $\mu$. Let $\cE \to M$ be a (measurable) two-dimensional vector bundle over $(M,\mu)$ and let $A: \cE \to \cE$ be a vector bundle automorphism,  both of which   are smooth along unstable manifolds. Assume that the Lyapunov exponents of $A$ with respect to $\mu$ are  $\alpha > \beta$, corresponding to Oseledets subspaces $E_{\alpha}$ and $E_{\beta}$ respectively.  

Then there exists a family of smooth trivializations
 $\cY_0 = \{ \cY_{0,x}= (\xi_{0,x}, \xi_{0,x}^\perp) \}_{x \in M}$ such that for $\mu$-a.e. $x$,
\aryst
 A^{\cY_0} (x, t) = \begin{pmatrix} \alpha_x(t) & r^0_x(t) \\ 0 & \beta_x(t) \end{pmatrix}
\earyst
where   $\alpha_x, \beta_x, r_x: (-1,1) \to \RR$  are  smooth functions. Moreover, for $\mu$-a.e. $x$, we have that $\xi_{0,x}(0) \in E_{\alpha}(x)$.
 \end{prop}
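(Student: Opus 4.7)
\textbf{Proof proposal for Proposition \ref{p.oseledetssmooth}.}
My plan is to construct a smooth invariant section spanning the dominant Oseledets direction $E_\alpha$ along each unstable manifold via a graph transform, and then complete it to the desired trivialization. Since the matrix will become upper triangular automatically once its first column is invariant, the whole statement reduces to producing $\xi_{0,x}$ with the required properties.

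First I would fix any auxiliary family $(\eta_x, \eta_x^\perp)_{x \in M}$ of smooth trivializations of $\cE$ along unstable manifolds; this exists by the hypothesis that $\cE$ is smooth along $\cW^1$. In this auxiliary frame the cocycle reads
\begin{equation*}
A^{\eta}(x,t) = \begin{pmatrix} a_x(t) & b_x(t) \\ c_x(t) & d_x(t)\end{pmatrix}, \qquad t\in(-1,1),
\end{equation*}
with smooth $t$-dependence for $\mu$-a.e.\ $x$. An invariant section of the form $\xi_{0,x}(t)=\eta_x(t)+u_x(t)\eta_x^\perp(t)$ must satisfy the graph-transform equation
\begin{equation*}
u_{f(x)}(\lambda_{1,x}\,t) \;=\; \frac{c_x(t)+d_x(t)\,u_x(t)}{a_x(t)+b_x(t)\,u_x(t)}.
\end{equation*}
Equivalently, writing $\Lambda_x=\lambda_{1,f^{-1}(x)}^{-1}$, one iterates backward
\begin{equation*}
u^{(n+1)}_x(t) \;=\; \frac{c_{f^{-1}(x)}(\Lambda_x t)+d_{f^{-1}(x)}(\Lambda_x t)\,u^{(n)}_{f^{-1}(x)}(\Lambda_x t)}{a_{f^{-1}(x)}(\Lambda_x t)+b_{f^{-1}(x)}(\Lambda_x t)\,u^{(n)}_{f^{-1}(x)}(\Lambda_x t)}
\end{equation*}
starting from $u^{(0)}\equiv 0$. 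The linearization of this transform at its invariant section is multiplication by $(\text{det of the cocycle})/(a+bu)^2$, whose Birkhoff average along $f^{-1}$ equals $\beta-\alpha<0$, giving exponential contraction in the fiber. Simultaneously, each backward step rescales $t$ by $\Lambda_x$ whose Birkhoff average of logarithms is $-\chi_1<0$, so when we differentiate in $t$ to order $k$ we gain an extra factor whose Birkhoff average is $-k\chi_1$. This is exactly the mechanism that closes the scheme in every $C^k$ topology.

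Second, to make the previous contraction rigorous without invoking unneeded fiber-bunching, I would restrict the iteration to an increasing exhaustion $(\Lambda_N)_{N\geq 1}$ of Pesin-like regular sets on which the $C^k$-norms of $a,b,c,d$ along $W^1_1(\cdot)$ are uniformly bounded and the Oseledets/Lyapunov data vary continuously. Standard Birkhoff estimates then give, for each fixed $k\geq 0$, a summable bound on $\|u^{(n+1)}-u^{(n)}\|_{C^k(W^1_1(x))}$ by a geometric series with ratio $e^{-(\alpha-\beta)}\cdot e^{-k\chi_1}$ modified by arbitrarily small $\epsilon$. Passing to the limit yields a smooth section $u_x(t)$ which is unique and therefore coincides $\mu$-a.e.\ with the measurable $\alpha$-Oseledets direction at $x$ (so $\xi_{0,x}(0)\in E_\alpha(x)$). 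Setting $\xi_{0,x}^\perp:=\eta_x^\perp$ and unwinding definitions shows that $A^{\cY_0}(x,t)$ is upper triangular with smooth $t$-dependent entries, whose Birkhoff averages are $\alpha$ and $\beta$.

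The main obstacle I expect is purely technical: controlling the $C^k$-norms of the cocycle coefficients uniformly along unstable leaves, since the cocycle depends only measurably on $x$. This is handled in standard Pesin fashion, cutting to regular sets of large measure and then using the strict sign of the average $\chi_1>0$ together with $\alpha-\beta>0$ to get true contraction in each $C^k$ at a cost of arbitrarily small loss $\epsilon$. No fiber-bunching assumption beyond $\alpha>\beta$ is needed because the spatial contraction $\Lambda_x$ provides the necessary gain per derivative order, a feature particular to working inside a single unstable manifold.
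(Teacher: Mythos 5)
Your overall mechanism is the right one, and it is the same one the paper's proof runs on: construct the dominant invariant line field along an unstable leaf by going backwards along the base orbit, using the exponent gap $\alpha-\beta>0$ for contraction in the fibers and the spatial contraction $\lambda^{-}_{1,n}(x)$ for the extra gain per $t$-derivative (so no fiber bunching is needed). The gap is in your convergence step. You start the M\"obius graph transform from $u^{(0)}\equiv 0$ in an \emph{arbitrary} auxiliary frame, and the only contraction you invoke is the linearization $\det A^{\eta}/(a_x+b_xu)^2$ at the (yet to be constructed) invariant section, whose Birkhoff average is $\beta-\alpha$. That is a purely local statement: it does not show that the iterates $u^{(n)}_x$ ever enter the basin of that section, stay bounded, or even remain defined --- the denominator $a_x+b_xu$ vanishes whenever the pushed-forward line hits $\mathrm{span}(\eta^\perp_x(t))$, and nothing in your setup rules this out. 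Worse, your initial section is the line field $\mathrm{span}(\eta_x)$, and ``any auxiliary family'' allows $\eta_x(0)\in E_\beta(x)$ on a set of positive measure; in that case, at $t=0$ the forward images $A^n(\mathrm{span}\,\eta_{f^{-n}(x)}(0))$ stay exactly in the invariant direction $E_\beta(x)$, so the scheme converges to the wrong object at $t=0$ (and cannot produce a smooth limit near $t=0$), and your final assertion $\xi_{0,x}(0)\in E_\alpha(x)$, which you deduce from an unproved uniqueness claim, fails. Already at $t=0$ the projectivized cocycle has two invariant sections, $E_\alpha$ and $E_\beta$; which one a forward graph transform selects depends on a quantitative (tempered) transversality of the initial line to $E_\beta$ along the backward orbit, and this is precisely the input your argument never secures.

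The repair is what the paper does at the outset: normalize the auxiliary trivialization so that $\xi_x(0)\in E_\alpha(x)$ and $\xi^\perp_x(0)\in E_\beta(x)$, which kills the zeroth-order obstruction ($q_x(0)=0$), and then either run an invariant-cone/graph-transform argument with Pesin tempering around this adapted frame, or, as the paper prefers, seek the correction in the linear form $\hat\xi_x=\eta_x\xi_x+p_x\xi^\perp_x$ with $p_x(0)=0$, $\eta_x(0)=1$. The invariance condition then becomes a pair of equations that are \emph{linear} in the unknowns $(p_x,\eta_x)$ and are solved by explicit backward series; their convergence, together with that of all $t$-derivatives, follows directly from $\alpha>\beta$ and $\lambda^{-}_{1,n}(x)\to 0$, with no need to control nonlinear iterates, vanishing denominators, or a basin of attraction. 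If you want to keep the projective graph-transform formulation, you must add the adapted normalization at $t=0$, an invariant cone (or a projective metric) that the iterates provably preserve on Pesin blocks, and a tempering argument for excursions of the backward orbit outside those blocks; only then does the contraction estimate you state actually yield convergence, uniqueness, and the identification $\xi_{0,x}(0)\in E_\alpha(x)$.
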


Let us recall that the fact that $A$ is smooth along unstable manifolds implicitly requires the bundle $\cE$ to be defined and be smooth along unstable manifolds (see Remark \ref{rem.smoothunstables}). This means that for $\mu$-almost every $x \in M$, the bundle $\cE$ is defined over $W^1_1(x)$ and admits a smooth trivialization $\cY$ making $A^\cY(x, \cdot)$ smooth as a function from $(-1,1)$ to $\mathrm{GL}(2,\RR)$. 

%
%
%

We also have the following parallel version.

\begin{prop} \label{p.oseledetssmooth.uniform}
	Let $f : M \to M$ be a $C^{\infty}$ smooth diffeomorphism preserving a uniform partially hyperbolic set $\Lambda$. Let $\cE \to \Lambda$ be a two-dimensional vector bundle over $\Lambda$, and let $A : \cE \to \cE$ be a bundle automorphism, both of which are smooth along the unstable manifolds. Assume that $A|_{\cE}$ admits a continuous dominated splitting $\cE = E' \oplus E''$, i.e.,  $\| A |_{E'} \| > \| A|_{E''} \|$ pointwise.
	
	Then there exists a continuous family of smooth trivializations
	$\cY_0 = \{ \cY_{0,x}= (\xi_{0,x}, \xi_{0,x}^\perp) \}_{x \in \Lambda}$ such that for every $x \in \Lambda$,
	\aryst
	A^{\cY_0} (x, t) = \begin{pmatrix} \alpha_x(t) & r^0_x(t) \\ 0 & \beta_x(t) \end{pmatrix}
	\earyst
	where  $\alpha_x, \beta_x, r_x: (-1,1) \to \RR$  are  smooth functions with uniformly (in $x$) bounded smooth norms. Moreover,  we have that $\xi_{0,x}(0) \in E'(x)$.
\end{prop}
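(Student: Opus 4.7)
The strategy is to mirror the proof of Proposition~\ref{p.oseledetssmooth}, replacing the measurable Oseledets splitting by the continuous dominated splitting and exploiting uniformity to obtain continuous dependence on the base point. The construction proceeds in two stages: first I would produce $\xi_{0,x}$ as a smooth section of $\cE$ along $W^1_1(x)$ with $\xi_{0,x}(0)\in E'(x)$ depending continuously on $x\in\Lambda$, and then take $\xi_{0,x}^\perp$ to be any continuous-in-$x$ smooth complementary section. Since $\xi_{0,x}$ will span an $A$-invariant line subbundle of $\cE|_{W^1_1(x)}$, the cocycle in the frame $(\xi_{0,x},\xi_{0,x}^\perp)$ is automatically upper triangular, and the diagonal entries $\alpha_x,\beta_x$ and off-diagonal entry $r^0_x$ will be smooth along unstable manifolds with uniformly bounded $C^k$-norms.

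For the first stage I would use a graph transform argument. Pick any continuous family of smooth trivializations $\{(\eta_x,\eta_x^\perp)\}_{x\in\Lambda}$ of $\cE$ along unstable leaves with $\eta_x(0)\in E'(x)$ and $\eta_x^\perp(0)\in E''(x)$; such a family exists by continuity of $E',E''$ together with a standard smooth extension along $W^1_1(x)$. In these coordinates $A$ is represented by a matrix-valued map $M_x(t)$ that is smooth in $t$ with uniformly bounded $C^k$-norms and continuous in $x$. A section of the form $\xi_{0,x}(t)=\eta_x(t)+u_x(t)\eta_x^\perp(t)$ defines an $A$-invariant line field iff the family $\{u_x\}$ satisfies a functional equation of the form $u_{f(x)}(\lambda_{1,x}t)=T_x(u_x)(t)$, where $T_x$ is a graph transform on $C^0$-sections with $u_x(0)=0$. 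The dominated splitting $\|A|_{E''}\|/\|A|_{E'}\|<\theta<1$ implies uniform contraction of this graph transform pulled back through finitely many iterates, which produces a unique fixed point $\{u_x\}$ depending continuously on $x$.

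The key step is to upgrade this fixed point to a smooth family along $W^1_1(x)$ with uniform $C^k$-bounds in $x$. Differentiating the invariance equation in $t$ relates $\partial_t^k u_x$ to $\partial_t^k u_{f^{-n}(x)}$ weighted by $n$-fold products of the expansion factors $\lambda_{1,\cdot}^1$ and the contraction factor $\theta$; domination together with smoothness of $f$ and of $A$ along unstables guarantees, after iterating sufficiently many times, that the bunching condition $\theta\cdot(\lambda_{1,\max}^1)^k<1$ is satisfied for every fixed $k$, so the $k$-th derivative equation is again a uniform contraction on continuous sections and has a unique continuous fixed point. This is the classical bunching/invariant-section argument, and gives smoothness of all orders with uniformly bounded norms, continuous in $x$. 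The upper-triangular form in the frame $(\xi_{0,x},\xi_{0,x}^\perp)$ then follows immediately from $A$-invariance of $\mathrm{span}(\xi_{0,x})$.

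The main obstacle is the last step: obtaining smoothness along $W^1$ with \emph{uniform} estimates in $x$, rather than merely continuity. In the measurable setting of Proposition~\ref{p.oseledetssmooth} one can afford pointwise estimates that deteriorate along orbits, but here one needs the bunching to hold everywhere on $\Lambda$ simultaneously. This is precisely what the uniformity of the partially hyperbolic splitting, the boundedness of $f$ on the compact set $\Lambda$, and the uniform domination $\|A|_{E''}\|/\|A|_{E'}\|<\theta<1$ provide, once one passes to a sufficiently high iterate of $f$ (which does not affect the conclusion of the proposition).
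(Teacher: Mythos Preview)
Your overall strategy---mirror the proof of Proposition~\ref{p.oseledetssmooth} and check that all estimates become uniform thanks to the continuous dominated splitting---is exactly what the paper does (the paper omits the proof with precisely this remark). The graph-transform setup you describe is equivalent to the explicit series in the paper's proof of Proposition~\ref{p.oseledetssmooth}.

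However, your smoothness step contains a genuine error. You claim the $k$-th derivative equation contracts because ``the bunching condition $\theta\cdot(\lambda_{1,\max}^1)^k<1$ is satisfied for every fixed $k$'' after passing to a high iterate. Since $\lambda_{1,\max}^1>1$, this inequality \emph{fails} for all large $k$, and iterating does not help: after $n$ iterates the relevant factor becomes $\theta^n(\lambda_{1,\max}^1)^{nk}$, which still diverges. The point you are missing is that the chain rule acts in your favor, not against you. When you express $u_x(t)$ in terms of $u_{f^{-n}(x)}(\lambda^{-}_{1,n}t)$ via the invariance equation and differentiate $k$ times, the weight on $\partial_t^k u_{f^{-n}(x)}$ is $\theta^n\,(\lambda^{-}_{1,n})^{k}$, where $\lambda^{-}_{1,n}=(\lambda_{1,f^{-1}(x)}\cdots\lambda_{1,f^{-n}(x)})^{-1}\to 0$. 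Hence the contraction rate for the $k$-th derivative is at worst $\theta\cdot(\lambda_{1,\min}^1)^{-k}<\theta<1$: it \emph{improves} with $k$, so no bunching condition is needed at all. This is also why the explicit series in the proof of Proposition~\ref{p.oseledetssmooth} can be differentiated term by term to any order with uniformly convergent results. Once you correct this, the rest of your argument goes through.
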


We will omit the proof of Proposition \ref{p.oseledetssmooth.uniform} since it is in close parallel with that of Propositon \ref{p.oseledetssmooth}: it is enough to check the uniformity of the estimates on various functions at each step of the construction in the proof of Propositon \ref{p.oseledetssmooth}. 


\begin{proof}
By hypothesis, there is a family of trivializations $\cY = \{  \cY_{x} = (\xi_x, \xi^\perp_x) \}_{x \in M}$ such that for $\mu$ almost every $x \in M$, $\xi_x, \xi_x^\perp : (-1,1) \to \cE$ are smooth maps so that $\xi_x(t), \xi_x^\perp(t) \in \cE_{\Phi^1_x(t)}$ are linearly independent. 
Moreover, we may assume without loss of generality that for $\mu$-a.e. $x \in M$, $\xi_x(0) \in E_{\alpha}(x)$ and $\xi_x^{\perp}(0) \in E_{\beta}(x)$. 

The restriction of the bundle map $A$ on $\cE|_{W^1_1(x)}$, seen under the basis $(\xi_x, \xi^\perp_x)$ and $(\xi_{f(x)}, \xi^\perp_{f(x)})$, is given by the matrix
\aryst
A^{\cY}(x, t) = 
\begin{bmatrix} 
\alpha_x(t) & r_x(t) \\
q_x(t) & \beta_x(t)
\end{bmatrix}.
\earyst
Here functions $\alpha_x$, $r_x$, $\beta_x$, $q_x$ are smooth. Moreover, we have $q_x(0) = 0$ by our choices of $\xi_x(0)$ and $\xi_x^{\perp}(0)$.

Let us define another family of trivializations $\hat\cY = \{ \hat\cY_x = ( \hat\xi_x, \xi_x^{\perp} ) \}_{x \in M}$ by setting
\aryst 
\hat\xi_x =  \eta_x \xi_x + p_x \xi_x^{\perp}
\earyst
where $p_x$ is a smooth function on $(-1,1)$ satisfying $p_x(0) = 0$; and $\eta_x$ is a non-vanishing smooth function on $(-1,1)$ satisfying $\eta_x(0) = 1$.
Then the  restriction of the bundle map $A$ on $\cE|_{W^1_1(x)}$, seen under the basis $(\hat\xi_x, \xi^\perp_x)$ and $(\hat\xi_{f(x)}, \xi^\perp_{f(x)})$, is given by the matrix
\aryst
&& A^{\hat\cY}(x, t) = \begin{bmatrix}
\eta_{f(x)}(\lambda_{1,x} t)^{-1} & 0 \\  - \eta_{f(x)}(\lambda_{1,x} t)^{-1} p_{f(x)}( \lambda_{1, x} t) & 1
\end{bmatrix}
 A^{\cY}(x, t) 
\begin{bmatrix}
\eta_x(t) & 0 \\ p_x(t) & 1
\end{bmatrix}
 =
 \begin{bmatrix}
 \hat \alpha_x(t) & \hat r_x(t) \\
\hat q_x(t) & \hat \beta_x(t)
 \end{bmatrix}
\earyst

\noindent where 
\begin{equation} \hat q_x(t)=    \eta_x(t) q_x(t) + p_x(t) \beta_x(t) - (\eta_{f(x)}^{-1} p_{f(x)})( \lambda_{1,x}t) ( \eta_x(t) \alpha_x(t) + p_x(t) r_x(t)).
\end{equation}
We will choose $\eta_x$ and $p_x$ such that for $\mu$-a.e. $x$ we have the equations
\ary
  \eta_x(t)  q_x(t) + p_x(t) \beta_x(t) &=& \alpha_x(0) p_{f(x)}( \lambda_{1,x}t), \label{eq equationforp} \\
\eta_x(t) \alpha_x(t) + p_x(t) r_x(t) &=& \alpha_x(0)  \eta_{f(x)}( \lambda_{1,x}t).  \label{eq equationforeta}
\eary

If we denote $\lambda^-_{1,m}(x) = (\lambda_{1, f^{-1}(x)} \cdots \lambda_{1, f^{-m}(x)})^{-1}$, we can solve the equation \eqref{eq equationforp} by setting
\aryst 
p_x(t) =  \sum_{n = 1}^{\infty} [\prod_{j = 1}^{n-1} \alpha_{f^{-j}(x)}(0)]^{-1}[\prod_{j = 1}^{n-1} \beta_{f^{-j}(x)}(\lambda^{-}_{1, j}(x) t)] \alpha_{f^{-n}(x)}(0)^{-1} (\eta_{f^{-n}(x)} q_{f^{-n}(x)})(\lambda^{-}_{1, n}(x) t).
\earyst
Notice that the above sum converges since $\alpha > \beta$, and
\aryst
\lim_{n \to \infty} n^{-1} \sum_{j = 1}^{n-1}\log  \alpha_{f^{-j}(x)}(0) = \alpha \ \mbox{and } \ \lim_{n \to \infty} n^{-1} \sum_{j = 1}^{n-1}\log  \beta_{f^{-j}(x)}(0) = \beta.
\earyst 
Then we can solve the equation \eqref{eq equationforeta} by setting
\aryst
\eta_{x}(t) =  \sum_{n = 1}^{\infty} [\prod_{j = 1}^{n-1} \alpha_{f^{-j}(x)}(0)]^{-1}[\prod_{j = 1}^{n-1} \alpha_{f^{-j}(x)}(\lambda^{-}_{1, j}(x) t)] \alpha_{f^{-n}(x)}(0)^{-1} ( p_{f^{-n}(x)}  r_{f^{-n}(x)} )(\lambda^{-}_{1, n}(x) t).
\earyst
Thus the equations \eqref{eq equationforp} and \eqref{eq equationforeta} are simultaneously solvable.  We see that $A^{\hat \cY}(x, \cdot)$ is of form 
$ \begin{bmatrix}
* & * \\ 0 & *
\end{bmatrix}. $ This concludes the proof.
\end{proof}

\end{document}